\newcommand{\mathsout}[1]{%
	\ifmmode
	\text{\sout{\ensuremath{#1}}}
	\else
	\sout{#1}
	\fi
}
\definecolor{cc}{RGB}{0,127,0}
\title{The high-dimensional asymptotics of\\ 
	first order methods with random data}
\author{Michael Celentano\thanks{Department of Statistics, University of California Berkeley} \and Chen Cheng\thanks{Department of Statistics, Stanford University} \and 
	Andrea Montanari\thanks{Department of Statistics and  Department of Mathematics, 
		Stanford University}}
\newcommand{\Cov}{\operatorname{Cov}}
\newcommand{\Tr}{\operatorname{Tr}}
\newcommand{\de}{\mathrm{d}}
\newcommand{\E}{\mathbb{E}}
\newcommand{\opnorm}[1]{{\left\vert\kern-0.25ex\left\vert\kern-0.25ex\left\vert #1 
		\right\vert\kern-0.25ex\right\vert\kern-0.25ex\right\vert}}
\newcommand{\ba}{\boldsymbol{a}}
\newcommand{\bb}{\boldsymbol{b}}
\newcommand{\flr}[1]{{\lfloor {#1} \rfloor}}
\newcommand{\cil}[1]{{\lceil {#1} \rceil}}
\newcommand{\integersp}{\integers_{\geq 0}}
\newcommand{\wb}[1]{\overline{#1}}
\newcommand{\Ep}{\mathbb{E}}
\newcommand{\real}{\mathbb{R}} 
\newcommand{\prn}[1]{\left({#1}\right)} 
\newcommand{\brk}[1]{\left[{#1}\right]} 
\newcommand{\brc}[1]{\left\{{#1}\right\}} 
\newcommand{\norm}[1]{\left\|{#1}\right\|} 
\newcommand{\normtwo}[1]{\norm{#1}_2}
\newcommand{\normlbd}[2]{\norm{#1}_{\lambda, #2}}
\newcommand{\intnormlbd}[2]{\opnorm{#1}_{\lambda, #2}}
\newcommand{\est}[1]{\widehat{#1}}
\newcommand{\wdst}[2]{\mathrm{W}_{2}\prn{#1,#2}}
\newcommand{\dW}[2]{d_{{\rm\scriptsize W}}\prn{#1,#2}}
\newcommand{\lbddst}[2]{\mathsf{dist}_{\lambda,T}\prn{#1,#2}}
\newcommand{\ddt}{\frac{\de}{\de t}}
\newcommand{\vt}{\theta}
\newcommand{\vr}{r}
\newcommand{\vLambda}{\Lambda}
\newcommand{\vu}{u}
\newcommand{\vw}{w}
\newcommand{\vl}{\ell}
\newcommand{\veps}{z}
\newcommand{\trsfrm}{\mathcal{T}}
\newcommand{\trsfrmA}{\mathcal{T}_{\mathcal{S} \to \wb{\mathcal{S}}}}
\newcommand{\trsfrmB}{\mathcal{T}_{\wb{\mathcal{S}} \to \mathcal{S}}}
\newcommand{\cstloss}{M_\ell}
\newcommand{\cstspaceS}{M_{\mathcal{S}}}
\newcommand{\cstdualspaceS}{M_{\wb{\mathcal{S}}}}
\newcommand{\cstlbd}{M_\Lambda}
\newcommand{\cstthetaz}{M_{\theta^0, z}}
\newcommand{\Rt}{R_\theta}
\newcommand{\Rl}{R_{\ell}}
\newcommand{\Ct}{C_\theta}
\newcommand{\Cl}{C_{\ell}}
\newcommand{\vRt}{R_\theta}
\newcommand{\vRl}{R_{\ell}}
\newcommand{\vCt}{C_\theta}
\newcommand{\vCl}{C_{\ell}}
\newcommand{\bRt}{\wb{R}_\theta}
\newcommand{\bRl}{\wb{R}_\ell}
\newcommand{\bCt}{\wb{C}_\theta}
\newcommand{\bCl}{\wb{C}_\ell}
\newcommand{\bGamma}{\wb{\Gamma}}
\newcommand{\pRt}{\Phi_{\Rt}}
\newcommand{\pRl}{\Phi_{\Rl}}
\newcommand{\pCt}{\Phi_{\Ct}}
\newcommand{\pCl}{\Phi_{\Cl}}
\newcommand{\vSl}{\vRl}
\newcommand{\vGamma}{\Gamma}
\newcommand{\bvCl}{\bCl}
\newcommand{\bvCt}{\bCt}
\newcommand{\bvRl}{\bRl}
\newcommand{\bvRt}{\bRt}
\newcommand{\bvSl}{\bRl}
\newcommand{\bvGamma}{\bGamma}
\def\Loss{{\sf L}}
\def\oX{{\overline{X}}}
\def\naturals{{\mathbb N}}
\def\reals{{\mathbb R}}
\def\sBL{\mbox{\scriptsize\rm BL}}
\def\sW{\mbox{\scriptsize\rm W}}
\def\sLip{\mbox{\scriptsize\rm Lip}}
\DeclareMathOperator*{\plim}{p-lim}
\DeclareMathOperator*{\plimsup}{p-lim\,sup}
\DeclareMathOperator*{\argmin}{arg\,min}
\newtheoremstyle{myexample} 
    {\topsep}                    
    {\topsep}                    
    {\rm }                   
    {}                           
    {\bf }                   
    {.}                          
    {.5em}                       
    {}  
\newtheoremstyle{myremark} 
    {\topsep}                    
    {\topsep}                    
    {\rm}                        
    {}                           
    {\bf}                        
    {.}                          
    {.5em}                       
    {}  
\newtheorem{claim}{Claim}[section]
\newtheorem{lemma}[claim]{Lemma}
\newtheorem{assumption}{Assumption}
\newtheorem{theorem}{Theorem}
\newtheorem{proposition}[claim]{Proposition}
\newtheorem{corollary}[claim]{Corollary}
\newtheorem{definition}[claim]{Definition}
\theoremstyle{myremark}
\newtheorem{remark}{Remark}[section]
\theoremstyle{myremark}
\theoremstyle{myexample}
\newtheorem{example}[remark]{Example}
\definecolor{darkblue}{rgb}{0, 0, 0.5}
\newcommand{\rev}[1]{\textcolor{black}{#1}}
\def\<{\langle}
\def\>{\rangle}
\def\argmin{{\rm argmin}}
\def\eps{{\varepsilon}}
\def\id{{\boldsymbol I}}
\def\sT{{\sf T}}
\def\cH{{\cal H}}
\def\hR{\hat{R}}
\def\complex{\mathbb{C}}
\def\Prox{{\rm Prox}}
\def\normal{{\sf N}}
\def\cS{{\cal S}}
\def\Cov{{\rm Cov}}
\def\cL{{\cal L}}
\def\Tr{{\rm {Tr}}}
\def\de{{\rm d}}
\def\htheta{\hat{\theta}}
\def\bfone{{\bf 1}}
\def\bfzero{{\bf 0}}
\def\bb{{\boldsymbol b}}
\def\ed{\stackrel{{\rm d}}{=}}
\def\hS{\widehat{S}}
\def\naturals{\mathbb{N}}
\def\integers{\mathbb{Z}}
\def\reals{\mathbb{R}}
\def\realsp{\mathbb{R}_{\ge 0}}
\def\bB{{\boldsymbol B}}
\def\bX{{\boldsymbol X}}
\def\be{{\boldsymbol e}}
\def\bu{{\boldsymbol u}}
\def\bx{{\boldsymbol x}}
\def\by{{\boldsymbol y}}
\def\bz{{\boldsymbol z}}
\def\btheta{{\boldsymbol \theta}}
\def\bv{{\boldsymbol v}}
\def\bF{{\boldsymbol F}}
\def\hbtheta{\hat{\boldsymbol \theta}}
\def\hr{\hat{r}}
\def\bell{{\boldsymbol \ell}}
\def\br{{\boldsymbol r}}
\def\bsigma{{\boldsymbol \sigma}}
\def\prob{{\mathbb P}}
\def\E{{\mathbb E}}
\def\Treg[#1]{T^{{\rm reg},#1}}
\def\GW[#1]{{\rm GW}(#1)}
\def\MGW[#1]{{\rm MGW}(#1)}
\def\tpsi{\widetilde{\psi}}
\def\P{{\rm P}}
\def\Loss{{\sf Loss}}
\def\Ls{{\sf L}}
\def\btheta{{\boldsymbol \theta}}
\def\diag{{\rm diag}}
\def\op{{\rm op}}
\date{\today}
\begin{document}
	\maketitle
	
	\begin{abstract}
		We study a class of deterministic flows in $\reals^{d\times k}$, parametrized 
		by a random matrix $\bX\in \reals^{n\times d}$ with i.i.d. centered subgaussian entries.
		We characterize the asymptotic behavior of these flows over bounded time horizons, in
		the high-dimensional limit in which $n,d\to\infty$ with $k$ fixed and 
		converging aspect ratios $n/d\to\delta$. The asymptotic characterization we prove is in 
		terms of a system of nonlinear stochastic processes in $k$ dimensions, whose parameters
		are determined by a fixed point condition. This type of characterization is known in physics as
		dynamical mean field theory. Rigorous results of this type
		have been obtained in the past for a few spin glass models.
		Our proof is based on time discretization and a reduction to certain iterative 
		schemes known as approximate message passing (AMP) algorithms, as opposed to 
		earlier work that was based on large deviations theory and stochastic processes
		theory. The new approach provides a unified view of a general 
		class of algorithms  and implies that 
		the high-dimensional behavior of the flow is universal with respect to
		the distribution of the entries of $\bX$.  
		
		As specific applications, we obtain high-dimensional characterizations of gradient flow 
		in some classical models from statistics and machine learning, under a random design assumption.
	\end{abstract}
	
	\tableofcontents
	
	\section{Introduction}
	\label{sec:Introduction}
	
	\subsection{Motivation}

	Understanding the behavior of gradient descent dynamics in non-convex random
	energy landscapes is a central problem in a number of disciplines, ranging from statistical physics
	to applied mathematics, machine learning and statistics. 
	\rev{Consider for instance
	the problem of fitting $n$ data points $\bx_i\in\reals^d$, $y_i\in \reals$  using a superposition of
	$k$ functions $\varphi(\<\btheta_1,\bx\>)$, \dots $\varphi(\<\btheta_k,\bx\>)$:
	\begin{align}
		\mbox{Solve:} &\;\;\;\;\;\sum_{a=1}^kc_a\varphi(\<\bx_i,\btheta_a\>)= y_i\, , 
		 \;\;\; \forall i\le n\, ,\label{eq:Problem1}\\
		 \mbox{Subject to:} &\;\;\;\;\;\btheta_1,\dots,\btheta_k\in\reals^d\, .\nonumber
	\end{align}
	Here $\varphi:\reals\to\reals$ is a known function, $c_a$, 
	$\bx_i\in\reals^d$, $y_i\in\reals$ are known data of the problem, and $\<\bu,\bv\>$
	denotes the standard scalar product of $\bu,\bv\in\reals^d$.
	We are given the data points $y_i$ and $\bx_i$, $i\le n$, the coefficients 
	$c_a$, $a\le k$, and the function $\varphi$,
	and would like to solve  Problem \eqref{eq:Problem1}  for 
	$\btheta_1,\dots,\btheta_k\in\reals^d$.}
	
	\rev{The classical problem of 
	representing a sub-sampled signal in $d$ dimension as a sum of a 
	small number of Fourier waves (with unknown wave-vectors $\btheta_a$) 
	reduces to problem \eqref{eq:Problem1} with $\varphi(t)=e^{it}$ \cite{plonka2018numerical}. 
	Fitting a two-layer
	neural network with $k$ hidden neurons to $n$ data points also takes the same form with $\varphi$
	the activation function (e.g. $\varphi(t) = \tanh(t)$)
	 \cite{pinkus1999approximation}. Other special cases of the above include 
	 linear regression \cite{skouras1994estimation} and
	 phase retrieval \cite{fienup1982phase,chen2017solving}.
	}
	
	\rev{An interesting way to explore the space of solutions and near-solutions of
	Problem \ref{eq:Problem1}  is to consider a gradient flow that converges to solutions.
	In order to define this gradient flow, it is convenient to 
	introduce the notation $\btheta = [\btheta_1,\dots ,\btheta_k]\in\reals^{d\times k}$ 
	and define the function  $\Loss:\reals^k\times \reals \to \reals$
	\begin{align}
	\Loss(u;y) = \frac{1}{2}\Big(\sum_{a=1}^kc_a\varphi(u_a)-y\Big)^2\, ,
	\end{align}
	(other smooth functions could replace the square $(\hat y-y)^2/2$.)}
   \rev{ The gradient flow of interest then reads 
	\begin{align}
		\frac{\de\btheta^t}{\de t} &= -\nabla\cL_n(\btheta^t)\, ,\label{eq:FirstFlow}\\
		\cL_n(\btheta) & := \frac{d}{n}\sum_{i=1}^n\Loss(\btheta^{\sT}\bx_i,;y_i)\, .\label{eq:FirstLoss}
	\end{align}
	We will use this as a running example in what follows.}

	\rev{The main objective of this paper is to establish a characterization
	of a class of flows including the one in Eq.~\eqref{eq:FirstFlow} as a special case, which holds
	for certain distributions of random matrices $\bX$, under the 
	high-dimensional asymptotics $n,d\to\infty$, when $n/d\to \delta\in (0,\infty)$.
	This characterization (known in physics as `dynamical mean field theory' (DMFT))
	is amenable to both numerical and mathematical analysis and indeed has been used in 
	a large number of works in statistics and machine learning,
	 both before and after our results (see Section \ref{sec:OtherConsequences} for a few pointers). 
	 This work provides the first rigorous foundation for a number of  these applications.}
	
	\rev{Gradient} flow can be discretized to 
	yield a gradient descent algorithm 
	(typically initialized at an uninformative position, e.g. $\btheta^0=0$ or 
	$\btheta^0\sim\normal(0,c_0\id_d)$):
	\begin{align}
		\btheta^{k+1}= \btheta^k-\eta\nabla\cL_n(\btheta^k)\, ,\label{eq:FirstDiscr}
	\end{align}
	where $\eta$ is a stepsize parameter. For small enough $\eta$, this algorithm should 
	closely track  gradient flow, and (alongside its many variants)
	 is broadly used in practice because of its scalability. 
	
	Similar  flows are studied in statistical physics.
	For instance, in the Hopfield model of associative memories,
	the memory retrieval dynamics is closely related to the following flow
	\begin{align}
		\frac{\de\bsigma^t}{\de t} & = -\nabla\cH(\bsigma^t)\, ,\\
		\cH(\bsigma) & := \frac{1}{2}\sum_{i=1}^n\<\bx_i,\bsigma\>^2+ V(\bsigma)\, ,
	\end{align}
	where $\bx_i$ are the `patterns' memorized by the network, $V(\bsigma) := \sum_{i=1}^dV(\sigma_i)$,
	and $\cH(\bsigma)$ is the Hamiltonian or energy function.
	This type of dynamics was studied in the context of
	the Sherrington-Kirkpatrick model \cite{sompolinsky1982relaxational},
	the spherical $p$-spin glass model \cite{crisanti1993sphericalp,cugliandolo1993analytical},
	and the Hopfield model \cite{rieger1988glauber}.
	
	All of these dynamics can be  modified by introducing a noise term, thus yielding 
	a Langevin dynamics. For instance, Eq.~\eqref{eq:FirstFlow} can be modified to
	$\de\btheta^t = -\nabla\cL(\btheta^t)\de t + \alpha\de\bB^t$, with $(\bB_t)_{t\ge 0}$
	a standard $d$-dimensional Brownian motion. We believe that this generalization can be treated using
	our approach, but our formal results are established for the `zero temperature' case $\alpha=0$.
	
	\subsection{Dynamical mean field theory}

	We will study the behavior of the flow \eqref{eq:FirstFlow}
	(and indeed a significant generalization of this flow), when the matrix
	$\bX$ is random, with i.i.d. centered subgaussian entries\footnote{Our main theorem can 
		be proved in a slightly stronger form for the case of Gaussian entries, 
		because of available theorems in the literature.}.
	As already mentioned, we will focus on the proportional asymptotics $n,d\to\infty$ 
	with $n/d\to \delta\in (0,\infty)$. 
	Indeed, the regime $n\asymp d$ is the richest (and most challenging) from a mathematical viewpoint.
	If  $n/d\to 0$ then the optimization problem \eqref{eq:FirstLoss} is strongly overparametrized,
	and gradient flow quickly converges to a global minimizer for most reasonable loss
	functions $\Loss(\bz;y)$ \cite{bartlett2021deep}. On the other hand, if $n/d\to\infty$,
	then gradient flow  \eqref{eq:FirstFlow} converges to gradient flow with respect to the population 
	error $\cL(\btheta) = \E[\cL_n(\btheta)]$ which is much simpler.
	
	The mainstream approach to the analysis of the flow
	\eqref{eq:FirstFOM} in statistics and applied
	mathematics is to study the landscape of the cost function $\cL_n(\btheta)$,
	\rev{and compare it with its expectation $\cL(\btheta) = \E[\cL_n(\btheta)]$.}
	One then relates the properties of gradient flow to such landscape properties (e.g., the absence
	of `bad' local minima) via a deterministic argument. \rev{This approach
	has two weaknesses: $(i)$~It is accurate only for $n\gg d$, because otherwise the random landscape 
	$\cL_n(\btheta)$ will not converge uniformly around its expectation;
	$(ii)$~It is inherently a `worst case' analysis, and does not capture situations in which
	 bad local minima exist but are avoided by the dynamics.} 
	 We refer also to Section \ref{sec:Interpretation} for a comparison of our DMFT characterization to 
	 the $n/d\to\infty$ limit.

	In contrast, within statistical physics, there exists a well established
	approach to the analysis of gradient or Langevin flows for 
	spin glass Hamiltonians. One takes the limit $n,d\to\infty$ at $n/d=\delta$ 
	and $t\le T$ fixed and uses a non-rigorous argument to derive an asymptotic characterization
	also known as `dynamical mean field theory' (DMFT).
\rev{Consider ---to be definite--- the case $k=1$ of the flow \eqref{eq:FirstFlow}:
this describes the evolution of a $d$-dimensional vector $\btheta^t$ with
coordinates $\theta^t_1,\dots\theta^t_n$, each tracing a trajectory $\theta^{[0,T]}_i$
over the time horizon $t\in [0,T]$. DMFT predicts the distribution of this trajectory for a typical
coordinate $i$. Explicitly, for any test function $\psi$ (which takes as input a one-dimensional 
trajectory, i.e. a function in $C([0,T])$)}
\begin{align}
\plim_{n,d\to\infty}\frac{1}{d}\sum_{i=1}^d\psi(\theta^{[0,T]}_i) = \E\big\{\psi(\theta^{[0,T]})\big\}\,.
\end{align}
\rev{Here $\theta^{[0,T]}$ on the right-hand side is the asymptotic process. The crucial point is that this 
process is one-dimensional, and is completely defined by a certain stochastic differential 
equation with memory. As such it lends itself both to sharp analysis and \emph{dimension-independent}
numerical approximation.}

\rev{As a concrete example ---by considering the test function 
$\psi(\theta^{[0,T]}_i) =(\theta^t_i-\theta^s_i)^2$---
DMFT allows to compute the high-dimensional asymptotics of the distance between the state at time $t$ 
and time $s$}
	\begin{align}
		D_{\theta}(t,s) := \lim_{n,d\to\infty}\frac{1}{d}\|\btheta^{t}-\btheta^s\|^2 \, .
	\end{align}
	\rev{It is important to emphasize that the limit $n,d\to\infty$ is taken at fixed $t$, $s$.}

	At first sight, studying gradient flow or similar flows over a time horizon
	$T=O(1)$ as $n,d\to \infty$ might seem to have little use. Instead it turns out
	that, in many problems of interest, a non-trivial evolution takes place on this time scale,
	and a near optimum is achieved.  Examples from the literature will be provided
	in the next sections. (An important role is of course played by the scaling of the 
	cost function in Eq.~\eqref{eq:FirstLoss}). 
	We will also prove that gradient descent, as is defined in Eq.~\eqref{eq:FirstDiscr}, closely tracks gradient
	flow (and the same happens for more general flows) when $\eta$ is small, but is still of the size 
	$O(1)$ as $n,d\to\infty$. This means the theory developed here concerns algorithms whose complexity is
	of the order of $T/\eta=O(1)$ matrix--vector multiplications.
	
	\rev{There are interesting examples in which gradient-based methods only achieve non-trivial
	learning after $t\ge C\log d$. As demonstrated recently in \cite{troiani2025fundamental,montanari2026phase}, 
	even in those cases the DMFT theory developed here is a very useful tool (both papers 
	build upon the results proven here.) Further, we expect that the proof technique used here can be generalized 
	beyond $O(1)$ timescales.}
	
	DMFT asymptotics have been proved in the past for Langevin 
	dynamics on several spin glass models 
	\cite{arous1995large,arous1997symmetric,arous2001aging,arous2006cugliandolo}. 
	These proofs were based either on a large deviations argument or on 
	stochastic processes and weak convergence theory. 
	Over the last few years, physicists have applied
	the DMFT approach to analyze gradient flow algorithms in several problems from 
	high-dimensional statistics and machine learning (see next section for some pointers). 
	While a DMFT characterization was not proven for these applications, 
	several insights were extracted from the analysis of DMFT systems. 
	The present paper aims at filling this gap.
	
	\subsection{Technical contributions}

	We report contributions in several directions:
	\begin{description}
		\item[Asymptotic characterization.] We prove an asymptotic DMFT characterization of
		a class of flows including \eqref{eq:FirstLoss} as a special case.
		Our setting includes cases in  which  $\btheta^t\in\reals^{d\times k}$ is a matrix with a 
		fixed number $k$ of columns (with $k$ independent of $d,n$). Further, the flow
		can depend on time, and the function $\Loss'$ in Eq.~\eqref{eq:FirstFOM}
		is replaced by a general function $\ell:\reals^{k+1}\to \reals^k$.
		
		These generalizations allow us to cover a range of applications.
		In particular, our results characterize the asymptotic distribution:
		\begin{align}
			\frac{1}{d}\sum_{i=1}^d \delta_{\rev{\theta_i^{[0,T]}}} \Rightarrow {\rm P}_T
		\end{align}
		where $\rev{\theta_i^{[0,T]}}$ is the trajectory of row $i$ of $\btheta^t$ (seen as a function from $[0,T]$
		to $\reals^k$), and $\Rightarrow$ denotes weak convergence in the space of probability distributions
		over $C([0,T],\reals^k)$.
		In particular, we prove existence and  uniqueness of the solution of the  
		DMFT  equations. Our DMFT characterization generalizes earlier results obtained by 
		non-rigorous physics techniques.
		\item[Proof technique.] We introduce a new proof technique that is based on a 
		3 step  procedure: $(1)$ Discretize time; $(2)$ Show that the discrete-time flow can
		be obtained  by applying a simple change of variables to the iterates of an approximate message passing (AMP)
		algorithm;
		$(3)$ Apply an existing asymptotic characterization of AMP algorithms, known as `state evolution'.
		
		Given the special structure of AMP algorithms, 
		this approach has advantages over alternative ones.
		We note that the reduction of discrete time flows to AMP was already elucidated
		in \cite{celentano2020estimation}.
		The main technical challenge addressed in this paper is to show that 
		that the time step can be taken to zero, to yield the DMFT.  
		We show this  by establishing  a contraction property
		in a suitable function space. This contraction property has other useful consequences:
		among them, it implies existence and uniqueness of solutions of the asymptotic dynamics.
		\item[Beyond gradient flow.] In applications, gradient flow is only one among 
		other algorithms that we might be interested in. These algorithms need not be
		gradient flows with respect to a cost function. For instance, it is known that, among 
		first order methods for statistical estimation (algorithms that proceed by successive
		multiplication by $\bX$ or $\bX^{\sT}$), Bayes AMP achieves optimal statistical accuracy,
		under suitable assumptions  \cite{celentano2020estimation}.
		
		The proof by discretization and reduction to AMP makes transparent the relation between 
		various algorithms, and in particular the fact that each of these algorithms can be 
		viewed as AMP plus some post-processing. 
		\item[Non-vanishing step size.] As a byproduct of our analysis, the 
		asymptotic characterization does not apply only to the continuous time flow,
		but also to its discretization (e.g. gradient descent \eqref{eq:FirstDiscr})
		for any stepsize $\eta=\eta_n\to 0$ as $n,d\to\infty$.
		This follows from the fact that our proof technique is based on time discretization.
		
		For the case of non-vanishing stepsize, our analysis still gives an asymptotic characterization 
		(with discrete time).
		\item[Universality.] As our approach leverages available results on the analysis of AMP algorithms, 
		we inherit the generality of those results. 
		In particular, we can establish universality of the $n,d\to\infty$ limit
		with respect to the distribution of the entries $X_{ij}$ of $\bX$. 
		\item[Stationary points.] In the case of gradient flows, we prove that a subset 
		of fixed points of the DMFT dynamics are in correspondence with 
		stationary points of an infinite-dimensional variational principle.
		This correspondence holds both for convex and non-convex optimization problems
		and --to the best of our knowledge-- was not mentioned even at heuristic level (except in special cases).
		
		In physics language, these stationary points correspond to `replica symmetric solutions'.
		A subset of them should describe the long-time asymptotics of the flow.
		However, we leave for future work the study of how and when these stationary points
		do actually control the long-time asymptotics.
		Statistical physics predicts that other types of asymptotic behaviors
		are possible as well in non-convex problems  \cite{cugliandolo1993analytical}.
	\end{description}

	The rest of the paper is organized as follows. We briefly survey related work in 
	Section \ref{sec:Related}. We then state our general results in Section \ref{sec:Main}.
	We specialize the general result to a few cases of interest in Section \ref{sec:Examples}.
	We finally present our proofs in Sections \ref{sec:proof:UniqueExist} and 
	\ref{sec:proof:StateEvolution}, 
	with several technical lemmas
	deferred to the appendices.

	\section{Related work}
	\label{sec:Related}
	
	As mentioned in the introduction, DMFT was used by physicists for a long time to characterize
	the high-dimensional behavior of Langevin dynamics in mean field spin glasses
	\cite{sompolinsky1981dynamic,sompolinsky1982relaxational}.
	The asymptotic characterization is given, as in our paper, by a correlation and
	response function. In some cases, these functions are determined by a set of 
	integral-differential equations \cite{crisanti1993sphericalp,cugliandolo1993analytical}.
	More often, they solve a fixed point condition that is given in terms of \rev{an} one-dimensional 
	stochastic process with correlated noise and 
	memory~\cite{sompolinsky1982relaxational,cugliandolo2008out,altieri2020dynamical}. 
	
	Over the last few years physicists applied the same techniques to several problems
	in high-dimensional statistics and machine-learning. They studied the behavior of
	gradient flow learning and extracted useful insights from the DMFT characterization.
	An incomplete list of examples includes tensor principal component analysis
	\cite{mannelli2020marvels}, max margin linear classification \cite{agoritsas2018out,mignacco2020dynamical},
	Gaussian mixture models \cite{mignacco2020dynamical}.
	
	Some of these papers compare Langevin learning to Bayes optimal AMP, by solving
	numerically the corresponding high-dimensional characterizations. 
	They observe that Bayes AMP achieves superior accuracy and provide physics-based explanations
	for this phenomenon. 
	Our analysis (alongside the results of \cite{celentano2020estimation}) provides a  simple rigorous explanation
	of this observation. Langevin (as gradient flow and indeed any first order method) 
	is equivalent to a specific AMP plus post-processing. Bayes AMP is the optimal AMP
	algorithm in Bayesian estimation problems.
	
	DMFT characterizations for the Langevin dynamics of the Sherrington-Kirkpatrick (SK) model
	were first proved by Ben Arous and Guionnet  \cite{arous1995large,arous1997symmetric,guionnet1997averaged}
	(who considered continuous spins and Langevin dynamics) and by 
	Grunwald \cite{grunwald1996sanov} who instead considered Ising spins and Glauber dynamics.
	Spherical spin glasses (whereby the vector $\btheta$ lies on a sphere)
	were studied in \cite{arous2001aging} in the case of quadratic cost functions. 
	With respect to all other cases discussed here, the example of spherical spin glasses
	with quadratic activations is significantly simpler. In this case, the solutions to the flow
	can be written explicitly. 
	The case of spherical spin glasses with general polynomial interactions (the so called $p$-spin model) 
	was  studied in \cite{arous2006cugliandolo}.  This paper proved the DMFT equations using 
	a concentration technique and Girsanov formula, and leveraging in a crucial way the fact that 
	the energy function is a Gaussian process. 
	
	We notice in passing that all of the above approaches use in an important way the
	fact that the process studied is a non-degenerate diffusion, e.g. Langevin dynamics at 
	non-zero temperature. In contrast, we focus on the degenerate case of deterministic flow.
	We believe it is possible to apply our proof technique to non-zero temperature,
	by constructing the Brownian noise as a deterministic function of the noise vector $\bz$.
	We also note that the models we treat are analogous to the  SK model in that 
	the asymptotic characterization is given in terms of a stochastic process.
	
	Recently the mathematical study of DMFT asymptotics has attracted renewed interest
	to address the question of universality with respect to the distribution 
	of the underlying randomness (the matrix $\bX$ in our case). Dembo and Gheissari
	\cite{dembo2021diffusions} prove universality for a class of diffusions 
	parametrized by a random matrix. Finally, Dembo,  Lubetzky and  Zeitouni
	\cite{dembo2019universality} prove universality for a version of the SK model 
	Langevin dynamics in which the symmetric interaction matrix is replaced by an asymmetric matrix
	with independent entries. This additional independence allows to use a direct approach
	based on Girsanov formula.
	
	Our proof technique is based on a reduction to AMP, and hence allows us to
	leverage the wealth of results proved in that context.
	While most of these results
	\cite{bayati2011dynamics,javanmard2013state,berthier2020state}  
	were proven for Gaussian randomness, using a technique first introduced in 
	\cite{bolthausen2014iterative}, universality results were proven in 
	\cite{bayati2015universality,chen2021universality}. In particular, we 
	exploit the result of Chen and Lam \cite{chen2021universality} and deduce 
	universality for a large class of flows.
	
	Finally, we believe the same technique should be applicable to prove universality in other
	cases as well, e.g. for Langevin dynamics in the SK model which is not covered by our main theorem.
	
\subsection{Some applications of the DMFT equations}
\label{sec:OtherConsequences}

One important feature of the DMFT equations \eqref{eq:planted-se},
\eqref{eq:planted-derivs} is that they can be solved numerically.
As such these (or the analogous equations for related models) have been used
to derive predictions in a number of high-dimensional statistics problems.
Prior to our work, \cite{sarao2019afraid,sarao2020marvels,sarao2020complex} used (heuristically derived)  DMFT equations 
to derive weak recovery thresholds for gradient flow in tensor principal component analysis
and phase retrieval.
\cite{mignacco2022effective} used (heuristically derived) DMFT equations to study
learning in Gaussian mixture models,  in particular showing how overfitting depends on
initialization and early  stopping. 

After our work appeared as a preprint, our results were used in \cite{dandi2024benefits,arnaboldi2024repetita} 
to study the effect of reusing data in learning multi-index models using two-layer neural nets.
These authors showed that SGD and GD can learn from substantially smaller sample sizes than 
online SGD.

The authors of \cite{bellec2024uncertainty,tan2024estimating} built on our approach to 
quantify uncertainty along the trajectory of gradient descent in high-dimensional statistics
estimators.

Generalizing our work, \cite{fan2025dynamical_a,fan2025dynamical_b} studied Langevin sampling
for Bayesian linear regression. This was used as a building block in an empirical Bayes approach. The 
DMFT equations were crucial to prove high-dimensional consistency.

Finally, \cite{montanari2025dynamical} used DMFT to study the dynamics of learning in two-layer networks
when the network width (number of hidden neurons) grows. The authors showed that
features learning decouples from overfitting for large networks. 
\rev{Multi-index models were also studied in \cite{troiani2025fundamental,montanari2026phase},
which focused on feature learning beyond $t=O(1)$  time horizons: the results proven here were a useful tool in that context as well.}

\paragraph{Notational conventions.} We use boldface symbols for matrices or vectors whose dimensions 
diverge, e.g. $\btheta^t$, $\bX$ and so on. 
Also, we generally use upper case letters for matrices and lower case
letters for vectors, with the exception of $n\times k$ or $d\times k$ matrices such as $\btheta^t$.
We use $\normtwo{u}$ to denote the $\ell_2$ norm of a vector $u$. We also use 
$\norm{M}$ and $\norm{M}_F$ to denote the operator norm and Frobenius norm of a matrix 
$M$. \rev{
	For two vectors $u,v$ of the same dimension, we write $u\geq v$ or $u \leq v$ to
	represent entrywise inequality. For random variables $\xi$ and $\xi_1, \xi_2, \cdots, \xi_n, \cdots$ 
	defined on the same probability space, we denote convergence almost surely, in probability and 
	weakly by $\xi_n \stackrel{a.s.}\to \xi$, $\xi_n \stackrel{p}\to \xi$ and
	$\xi_n \Rightarrow \xi$, respectively. We will also use convergence in Wasserstein-$2$ 
	distance, denoted by $\mu_n \stackrel{\mathrm{W}_2}\to \mu$. In particular, 
	for two distributions $\mu, \nu$ on $\mathbb{R}^k$, 
	the Wasserstein-$2$ distance between them is defined by
	\begin{align*}
		\wdst{\mu}{\nu} := \inf_{\gamma \in \Gamma(\mu, \nu)} \sqrt{\int \normtwo{\xi - \eta}^2 \de \gamma(\xi, \eta)} \, ,
	\end{align*}
	where $\Gamma(\mu, \nu)$ denotes the collection of all couplings of $\mu$ and $\nu$.
}

	\section{Main results}
	\label{sec:Main}
	
	\newcommand{\bXt}{\bX^{\sT}}
	\newcommand{\opt}{^\star}
	\newcommand{\iidsim}{\stackrel{\mathrm{i.i.d.}}{\sim}}
	
	\subsection{Setting}

	\rev{We will next formally define the general setting of
	our work, and illustrate it with the running example of 
	Eqs.~\eqref{eq:FirstFlow} and \eqref{eq:FirstLoss}. 
	In the next subsection we will introduce the asymptotic
	characterization, and subsequently state our convergence results.}

\rev{As a motivation for our general setting, we reconsider the example of 
Eq.~\eqref{eq:FirstFlow}.
	Taking the derivative in Eq.~\eqref{eq:FirstLoss}, substituting back into Eq.~\eqref{eq:FirstFlow} and adopting the vector notation,
	gradient flow reads}
	\begin{align}
		\frac{\de\btheta^t}{\de t} = -\frac{d}{n}\bX^{\sT} \Loss'(\bX\btheta^t;\by)\, ,
		\label{eq:FirstFOM}
	\end{align}
	\rev{where $\bX\in\reals^{n\times d}$ is the matrix whose $i$-th row is given by vector $\bx_i$,
	$\by = (y_1,\dots,y_n)^{\sT}$, $\btheta=[\btheta^t_1,\dots,\btheta^t_k]\in\reals^{d\times k}$,
	and hence $\bX\btheta^t = [\bX\btheta^t_1,\dots,\bX\btheta^t_k]\in\reals^{n\times k}$. 
	Further $\Loss'(\,\cdot\,;y):\reals^k\to\reals^k$
is the derivative of $\Loss$ with respect to its first argument.
	We adopt the convention of applying $\Loss'$ to matrices $\bu\in\reals^{n\times k}$
row-wise, i.e. $\Loss'(\bu;\by) \in\reals^{n\times k}$ is the matrix whose $i$-th row is given by 
 $\Loss'(u_i;y_i)$ (where $u_i$ is the $i$-th row of $\bu$).
	}

	\paragraph{The general flow.} We can now state our general setting.
	Let $\ell:\reals^{k}\times \reals\times\reals_{\ge 0} \to \reals^k$,
	$(r,z,t)\mapsto  \ell_t(r;z)$ be a Lipschitz function,
	and $\Lambda:\reals_{\ge 0}\to \reals^{k\times k}$, $t \mapsto \Lambda^t$ be a bounded matrix-valued 
	function.
	
	Fixing a constant $\delta \in (0,\infty)$, we then consider the general flow over $\reals^{d\times k}$, defined via the 
	following ordinary differential equation, denoted by $\mathfrak{F}(\btheta^0, \bz, \Lambda, \ell)$,
	\begin{align}
\frac{\de \btheta^t}{\de t} = -\btheta^t\Lambda^{t,\sT} -\frac{1}{\delta}
\bX^{\sT}\bell_t(\bX\btheta^t;\bz)\, , \label{eq:GeneralFlow}
\end{align}
with the initial condition $\btheta^0\in \reals^{d\times k}$. We point out that $\bz \in \reals^n$ should be understood as a noise vector independent of the data matrix $\bX$--differing from the previous notation $\by$ which is a general response vector--as is clarified in the following when we specify distributional assumptions.
Here we follow the convention of applying functions to matrices row-wise.
In particular $\bell_t(\bX\btheta^t;\bz)\in\reals^{n\times k}$ is the matrix 
with rows
\begin{align*}
\bell_t(\bX\btheta^t;\bz) =
\left[\begin{matrix}
	\ell_t(\bx_1^{\sT}\btheta^t;z_1)\\
	\ell_t(\bx_2^{\sT}\btheta^t;z_2)\\
	\vdots\\
	\ell_t(\bx_n^{\sT}\btheta^t;z_n)\\
\end{matrix}\right]\, ,
\end{align*}
where we recall that $\bx_i$ is the $i$-th row of $\bX$. 
Notice that this setting generalizes the gradient flow equation \eqref{eq:FirstFlow}
in a few ways, apart from the fact that $\btheta^t$ can now have $k$
columns. First, the function $\ell_t$ can now depend on  the additional 
argument $z$ as well as on the time $t$; second, $\ell_t(\,\cdot\,; z)$ is not 
necessarily the gradient of a cost function; third, the additional term 
$-\btheta^t\Lambda^{t,\sT}$ allows us to include constraints on the norm
of $\btheta^t$, or regularization terms. In Section \ref{sec:Examples} we will illustrate how the additional flexibility introduced here allows
to capture applications in statistics and machine learning. 

\begin{example}[Fitting a two-layer neural network]
\rev{Consider the gradient flow of Eq.~\eqref{eq:FirstFlow}, which
is also equivalent to Eq.~\eqref{eq:FirstFOM}. 
It is clear that this is a special case of Eq.~\eqref{eq:GeneralFlow},
with the choices}
\begin{align}
\Lambda^t  = 0\, ,\;\;\; \bell_t(u;z) =\Loss'(u; z)\, .
\end{align}
\rev{where we recall that $\Loss'(\,\cdot\,;z):\reals^k\to\reals^k$ is 
the derivative of $\Loss$ with respect to its first argument. Explicitly 
using Eq.~\eqref{eq:FirstLoss}, we have
$\ell_{t}(u;z) = (\ell_{t,1}(u;z),\dots,\ell_{t,k}(u;z))^{\sT}$, where}
\begin{align}
\ell_{t,a}(u;z) = c_a\Big(z-\sum_{b=1}^k c_b\varphi(u_b)\Big)\cdot
\varphi'(u_a)\, .\label{eq:ComputationDerivative}
\end{align}
\end{example}

\begin{remark}
\rev{While in the previous example we assumed $\ell_t$ is independent of $t$ and given by a gradient,
our results accommodate situations in which we change the loss function 
$\Loss$ during the optimization process, as is sometimes done in machine learning. For instance,
the square of Eq.~\eqref{eq:FirstLoss} can be changed with another function with $t$.}
\end{remark}

\subsection{Dynamical Mean Field Theory}

\rev{Define $\br^t = \bX \btheta^t$. The main result of
our paper is that there exist low-dimensional stochastic processes 
$\theta^t$ and $r^t$ in $C([0,T],\reals^k)$,
such that, as $n,d\to\infty$, with $n/d\to \delta$,
\begin{align*}
	\frac{1}{d} \sum_{i=1}^d \theta_i^{[0,T]} \rightsquigarrow \theta^{[0, T]} \, , \qquad \frac{1}{n} \sum_{i=1}^n r_i^{[0,T]} \rightsquigarrow r^{[0,T]} \, ,
\end{align*}
where  $\theta^{[0, T]}$  denotes the function $[0,T]\ni t\mapsto \theta^t$ and 
similarly for $r^{[0, T]}$ and 
$\rightsquigarrow$ denotes convergence in distribution in
the space  $C([0,T],\reals^k)$, (see Theorem~\ref{thm:StateEvolution}).}

\rev{We will refer to the characterization of $\theta^t$ and $r^t$ as to
the \emph{Dynamical Mean Field Theory} (DMFT), and to the 
stochastic processes 
$\theta^t$ and $r^t$ themselves as to the \emph{DMFT processes}.
In words, for large $n,d$, a typical row of $\theta_i^{[0,T]}$ evolves
according to the DMFT process $\theta^t$. (Notationally we distinguish  the 
asymptotic process $\theta^t$ from the rows $\theta_i^t$ of $\btheta^t$, by the subscript.)}

Given random variables $(\theta^0, z) \in \reals^{k} \times \reals$, a matrix valued function 
$\Lambda:\reals_{\ge 0}\to \reals^{k\times k}$ and 
a Lipschitz function $\ell:\reals^{k}\times \reals\times\reals_{\ge 0} \to \reals^k$, 
\rev{the DMFT processes are defined as the unique (see 
Theorem~\ref{thm:UniqueExist} below)
solutions of the following system of equations (DMFT equations).}

The DMFT equations involve the unknown
deterministic functions 
$\Gamma: \realsp \to \reals^{k \times k}$, $R_\theta, R_\ell, C_\theta, C_\ell: 
\realsp \times \realsp \to \reals^{k \times k}$ and stochastic processes
$\theta, r: \realsp \to \reals^k$, 
which we will denote by $\mathfrak{S}:=\mathfrak{S}(\theta^0, z, \delta, \Lambda, \ell)$:\label{eq:int-diff-1}
\begin{subequations} \label{eq:DMFT}
\begin{align}
	\frac{\de}{\de t} \theta^t & = -(\Lambda^t + \Gamma^t) \theta^t - \int_0^t R_{\ell}(t,s) \theta^s \de s + u^t \, , & & u \sim \mathsf{GP}(0, \Cl / \delta) \, , \label{eq:def-theta} \\
	r^t & = - \frac{1}{\delta} \int_0^t R_{\theta}(t,s) \ell_s(r^s; z) \de s + w^t \, , & & w \sim \mathsf{GP}(0, \Ct) \, , \label{eq:def-r}  \\ 
	R_\theta(t,s) & = \Ep \brk{\frac{\partial \theta^t}{\partial u^s}} \, , & & 0 \leq s \leq t < \infty\, , \label{eq:def-R-t}  \\
	R_{\ell}(t,s) & = \Ep \brk{\frac{\partial \ell_t(r^t;z)}{\partial w^s}} \, , & &0 \leq s < t <\infty\, , \label{eq:def-R-l} \\
	\Gamma^t &= \Ep \brk{\nabla_r \ell_t(r^t;z)}\, , \label{eq:def-Gamma}  \\
	C_\theta(t,s) & = \Ep \brk{\theta^t {\theta^s}^{\sT}}\, , & & 0 \leq s , t < \infty\, ,  \label{eq:def-C-t} \\
	C_\ell(t,s) & =   \Ep \brk{\ell_t(r^t;z) \ell_s(r^s;z)^{\sT}} \, ,  & & 0 \leq s , t < \infty\, . \label{eq:def-C-l} 
\end{align}
\end{subequations}
Here the notation $u \sim \mathsf{GP}(0, \Cl / \delta)$,  $w \sim \mathsf{GP}(0, \Ct)$
means that $u, w$ are independent centered Gaussian processes with covariance kernels 
$C_{\ell} / \delta$ and $C_\theta$.
We set $R_\theta(t,s) = R_{\ell}(t,s) = 0$ for $t<s$. 

The quantities $\partial\theta^t / \partial u^s$ in Eq.~\eqref{eq:def-R-t} and 
$\partial \ell_t(r^t;z) /\partial w^s$ in Eq.~\eqref{eq:def-R-l} are stochastic processes defined via
the following equations:
\begin{subequations}\label{eq:int-diff-2}
\begin{align}
	\frac{\de}{\de t} \frac{\partial \theta^t}{\partial u^s} & = -(\Lambda^t + \Gamma^t) \frac{\partial \theta^t}{\partial u^s} - \int_s^t R_{\ell}(t,s') \frac{\partial \theta^{s'}}{\partial u^s} \de s'\, , & & 0 \leq s \leq t < \infty\, , \label{eq:def-derivative-t} \\
	\frac{\partial \ell_t(r^t;z)}{\partial w^s} & = \nabla_r \ell_t(r^t;z) \cdot \prn{- \frac{1}{\delta} \int_s^t R_{\theta}(t,s') \frac{\partial \ell_{s'}(r^{s'};z)}{\partial w^s}  \de s' - \frac 1 \delta R_\theta(t,s) \nabla_r \ell_s(r^s;z) } \, , & & 0 \leq s < t < \infty\, , \label{eq:def-derivative-l}
\end{align}
\end{subequations} 
with boundary condition $\partial\theta^t / \partial u^t = I$.
Note that the first one is a deterministic integral-differential equation 
and therefore $\partial\theta^t / \partial u^s$ is a deterministic function. 
On the other hand, $\partial \ell_t(r^t;z) /\partial w^s$ is in general a stochastic process because 
$r^t$ is. 

The rationale for the notations $\partial\theta^t / \partial u^s$ and
$\partial \ell_t(r^t;z) /\partial w^s$ is that Eqs.~\eqref{eq:int-diff-2} can be
heuristically derived as equations for such functional derivatives. However,
we will not need to prove that these are the actual functional derivatives. We set $\partial\theta^t / \partial u^s = 0$ if $s > t$ and $\partial \ell_t(r^t;z) /\partial w^s = 0$
if $s \geq t$.

\begin{remark}
Since $\partial \theta^t / \partial u^s$ is a deterministic function,
the expectation operator in Eq.~\eqref{eq:def-R-t} can be removed. However,
this is not immediately clear from Eq.~\eqref{eq:def-R-t} alone and also to keep uniformity in our definitions, we retain the expectation operator in Eq.~\eqref{eq:def-R-t}.
\end{remark}

\begin{remark}
The DMFT equations \eqref{eq:DMFT} generalize characterizations obtained heuristically
by physics methods, the most closely related earlier work being \cite{agoritsas2018out}
which studies the perceptron model.
Section \ref{sec:Interpretation} we will briefly discuss the interpretation of the DMFT process defined 
by Eqs.~\eqref{eq:DMFT}.
Additional intuition is provided by the cavity derivation in \cite{agoritsas2018out}.
\end{remark}

\begin{example}[Fitting a single neuron]
\rev{To illustrate the DMFT equations, consider the special case 
of $k=1$, $c_1=1$ of the flow defined in Eq.~\eqref{eq:FirstFlow}.
Using Eq.~\eqref{eq:ComputationDerivative}, we get that
Eqs.~\eqref{eq:def-theta}, \eqref{eq:def-r} reduce to}
\begin{align}
	\frac{\de}{\de t} \theta^t & = - \Gamma^t \theta^t - 
	\int_0^t R_{\ell}(t,s) \theta^s \de s + u^t \, , & & u \sim \mathsf{GP}(0, \Cl / \delta) \, , \label{eq:def-theta-example} \\
	r^t & = - \frac{1}{\delta} \int_0^t R_{\theta}(t,s) \, (z-\varphi(r^s)) \varphi'(r^s)\,
	\de s + w^t \, , & & w \sim \mathsf{GP}(0, \Ct) \, . \label{eq:def-r-example}  
\end{align}
\rev{It is important to emphasize that here (as in the general Eqs.~\eqref{eq:def-theta}, \eqref{eq:def-r})
$\Gamma^t$ $R_{\ell}(t,s)$ and $R_{\theta}(t,s)$, as well as the kernels $C_\theta(t,s)$ and $C_\ell(t,s)$
 are deterministic functions of $t$ and $s$. Once these are given, the 
 stochastic processes $\theta^t$ and $r^t$ are defined by solving the above equations.}
\end{example}

\subsection{Statement of main results}

\begin{assumption}\label{ass:Normal}
\begin{itemize}
	\item[(a)] The entries $\bX= (X_{ij})_{i\le n,j\le d}$ are given by $X_{ij}= \oX_{ij}/\sqrt{d}$,
	where $(\oX_{ij})_{i,j\ge 1}$ is a collection of i.i.d. random variables with distribution
	independent of $n,d$, such that $\E\{\oX_{ij}\}=0$, $\E\{\oX^2_{ij}\}=1$, 
	and $\|\oX_{ij}\|_{\psi_2}\le C$ for a constant $C$ (here $\|\, \cdot\,\|_{\psi_2}$
	denotes  the sub-Gaussian norm).
	\item[(b)] The function $\ell_t(r; z)$ is Lipschitz continuous 
	with Lipschitz
	continuous Jacobian in $t$ and $r$. Further, these Lipschitz constants are bounded uniformly over $t\in [0,T]$ and $z \in \reals$.
	Namely there exists some $\cstloss \in \realsp$ such that, for all $z$, all
	$r_1,r_2\in\reals^k$ and $t_1, t_2 \in [0, T]$, we have
	\begin{subequations}
		\begin{align}
			\big\|\ell_{t_1}(r_1;z)-\ell_{t_2}(r_2;z)\big\|_2&\le \cstloss(\|r_1-r_2\|_2 + |t_1 - t_2|)\, ,\\
			\big\|D\ell_{t_1}(r_1;z)-D\ell_{t_2}(r_2;z)\big\|&\le \cstloss(\|r_1-r_2\|_2 + |t_1 - t_2|)\, ,
		\end{align}
	\end{subequations}
	where
	\begin{align*}
		D\ell_t(r;z) = \begin{bmatrix} \nabla_r \ell_t(r;z) & \frac{\de}{\de t} \ell_t(r;z) \end{bmatrix}.
	\end{align*}
	\item[(c)] In addition, $\Lambda^t$ is Lipschitz continuous and symmetric. There exists some $\cstlbd \in \realsp$ such that $\|\Lambda^t\| \leq \cstlbd$ for all $t \in [0, T]$, and for all $t_1, t_2 \in [0,T]$
	\begin{align}
		\norm{\Lambda^{t_1} - \Lambda^{t_2}} \leq \cstlbd |t_1 - t_2| \, .
	\end{align}
\end{itemize}

\end{assumption}

Our first result establishes existence and uniqueness of solutions of the DMFT system
$\mathfrak{S}(\theta^0, z, \delta, \Lambda, \ell)$. 
\begin{theorem} \label{thm:UniqueExist}
Let Assumption~\ref{ass:Normal} hold, and suppose the random variables $(\theta^0, z) \in \reals^k \times \reals$ satisfy
\begin{align}
	\cstthetaz := \max \left\{\Ep \brk{\normtwo{\theta^0}^2}, \sup_{t \in \realsp} \Ep \brk{\normtwo{\ell_t(0; z)}^2} \right\} < \infty.
\end{align} 
Given any functions $\Lambda: \realsp \to \reals^{k \times k}$ and $\ell: \reals^k \times \reals \times \reals_{\ge 0} \to \reals^k$, 
there exists a sextet $(\theta, r, \Rt, \Rl, \Ct, \Cl)$ solving the DMFT system $\mathfrak{S} := \mathfrak{S}(\theta^0, z, \delta, \Lambda, \ell)$ 
defined through Eqs.~\eqref{eq:def-theta} to \eqref{eq:def-C-l} and Eqs.~\eqref{eq:def-derivative-t} to \eqref{eq:def-derivative-l}. 
The solution is also unique among all sextets whose components $(\Ct, \Rt)$ are bounded in all compact sets in $\realsp^2$. There further exists nondecreasing functions $\pRt, \pRl, \pCt, \pCl: \realsp \to \realsp$ satisfying
\begin{subequations}
	\begin{align}
		&\norm{\Rt(t,s)} \leq \pRt(t-s)\, , & & \norm{\Rl(t,s)} \leq \pRl(t-s)\, , & & \forall 0 \leq s \leq t < \infty\, , \\
		&\norm{\Ct(t,t)} \leq \pCt(t)\, , & &\norm{\Cl(t,t)} \leq \pCl(t)\, , & & \forall 0 \leq t < \infty\, ,\\
		&\norm{\Gamma^t} \leq \cstloss \, , & & & & \forall 0 \leq t < \infty\,.
	\end{align}
\end{subequations}
Further, the process $(\theta^t)_{t\in [0,T]}$ has continuous sample paths.
Finally, the functions $\pRt, \pRl, \pCt, \pCl$ are such that there exists 
$\lambda:= \lambda(\theta^0, z, \delta,\cstthetaz, \cstloss, \cstlbd) > 0$ such that
\begin{align}
	\lim_{t \to \infty} e^{-\lambda t} \max \left\{\pRt(t), \pRl(t), \pCt(t), \pCl(t) \right\} = 0. \label{eq:Phi-exponential-growth}
\end{align}
\end{theorem}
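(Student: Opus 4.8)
The plan is to solve the DMFT system by a fixed-point argument in a carefully weighted function space, where the weight is an exponential $e^{-\lambda t}$ (or $e^{-\lambda(s+t)}$ for kernels) chosen large enough to make the relevant integral operator a contraction on every compact time interval, and in fact on $\realsp$ with the weighted norm. Concretely, I would fix a horizon $T$ first and set up an iteration map $\Psi$ that takes a tuple $(\Rt, \Rl, \Ct, \Cl, \Gamma)$ of kernels/functions, (i) solves the linear integral-differential equation \eqref{eq:def-theta} for the Gaussian process $\theta$ (this is a \emph{linear} SDE-free equation driven by the Gaussian process $u$ whose covariance is determined by the input $\Cl/\delta$, so existence/uniqueness of $\theta$ and of the deterministic response $\partial\theta^t/\partial u^s$ from \eqref{eq:def-derivative-t} is classical Volterra theory), (ii) solves \eqref{eq:def-r} for $r$ (again a linear-in-$r$ Volterra equation with a Lipschitz forcing through $\ell$, driven by $w$ with covariance given by the input $\Ct$) together with \eqref{eq:def-derivative-l} for $\partial\ell_t(r^t;z)/\partial w^s$, and then (iii) recomputes the output kernels via the defining expectations \eqref{eq:def-R-t}--\eqref{eq:def-C-l}. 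A solution of the DMFT system is exactly a fixed point of $\Psi$.

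The core estimates are the Lipschitz bounds on $\Psi$. Using Assumption~\ref{ass:Normal} (Lipschitz $\ell$ with Lipschitz Jacobian, bounded $\Lambda$), Gaussian interpolation / Gaussian Poincar\'e-type inequalities, and the fact that the maps $C\mapsto$ (law of the driving Gaussian process) are Lipschitz in the covariance, one shows that the discrepancy between the outputs of $\Psi$ on two inputs is controlled by a \emph{time-convolution} of the input discrepancy against a fixed nonnegative kernel depending only on $\cstloss,\cstlbd,\delta$. The standard device then is: on the weighted norm $\norm{f}_\lambda := \sup_{t\le T} e^{-\lambda t}\norm{f(t)}$ (and the analogous $e^{-\lambda(s+t)}$-weighting for two-point kernels), a convolution operator of this type has operator norm $O(1/\lambda)$, so for $\lambda$ large enough $\Psi$ is a contraction; Banach fixed point gives existence and uniqueness (the uniqueness being among $(\Ct,\Rt)$ bounded on compacts, since boundedness on compacts is what lets one pass to the weighted norm on each $[0,T]$). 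A Gr\"onwall-type argument run in the weighted norm then yields the a priori growth bounds: the self-consistent inequalities satisfied by $\norm{\Rt},\norm{\Rl},\norm{\Ct(t,t)},\norm{\Cl(t,t)}$ close up to give nondecreasing majorants $\pRt,\pRl,\pCt,\pCl$, and the convolution structure forces these majorants to grow at most exponentially, which is exactly \eqref{eq:Phi-exponential-growth}; the bound $\norm{\Gamma^t}\le\cstloss$ is immediate from \eqref{eq:def-Gamma} and the Lipschitz bound on $\ell$. Continuity of sample paths of $\theta$ follows from \eqref{eq:def-theta}: the drift terms are continuous in $t$ and the Gaussian process $u$ has a continuous modification since its covariance $\Cl/\delta$ is (H\"older-)continuous, which is inherited from the regularity of the fixed point.

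The main obstacle I expect is making the fixed-point space precise enough that all five objects live in it simultaneously and that $\Psi$ genuinely maps it to itself: the kernels $\Rt,\Rl,\Ct,\Cl$ are functions of two time variables with different support constraints ($t\ge s$ vs.\ $t>s$), $\Gamma$ is single-variable, and $\theta,r$ are stochastic processes, so one needs a product space with a compatible family of weighted norms, and one must check the nonlinear pieces — especially the dependence of the law of $r$ (and hence of $C_\ell$, $R_\ell$, $\Gamma$) on the input kernels through the \emph{nonlinear} Volterra equation \eqref{eq:def-r} and the product-form equation \eqref{eq:def-derivative-l} — are Lipschitz with the right convolution-type modulus. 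Handling \eqref{eq:def-derivative-l}, which involves the product $\nabla_r\ell_t(r^t;z)\cdot(\cdots)$ with $r$ itself varying, is the delicate step; here the Lipschitz-Jacobian hypothesis and a Gaussian moment bound on $\sup_{t\le T}\Ep\norm{r^t}^2$ (obtained first, from \eqref{eq:def-r} and Gr\"onwall) are what make the product bounded and Lipschitz. Once these convolution-type Lipschitz estimates are in hand, choosing $\lambda$ large is routine and the theorem follows.
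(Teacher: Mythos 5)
Your overall strategy coincides with the paper's: both solve $\mathfrak{S}$ as the fixed point of the map (kernels) $\mapsto$ (processes $\theta,r$ and their responses) $\mapsto$ (recomputed kernels), made into a contraction on $[0,T]$ by an exponential weight $e^{-\lambda t}$ with $\lambda$ large; the a priori majorants $\pRt,\pRl,\pCt,\pCl$ come from a self-consistent scalar comparison system, $\norm{\Gamma^t}\le\cstloss$ is immediate from the Lipschitz bound, and sample-path continuity follows from Kolmogorov's criterion applied to the regularity of the fixed-point covariance. The splitting into two half-maps, the role of the Lipschitz Jacobian in taming the product structure of \eqref{eq:def-derivative-l}, and the origin of \eqref{eq:Phi-exponential-growth} are all as in the paper.

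There is, however, one load-bearing claim in your sketch that is false as stated, and whose repair is the main technical device of the paper's proof: ``the maps $C\mapsto$ (law of the driving Gaussian process) are Lipschitz in the covariance.'' If the space of covariance kernels carries the sup-norm, this map into the $L^2$-coupling distance on process laws is only H\"older-$1/2$: already in one dimension $\wdst{\normal(0,\sigma_1^2)}{\normal(0,\sigma_2^2)}=|\sigma_1-\sigma_2|$, which is not linearly controlled by $|\sigma_1^2-\sigma_2^2|$ near $\sigma_1=\sigma_2=0$. A H\"older-$1/2$ modulus destroys the contraction no matter how large $\lambda$ is, since $\epsilon\mapsto C\sqrt{\epsilon}$ is not a contraction near $0$. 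The paper circumvents this by \emph{defining} the distance between two covariance kernels $C^1,C^2$ to be the optimal-coupling distance $\lbddst{C^1}{C^2}$ between the associated centered Gaussian processes (with the $e^{-\lambda t}$ weight built in); in that metric every stage of the composite map is genuinely Lipschitz and the contraction closes. The price is an extra argument at the end: one must check that this coupling metric dominates the sup-norm on kernels (the paper does this using $\norm{\Ct(t,t)}\le\pCt(T)$) and that the output kernels are uniformly Lipschitz continuous, so that the Banach iteration converges in a complete subspace and uniqueness among kernels bounded on compacts follows. Your plan needs this modification (or an equivalent square-root reparametrization of the covariances) to go through; the rest of it matches the paper's argument.
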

The proof and the definitions of $\pRt, \pRl, \pCt, \pCl$ are presented in Section~\ref{sec:proof:UniqueExist},
with most technical details deferred to the appendices.

We next prove that the original flow converges ---in a suitable sense--- to the unique solution
of the DMFT system in the proportional asymptotics $n,d\to\infty$, with $n/d\to \delta$.
\begin{theorem} \label{thm:StateEvolution}
Under Assumption~\ref{ass:Normal},
further assume that $n,d\to\infty$ with $n/d\to\delta\in (0,\infty)$.
Let $\bz,\btheta^0$ be independent of $\bX$, and assume that the empirical distributions
$\widehat{\mu}_{\theta^0} := d^{-1}\sum_{i=1}^d\delta_{\theta^0_i}$ and $\widehat{\mu}_{z} := n^{-1}\sum_{i=1}^n\delta_{z_i}$
converge 
weakly to $\mu_{\theta^0}$ and $\mu_{z}$, 
$\Ep_{\widehat{\mu}_{\theta^0}} [\|\theta^0\|^2] \to \Ep_{\mu_{\theta^0}} [\|\theta^0\|^2] < \infty$ and 
$\Ep_{\widehat{\mu}_{z}} [\|z\|^2] \to \Ep_{\mu_{z}} [\|z\|^2] < \infty$. Let $\rev{\theta_i^{[0,T]}}$ be 
the unique stochastic process that solves $\mathfrak{S}$ in Theorem~\ref{thm:UniqueExist}.
Finally, define $\br^t:=\bX\btheta^t\in \reals^{n\times k}$, $t\ge 0$.

Then, for any distance $d_{\sW}$ that metrizes weak convergence in  $C([0,T],\reals^k)$
(for instance $d_{\sW}=d_{\sBL}$ the bounded Lipschitz distance), 
\rev{\begin{align}
	d_{\sBL}(\mu,\nu):= \sup\big\{\int f \de\mu -\int f \de\nu:\; \|f\|_{\infty}\le 1,\;  
	\|f\|_{\sLip}\le 1 \big\}\, .
\end{align}}
we have
\begin{align}
	&\plim_{n,d\to\infty}d_{\sW}\Big(\frac{1}{d}\sum_{i=1}^d \delta_{\rev{\theta_i^{[0,T]}}},{\rm P}_{\rev{\theta^{[0,T]}}}\Big)=0\, ,\label{eq:LimTheta}\\
	&\plim_{n,d\to\infty}d_{\sW}\Big(\frac{1}{n}\sum_{i=1}^n \delta_{z_i,\rev{r_i^{[0,T]}}},{\rm P}_{z,\rev{r^{[0,T]}}}\Big)=0\, .\label{eq:LimR}
\end{align}
Here $\plim_{n,d\to\infty}$ denotes convergence in probability, 
${\rm P}_{\rev{\theta^{[0,T]}}}$ denotes the law of $\rev{\theta^{[0,T]}}:= (\theta^t)_{0\le t\le T}$,
and ${\rm P}_{z,\rev{r^{[0,T]}}}$ denotes the joint law of $z$ and 
$\rev{r^{[0,T]}}:= (r^t)_{0\le t\le T}$.
\end{theorem}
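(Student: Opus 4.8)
The plan is to prove Theorem~\ref{thm:StateEvolution} by the three-step strategy described in the introduction: time discretization, reduction to an AMP iteration, and an appeal to the state evolution theorem for AMP. First I would fix a step size $\eta>0$ and $N = T/\eta$ steps, and introduce the forward-Euler (or a suitable one-step) discretization of the flow $\mathfrak{F}(\btheta^0,\bz,\Lambda,\ell)$, producing iterates $\btheta^{(k)}\in\reals^{d\times k}$ and the auxiliary vectors $\br^{(k)} = \bX\btheta^{(k)}$. The key structural observation is that this discrete recursion — after a deterministic, triangular change of variables that ``subtracts off'' the Onsager correction term — is exactly an AMP iteration in the sense for which rigorous state evolution results are available (e.g.\ \cite{bayati2011dynamics,javanmard2013state,berthier2020state} for Gaussian $\bX$ and \cite{chen2021universality} for the universality extension needed under Assumption~\ref{ass:Normal}). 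Concretely, the nonlinearities applied to $\bX\btheta^{(k)}$ and to $\bX^{\sT}(\cdots)$ are Lipschitz with Lipschitz Jacobians by Assumption~\ref{ass:Normal}, so the AMP hypotheses are met; its state evolution is a finite recursion for $k\times k$ covariance and response matrices indexed by pairs of iteration times. I would then verify that this discrete state evolution recursion is precisely the Euler discretization (with the same step $\eta$) of the integral-differential DMFT system $\mathfrak{S}$ in Eqs.~\eqref{eq:def-theta}--\eqref{eq:def-C-l} together with Eqs.~\eqref{eq:def-derivative-t}--\eqref{eq:def-derivative-l}; this is the bookkeeping heart of the argument, matching the Onsager term with the memory kernels $R_\theta, R_\ell$ and the drift $\Gamma^t$.

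Next I would pass to the limit in two stages. For fixed $\eta$, the AMP state evolution theorem gives that, as $n,d\to\infty$ with $n/d\to\delta$, the empirical distribution of the discrete trajectories $\{(\btheta_i^{(0)},\dots,\btheta_i^{(N)})\}_{i\le d}$ (resp.\ $\{(z_i,\br_i^{(0)},\dots,\br_i^{(N)})\}_{i\le n}$) converges, in the Wasserstein-$2$/bounded-Lipschitz sense, to the law of the Gaussian AMP process — which by the previous paragraph is the law of the Euler-discretized DMFT process. The second stage is to send $\eta\to 0$. Here I would invoke Theorem~\ref{thm:UniqueExist} and, more importantly, the contraction property in a weighted function space established in its proof: this gives quantitative control showing that (i) the true flow $\btheta^t$ is close (in the relevant empirical sense, uniformly over $i$ in an averaged way) to its Euler discretization $\btheta^{\flr{t/\eta}}$, with error $\to 0$ as $\eta\to 0$ uniformly in $n,d$; and (ii) the Euler-discretized DMFT solution converges to the continuous DMFT solution $\theta_0^T$ as $\eta\to 0$. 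Since $d_{\sW}$ metrizes weak convergence on $C([0,T],\reals^k)$, I would also need to upgrade from finite-dimensional distributions at the grid points $\{k\eta\}$ to the path space $C([0,T],\reals^k)$, using equicontinuity: the Lipschitz bounds on $\ell$ and $\Lambda$ and the a priori norm bounds (e.g.\ $\norm{\Ct(t,t)}\le\pCt(t)$) give uniform Hölder/Lipschitz control of trajectories, so linear interpolation of the grid values is tight and converges, and the limit has continuous sample paths as asserted in Theorem~\ref{thm:UniqueExist}.

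Assembling these pieces, a triangle-inequality argument across three comparisons — true flow vs.\ Euler flow (small for small $\eta$, uniformly in $n,d$), Euler flow vs.\ Euler DMFT (small for large $n,d$ at fixed $\eta$, by AMP state evolution), and Euler DMFT vs.\ continuous DMFT (small for small $\eta$) — yields $\plim_{n,d\to\infty} d_{\sW}(\cdot) = 0$ after sending first $n,d\to\infty$ and then $\eta\to 0$. Care is needed because the $\eta\to 0$ limit must be taken \emph{after} the $n,d\to\infty$ limit; this is handled by choosing, for each target accuracy $\varepsilon$, a step $\eta(\varepsilon)$ making the first and third comparisons below $\varepsilon/3$, and then using AMP state evolution at that fixed $\eta(\varepsilon)$.

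The main obstacle I anticipate is the uniform-in-dimension control of the discretization error for the true flow versus its Euler scheme — i.e.\ step (i) above. Standard ODE Euler error bounds are in terms of $\|\bX^{\sT}\bell\|$-type quantities whose naive operator-norm bounds blow up or at least are not uniform, so one must instead propagate the error in the same weighted/empirical norm in which the DMFT contraction is proved, exploiting that $\bX$ has bounded operator norm with high probability (sub-Gaussian entries, proportional regime) and that the relevant nonlinearities are Lipschitz. A secondary technical point is ensuring the AMP universality result of \cite{chen2021universality} applies with the needed test-function class and with $k>1$ columns, and that its conclusion can be phrased as convergence of the full joint empirical distribution of iteration-time trajectories rather than just bivariate marginals; this may require iterating/stacking the AMP appropriately so that a single state evolution statement controls all grid times simultaneously.
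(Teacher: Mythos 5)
Your proposal follows essentially the same route as the paper's proof: Euler discretization of the flow with error control uniform in $n,d$ via the Bai--Yin bound on $\|\bX\|$ (Lemma~\ref{lem:flow-approximation}), reduction of the discretized flow to an AMP iteration whose state evolution is the discretized DMFT system $\mathfrak{S}^\eta$ (Lemma~\ref{lem:state-evolution-discrete-flow}, using \cite{chen2021universality}), convergence of $\mathfrak{S}^\eta$ to $\mathfrak{S}$ as $\eta\to 0$ via the contraction framework of Theorem~\ref{thm:UniqueExist} (Lemma~\ref{lem:integral-differential-approximation}), and a final triangle-inequality plus tightness argument to upgrade to weak convergence on $C([0,T],\reals^k)$. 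The approach and the anticipated technical issues (order of limits, path-space tightness, universality with $k>1$ and multi-time marginals) all match the paper's treatment.
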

The proof of this theorem is presented in Section~\ref{sec:proof:StateEvolution}.

\begin{remark} \label{remark:MainThm}
Concretely, the convergence in Theorem \ref{thm:StateEvolution} implies the following.
For any $L\in \naturals$, any times $0\le t_1<\cdots\le t_L$, and any bounded continuous functions
$\psi:(\reals^k)^L\to\reals$, mapping $(x_1,\dots,x_L)\mapsto \psi(x_1,\dots,x_L)$,
for $x_i\in\reals^k$, and $\tilde\psi:(\reals^k)^L\times\reals\to\reals$, we have:
\begin{align}
	\plim_{n,d\to\infty}\frac{1}{d}\sum_{i=1}^d \psi(\theta_i^{t_1},\dots,\theta_i^{t_L}) = 
	\E\big\{\psi(\theta^{t_1},\dots,\theta^{t_L}) \big\}\, ,\label{eq:TestF_Rmk}\\
	\plim_{n,d\to\infty}\frac{1}{n}\sum_{i=1}^n \tilde\psi(r_i^{t_1},\dots,r_i^{t_L},z) = 
	\E\big\{\tilde\psi(r^{t_1},\dots,r^{t_L},z) \big\}\, .
\end{align}
The expectation on the right-hand side is with respect to the processes $(\theta^t)_{t\ge 0}$,
$(r^t)_{t\ge 0}$
defined by the DMFT system.	

In the case the matrix $\bX$ has i.i.d. Gaussian entries, the same proof of 
Section~\ref{sec:proof:StateEvolution} implies a somewhat stronger statement by leveraging the 
results of \cite{javanmard2013state}. Namely, Eq.~\eqref{eq:TestF_Rmk} holds for any 
continuous functions with at most quadratic growth $|\psi(x)|\le C(1+\|x\|_2^2)$,
$|\tilde\psi(x)|\le C(1+\|x\|_2^2)$.
\end{remark}

\subsection{Interpretation of the DMFT process}
\label{sec:Interpretation}

Theorem \ref{thm:StateEvolution} establishes that
the stochastic process $(\theta^t: t\ge 0)$  captures the $n,d\to\infty$ asymptotics of
$(\theta_i^t: t\ge 0)$  ($i$-th row of $\btheta$),  and $(r^t: t\ge 0)$  captures the
asymptotics of
$(r_i^t= (\bX\btheta)_i^t: t\ge 0)$  ($i$-th row of $\br^t = \bX\btheta$).
In order to develop some intuition of this characterization,
it is instructive \rev{to}  consider a modification of
Eq.~\eqref{eq:GeneralFlow} whose state $\hbtheta^t$ evolves according to the expectation
of the right-hand side (for $X_{ij}\sim \normal(0,1/d)$ independent of $\hbtheta$, which is deterministic in this case):
\begin{align}
\frac{\de \hbtheta^t}{\de t} = -\hbtheta^t\Lambda^{t,\sT} -\frac{1}{\delta}
\E\big\{\bX^{\sT}\bell_t(\bX\hbtheta^t;\bz)\big\}\, .\label{eq:SimplifiedFlow}
\end{align}
Considering the $i$-th row of $\hbtheta^t$ 
(and letting $\bx_i$ denote the $i$-th row
of $\bX$) we obtain:
\begin{align}
\frac{\de \htheta_i^t}{\de t} &= -\Lambda^{t}\htheta_i^t -\frac{1}{\delta}
\sum_{j=1}^n\E\big\{X_{ji}\ell_t(\hbtheta^{t,\sT}\bx_j;\bz)^{\sT}\big\}\nonumber\\
& \stackrel{\mathrm{(i)}}{=} -\Lambda^{t}\htheta_i^t -\frac{1}{n}
\sum_{j=1}^n\E\big\{\nabla_r\ell_t(\hr^t_j ;\bz)^{\sT}\big\}\htheta^t_i\, ,\label{eq:PopFlow}
\;\;\;\;\;\;
\hr^t_j := \hbtheta^{t,\sT}\bx_j\, ,
\end{align}
where $\mathrm{(i)}$ we used Stein's lemma \rev{and $n = d \delta$}. Note that the $(\hr^t_j:j\le n)$ are i.i.d. and let
$\hr^t\ed \hr^t_1$. We obtain 
\begin{align}
\frac{\de \htheta_i^t}{\de t} &= -(\Lambda^{t}+\hat\Gamma^t)\htheta_i^t\, ,
\;\;\;\;\; \hat\Gamma^t :=\E\big\{\nabla_r\ell_t(\rev{\hr^t} ;\bz)^{\sT}\big\}\, .
\end{align}
This coincides with Eqs.~\eqref{eq:def-theta} and \eqref{eq:def-Gamma} 
in which the terms  $\int_0^t R_{\ell}(t,s) \theta^s \de s$ and $u^t$ have been dropped. 
Notice that these terms are heuristically of order $1/\delta$ and $1/\sqrt{\delta}$,
which can be understood since the difference between 
$\bX^{\sT}\bell_t(\bX\hbtheta^t;\bz)$ (appearing in Eq.~\eqref{eq:GeneralFlow}) and its expectation 
(cf. Eq.~\eqref{eq:SimplifiedFlow}) is of order $1/\sqrt{\delta}$.

From Eq.~\eqref{eq:PopFlow},
$(\hr^{t_1}_j, \hr^{t_2}_j, \dots , \hr_j^{t_\ell})$ are jointly Gaussian for any fixed 
$t_1,\dots \rev{,} t_{\ell}$ (because $\bx_j$ \rev{is} Gaussian independent of $\hbtheta^t$).
Their covariance is
\begin{align}
\E[\hr^{t_1}_j(\hr^{t_2}_j)^{\sT}] = \E[ \hbtheta^{t_1,\sT}\bx_j \bx_j^{\sT}\hbtheta^{t_2}]
=\frac{1}{d}\hbtheta^{t_1,\sT}\hbtheta^{t_2}\to C_{\theta}(t_1,t_2)\, .
\end{align}
This matches Eq.~\eqref{eq:def-r} provided we drop the term proportional to
$\int_0^t R_{\theta}(t,s) \ell_s(r^s; z) \de s$, which is heuristically of order $1/\delta$. 

To summarize, the DMFT equations for the simplified (non-random) flow
\eqref{eq:SimplifiedFlow} are easy to derive and corresponds to 
Eqs.~\eqref{eq:def-theta}, \eqref{eq:def-r}, and \eqref{eq:def-Gamma} in which terms of order 
$1/\sqrt{\delta}$ and $1/\delta$ have been dropped. 
We finally note that these terms arise because of two reasons. 

First, 
the drift  $-\frac{1}{\delta}\bX^{\sT}\bell_t(\br^t;\bz)$ fluctuates around expectation,
which leads to the additional term $u^t$ in Eq.~\eqref{eq:def-theta}.
\rev{Its covariance would be the same as that of the $i$-th row of $-\frac{1}{\delta}\bX^{\sT}\bell_t(\br^t;\bz)$ if the process $\br^t$ were independent of $\bX$.}

Second, both Eq.~\eqref{eq:def-theta} and Eq.~\eqref{eq:def-r} contain memory terms,
which results in the asymptotic process $(\theta^t: t\ge 0)$ and $(r^t: t\ge 0)$ 
being non-Markovian. The non-Markovian nature is expected. Indeed, the original process
$(\btheta^t:\; t\ge 0)$ is Markov (indeed deterministic), conditionally on $\bX$.
However the marginal distribution of $\btheta^t$ is non-Markov, and so is the  marginal
distribution of any low-dimensional projection of $\btheta^t$ (e.g. its $i$-th row $\theta_i^t$, 
whose asymptotics is captured by Eq.~\eqref{eq:def-theta}).

\section{Applications}
\label{sec:Examples}

In this section,
we apply Theorem \ref{thm:StateEvolution} to prove a DMFT characterization of gradient 
flows for generalized linear models and shallow neural networks with a 
constant number of hidden neurons. 

Next, we define a notion of stationary-point solutions of the DMFT system 
$\mathfrak{S}(\theta^0, z, \delta, \Lambda, \ell)$.
We show that these stationary points are characterized by a system of five 
nonlinear equations, and that they are in correspondence with stationary points of a 
certain infinite-dimensional variational principle. 
This variational principle also emerges in the study of global minimizers
of the risk via Gordon's comparison inequality.    
As an illustration, we discuss the case of logistic regression.

We introduce two classes of applications that are covered by our general results, 
\rev{Theorems} \ref{thm:UniqueExist} and \ref{thm:StateEvolution}: gradient flow with respect 
to these cost functions are special cases of the general theorem stated below. 
\begin{description}
\item[\textbf{Generalized linear models}.] 
The statistician observes $n$ iid pairs $(y_i,\bx_i)$ where $y_i = \varphi(\< \bx_i , \btheta^* \>,z_i)$ and $z_i$ is noise drawn independently of $\bx_i$.
The goal is to estimate or recover the planted signal $\btheta^*$.
Ridge regularized empirical risk minimization attempts to minimize the objective 
\begin{equation}
	\label{eq:glm-erm}
	\cL_n(\btheta)
	:= 
	\frac{d}{n} \sum_{i=1}^n \Ls_0(\< \bx_i,\btheta\>;y_i)
	+ 
	\frac{\lambda}{2} \| \btheta \|_2^2.
\end{equation} 
When $\varphi(r,z) = r+z$, we recover a linear model (in this case, we might take $\Ls_0(r;y)=(r-y)^2$).
When $\varphi(r,z) = \boldsymbol{1}\{ r + z \geq 0 \}$ and $z \sim \mathrm{Logistic}$
(and for $\Ls_0$ the logistic loss), we recover logistic regression.
When $\varphi(r,z) = |r|^2 + z$, we recover a model of noisy phase-retrieval.
Alternative choices of $\varphi$ recover several other popular 
regression and classification models.

\item[\textbf{Shallow neural networks with constant number of hidden units}.]
For a fixed-constant $k$,
a two-layer neural network with width $k$ and activation $\sigma: \reals \rightarrow \reals$ is the function class containing functions of the form
\begin{equation}
	f_{\btheta}(\bx)
	:= 
	\sum_{a=1}^k
	\alpha_a \sigma(\< \bx , \btheta^{(a)}\>),
\end{equation}
where $\btheta \in \reals^{d \times k}$ has columns $\{ \btheta^{(a)}\}_{a \in [k]}$.
Given training data $\{(y_i,\bx_i)\}_{i \in [n]}$, 
the statistician may fit a neural network by gradient descent on the objective 
\begin{equation}
	\cL_n(\btheta)
	:= 
	\frac{d}{n} \sum_{i=1}^n \Ls_0\left( \sum_{a = 1}^k \alpha_a \sigma(\< \bx_i,\btheta^{(a)}\>);y_i\right)
	+ 
	\frac{\lambda}{2} \sum_{a = 1}^k \| \btheta^{(a)} \|_2^2.
\end{equation} 
\rev{As} in a multi-index model, the response is assumed to depend on a low-dimensional
projection of the data, for instance
$y_i = \varphi\big((\btheta^{*})^{\sT}\bx_i;z_i\big)$.
\end{description}


\subsection{Flow with planted signal}

At first glance,
it may appear that Theorem \ref{thm:StateEvolution} does not apply in general
to gradient flow in the examples above because $\by$ is not independent of $\bX$, 
whereas the noise $\bz$ in Theorem \ref{thm:StateEvolution} is independent of $\bX$.
In fact, \rev{as we shall explain below},
gradient flow in these examples can be represented as a cross-section of a flow of
the form \eqref{eq:GeneralFlow} on a higher dimensional space, so that 
its DMFT is an instance of Theorem \ref{thm:StateEvolution}.

First note that the cost functions above can be written as
\begin{align} 
\cL_n(\btheta)
:= 
\frac{d}{n} \sum_{i=1}^n \Ls(\btheta^{\sT} \bx_i,(\btheta^*)^{\sT} \bx_i;z_i)
+ 
\frac{\lambda}{2} \| \btheta \|_F^2\, ,\label{eq:GeneralCost}
\end{align} 	
for a suitable function $\Ls:(\reals^k)^2\to \reals$.  For instance,
in the generalized linear model, we set $k=1$ and
$\Ls(r,w;z) = \Ls_0(r,\varphi(w,z))$.

It is convenient to consider a more general (non-gradient, time-dependent) flow
of the form
\begin{align}
\frac{\de \btheta^t}{\de t} = -\btheta^t\Lambda^t -\frac{1}{\delta}
\bX^{\sT}\bell_t(\bX\btheta^t,\bX \btheta^*;\bz)\,. \label{eq:GeneralPlantedFlow}
\end{align}
We recover gradient flow with respect to the general cost \eqref{eq:GeneralCost} 
by setting $\Lambda^t = \lambda I_k$ and $\bell_t(\bX \btheta^t,\bX\btheta^*;\bz)_i = 
\nabla_r\Ls(r,w;z) |_{(r,w,z) = (\btheta^{\sT} \bx_i,(\btheta^*)^{\sT} \bx_i, z_i)}$.
We can put Eq.~\eqref{eq:GeneralPlantedFlow} into the form of flow \eqref{eq:GeneralFlow} 
by concatenating $\btheta^*$ to the iterates $\btheta^t$ and considering the flow 
\begin{align}
\label{eq:full-flow}
\frac{\de (\btheta^t,\btheta^*)}{\de t} 
= 
-(\btheta^t ,\btheta^*)
\begin{pmatrix} 
	\Lambda^t & 0 \\
	0 & 0
\end{pmatrix} 
- \frac{1}{\delta}
\bX^{\sT}
\begin{pmatrix} 
	\vert & \vert \\
	\bell_t(\bX\btheta^t,\bX \btheta^*;\bz) & 0 \\ 
	\vert & \vert
\end{pmatrix}.
\end{align}
Indeed, the final $k$-columns on the right-hand side are 0, so that $\btheta^*$ does 
not change along the trajectory.
Theorem \ref{thm:StateEvolution} can be applied directly to this flow.
Doing so leads to certain simplifications which allow us to represent the 
asymptotic characterization as a $k$-dimensional rather than $2k$-dimensional process.
Here we present this characterization as a corollary to Theorem \ref{thm:StateEvolution}.

Also, notice that there is no loss of generality in assuming that 
$\btheta^t$ and $\btheta^*$ have the same number of columns. Indeed, we can always
accommodate cases in which the number of columns is different by adding some zero columns, 
and redefining $\ell_t$ accordingly. 

We require the following assumption on the flow defined in Eq.~\eqref{eq:GeneralPlantedFlow},
defined in terms of a function $\ell_t:(\reals^k)^2\times\reals\to\reals^k$.
\begin{assumption}\label{ass:planted-Normal}
\begin{itemize}
\item[(a)] The same conditions of Assumption \ref{ass:Normal} are required on the random matrix $\bX$.

\item[(b)] The function $(r,w^*, z)\mapsto \ell_t(r,w^*; z)$ is assumed to be Lipschitz continuous 
with Lipschitz
continuous Jacobian in $t$, $r$, and $w^*$. 
Further, these Lipschitz constants are bounded uniformly over $t\in [0,T]$ and $z \in \reals$.
Explicitly, there exists $\cstloss \in \realsp$ such that, for all $z$, all
$\rev{w_1^*, w_2^*}, r_1,r_2\in\reals^k$ and $t_1, t_2 \in [0, T]$, we have
\begin{subequations}
	\begin{align}
		\big\|\ell_{t_1}(r_1,w_1^*;z)-\ell_{t_2}(r_2,w_2^*;z)\big\|_2&\le \cstloss(\|r_1-r_2\|_2 + \| w_1^* - w_2^* \|_2 + |t_1 - t_2|)\, ,\\
		\big\|D\ell_{t_1}(r_1,w_1^*;z)-D\ell_{t_2}(r_2,w_2^*;z)\big\|&\le \cstloss(\|r_1-r_2\|_2 + \| w_1^* - w_2^* \|_2 + |t_1 - t_2|)\, ,
	\end{align}
\end{subequations}
where
\begin{align*}
	D\ell_t(r,w^*;z) = \begin{bmatrix} \nabla_r \ell_t(r;z) & \nabla_{w^*} \ell_t(r,w^*;z) & \frac{\de}{\de t} \ell_t(r;z) \end{bmatrix}.
\end{align*}
\item[(c)] In addition, $\Lambda^t$ is Lipschitz continuous and symmetric. Explicitly, there exists  $\cstlbd \in \realsp$ such that $\|\Lambda^t\| \leq \cstlbd$ for all $t \in [0, T]$, and for all $t_1, t_2 \in [0,T]$
\begin{align}
	\norm{\Lambda^{t_1} - \Lambda^{t_2}} \leq \cstlbd |t_1 - t_2| \, .
\end{align}
\end{itemize}
\end{assumption}
Given random variables $(\theta^0,\theta^*,z) \in (\reals^k)^2 \times \reals$,
we consider the following system of equations
$\mathfrak{S}:=\mathfrak{S}(\theta^0, \theta^*, z, \delta, \Lambda^t, \ell_t)$ for 
unknown deterministic functions $\Lambda^t: \realsp \to \reals^{k \times k}$, 
$R_\ell, C_\theta: (\realsp \cup \{ * \}) \times (\realsp \cup \{ * \}) \to \reals^{k \times k}$,
$R_\theta,C_\ell : \realsp  \times \realsp \to \reals^{k \times k}$, 
and stochastic processes $\theta: (\realsp\cup \{*\}) \to \reals^k$, 
$r: \realsp \rightarrow \reals^k$:
\begin{subequations}\label{eq:planted-se}
\begin{align}
	\frac{\de}{\de t} \theta^t & = -(\Lambda^t + \Gamma^t) \theta^t - \int_0^t R_{\ell}(t,s) \theta^s \de s -  R_\ell(t,*) \theta^* + u^t \, , & & u^t \sim \mathsf{GP}(0, \Cl / \delta) \, , \label{eq:def-theta-plt} \\
	r^t & = - \frac{1}{\delta} \int_0^t R_{\theta}(t,s) \ell_s(r^s,w^*;z) \de s + w^t \, , & & w^t \sim \mathsf{GP}(0, \Ct) \, , \label{eq:def-r-plt}  \\ 
	R_\theta(t,s) & = \Ep \brk{\frac{\partial \theta^t}{\partial u^s}} \, , & & 0 \leq s \leq t < \infty\, , \label{eq:def-R-t-plt}  \\
	R_{\ell}(t,s) & = \Ep \brk{\frac{\partial \ell_t(r^t,w^*;z)}{\partial w^s}} \, , & &0 \leq s < t <\infty\, , \label{eq:def-R-l-plt} \\
	R_{\ell}(t,*) & = \Ep \brk{\frac{\partial \ell_t(r^t,w^*;z)}{\partial w^*}}\, ,\\
	\Gamma^t &= \Ep \brk{\nabla_r \ell_t(r^t,w^*;z)}\, , \label{eq:def-Gamma-plt}  \\
	C_\theta(t,s) & = \Ep \brk{\theta^t {\theta^s}^\sT}\, , & & 0 \leq s \leq t < \infty\;\text{or}\;s=*\,,  \label{eq:def-C-t-plt} \\
	C_\ell(t,s) & =   \Ep \brk{\ell_t(r^t,w^*;z) \ell_s(r^s,w^*;z)^\sT} \, ,  & & 0 \leq s \leq t < \infty\, . \label{eq:def-C-l-plt} 
\end{align}
\end{subequations}
Here $u^t, w^t$ are independent centered Gaussian processes with covariance kernels 
$C_{\ell} / \delta$ and $C_\theta$, and $y = \varphi(w^*,z)$ \rev{(recall that the cost function implicitly depends on $y$ through $\Ls(r,w;z) = \Ls_0(r,\varphi(w,z))$)}, with 
$w^*\sim\normal(0,\E[\theta^*(\theta^*)^{\sT}])$. 
As before, we have
$C_{\theta}(s,t) = C_{\theta}(t,s)$,  $C_{\ell}(s,t) = C_{\ell}(t,s)$,
and $R_\theta(t,s)= R_\ell(t,s)=0$ for $t<s$.

The quantities $\partial\theta^t / \partial u^s$, $\partial \ell(r^t,w^*;z) /\partial w^s$, and $\partial \ell(r^t,w^*;z) /\partial w^*$ are 
uniquely defined via the following integral-differential equations \rev{for $0 \leq s \leq t < \infty$,}
\begin{subequations}\label{eq:planted-derivs}
\begin{align}
	\frac{\de}{\de t} \frac{\partial \theta^t}{\partial u^s} & = -(\Lambda^t + \Gamma^t) \frac{\partial \theta^t}{\partial u^s} - \int_s^t R_{\ell}(t,s') \frac{\partial \theta^{s'}}{\partial u^s} \de s'\, ,  \label{eq:def-derivative-planted-t} \\
	\frac{\partial \ell(r^t,w^*;z)}{\partial w^s} & = \nabla_r \ell(r^t,w^*;z) \cdot \prn{- \frac{1}{\delta} \int_s^t R_{\theta}(t,s') \frac{\partial \ell(r^{s'},w^*;z)}{\partial w^s}  \de s' - \frac 1 \delta R_\theta(t,s) \nabla_r \ell(r^s,w^*;z) } \, ,  \label{eq:def-derivative-planted-l}\\ 
	\frac{\partial \ell(r^t,w^*;z)}{\partial w^*} & =  - \frac{1}{\delta} \nabla_r \ell(r^t,w^*;y) \int_0^t R_{\theta}(t,s') \frac{\partial \ell(r^{s'},w^*;z)}{\partial w^*}  \de s' + \nabla_{w^*} \ell(r^t,w^*;z) \, ,  \label{eq:def-derivative-dw}
\end{align}
\end{subequations} 
with boundary condition $\partial\theta^t / \partial u^t = I$. Similarly, we set $\partial\theta^t / \partial u^s = 0$ if $s > t$ and $\partial \ell(r^t;z) /\partial w^s = 0$ if $s \geq t$. 
As before, our notations point to the fact that these quantities are functional derivatives, although we do not 
prove it formally (and we do not need to).

\begin{corollary}
\label{cor:planted-se}
Under Assumption~\ref{ass:planted-Normal},
suppose the random variables $(\theta^0, \theta^*, z) \in (\reals^k)^2 \times \reals$ satisfy
\begin{align}
	M_{\rev{\theta^0},\theta^*,z} 
	:= 
	\max \left\{\Ep \brk{\normtwo{\theta^0}^2}, \Ep \brk{\normtwo{\theta^*}^2}, \sup_{t \in \realsp} \Ep \brk{\normtwo{\ell_t(0,0; z)}^2} \right\} < \infty.
\end{align}         
Then the system of equations $\mathfrak{S}(\theta^0, \theta^*, z, \delta, \Lambda^t, \ell_t)$ defined in Eqs.~\eqref{eq:planted-se} and \eqref{eq:planted-derivs} has a unique solution.

Moreover, assume that $n,d\to\infty$ with $n/d\to\delta\in (0,\infty)$.
Let $\bz,\btheta^0,\btheta^*$ be independent of $\bX$, and assume that the empirical distributions
$\widehat{\mu}_{\theta^0,\theta^*} := d^{-1}\sum_{i=1}^d\delta_{(\theta^0_i,\theta^*_i)}$ and 
$\widehat{\mu}_{z} := n^{-1}\sum_{i=1}^n\delta_{z_i}$ 
converge weakly to $\mu_{\theta^0, \theta^*}$ and $\mu_{z}$, 
$\Ep_{\widehat{\mu}_{\theta^0,\theta^*}} [\|\theta^0\|^2 + \|\theta^*\|^2]
\to \Ep_{\mu_{\theta^0,\theta^*}} [\|\theta^0\|^2 + \|\theta^*\|^2] < \infty$ 
and $\Ep_{\widehat{\mu}_{z}} [\|z\|^2] \to \Ep_{\mu_{z}} [\|z\|^2] < \infty$. 
Let $\rev{\theta^{[0,T]}}$ be the \rev{unique stochastic process that solves} the system
$\mathfrak{S}$ of Eq.~\eqref{eq:planted-se}.
Finally, define $\br^t:=\bX\btheta^t\in \reals^{n\times k}$, $t\ge 0$ \rev{and $\by = \varphi(\bX \btheta^*; \bz)$}.

Then, for any distance $d_{\sW}$ that metrizes weak convergence in  $C([0,T],\reals^{k_0})$,
for some fixed $k_0$ (for instance $d_{\sW}=d_{\sBL}$ the bounded Lipschitz distance), we have
\begin{align}
	&\plim_{n,d\to\infty}d_{\sW}\Big(\frac{1}{d}\sum_{i=1}^d \delta_{\theta^*_i,\rev{\theta_i^{[0,T]}}},{\rm P}_{\theta^*,\rev{\theta^{[0,T]}}}\Big)=0\, ,\\
	&\plim_{n,d\to\infty}d_{\sW}\Big(\frac{1}{n}\sum_{i=1}^n \delta_{y_i,\rev{r_i^{[0,T]}}},{\rm P}_{\varphi(w^*,z),\rev{r^{[0,T]}}}\Big)=0\, .
\end{align}
\end{corollary}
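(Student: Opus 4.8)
The plan is to realize the planted flow \eqref{eq:GeneralPlantedFlow} as a coordinate cross-section of an \emph{unplanted} flow of the form \eqref{eq:GeneralFlow}, namely the flow \eqref{eq:full-flow}, and to deduce every assertion of the corollary from Theorems~\ref{thm:UniqueExist} and~\ref{thm:StateEvolution} applied on $\reals^{d\times 2k}$. Concretely, set $\tilde\btheta^t := (\btheta^t,\btheta^*) \in \reals^{d\times 2k}$, $\tilde\Lambda^t := \mathrm{diag}(\Lambda^t, 0)$ and $\tilde\ell_t\big((r,w^*);z\big) := \big(\ell_t(r,w^*;z),\,0\big) \in \reals^{2k}$, so that \eqref{eq:full-flow} is exactly $\mathfrak{F}(\tilde\btheta^0,\bz,\tilde\Lambda,\tilde\ell)$. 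First I would verify that Assumption~\ref{ass:planted-Normal} on $(\Lambda,\ell)$ entails Assumption~\ref{ass:Normal} for $(\tilde\Lambda,\tilde\ell)$ --- padding with a zero block affects neither the Lipschitz constants of $\ell$ and $D\ell$ nor the symmetry, boundedness, and Lipschitz continuity of $\Lambda^t$ --- that $\max\{\Ep[\normtwo{\tilde\theta^0}^2],\sup_t\Ep[\normtwo{\tilde\ell_t(0;z)}^2]\} = M_{\theta^*,\theta^*,z} < \infty$, and that the assumed weak convergence and moment convergence of $\widehat\mu_{\theta^0,\theta^*}$ and $\widehat\mu_z$ are exactly what Theorem~\ref{thm:StateEvolution} requires of $\widehat\mu_{\tilde\theta^0}$ and $\widehat\mu_z$.

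By Theorem~\ref{thm:UniqueExist}, the system $\mathfrak{S}(\tilde\theta^0,z,\delta,\tilde\Lambda,\tilde\ell)$ then has a unique solution $(\tilde\theta,\tilde r,\widetilde R_\theta,\widetilde R_\ell,\widetilde C_\theta,\widetilde C_\ell)$, and the core of the argument is to show that this $2k$-dimensional system reduces to the planted DMFT system \eqref{eq:planted-se}--\eqref{eq:planted-derivs}. Since the last $k$ columns on the right of \eqref{eq:full-flow} vanish, the second block of $\tilde\btheta^t$ is constant along the flow, and correspondingly I expect the enlarged DMFT solution to be \emph{frozen} in its second block: $\tilde\theta^t = (\theta^t,\theta^*)$ with $\theta^*$ time-independent and $\tilde r^t = (r^t,w^*)$ with $w^*$ a time-independent Gaussian vector. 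This should follow by direct substitution: because $\tilde\ell$ has vanishing last $k$ components, the second-block rows of $\widetilde\Gamma^t$, $\widetilde R_\theta$, $\widetilde R_\ell$ are zero and $\widetilde C_\ell$ is supported in its $(1,1)$-block, so the $\theta^*$- and $w^*$-equations collapse to $\tfrac{\de}{\de t}\theta^{*,t}=0$ and $w^{*,t}=\tilde w^t_{(2)}$; since the $(2,2)$-block of $\widetilde C_\theta$ is the constant $\Ep[\theta^*(\theta^*)^{\sT}]$, this makes $w^*\sim\normal(0,\Ep[\theta^*(\theta^*)^{\sT}])$. The Jacobian $\widetilde\Gamma^t$ and the $\theta$-covariance $\widetilde C_\theta$ acquire the block structure
\[
\widetilde\Gamma^t = \begin{pmatrix}\Gamma^t & \Ep[\nabla_{w^*}\ell_t] \\ 0 & 0\end{pmatrix}, \qquad \widetilde C_\theta = \begin{pmatrix}C_\theta(t,s) & C_\theta(t,*) \\ C_\theta(*,s) & \Ep[\theta^*(\theta^*)^{\sT}]\end{pmatrix},
\]
while $\widetilde R_\theta$ and $\widetilde C_\ell$ are block-diagonal with $(1,1)$-blocks $R_\theta$ and $C_\ell$, and the genuine-time part of $\widetilde R_\ell$ restricts to $R_\ell$. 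Plugging these into the first-block components of $\mathfrak{S}(\tilde\theta^0,z,\delta,\tilde\Lambda,\tilde\ell)$ and of \eqref{eq:int-diff-2} at dimension $2k$, and folding the whole $w^*$-dependence of $\tilde\ell_t$ --- both the direct term $\Ep[\nabla_{w^*}\ell_t]$ and the dependence of the entire path $r$ on $w^*$ --- into the single response $R_\ell(t,*):=\Ep[\partial\ell_t(r^t,w^*;z)/\partial w^*]$ governed by \eqref{eq:def-derivative-dw}, one recovers exactly \eqref{eq:planted-se}--\eqref{eq:planted-derivs}. This exhibits a solution of $\mathfrak{S}(\theta^0,\theta^*,z,\delta,\Lambda^t,\ell_t)$; conversely, any solution of the planted system lifts, via the same block formulas, to a solution of the enlarged DMFT inside the uniqueness class of Theorem~\ref{thm:UniqueExist}, hence coincides with the one just built, which gives uniqueness.

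Finally, Theorem~\ref{thm:StateEvolution} applied to $\mathfrak{F}(\tilde\btheta^0,\bz,\tilde\Lambda,\tilde\ell)$ gives $\plim_{n,d\to\infty} d_{\sW}\big(d^{-1}\sum_{i=1}^d\delta_{(\tilde\theta_i)_0^T},{\rm P}_{\tilde\theta_0^T}\big)=0$ and $\plim_{n,d\to\infty} d_{\sW}\big(n^{-1}\sum_{i=1}^n\delta_{z_i,(\tilde r_i)_0^T},{\rm P}_{z,\tilde r_0^T}\big)=0$. For each $i$ the last $k$ coordinates of $\tilde\theta_i^t$ equal $\theta^*_i$ for all $t$, so $(\tilde\theta_i)_0^T$ is the image of $(\theta^*_i,(\theta_i)_0^T)$ under a homeomorphism onto the paths that are constant in the second block, which yields the first displayed limit of the corollary. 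Likewise the last $k$ coordinates of $\tilde r_i^t$ equal $w^*_i := \<\bx_i,\btheta^*\>$ for all $t$, so $(z_i,(\tilde r_i)_0^T)$ is the image of $(z_i,(r_i)_0^T,w^*_i)$; in the limit $z$ and $w^*$ are independent (the DMFT driving noise $\tilde w$ is independent of $z$) with $w^*\sim\normal(0,\Ep[\theta^*(\theta^*)^{\sT}])$, so pushing forward under the continuous map $(z,\rho,w^*)\mapsto(\varphi(w^*,z),\rho)$ --- by continuity of $\varphi$ and the continuous mapping theorem for weak convergence in probability --- produces the second displayed limit with $y = \varphi(w^*,z)$.

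The main obstacle is the block reduction of the middle step: one must carefully check that the direct term $\Ep[\nabla_{w^*}\ell_t]$ and the memory contribution produced by the frozen component $w^*$ assemble into exactly the single response $R_\ell(t,*)$ solving \eqref{eq:def-derivative-dw}, and that the lift of a planted solution back to the $2k$-dimensional system stays within the uniqueness class of Theorem~\ref{thm:UniqueExist}. Both reduce to bookkeeping with the integral-differential equations \eqref{eq:int-diff-2} and their uniqueness; verifying the assumptions and the homeomorphism and continuous-mapping arguments is routine.
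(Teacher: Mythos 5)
Your proposal is correct and follows essentially the same route as the paper's proof in Appendix C.1: embed the planted flow as the $2k$-dimensional flow \eqref{eq:full-flow}, apply Theorems~\ref{thm:UniqueExist} and~\ref{thm:StateEvolution} to it, verify that the unique solution has the frozen-second-block form, and identify $R_\ell(t,*)$ with $\bar\Gamma^t_{12}+\int_0^t \bar R_\ell(t,s)_{12}\,\de s$ via the integrated version of the $(1,2)$-block response equation. The only slight imprecision is calling $\widetilde R_\theta$ block-diagonal (it is upper block-triangular with a nontrivial $(1,2)$ block), but as the paper also notes, that block multiplies only the vanishing second component of $\tilde\ell$ and plays no role in the dynamics.
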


\noindent Corollary \ref{cor:planted-se} is proved in Appendix \ref{app:fixed-pt}.

\begin{remark}
The interpretation of the DMFT equations \eqref{eq:planted-se} is
analogous to the interpretations of Eqs.~\eqref{eq:DMFT} given in Section \ref{sec:Interpretation}.
Indeed,  the main difference is the \rev{appearance} of a term $- R_\ell(t,*) \theta^*$
in Eq.~\eqref{eq:def-theta-plt}. This is not unexpected since the data distribution is parametrized by 
$\theta^*$. Indeed it is straightforward to repeat the derivation in the simplified model of Section \ref{sec:Interpretation},
and recover  Eq.~\eqref{eq:def-theta-plt} without terms $- \int_0^t R_{\ell}(t,s) \theta^s \de s$
and $u^t$ and the term $R_\ell(t,*)$ replaced by $\E[\nabla_{w^*} \ell(r^t,w^*;z)]$. 
\end{remark}

\begin{remark}
We emphasize that here we study algorithms in which each data-point $(y_i,\bx_i)$
is used all along the gradient flow (or \rev{gradient} descent) trajectory. This ultimately gives
rise to the non-Markovian nature of the evolution \eqref{eq:planted-se}.
Similar behavior is obtained in stochastic gradient descent (SGD), 
when each data-point is revisited multiple times 
\cite{mignacco2020dynamical,mignacco2022effective} (although SGD is not formally 
covered by our results).

Several papers have studied statistical learning (for problems as the ones described above) using 
`online SGD,' whereby each data point is visited only once. In this case
the dynamics remains Markovian after marginalizing over the data thus allowing for 
simpler (or more detailed) study, see e.g. \cite{arous2021online,abbe2023sgd}. 
\end{remark}   
\subsection{Exponentially attractive fixed points}

In this section we focus on  the case in which $\ell_t,\Lambda^t$ do not depend on time $t$,
so that we omit the subscripts and instead write $\ell,\Lambda$.
We study the case in which the solutions of the DMFT equations \eqref{eq:planted-se} converge 
exponentially fast as $t \rightarrow \infty$,
and provide a system of non-linear equations which characterize  its limit. 

\begin{definition}
\label{def:exp-conv}
We say that the DMFT system $\mathfrak{S}:=\mathfrak{S}(\theta^0, \theta^*, z, \delta, \Lambda, \ell)$ given in
Eq.~\eqref{eq:planted-se} and \eqref{eq:planted-derivs} \emph{converges exponentially} if 
there \rev{exist}  deterministic constants $C,c > 0$, matrices
$\Gamma,R_\ell^* \in \reals^{k \times k}$, 
and functions  $R_\ell,R_\theta : \reals_{\geq 0} \rightarrow \reals^{k\times k}$, 
random functions $\widehat R_\ell : \reals_{\geq 0} \rightarrow \reals^{k \times k}$,
random matrix $\widehat R_\ell^* \in \reals^{k \times k}$,
and random variables $\theta^\infty,r^\infty,u^\infty,w^\infty \in \reals^k$ such that 
\begin{equation}
	\begin{gathered}
		\| r^t - r^\infty \|_{L^2} \leq C e^{-ct},
		\qquad 
		\| \theta^t - \theta^\infty \|_{L^2} \leq Ce^{-ct},
		\qquad 
		\| u^t - u^\infty \|_{L^2} \leq Ce^{-ct},
		\qquad 
		\| w^t - w^\infty \|_{L^2} \leq Ce^{-ct},
		\\
		\| R_\ell(\cdot) - R_\ell(t,t-\cdot) \|_{\infty} \rightarrow 0,
		\qquad 
		\| R_\theta(\cdot) - R_\theta(t,t-\cdot) \|_\infty \rightarrow 0,
		\qquad 
		R_\ell(t,*) \rightarrow R_\ell^*,
		\qquad 
		\Gamma^t \rightarrow \Gamma^\infty,
		\\
		\frac{\partial \ell(r^{t+s},w^*;z)}{\partial w^t}
		\stackrel{\rev{L^2}}\rightarrow 
		\widehat R_\ell(t),
		\qquad 
		\frac{\partial \ell(r^t,w^*;z)}{\partial w^*}
		\stackrel{\rev{L^2}}\rightarrow 
		\widehat R_\ell^*,          
	\end{gathered}
\end{equation}
where all limits are taken with $s$ fixed and $t \rightarrow \infty$,
and further \rev{for all $t, s$} (where the bound on $\widehat R_\ell(s)$ is understood to hold almost surely),
\begin{align}
	R_\ell(s),R_\ell(t+s,t),R_\theta(s),R_\theta(t+s,t),\widehat R_\ell(s) \leq Ce^{-cs}\, .
\end{align}
\end{definition}
Notice the abuse of notation in the last definition. For instance we use the
same notation for $C_{\theta}(t,t+s)$ and its limit $C_{\theta}(s):=
\lim_{t\to\infty} C_{\theta}(t,t+s)$. This should not cause confusion in what follows.

The next theorem establishes that, if the DMFT solution converges exponentially, then the 
limit quantities satisfy a set of nonlinear equations.
\begin{theorem}
\label{thm:fixed-pt}
Assume that the DMFT system $\mathfrak{S}:=\mathfrak{S}(\theta^0, \theta^*, z, \delta, \Lambda, \ell)$ given in Eq.~\eqref{eq:planted-se} and \eqref{eq:planted-derivs} \emph{converges exponentially}.
Then there exist matrices $R_\ell^\infty,R_\theta^\infty,C_\ell^\infty \in \reals^{k \times k}$ and $C_\theta^\infty \in \reals^{2k \times 2k}$ 
such that 
\begin{equation}\label{eq:fixed-pt-1}
	\begin{aligned}
		0 &= -(\Lambda + R_\ell^\infty) \theta^\infty - R_\ell^* \theta^* + u^\infty,
		&
		r^\infty &= - \frac1\delta R_\theta^\infty \ell(r^\infty,w^*;z) + w^\infty,
		\\
		C_\theta & = \Ep [ (\theta^{\infty\sT},\theta^{*\sT})^\sT (\theta^{\infty\sT},\theta^{*\sT})]\, , 
		&
		C_\ell & =   \Ep[\ell(r^\infty,w^*;z) \ell(r^\infty,w^*;z)^\sT] \, ,
	\end{aligned}
\end{equation}
where $u^\infty \sim \normal(0, \Cl / \delta)$ and $(w^\infty,w^*) \sim \normal(0, \Ct)$.
Here $R_\ell^*,\theta^\infty,u^\infty,r^\infty,w^\infty$ are the same as those which appear in Definition \ref{def:exp-conv}.
Moreover, $R_\ell^\infty,R_\theta^\infty,R_\ell^*$ satisfy
\begin{equation}\label{eq:fixed-pt-2}
	\begin{aligned}
		R_\ell^\infty
		&=
		\E\Big[ \Big(I_k + \frac1\delta\nabla_r\ell(r^\infty,w^*;z) R_\theta^\infty\Big)^{-1}\nabla_r\ell(r^\infty,w^*;z) \Big],
		&
		(R_\theta^\infty)^{-1} &= \Lambda + R_\ell^\infty,
		\\
		R_\ell^*
		&= \E\Big[\Big( I_k + \frac1\delta \nabla_r \ell(r^\infty,w^*;z) R_\theta^\infty\Big)^{-1} \nabla_{w^*} \ell(r^\infty,w^*;z)\Big],
	\end{aligned}
\end{equation}
assuming all inverted matrices are invertible.
\end{theorem}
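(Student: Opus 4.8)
The plan is to send $t\to\infty$ in every equation of the DMFT system \eqref{eq:planted-se}--\eqref{eq:planted-derivs} and collect the resulting algebraic identities. The matrices in the statement will be
\[
R_\theta^\infty := \int_0^\infty R_\theta(\sigma)\,\de\sigma,\qquad R_\ell^\infty := \Gamma^\infty + \int_0^\infty R_\ell(\sigma)\,\de\sigma,\qquad C_\ell^\infty := \lim_{t\to\infty}C_\ell(t,t),
\]
with $C_\theta^\infty\in\reals^{2k\times 2k}$ the limit of the joint second-moment matrix of $(\theta^t,\theta^*)$. All these are well defined: the almost-sure exponential bounds $\norm{R_\theta(\sigma)},\norm{R_\ell(\sigma)}\le Ce^{-c\sigma}$ in Definition~\ref{def:exp-conv} make the integrals converge, while $C_\ell(t,t)=\Ep[\ell(r^t,w^*;z)\ell(r^t,w^*;z)^\sT]\to\Ep[\ell(r^\infty,w^*;z)\ell(r^\infty,w^*;z)^\sT]$ since $r^t\to r^\infty$ in $L^2$ and $\ell$ is Lipschitz, and likewise $\Ep[\theta^t{\theta^s}^\sT]$ and $\Ep[\theta^t{\theta^*}^\sT]$ converge using $\theta^t\to\theta^\infty$ in $L^2$. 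The Gaussian laws follow automatically: $u^t$ is centered Gaussian with covariance $C_\ell(t,t)/\delta$, so its $L^2$ limit $u^\infty$ is centered Gaussian with covariance $C_\ell^\infty/\delta$, and $(w^t,w^*)\to(w^\infty,w^*)$ in $L^2$ forces $(w^\infty,w^*)\sim\normal(0,C_\theta^\infty)$; this already gives the two covariance identities in \eqref{eq:fixed-pt-1}.

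For the flow identities in \eqref{eq:fixed-pt-1} I integrate rather than differentiate: from \eqref{eq:def-theta-plt}, $\theta^{t+h}-\theta^t=\int_0^h\frac{\de}{\de s}\theta^{t+s}\,\de s$; the left side tends to $0$ in $L^2$ because $\theta^t$ is exponentially Cauchy. On the right, $\Gamma^{t+s}\theta^{t+s}\to\Gamma^\infty\theta^\infty$, $R_\ell(t+s,*)\theta^*\to R_\ell^*\theta^*$, and $u^{t+s}\to u^\infty$, uniformly in $s\in[0,h]$; for the memory term, substituting $u=(t+s)-\rho$ and using $\norm{R_\ell(t+s,(t+s)-\rho)}\le Ce^{-c\rho}$, $R_\ell(t+s,(t+s)-\rho)\to R_\ell(\rho)$, $\theta^{(t+s)-\rho}\to\theta^\infty$ in $L^2$, and $\sup_t\norm{\theta^t}_{L^2}<\infty$, dominated convergence gives $\int_0^{t+s}R_\ell(t+s,u)\theta^u\,\de u\to\big(\int_0^\infty R_\ell(\rho)\,\de\rho\big)\theta^\infty$. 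Hence the right side converges to $h\,v$ with $v:=-(\Lambda+R_\ell^\infty)\theta^\infty-R_\ell^*\theta^*+u^\infty$, and since $h>0$ was arbitrary, $v=0$. The same substitution in \eqref{eq:def-r-plt}, using time-independence of $\ell$ so that $\ell(r^{t-\sigma},w^*;z)\to\ell(r^\infty,w^*;z)$ in $L^2$, yields $r^\infty=-\frac1\delta R_\theta^\infty\ell(r^\infty,w^*;z)+w^\infty$.

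To obtain \eqref{eq:fixed-pt-2} I pass to the stationary limit in \eqref{eq:planted-derivs} and then integrate over $[0,\infty)$. Writing $\tau=t-s$ in \eqref{eq:def-derivative-planted-t} and using $R_\theta(t,t-\cdot)\to R_\theta(\cdot)$ and $\Gamma^t\to\Gamma^\infty$, the limit equation is $\frac{\de}{\de\tau}R_\theta(\tau)=-(\Lambda+\Gamma^\infty)R_\theta(\tau)-\int_0^\tau R_\ell(\tau-\sigma)R_\theta(\sigma)\,\de\sigma$ with $R_\theta(0)=I$; integrating on $[0,\infty)$, using $R_\theta(\tau)\to0$ and the convolution identity $\int_0^\infty\!\int_0^\tau R_\ell(\tau-\sigma)R_\theta(\sigma)\,\de\sigma\,\de\tau=\big(\int_0^\infty R_\ell(u)\,\de u\big)R_\theta^\infty$, gives $-I=-(\Lambda+\Gamma^\infty)R_\theta^\infty-\big(\int_0^\infty R_\ell(u)\,\de u\big)R_\theta^\infty=-(\Lambda+R_\ell^\infty)R_\theta^\infty$, whence $(R_\theta^\infty)^{-1}=\Lambda+R_\ell^\infty$. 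Next, with $J^\infty:=\nabla_r\ell(r^\infty,w^*;z)$, the stationary limit of \eqref{eq:def-derivative-planted-l} is $\widehat R_\ell(\tau)=-\frac1\delta J^\infty\int_0^\tau R_\theta(\tau-\sigma)\widehat R_\ell(\sigma)\,\de\sigma-\frac1\delta J^\infty R_\theta(\tau)J^\infty$, using $\nabla_r\ell(r^t,w^*;z)\to J^\infty$ in $L^2$ (Lipschitz Jacobian), the bound $\widehat R_\ell(\sigma)\le Ce^{-c\sigma}$, and $R_\ell(\sigma)=\Ep[\widehat R_\ell(\sigma)]$, which follows from \eqref{eq:def-R-l-plt} and the $L^2$ convergence of the functional derivative. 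Integrating on $[0,\infty)$ with $Q:=\int_0^\infty\widehat R_\ell(\sigma)\,\de\sigma$ gives $(I+A)Q=-AJ^\infty$ where $A:=\frac1\delta J^\infty R_\theta^\infty$, hence $J^\infty+Q=(I+A)^{-1}J^\infty$ via $I-(I+A)^{-1}A=(I+A)^{-1}$; taking expectations (Fubini, legitimate by the exponential bound) with $R_\ell^\infty=\Gamma^\infty+\Ep[Q]=\Ep[J^\infty+Q]$, and noting $\delta(I-(I+A)^{-1})(R_\theta^\infty)^{-1}=(I+A)^{-1}J^\infty$, produces the claimed formula for $R_\ell^\infty$. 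Finally, the stationary limit of \eqref{eq:def-derivative-dw} is $(I+A)\widehat R_\ell^*=\nabla_{w^*}\ell(r^\infty,w^*;z)$, and taking expectations with $R_\ell^*=\Ep[\widehat R_\ell^*]$ gives the formula for $R_\ell^*$.

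The main obstacle is the rigorous passage to the limit in the memory integrals. Each must be split into a recent-past part, where the integrand has essentially reached its stationary form---controlled by the $L^2$ exponential convergence of $\theta^t,r^t,u^t,w^t$, the $\norm{\cdot}_\infty$ convergence of the kernels, and the $L^2$ convergence of the functional derivatives---and a distant-past tail that is negligible by the almost-sure exponential decay of $R_\theta,R_\ell,\widehat R_\ell$; one also needs these convergences to be uniform over the bounded shift $s\in[0,h]$ in the $\theta$-equation, plus a Fubini/dominated-convergence argument to interchange $\Ep[\cdot]$, $\int\de\sigma$, and $\lim_{t\to\infty}$. Once the limits are justified, integrating the resulting convolution (Volterra) equations and performing the matrix algebra is routine.
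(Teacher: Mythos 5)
Your proposal is correct and follows essentially the same route as the paper's proof: define $R_\theta^\infty=\int_0^\infty R_\theta(s)\,\de s$ and $R_\ell^\infty=\Gamma^\infty+\int_0^\infty R_\ell(s)\,\de s$, pass to the $t\to\infty$ limit in each DMFT equation using the $L^2$ convergences and exponential kernel decay from Definition~\ref{def:exp-conv}, then integrate the resulting stationary Volterra equations and carry out the same matrix algebra (including the identity $I-(I+A)^{-1}A=(I+A)^{-1}$ with $A=\frac1\delta\nabla_r\ell\, R_\theta^\infty$). The only cosmetic differences are that you conclude $v=0$ by integrating $\de\theta^t/\de t$ over a window $[t,t+h]$ rather than invoking boundedness of $\theta^t$, and you integrate the limiting ODE for $R_\theta(\tau)$ rather than differentiating $\int_0^s R_\theta(t+s,t+s')\,\de s'$; both are equivalent to the paper's steps.
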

Let us emphasize that, so far, we assumed $\ell_t(r,w;z) = \ell(r,w;z)$
to be independent of time, but not necessarily that it is the gradient of a cost function.

In the next theorem we specialize this to gradient flow with respect to convex losses,
and build on \cite{asgari2025local} to establish connection to global optima. 
For $f:\reals^k\to\reals$ proper, convex and $A\in\reals^{k\times k}$ positive semidefinite, we denote by 
$\Prox_f(\,\cdot\,;S):\reals^k\to\reals^k$ the corresponding proximal operator, namely
\begin{align}
\Prox_f(z;S):= \argmin_{x\in\reals^k}\Big\{ \frac{1}{2}\<x-z,S^{-1}(x-z)\> + f(x)\Big\}\, .
\end{align}
\begin{theorem}[Convergence to global minimizers]\label{thm:Variational-General}
Assume $\Lambda = \lambda I_k$ and $\ell(r,w^*;z) =  \nabla_r\Ls(r,w^*;z)$ for a $C^2$ 
convex function $\Ls$. Further assume $Q_{00}=\E[\theta^*\theta^{*,\sT}]$
and $\Cov_z(\nabla \Ls(r,w;z))$ to be strictly positive for any $r,w$. 
Then the solutions of Eqs.~\eqref{eq:fixed-pt-1},
\eqref{eq:fixed-pt-2}
are in one-to\rev{-}one correspondence with the solutions of the following system of equations \rev{over all} pairs of positive semidefinite matrices 
$S\in\reals^{k\times k}$, $Q\in\reals^{2k\times 2k}$:
\begin{align}
	\E\big[\nabla \Ls(r^{\infty},w^*;z)\nabla \Ls(r^{\infty},w^*;z)^{\sT}\big] & = \frac{1}{\delta}
	S^{-1}(Q\setminus Q_{00})S^{-1}\, ,\label{eq:QS1}\\
	\E\big[\nabla \Ls(r^{\infty},w^*;z)(r^{\infty},w_*)^{\sT}\big] +\lambda[Q_{11},Q_{10}] & = 0\, ,
	\label{eq:QS2}\\
	Q= \left(\begin{matrix}
		Q_{00} & Q_{01}\\
		Q_{10} & Q_{11}\end{matrix}\right)&\, ,\;\;\;\; Q_{00}=\E[\theta^*\theta^{*,\sT}]\, ,
\end{align} 
where $Q_{ab}\in\reals^{k\times k}$ for $a,b\in\{0,1\}$, $Q\setminus Q_{00} :=Q_{11}-Q_{10}Q_{00}^{-1}
Q_{01}$, and $(w^*,w^{\infty})\sim\normal(0,Q)$ and
\begin{align}
	r^{\infty} = \Prox_{ \Ls(\,\cdot\, ,w^*;z)}(w^{\infty};S)\, .
\end{align}
Further, the solution to Eqs.~\eqref{eq:QS1}, \eqref{eq:QS2} is unique if either 
$\lambda>0$ or $\lambda=0$ and $\Ls(,\cdot\, ,w^*;z)$ is strictly convex for all $w^*,z$
(and at least one solution exists). 
In this case, the solution gives the asymptotics of the empirical risk minimizer 
(see Remark \ref{rmk:CVX_ERM} below\rev{).}
\end{theorem}
We prove Theorem \ref{thm:fixed-pt} and Theorem 
\ref{thm:Variational-General} in Appendix \ref{app:fixed-pt}.

\begin{remark}[Exponential convergence assumption]
The limit quantities $R_\ell^\infty,R_\theta^\infty,C_\ell^\infty \in \reals^{k \times k}$ and 
$C_\theta^\infty \in \reals^{2k \times 2k}$ can be defined as long as the convergence in 
Definition \ref{def:exp-conv} takes place at any rate (not necessarily at exponential rate\rev{).}

However, exponential convergence is used in the proof of Theorem \ref{thm:fixed-pt} 
to control the behavior of the DMFT equations \eqref{eq:DMFT}.
More precisely, these equations involve time integrals whose convergence is controlled by the 
assumed exponential decay.

It is also worth mentioning that Definition \ref{def:exp-conv} is non-empty.
For instance, gradient flow with strongly convex risk functions
converges exponentially fast to the global minimizer: 
this translates into exponential convergence
of the DMFT dynamics, as stated formally below. 
\end{remark}
\begin{proposition} \label{prop:convergence-DMFT}
	\rev{Under the assumptions of Corollary \ref{cor:planted-se}, assume that $\ell(r,w^*;z)  = \nabla_r\Ls(r,w^*;z)$, and $\Lambda^t=\Lambda\succeq 0$ 
	symmetric positive semidefinite matrix. Further assume that either 
	$(i)$~$r \mapsto \Ls(r,r^*;z)$ is strongly convex and $\delta>1$; or 
	$(ii)$~$r \mapsto \Ls(r,r^*;z)$ is convex and $\Lambda\succ 0$.
	Then the corresponding DMFT system converges exponentially.}
\end{proposition}
\rev{A proof is in Appendix \ref{proof:convergence-DMFT}.} More generally, determining under which conditions exponential
convergence holds is an open problem.

\begin{remark}[Convex empirical risk minimization]\label{rmk:CVX_ERM}
Uniqueness of the Eqs.~\eqref{eq:QS1}, \eqref{eq:QS2} under the stated conditions
was proven in \cite{asgari2025local}. The connection between these equations and 
Eqs.~\eqref{eq:fixed-pt-1},  \eqref{eq:fixed-pt-2} is given by 
\begin{align}
	C_{\theta} &= Q\, ,\;\;\; C_{\ell} = \frac{1}{\delta}  S^{-1}(Q\setminus Q_{00})S^{-1}\, ,\nonumber\\
	R_{\theta}^{\infty} &= \delta\, S\, ,\;\;\; R_{\ell}^{\infty} = (\delta \, S)^{-1}-\lambda I_k\, ,
	\label{eq:FixedPointMatch}\\
	R_{\ell}^*& = -\frac{1}{\alpha} S^{-1} Q_{10}Q_{00}^{-1}\, .\nonumber
\end{align}
We note that, by rotation invariance, we can assume without loss of generality $\theta^*\sim
\normal(0,Q_{00})$.
With these definitions we have $(\theta^*,\theta^{\infty})\sim \normal(0,Q)$.

It is also proven in  \cite{asgari2025local} that, whenever the solution
of the Eqs.~\eqref{eq:QS1}, \eqref{eq:QS2} is unique, it captures the 
asymptotics of the empirical risk minimizer:
\begin{align}
	\hbtheta := \argmin_{\btheta\in \reals^{d\times k}}\left\{\frac{d}{n}\sum_{i=1}^n
	\Ls(\btheta^{\sT}\bx_i,\btheta^{*,\sT}\bx_i,z_i) +\frac{\lambda}{2} \|\btheta\|_F^2\right\}\, .
	\label{eq:CVX_ERM}
\end{align}
In particular:
\begin{align*}
	\frac{1}{d}\sum_{i=1}^d\delta_{\hbtheta^{\sT}\be_i,\btheta^{*,\sT}\be_i}\Rightarrow
	{\rm Law}(\theta^{\infty},\theta^*)\, ,\;\;\; \;\;\;
	\frac{1}{n}\sum_{i=1}^d\delta_{\hbtheta^{\sT}\bx_i,\btheta^{*,\sT}\bx_i,z_i}\Rightarrow
	{\rm Law}(r^{\infty},w^*,z)\, .
\end{align*}
\end{remark}

There is a substantial literature on the asymptotics of empirical risk minimization
problems under the proportional \rev{regime} whereby $n,d\to\infty$ with $n/d\to \alpha$,
see e.g.
\cite{BayatiMontanariLASSO,karoui2013asymptotic,stojnic2013framework,
thrampoulidis2015,Donoho2016,thrampoulidis2018,Sur2019}, as well as the more general
\cite{asgari2025local}. The above remark shows that all of these can be recovered as exponentially
attractive fixed points in DMFT. 

\subsection{Convergence rate}

The DMFT equations contain significant information about the dynamics, beyond the 
existence of attractive fixed points. As an illustration, we derive the rate of
convergence to the global minimum for gradient flow with strongly convex risk
functions. 

We consider gradient flow with respect to the risk function \eqref{eq:CVX_ERM}
whereby either $\Ls(\,\cdot\, \rev{,} \bv^*,z)$ is convex and $\lambda>0$, or $\Ls(\,\cdot\,\rev{,}  \bv^*,z)$
is strongly convex and $\alpha>1$. In both cases, the regularized risk function
$\cL_n(\btheta)$ of  \eqref{eq:CVX_ERM} is (with high probability)
$c_0$-strongly convex for some $c_0>0$ which is $n,d$-independent.
This in \rev{turn} implies (again with high probability over the choice of the initialization 
and the data)
\begin{align}
\frac{1}{\sqrt{d}}\|\btheta^t-\hbtheta\|\le C\, e^{-c_0t}\, ,\label{eq:StrongCVX-conv}
\end{align}
for some $d$-independent constants $C,c_0>0$.

\rev{Even in this simple setting, classical theory only yields crude bounds on the 
exponential convergence rate, of the type 
$c_0 \ge \inf_{\btheta\in\reals^d}\lambda_{\min}(\nabla^2\cL(\btheta))$. 
These bounds are typically too crude to compare different algorithms or
different choices of loss functions. In contrast, DMFT allows to compute the precise constant.
Since a rigorous proof requires a substantially longer paper, we only sketch
the basic idea.}

Hence, by  Corollary~\ref{cor:planted-se}, we have that 
\begin{align}
\|\ell(r^t,w^*;z)\|_{L^2}\le C'\, e^{-c_0t},\;\;\; \|r^t-r^{\infty}\|_{L^2}\le C'\, e^{-c_0t}\, \rev{.}
\end{align}
We note that these conclusion could have been derived directly from the DMFT equations,
but it is simpler to use the indirect approach outlined above.

We therefore heuristically replace $\nabla_r \ell(r^t,w^*;z)$,  $\nabla_r \ell(r^s,w^*;z)$
in Eq.~\eqref{eq:def-derivative-planted-l} by $D:= \nabla_r \ell(r^{\infty},w^*;z)$.
We also replace $\Gamma_t =  \E\{\nabla_r \ell_t(r^t,w^*;z)\}$ by $\Gamma^{\infty}= 
\E\{D\}$ in Eq.~\eqref{eq:def-derivative-planted-t}, to get 
\begin{subequations}\label{eq:planted-deris-Simpl}
\begin{align}
	\frac{\de}{\de t} R_{\theta}(t,s)& = -(\lambda\, I + \Gamma^t) 
	R_{\theta}(t,s) - \int_s^t R_{\ell}(t,s')  R_{\theta}(s',s) \de s'\, , & & 0 \leq s \leq t < \infty\, , \label{eq:def-derivative-planted-t-b} \\
	\hR_{\ell}(t,s) & = D \cdot \prn{- \frac{1}{\delta} \int_s^t R_{\theta}(t,s') 
		\hR_{\ell}(s',s) \de s' - \frac 1 \delta R_\theta(t,s) D} \, , & & 0 \leq s < t < \infty\, , \label{eq:def-derivative-planted-l-b}\\
	R_{\ell}(t,s) & = \E\{\hR_{\ell}(t,s)\}\, \rev{,}
\end{align}
\end{subequations} 
where we emphasize that   $(t,s)\mapsto\hR_{\ell}(t,s)$ is a random function.
The above equations  imply  (with an abuse of notation)
$R_{\theta}(t,s) = R_{\theta}(t-s)$ 
for all $t>s$ and $\hR_{\ell}(t,s)=  \hR_{\ell}(t-s)$
\rev{given the boundary condition $R_\theta(t,t) = I$}. We define the Laplace transforms
\begin{align}
S_{\theta}(\zeta) := \int_0^{\infty}e^{\zeta t} R_{\theta}(t)\, \de t\, ,\;\;\;\;\;
\hS_{\ell}(\zeta)  :=\int_0^{\infty}e^{\zeta t} \hR_{\ell}(t)\, \de t\, \rev{,}
\end{align}
where $\zeta\in\complex$, $\Re(\zeta)\le -M$ with $M$ a sufficiently large constant 
as to make the integral convergent. In terms of these quantities, Eqs.~\eqref{eq:planted-deris-Simpl}
read
\begin{subequations}\label{eq:planted-deris-laplace}
\begin{align}
	-I-\zeta S_{\theta}(\zeta)& = -(\lambda\, I + \Gamma^\infty) 
	S_{\theta}(\zeta) - S_{\ell}(\zeta)  S_{\theta}(\zeta) \, , \label{eq:def-derivative-planted-t-laplace} \\
	\hS_{\ell}(\zeta) & = D \cdot \prn{- \frac{1}{\delta} D S_{\theta}(\zeta)\hS_{\ell}(\zeta) 
		- \frac 1 \delta S_\theta(\zeta) D} \, ,
	\;\;\;\;\; S_{\ell}(\zeta) = \E\{\hS_{\ell}(\zeta)\} \, . \label{eq:def-derivative-planted-l-laplace}
\end{align}
\end{subequations}
Solving the second equation for  $\hS_{\ell}(\zeta)$, 
substituting in the first one, 
and using $\Gamma^\infty=\E\{D\}$,
we obtain that $S_{\theta}(\zeta)$ should satisfy
\begin{align}\label{eq:Stieltjes}
S_{\theta} = \E\Big\{\Big(I+\frac{1}{\delta}  D S_{\theta} \Big)^{-1}D\Big\}-(z-\lambda)\, I\, .
\end{align}
Note that, by Theorem \ref{thm:UniqueExist} $S_{\theta}(\zeta)$ is analytic for $\Re(\zeta)<-M$,
with $M$ a large enough constant. In fact by a consequence of the exponential
convergence of Eq.~\eqref{eq:StrongCVX-conv},  $S_{\theta}(\zeta)$ is analytic for $\Re(\zeta)<-c_0$.

In the case $k=1$, Eq.~\eqref{eq:Stieltjes} coincides with the equation for the
Stieltjes transform of generalized Marchenko-Pastur law \cite[Theorem 4.3]{BaiSilverstein}.
For general $k$\rev{, the} uniqueness of the solution of Eq.~\eqref{eq:Stieltjes}
\rev{is} proven, for instance in \cite{asgari2025local}.  
The same proof also yields that $S_{\theta}$ is analytic everywhere
except on an interval $[\lambda_m, \lambda_M]\subset \reals$, 
$0<\lambda_m\le\lambda_M\le \infty$, which we assume to be minimal.

We then get $\|R_{\theta}(t)\|= \exp(-\lambda_m t +o(t))$ and 
$\|R_{\ell}(t)\|= \exp(-\lambda_m t +o(t))$. We expect the same exponential convergence to be
inherited by $\theta^t, r^t$, namely
\begin{align}
\|r^t-r^{\infty}\|_{L^2} = e^{-\lambda_m t+o(t)}\, ,
\;\;\;\;\; \|\theta^t-\theta^{\infty}\|_{L^2} = e^{-\lambda_m t+o(t)}\, .
\end{align}
As shown in \cite{dobriban2015efficient,asgari2025local}, the solution to \eqref{eq:Stieltjes}
can be efficiently computed numerically, and hence $\lambda_m$ can be estimated accurately.

\section{Proof of Theorem \ref{thm:UniqueExist}}
\label{sec:proof:UniqueExist}

\subsection{Proof outline}

The proof is divided into the following parts. 
\begin{enumerate}
\item[I.] Define the auxiliary real-valued functions $\pRt, \pRl, \pCt$ and $\pCl$.
These satisfy a set equations that are constructed as to provide bounds on 
$R_{\theta}, C_{\theta}, R_{\ell}, C_{\ell}$.
\item[II.] Construct a metric space $\mathcal{S}:= \mathcal{S}(\pRt, \pRl, \pCt, \pCl, T)$ 
for the function triplet $(C_\ell, R_\ell, \Gamma)$ when $t, s \in [0, T]$ and also a 
space $\wb{\mathcal{S}}$ for $(\Ct, \Rt)$. Show that in these spaces, the stochastic 
processes $\vt^t, \vr^t$ and functional derivatives $\partial \vt^t / \partial \vu^s, \partial \ell_t(\vr^t; \veps) / \partial w^s$ 
are uniquely defined.
\item[III.]  Define a transformation $\mathcal{T}: \mathcal{S} \to \mathcal{S}$ 
such that for any solution of the DMFT system $\mathfrak{S}$ in Eq.~\eqref{eq:DMFT}, 
$(C_\ell, R_\ell, \Gamma)$ must be a fixed-point of $\mathcal{T}$. We then show $\trsfrm$ is a contraction mapping,
and conclude by Banach fixed-point theorem.
\end{enumerate}
For any real-valued function $f(t)$ on $[0, T]$, $\lambda \geq 0$ and $T \in [0, \infty)$, we define the following norms
\begin{align}
\intnormlbd{f}{T}&  := \int_0^T e^{-\lambda t} |f(t)| \de t\, , \\
\normlbd{f}{T}& := \sup_{0 \leq t \leq T} e^{-\lambda t} |f(t)|\, .
\end{align}
If $f: \realsp \to \real^{k_1 \times k_2}$ is vector- or matrix-valued, we 
define $\intnormlbd{f}{T}$, $\normlbd{f}{T}$ analogously, with $|f(t)|$ replaced by $\|f(t)\|$
on the right-hand side. Finally, we let
$\intnormlbd{f}{\infty} = \lim_{T \to \infty} \intnormlbd{f}{T}$, 
$\normlbd{f}{\infty} = \lim_{T \to \infty} \normlbd{f}{T}$. 

\subsection{Part I: The auxiliary  functions} 

We will develop 
bounds in terms of solutions of the following ODEs.
\begin{lemma} \label{lem:Exist-General-ODE}
Consider the following system of ODEs for
nonnegative functions $\pRt, \pRl, \pCt$ and $\pCl$ on $[0, \infty)$.
\begin{subequations}
	\begin{align}
		\frac{\de}{\de t}\pRt(t) & = (\cstlbd + \cstloss)\pRt(t) + \int_0^t \pRl(t-s) \pRt(s) \de s \, , \label{eq:def-pRt}\\
		\pRl(t) & = \frac{\cstloss}{\delta} \cdot \brc{\cstloss \pRt(t) + \int_0^t \pRt(t-s) \pRl(s) \de s}\, , \label{eq:def-pRl} \\
		\frac{\de}{\de t} \sqrt{\pCt(t)} & = \sqrt{3 \cdot \brc{ (\cstlbd + \cstloss)^2 \pCt(t) + \frac{k}{\delta} \pCl(t) + \int_0^t (t-s+1)^2 \pRl(t-s)^2 \pCt(s) \de s}}\, , \label{eq:def-pCt}\\
		\pCl(t) & = 3 \cdot \brc{\cstthetaz + k\cstloss^2 \pCt(t) +  \frac{\cstloss^2}{\delta^2} \int_0^t (t-s+1)^2 \pRt(t-s)^2 \pCl(s) \de s}\, , \label{eq:def-pCl}
	\end{align}
\end{subequations}
For any given $\pRt(0) >0, \pCt(0) > 0$, 
these equations have a unique solution in the space of locally integrable functions on $\realsp$.
Further, there exists some $\lambda> 0$, dependent on $\cstlbd, \cstloss,\cstthetaz ,\delta,k,\pRt(0), \pCt(0)$  such that
\begin{align}
	\lim_{t \to \infty} e^{-\lambda t} \max \left\{\pRt(t) , \pRl(t), \pCt(t),
	\pCl(t) \right\} = 0\, .
\end{align}
\end{lemma}
\rev{We defer its proof to Appendix~\ref{proof:Exist-General-ODE}.}

\subsection{Part II: The function space $\mathcal{S}$, $\wb{\mathcal{S}}$} 
We next define a function space in which we solve  $\mathfrak{S}$.
\begin{definition}[The function triplet spaces $\mathcal{S}$ and $\mathcal{S}_{\mathrm{cont}}$] \label{def:space-S}
For $T>0$, denote by $X = (C_\ell, R_\ell, \Gamma)$ the function triplet
$C_\ell, R_\ell: [0,T]^2 \to \reals^{k \times k}$ and $\Gamma^t: [0,T] \to \reals^{k \times k}$.
Define the following space parametrized by a constant  $\cstspaceS$,
\begin{align}
	\mathcal{S}:= \mathcal{S}(\pRt, \pRl, \pCt, \pCl,\cstspaceS, T)
\end{align} 
of all $X$ such that
\begin{enumerate}
	\item $C_\ell(t,s)$ is a covariance kernel (satisfying in particular 
	$C_{\ell}(t,s)^{\sT} = C_{\ell}(s,t)$),
	such that 
	$\norm{C_\ell(t,t)} \leq \pCl(t)$
	for $t \in [0,T]$ and
	\begin{align}
		\Cl(0,0) = 	\mathbb{E} \brk{\ell_0(r^0;\veps)\vl_0(r^0;\veps)^\sT} \, , \quad r^0 \sim \normal\prn{0, \Ep\brk{\theta^0 {\theta^0}^\sT}} \, .
	\end{align} 
	Further $C_\ell(t,s)$ is continuous for $s \leq t$ and $s, t \in [0,T] \backslash P$ where $P$ is a finite set. 
	Moreover, for any $s \leq t$ such that $C_\ell$ is continuous in $[s,t]^2$,
	%
	\begin{align}
		\norm{C_\ell(t,t) - C_\ell(t,s)- C_\ell(s,t)+ C_\ell(s,s)} \leq \cstspaceS (t-s)^2 \, . \label{eq:space-condition-Cl}
	\end{align}
	\item $\Rl(t,s)$ is measurable and $\Rl(t,s) = 0$ when $t \leq s$. Further for any $s \leq t$ and $s,t \in [0,T]$
	\begin{align}
		\norm{\Rl(t,s)} \leq \pRl(t-s) \, .\label{eq:Rl-Bound}
	\end{align} 
	$\Gamma^t$ is measurable in $[0,T]$ such that
	\begin{align}
		\norm{\Gamma^t} \leq \cstloss\, ,\label{eq:Gamma-Bound-Ass}
	\end{align}
	and
	\begin{align}
		\Gamma_0 = \mathbb{E} \brk{\nabla_r \vl(r^0, \veps)}\, , \quad r^0 \sim \normal\prn{0, \Ep\brk{\theta^0 {\theta^0}^\sT}} \, .
	\end{align}
\end{enumerate}
Moreover, we define the space $\mathcal{S}_{\mathrm{cont}} \subset \mathcal{S}$ of all $X$ such that $P= \emptyset$ in the first condition and for all $s, s' \in [0,t]$,
\begin{align}
	\norm{\Cl(t,s) - \Cl(t,s')} & \leq \sqrt{\pCl(T)\cstspaceS } \cdot |s-s'|\, . \label{eq:lip-continuity-Cl}
\end{align}
\end{definition}
Next we consider the function pairs $(\Ct, \Rt)$ when $t, s \in[ 0,T]$ and $\Ct, \Rt: [0, T]^2 \to \reals^{k \times k}$. 
\begin{definition}[The function pair spaces $\wb{\mathcal{S}}$ and $\wb{\mathcal{S}}_{\mathrm{cont}}$] \label{def:space-S-dual}
Letting $Y = (\Ct, \Rt)$, we consider the following  space  (depending on a constant $\cstdualspaceS$)
\begin{align}
	\wb{\mathcal{S}}:= \wb{\mathcal{S}}(\pRt, \pRl, \pCt, \pCl, \cstdualspaceS, T)
\end{align} 
for all $Y$ such that
\begin{enumerate}
	\item $\Ct(t,s)$ is a covariance kernel  (satisfying in particular 
	$\Ct(t,s)^{\sT} = \Ct(s,t)$), such that $\norm{\Ct(t,t)} \leq \pCt(t)$
	for all $t \in [0, T]$ and
	\begin{align}
		\Ct(0,0) = 	\mathbb{E} \brk{\vt^0 {\vt^0}^{\sT}} \, .
	\end{align} 
	$\Ct(t,s)$ is continuous for all $s \leq t$ and $s \leq t$ and $s, t \in [0,T] \backslash P$
	where $P$ is a finite set. Moreover, for any $s \leq t$ such that $\bCt(t,s)$ is continuous in 
	$[s,t]^2$, 
	\begin{align}
		&\norm{\Ct(t,t) - \Ct(t,s)-\Ct(s,t) + \Ct(s,s)} \leq \cstdualspaceS (t-s)^2 \, .  \label{eq:space-condition-Ct}
	\end{align}
	\item $\Rt(t,s)$ is measurable and $\Rl(t,s) = 0$ when $t < s$. It  also satisfies for any $s \leq t$ and $s,t \in [0,T]$
	\begin{align}
		\norm{\Rt(t,s)} \leq \pRt(t-s) \, .\label{eq:RtBoundDef}
	\end{align} 
\end{enumerate}
Moreover, we define the space $\wb{\mathcal{S}}_{\mathrm{cont}} \subset \wb{\mathcal{S}}$ of all $X$ such that $P= \emptyset$ in the first condition and for all $s, s' \in [0,t]$,
\begin{align}
	\norm{\bCt(t,s) - \bCt(t,s')} & \leq  \sqrt{\pCt(T)\cstdualspaceS } \cdot |s-s'|\, . \label{eq:lip-continuity-Ct}
\end{align}
\end{definition}
The following lemma shows the stochastic processes $\theta^t, r^t$ and the 
functional derivatives $\partial \theta^t/\partial u^s, \partial \ell_t(r^t;z) / \partial w^s$ 
are well-defined whenever $(\Cl, \Rl, \Gamma) \in \mathcal{S}$ and $(\Ct, \Rt) \in \wb{\mathcal{S}}$.
Its proof can be found in Appendix~\ref{proof:well-defined-paths}.
\begin{lemma} \label{lem:well-defined-paths}
For any fixed $T > 0$, initialization $\theta^0$ and  $(\Cl, \Rl, \Gamma) \in \mathcal{S}$,
$(\Ct, \Rt) \in \wb{\mathcal{S}}$, the functions $\theta^t, r^t, \partial \theta^t/\partial u^s, \partial \ell_t(r^t;z) / \partial w^s$ are uniquely defined by Eqs.~\eqref{eq:def-theta}, \eqref{eq:def-r}, \eqref{eq:def-derivative-t} and \eqref{eq:def-derivative-l}.
\end{lemma}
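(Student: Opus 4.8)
The plan is to fix a realization of the centered Gaussian processes $u\sim\mathsf{GP}(0,C_\ell/\delta)$ and $w\sim\mathsf{GP}(0,C_\theta)$ and then to solve the four equations \eqref{eq:def-theta}, \eqref{eq:def-r}, \eqref{eq:def-derivative-t}, \eqref{eq:def-derivative-l} one at a time, each by Banach's fixed-point theorem in the exponentially weighted norm $\normlbd{\cdot}{T}$ with $\lambda$ taken large enough to make the relevant Volterra operator a contraction. The first step is to record that $u$ and $w$ admit sample paths that are bounded and measurable on $[0,T]$ and continuous off the finite exceptional set $P$ of Definitions~\ref{def:space-S} and \ref{def:space-S-dual}: since $C_\ell$ and $C_\theta$ are covariance kernels with $\norm{C_\ell(t,t)}\le\pCl(t)$, $\norm{C_\theta(t,t)}\le\pCt(t)$ and second-order increments controlled by \eqref{eq:space-condition-Cl} and \eqref{eq:space-condition-Ct}, one gets $\norm{u^t-u^s}_{L^2}\le C\,|t-s|$ (and likewise for $w$) on any interval avoiding $P$; Gaussianity upgrades this to an $L^p$ bound for every $p$, and Kolmogorov's continuity criterion then supplies the desired modification on each subinterval between consecutive points of $P$. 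This regularity is exactly what makes the integrals in \eqref{eq:def-theta}--\eqref{eq:def-derivative-l} meaningful.

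Given such sample paths I would solve for $\theta$ first. Writing \eqref{eq:def-theta} in integral form,
\begin{align*}
\theta^t=\theta^0+\int_0^t\Big(-(\Lambda^s+\Gamma^s)\theta^s-\int_0^s R_\ell(s,s')\,\theta^{s'}\,\de s'+u^s\Big)\de s,
\end{align*}
the right-hand side is an affine self-map of $C([0,T],\reals^k)$ whose linear part is governed by the bounded kernels $\Lambda^s+\Gamma^s$ (operator norm $\le\cstlbd+\cstloss$) and $R_\ell(s,s')$ (operator norm $\le\pRl(T)$). The elementary bound $e^{-\lambda t}\int_0^t\norm{f(s)}\,\de s=\int_0^t e^{-\lambda(t-s)}\big(e^{-\lambda s}\norm{f(s)}\big)\,\de s\le\lambda^{-1}\normlbd{f}{T}$ makes this map a $\normlbd{\cdot}{T}$-contraction as soon as $\lambda$ exceeds a threshold depending only on $T$, $\cstlbd$, $\cstloss$, $\pRl(T)$, hence it has a unique fixed point, which is automatically continuous (indeed $C^1$ off $P$) because it solves the differential equation. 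The equation \eqref{eq:def-derivative-t} for $\partial\theta^t/\partial u^s$ with boundary value $I$ at $t=s$ has the identical structure but is deterministic and posed on $[s,T]$; the same contraction argument applies for each fixed $s$, and continuous dependence on the parameter $s$ yields joint measurability in $(t,s)$.

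Next I would solve \eqref{eq:def-r} for $r$: with $w$ and $z$ fixed, the map $r\mapsto\big(t\mapsto-\tfrac1\delta\int_0^t R_\theta(t,s)\,\ell_s(r^s;z)\,\de s+w^t\big)$ sends the space of bounded measurable $\reals^k$-valued functions on $[0,T]$ into itself --- using $\norm{R_\theta(t,s)}\le\pRt(T)$ and $\norm{\ell_s(0;z)}\le\norm{\ell_0(0;z)}+\cstloss T$, which follows from the Lipschitz-in-$t$ property --- and, because $\ell_s(\,\cdot\,;z)$ is $\cstloss$-Lipschitz uniformly in $s$, it is a $\normlbd{\cdot}{T}$-contraction for large $\lambda$; Banach gives the unique $r$. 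Finally, since $r$ (and hence the continuous, $\cstloss$-bounded map $t\mapsto\nabla_r\ell_t(r^t;z)$) is now determined, \eqref{eq:def-derivative-l} reads, for each fixed $s$, as a linear Volterra integral equation for $t\mapsto\partial\ell_t(r^t;z)/\partial w^s$ on $[s,T]$ with bounded forcing term $-\tfrac1\delta\nabla_r\ell_t(r^t;z)\,R_\theta(t,s)\,\nabla_r\ell_s(r^s;z)$ and Volterra part of operator norm at most $\cstloss\,\pRt(T)/\delta$; the same weighted-norm contraction closes it. I expect no real obstacle beyond bookkeeping the $T$-dependent constants: the one genuinely delicate point is the sample-path regularity of $u,w$ in the first step --- which is precisely why the quadratic increment bounds \eqref{eq:space-condition-Cl} and \eqref{eq:space-condition-Ct} were built into the definitions of $\mathcal{S}$ and $\wb{\mathcal{S}}$ --- and the three remaining fixed-point arguments are routine.
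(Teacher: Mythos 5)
Your proposal is correct and follows essentially the same route as the paper's proof in Appendix~\ref{proof:well-defined-paths}: sample-path regularity of $u$ and $w$ via the quadratic increment bounds and the Kolmogorov continuity criterion, followed by Banach fixed-point arguments in the exponentially weighted norm $\normlbd{\cdot}{T}$ for each of the four equations, with $\lambda$ chosen large enough to make the Volterra-type maps contractions. The only (harmless) difference is that you treat the finite exceptional set $P$ explicitly by working on subintervals, whereas the paper elides this point.
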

We endow $\mathcal{S}$ and $\wb{\mathcal{S}}$ with distances. To begin with, we define the $(\lambda,T)$-distance for two Gaussian processes $\vu_1$ and $\vu_2$ on $\reals^k$ by the following formula
\begin{align}
\lbddst{\vu_1}{\vu_2} := \inf_{(\vu_1,\vu_2) \sim \gamma \in \Gamma(\vu_1,\vu_2)} \sup_{t \in [0, T]} e^{-\lambda t} \sqrt{\mathbb{E} \brk{\norm{\vu^t_1 - \vu^t_2}_2^2}} \, , 
\end{align}
where $\lambda > 0$ and $\Gamma(u_1, u_2)$ is the collection of all couplings between the two Gaussian processes $u_1, u_2$. \rev{For} any pair of positive semi-definite kernel functions $C^1, C^2 :[0,T]^2 \to \reals^{k \times k}$, we define their $(\lambda, T)$-distance by
\begin{align}
\lbddst{C^1}{C^2} := \lbddst{g_1}{g_2} \, ,
\end{align}
where $g_1$ and $g_2$ are two centered Gaussian processes with covariance kernels 
$C^1$ and $C^2$. This distance satisfies the triangle inequality by
Minkowski inequality.
For function pairs $\vRl^i, \vGamma_i, \vRt^i$, $i=1,2$, we overload the notation 
$\lbddst{\cdot }{\cdot }$ to define distances
\begin{subequations}
\begin{align}
	\lbddst{\vRl^1}{\vRl^2} &:= \sup_{0 \leq s<t \leq T} e^{-\lambda t} \norm{\vRl^1(t,s) - \vRl^2(t,s)} \, , \label{eq:def-distance-Rl}\\
	\lbddst{\vGamma_1}{\vGamma_2} & := \sup_{0 \leq t \leq T} e^{-\lambda t} \norm{\vGamma^t_1 - \vGamma^t_2} \, , \label{eq:def-distance-Gamma}
\end{align}
\end{subequations}
and similarly for $\lbddst{\vRt^1}{\vRt^2}$.
It can be seen the triangle inequality still holds.
Finally, for any $X_i = (\Cl^i, \Rl^i, \Gamma_i) \in \mathcal{S}$ and 
$Y_i = (\Ct^i, \Rt^i) \in \wb{\mathcal{S}}$ and $i=1,2$, we define the distances
\begin{subequations}
\begin{align}
	\lbddst{X_1}{X_2} & := \lbddst{\vCl^1}{\vCl^2} + \lbddst{\vRl^1}{\vRl^2} + \lbddst{\vGamma_1}{\vGamma_2} \, \label{eq:distance-spaceS} , \\
	\lbddst{Y_1}{Y_2} & := \lbddst{\vCt^1}{\vCt^2} + \lbddst{\vRt^1}{\vRt^2} \, . \label{eq:distance-dualspaceS}
\end{align}
\end{subequations}

\subsection{Part III: The contraction mapping $\mathcal{T}$} 
In this part, we will define the mapping $\mathcal{T}: \mathcal{S} \to \mathcal{S}$ 
such that any solution of $\mathfrak{S}$ must be a fixed-point of $\mathcal{T}$ \rev{and} show the mapping is a contraction. We will define $\trsfrm$ by
\begin{align}
\trsfrm (X) := \trsfrmB \circ \trsfrmA (X) \, ,
\end{align}
where $\trsfrmA: (\Cl, \Rl, \Gamma) \mapsto (\bCt, \bRt)$ and $\trsfrmB: (\bCt, \bRt) \mapsto \wb{X}: = (\bCl, \bRl, \wb{\Gamma})$, so that $\mathcal{T}: X = (\Cl, \Rl, \Gamma) \mapsto \wb{X} = (\bCl, \bRl, \wb{\Gamma})$. Specifically, the mapping $\trsfrmA$ is defined by first solving Eqs.~\eqref{eq:def-theta} and \eqref{eq:def-derivative-t}
\begin{align}
\frac{\de}{\de t} \theta^t & = -(\Lambda^t + \Gamma^t) \theta^t - \int_0^t R_{\ell}(t,s) \theta^s \de s + u^t \, , & & u^t \sim \mathsf{GP}(0, \Cl/\delta) \, , \label{eq:trsfrm-t}\\
\frac{\de}{\de t} \frac{\partial \theta^t}{\partial u^s} & = -(\Lambda^t + \Gamma^t) \frac{\partial \theta^t}{\partial u^s} - \int_s^t R_{\ell}(t,s') \frac{\partial \theta^{s'}}{\partial u^s} \de s'\, , & & 0 \leq s \leq t \leq T \, , \label{eq:trsfrm-derivative-t}
\end{align}
with boundary condition $\partial\theta^t / \partial u^t = I$. We know $\vt^t$ and $\partial \vt^t / \partial u^s$ are uniquely defined by Lemma~\ref{lem:well-defined-paths}. This allows us to define the mapping $\trsfrmA(\Cl, \Rl, \Gamma) = (\bCt, \bRt)$ via
\begin{align}
\bCt(t,s)  & = \Ep \brk{\theta^t {\theta^s}^{\sT}}\, , & & 0 \leq s \leq t \leq T \, , \label{eq:trsfrm-Ct}\\
\bRt(t,s)  & =\Ep \brk{\frac{\partial \theta^t}{\partial u^s}} \, , & & 0 \leq s \leq t \leq T \, ,  \label{eq:trsfrm-Rt}
\end{align}
with the convention $\bCt(t,s) = \bRt(t,s) = 0$ for all $t < s$. Similarly, we define $\trsfrmB (\bCt, \bRt) = (\bCl, \bRl, \wb{\Gamma})$ through Eqs.~\eqref{eq:def-r} and \eqref{eq:def-derivative-l}, namely
\begin{align}
r^t & = - \frac{1}{\delta} \int_0^t \bRt(t,s) \ell_s(r^s; z) \de s + w^t \, , & & w^t \sim \mathsf{GP}(0, \bCt) \, , \label{eq:trsfrm-r} \\
\frac{\partial \ell_t(r^t;z)}{\partial w^s} & = \nabla_r \ell_t(r^t;z) \cdot \prn{- \frac{1}{\delta} \int_s^t \bRt(t,s') \frac{\partial \ell_{s'}(r^{s'};z)}{\partial w^s}  \de s' - \frac 1 \delta \bRt(t,s) \nabla_r \ell_s(r^s;z) } \, , & & 0 \leq s < t \leq T \, . \label{eq:trsfrm-derivative-l}
\end{align}
The random functions $r^t, \partial \vl_t(r^t;z) / \partial w^s$ are uniquely defined by Lemma~\ref{lem:well-defined-paths}. The mapping then determined by setting
\begin{align}
\bCl(t,s) & =   \Ep \brk{\ell_t(r^t;z) \ell_s(r^s;z)^{\sT}} \, ,  & & 0 \leq s \leq t < T \, , \label{eq:trsfrm-Cl} \\
\bRl(t,s) & =  \Ep \brk{\frac{\partial \ell_t(r^t;z)}{\partial w^s}} \, , & &0 \leq s < t < T \, , \label{eq:trsfrm-Rl} \\
\bGamma^t & = \Ep \brk{\nabla_r \ell_t(r^t;z)}\, , & & 0 \leq t \leq T \, , \label{eq:trsfrm-Gamma}
\end{align}
and on regions in $\realsp \times \realsp$ that are not defined for $\bCl, \bRl$, we set their values to be zero. The next lemma shows we can choose $\pRt(0)$ and $\pCt(0)$ large enough such that $\trsfrmA: \mathcal{S} \to \wb{\mathcal{S}}$ and $\trsfrmB: \wb{\mathcal{S}} \to \mathcal{S}$. Its proof can be found in Appendix~\ref{proof:solution-in-space}.
\begin{lemma} \label{lem:solution-in-space}
Under the same assumptions of Theorem~\ref{thm:UniqueExist}, suppose $\pCt(0) >\cstthetaz$ and 
$\pRt(0) > 1 $. Then there exist constants 
$\cstspaceS$,  $\cstdualspaceS$ such that
$\trsfrmA$ maps $\mathcal{S}$ into $\wb{\mathcal{S}}_{\mathrm{cont}} \subset 
\wb{\mathcal{S}}$ and $\trsfrmB$ maps $\wb{\mathcal{S}}$ into $\mathcal{S}_{\mathrm{cont}} \subset \mathcal{S}$. In particular, this implies $\trsfrm = \trsfrmB \circ \trsfrmA$ maps $\mathcal{S}$ into $\mathcal{S}_{\mathrm{cont}} \subset \mathcal{S}$.
\end{lemma}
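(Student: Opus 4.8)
The strategy is to verify, condition by condition, that the output of $\trsfrmA$ lies in $\wb{\mathcal{S}}_{\mathrm{cont}}$ and that of $\trsfrmB$ in $\mathcal{S}_{\mathrm{cont}}$, by first deriving a priori $L^2$ bounds on the processes involved and then reading off from them the covariance, pointwise, second-order-difference and Lipschitz requirements. That $\bCt$ and $\bCl$ are covariance kernels is immediate since they have the form $\Ep[XX^\sT]$; their values at the origin and $\bGamma^0$ equal the prescribed quantities by direct computation, using the boundary constraints built into the definitions of $\mathcal{S}$ and $\wb{\mathcal{S}}$ together with $r^0=w^0\sim\normal(0,\bCt(0,0))=\normal(0,\Ep[\theta^0{\theta^0}^\sT])$ (the integral term in \eqref{eq:trsfrm-r} vanishing at $t=0$).

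\textbf{A priori norm bounds.} For $\trsfrmA$ I would integrate \eqref{eq:trsfrm-t}, take $L^2$ norms, and use $\norm{\bCt(t,t)}\le\Ep[\normtwo{\theta^t}^2]$, $\Ep[\normtwo{u^t}^2]=\tfrac1\delta\Tr(\Cl(t,t))\le\tfrac k\delta\pCl(t)$ (from $X\in\mathcal{S}$), $\norm{\Lambda^t+\Gamma^t}\le\cstlbd+\cstloss$, and $\norm{\Rl(t,s)}\le\pRl(t-s)$; after Minkowski and Cauchy--Schwarz one finds that $g(t):=\sqrt{\Ep[\normtwo{\theta^t}^2]}$ satisfies an integral inequality majorized by the system \eqref{eq:def-pCt} defining $\sqrt{\pCt}$ — the factor $3$ and the weights $(t-s+1)^2$ there being exactly what absorbs those two steps. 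A nonlinear Gr\"onwall (comparison) argument, which can be initialized because $\pCt(0)>\cstthetaz\ge g(0)^2$, then yields $\Ep[\normtwo{\theta^t}^2]\le\pCt(t)$ and hence $\norm{\bCt(t,t)}\le\pCt(t)$ on $[0,T]$. Since $\partial\theta^t/\partial u^s$ is deterministic and solves \eqref{eq:trsfrm-derivative-t} with $\partial\theta^s/\partial u^s=I$, the parallel comparison with \eqref{eq:def-pRt} (using $\pRt(0)>1$) gives $\norm{\bRt(t,s)}=\norm{\partial\theta^t/\partial u^s}\le\pRt(t-s)$. For $\trsfrmB$ the argument is symmetric: from \eqref{eq:trsfrm-r}, the Lipschitz bound $\normtwo{\ell_t(r^t;z)}\le\normtwo{\ell_t(0;z)}+\cstloss\normtwo{r^t}$ (with $\sup_t\Ep[\normtwo{\ell_t(0;z)}^2]\le\cstthetaz$), $\Ep[\normtwo{w^t}^2]\le k\norm{\bCt(t,t)}\le k\pCt(t)$ and $\norm{\bRt(t,s)}\le\pRt(t-s)$ (from $Y\in\wb{\mathcal{S}}$), a comparison against the implicit Volterra system \eqref{eq:def-pCl} gives $\Ep[\normtwo{\ell_t(r^t;z)}^2]\le\pCl(t)$, hence $\norm{\bCl(t,t)}\le\pCl(t)$; \eqref{eq:trsfrm-derivative-l} with $\norm{\nabla_r\ell_t}\le\cstloss$ and a comparison with \eqref{eq:def-pRl} gives $\norm{\bRl(t,s)}\le\pRl(t-s)$; and $\norm{\bGamma^t}\le\cstloss$ is immediate.

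\textbf{Regularity.} The second-order conditions \eqref{eq:space-condition-Ct}, \eqref{eq:space-condition-Cl} rest on the identity $\bCt(t,t)-\bCt(t,s)-\bCt(s,t)+\bCt(s,s)=\Ep[(\theta^t-\theta^s)(\theta^t-\theta^s)^\sT]$ (and the same for $\bCl$, with the increments of $\theta$ replaced by those of $t\mapsto\ell_t(r^t;z)$), so they reduce to increment bounds $\Ep[\normtwo{\theta^t-\theta^s}^2]\le\cstdualspaceS(t-s)^2$ and $\Ep[\normtwo{\ell_t(r^t;z)-\ell_s(r^s;z)}^2]\le\cstspaceS(t-s)^2$. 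For $\theta$ this follows by writing $\theta^t-\theta^s=\int_s^t\tfrac{\de}{\de \tau}\theta^\tau\de\tau$, bounding each of the three contributions ($-(\Lambda+\Gamma)\theta$, the $R_\ell$-convolution, $\int_s^t u^\tau\de\tau$) by a multiple of $t-s$ using the uniform-on-$[0,T]$ bounds from the previous step, and collecting by Minkowski; the constant that emerges is $\cstdualspaceS$. The same estimates yield continuity of $\bCt$ everywhere (so $P=\emptyset$) and, via $\norm{\bCt(t,s)-\bCt(t,s')}\le\sqrt{\Ep[\normtwo{\theta^t}^2]}\sqrt{\Ep[\normtwo{\theta^s-\theta^{s'}}^2]}$, the Lipschitz bound \eqref{eq:lip-continuity-Ct}; hence $\trsfrmA(\mathcal{S})\subset\wb{\mathcal{S}}_{\mathrm{cont}}$. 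For $\trsfrmB$ one needs the analogous increment bound $\Ep[\normtwo{r^t-r^s}^2]\le C(t-s)^2$ (with $C$ a $T$-dependent constant): writing $r^t-r^s=(w^t-w^s)-\tfrac1\delta\big(\int_0^t\bRt(t,u)\ell_u(r^u;z)\de u-\int_0^s\bRt(s,u)\ell_u(r^u;z)\de u\big)$, the Gaussian increment is controlled by the second-order-difference property \eqref{eq:space-condition-Ct} of the input kernel $\bCt$ (giving $\Ep[\normtwo{w^t-w^s}^2]\le k\cstdualspaceS(t-s)^2$), the integral over $[s,t]$ by the uniform boundedness of $\bRt$ and of $\ell$, and the remaining difference $\bRt(t,\cdot)-\bRt(s,\cdot)$ on $[0,s]$ by the Lipschitz regularity in the first argument of $\bRt$ — which, for the composition $\trsfrm=\trsfrmB\circ\trsfrmA$, holds because $\bRt$ then solves \eqref{eq:trsfrm-derivative-t} and is therefore Lipschitz in $t$, uniformly on $[0,T]$. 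Feeding this into $\normtwo{\ell_t(r^t;z)-\ell_s(r^s;z)}\le\cstloss(\normtwo{r^t-r^s}+|t-s|)$ delivers the second-order bound for $\bCl$ with constant $\cstspaceS$ and the Lipschitz bound \eqref{eq:lip-continuity-Cl}.

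\textbf{Main obstacle.} The conceptual core, and the place demanding the most care, is the nonlinear Gr\"onwall / comparison step against the coupled, partly implicit majorant system \eqref{eq:def-pRt}--\eqref{eq:def-pCl}: one must check that the explicit constants $\cstspaceS,\cstdualspaceS$ and the weights in \eqref{eq:def-pCt}--\eqref{eq:def-pCl} are calibrated so that every Minkowski / Cauchy--Schwarz step is dominated, and that nondecreasing majorants $\pRt,\pRl,\pCt,\pCl$ with the stated growth exist — the latter being exactly Lemma~\ref{lem:Exist-General-ODE}. The second delicate point is the $L^2$-increment estimate for $r^t$, which, unlike the $C^1$ flow for $\theta^t$, must be routed through the regularity of the input kernel $\bCt$ and of $\bRt$.
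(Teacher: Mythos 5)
Your proposal follows the same overall architecture as the paper's proof: derive $L^2$ bounds by comparison with the majorant system \eqref{eq:def-pRt}--\eqref{eq:def-pCl} (using $\pCt(0)>\cstthetaz$, $\pRt(0)>1$ as comparison initial data), then deduce the second-order-difference and Lipschitz requirements from increment estimates. For $\trsfrmA$ the two arguments agree essentially line for line. Where you diverge — and actually sharpen the paper — is in the second-order bound for $\bCl$ under $\trsfrmB$.

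You correctly note that
\begin{align}
r^t - r^s
\; = \;
-\frac{1}{\delta}\int_s^t \bRt(t,s')\,\ell_{s'}(r^{s'};z)\,\de s'
\;-\;\frac{1}{\delta}\int_0^s \bigl(\bRt(t,s') - \bRt(s,s')\bigr)\ell_{s'}(r^{s'};z)\,\de s'
\;+\;\bigl(w^t - w^s\bigr)\,,
\end{align}
and that the middle integral cannot be controlled by $C\,(t-s)$ for a generic $Y=(\bCt,\bRt)\in\wb{\mathcal{S}}$, since Definition~\ref{def:space-S-dual} imposes only the pointwise decay $\norm{\bRt(t,s)}\le\pRt(t-s)$ and no modulus of continuity in the first argument. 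The paper's proof of Eq.~\eqref{eq:space-condition-Cl} bounds
$\norm{r^t-r^s}_2\le \frac1\delta\int_s^t\pRt(t-s')\norm{\ell_{s'}(r^{s'};z)}_2\,\de s'+\norm{w^t-w^s}_2$
and thereby silently drops that middle term; as written, this step is a gap. Your patch — invoking the fact that when $\bRt=\trsfrmA(X)$ it solves Eq.~\eqref{eq:trsfrm-derivative-t}, hence is uniformly Lipschitz in its first argument — correctly repairs the estimate, but it only establishes the claim for the composition $\trsfrm=\trsfrmB\circ\trsfrmA$, not the stated assertion that $\trsfrmB$ maps all of $\wb{\mathcal{S}}$ into $\mathcal{S}_{\mathrm{cont}}$. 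That weaker conclusion is exactly what the Banach fixed-point argument in Section~\ref{sec:proof:UniqueExist} requires, so the theorem is unaffected; but to prove the lemma exactly as stated one would either have to add a modulus-of-continuity-in-$t$ condition on $\bRt$ to $\wb{\mathcal{S}}$, or restate the lemma for the composition (equivalently for $\wb{\mathcal{S}}_{\mathrm{cont}}$ suitably augmented). It is worth flagging this, since it is not a stylistic choice but a genuine discrepancy between the paper's written estimate and what the hypotheses on $\wb{\mathcal{S}}$ actually yield.
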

In what follows it will be understood that constants 
$\cstspaceS$,  $\cstdualspaceS$ are chosen as in the proof of Lemma~\ref{lem:solution-in-space}.

Next, we want to show $\mathcal{T}$ is a contraction mapping under the
$\lbddst{\cdot}{\cdot}$ metric defined in Eq.~\eqref{eq:distance-spaceS}. 
To this end, we need the following lemmas for the transformation $\trsfrmA$.

\begin{lemma} \label{lem:diff-Cl-global}
Suppose $X_1 = (\Cl^1, \Rl^1, \Gamma_1), X_2 = (\Cl^2, \Rl^2, \Gamma_2) \in \mathcal{S}$, and further $\vRl^1 = \vRl^2$ on $[0,T]^2$ and $\Gamma_1 = \Gamma_2$ on $[0,T]$. Let $\trsfrmA(X_i) = (\bCt^i, \bRt^i)$ for $i=1,2$, then we have for any $\epsilon > 0$,
\begin{subequations}
	\begin{align}
		\lbddst{\bCt^1}{\bCt^2} & \leq \epsilon \cdot \lbddst{\vCl^1}{\vCl^2} \, , \\
		\bRt^1 & = \bRt^2 \, ,
	\end{align}
\end{subequations}
for all $\lambda \geq \wb{\lambda}_1 := \wb{\lambda}_1(\eps, \mathcal{S})$.
\end{lemma}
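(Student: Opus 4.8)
The plan is to leverage that $X_1$ and $X_2$ differ only in their $\Cl$-component, while the data $\Lambda^t$, $\Gamma^t$, $\Rl$ entering the linear equations \eqref{eq:trsfrm-t} and \eqref{eq:trsfrm-derivative-t} that define $\trsfrmA$ are identical. First I would note that \eqref{eq:trsfrm-derivative-t}, together with the boundary condition $\partial\theta^t/\partial u^t = I$, involves only $\Lambda^t$, $\Gamma^t$ and $\Rl$; since $\Gamma_1 = \Gamma_2$ and $\Rl^1 = \Rl^2$ on $[0,T]^2$, the uniqueness part of Lemma~\ref{lem:well-defined-paths} forces $\partial\theta_1^t/\partial u^s = \partial\theta_2^t/\partial u^s =: \Phi(t,s)$, a deterministic matrix-valued function. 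Taking expectations in \eqref{eq:trsfrm-Rt} yields $\bRt^1 = \bRt^2$, which is the second claim. Moreover $\Phi(t,s) = \bRt^i(t,s)$, and since $\trsfrmA(X_i) \in \wb{\mathcal{S}}$ by Lemma~\ref{lem:solution-in-space}, we have the uniform bound $\norm{\Phi(t,s)} \le \pRt(t-s) \le \pRt(T)$ for all $0 \le s \le t \le T$.

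For the $\bCt$-bound, fix any coupling of the Gaussian processes $u_1 \sim \mathsf{GP}(0,\Cl^1/\delta)$ and $u_2 \sim \mathsf{GP}(0,\Cl^2/\delta)$ on $[0,T]$, run both flows \eqref{eq:trsfrm-t} from the common initial condition $\theta^0$, and set $\Delta^t := \theta_1^t - \theta_2^t$. Subtracting the two copies of \eqref{eq:trsfrm-t}, $\Delta^t$ solves the same linear Volterra integro-differential equation with zero initial condition and forcing $u_1^t - u_2^t$, so variation of parameters gives
\begin{align*}
	\Delta^t = \int_0^t \Phi(t,s)\, (u_1^s - u_2^s)\, \de s \, ,
\end{align*}
an identity one verifies by differentiating the right-hand side in $t$, using that $\Phi$ solves \eqref{eq:trsfrm-derivative-t}, and swapping the order of integration by Fubini in the resulting term $\int_0^t R_{\ell}(t,s') \int_0^{s'} \Phi(s',s)(u_1^s - u_2^s)\,\de s\, \de s'$. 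Writing $d_u := \sup_{t\in[0,T]} e^{-\lambda t}\sqrt{\Ep\brk{\normtwo{u_1^t - u_2^t}^2}}$ for the chosen coupling, Minkowski's integral inequality and $\norm{\Phi(t,s)} \le \pRt(T)$ give
\begin{align*}
	\sqrt{\Ep\brk{\normtwo{\Delta^t}^2}} \le \pRt(T) \int_0^t \sqrt{\Ep\brk{\normtwo{u_1^s - u_2^s}^2}}\,\de s \le \pRt(T)\, d_u \int_0^t e^{\lambda s}\,\de s \le \frac{\pRt(T)}{\lambda}\, d_u\, e^{\lambda t} \, ,
\end{align*}
hence $\sup_{t\in[0,T]} e^{-\lambda t}\sqrt{\Ep\brk{\normtwo{\Delta^t}^2}} \le \pRt(T)\, d_u/\lambda$.

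To turn this into a bound on $\lbddst{\bCt^1}{\bCt^2}$, I would pass from the coupling $(\theta_1,\theta_2)$ to the jointly centered Gaussian process $(g_1,g_2)$ with the same first and second moments; this is legitimate since the joint second-moment kernel of $(\theta_1,\theta_2)$ is positive semidefinite, $g_i \sim \mathsf{GP}(0,\bCt^i)$ are the correct marginals, and $\Ep\normtwo{g_1^t - g_2^t}^2 = \Ep\normtwo{\theta_1^t - \theta_2^t}^2$ for every $t$ (here one uses $\Ep\theta_1^t = \Phi(t,0)\,\Ep\theta^0 = \Ep\theta_2^t$). Consequently $\lbddst{\bCt^1}{\bCt^2} \le \sup_{t\in[0,T]} e^{-\lambda t}\sqrt{\Ep\brk{\normtwo{\Delta^t}^2}} \le \pRt(T)\, d_u/\lambda$ for \emph{every} coupling of $(u_1,u_2)$; taking the infimum over such couplings and using that rescaling by $\delta^{-1/2}$ turns a coupling of $\mathsf{GP}(0,\Cl^1)$, $\mathsf{GP}(0,\Cl^2)$ into one of $\mathsf{GP}(0,\Cl^1/\delta)$, $\mathsf{GP}(0,\Cl^2/\delta)$, so that $\lbddst{u_1}{u_2} \le \delta^{-1/2}\lbddst{\Cl^1}{\Cl^2}$, we obtain
\begin{align*}
	\lbddst{\bCt^1}{\bCt^2} \le \frac{\pRt(T)}{\lambda\sqrt{\delta}}\, \lbddst{\Cl^1}{\Cl^2} \, .
\end{align*}
Taking $\wb{\lambda}_1(\epsilon,\mathcal{S}) := \pRt(T)/(\epsilon\sqrt{\delta})$, which depends only on the data fixing $\mathcal{S}$, makes the prefactor at most $\epsilon$ for all $\lambda \ge \wb{\lambda}_1$, completing the proof.

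The step I expect to require the most care is the variation-of-parameters representation of $\Delta^t$: one must check that the $\Phi$ produced by \eqref{eq:trsfrm-derivative-t} is genuinely the resolvent kernel of the linear Volterra operator in \eqref{eq:trsfrm-t}, justifying the differentiation under the integral sign and the Fubini exchange via the local boundedness of $\Rl$ and of $\Phi$ built into $\mathcal{S}$ and $\wb{\mathcal{S}}$; the remaining estimates are routine Gr\"onwall-type bounds. The Gaussianization step, though elementary, also deserves a careful statement since the kernels $\bCt^i$ are uncentered second-moment kernels rather than centered covariances.
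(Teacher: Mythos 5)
Your proof is correct, and while the treatment of $\bRt^1=\bRt^2$ coincides with the paper's (both observe that \eqref{eq:trsfrm-derivative-t} involves only the shared data $\Lambda,\Gamma,\Rl$ and invoke uniqueness), your derivation of the $\bCt$ bound takes a genuinely different route. The paper works with the pathwise differential inequality for $\|\theta_1^t-\theta_2^t\|_2$, runs a Gr\"onwall-type argument on $\sup_{s\le t}e^{-\wb\lambda s}\|\theta_1^s-\theta_2^s\|_2$, and then applies Cauchy--Schwarz with the weight $(t-s+1)^{-2}$ before taking expectations; the small prefactor comes out as $\int_0^\infty e^{-2(\lambda-2(\cstlbd+\cstloss)-\wb\lambda)t}(t+1)^2\,\de t$. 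You instead identify $\Phi(t,s)=\partial\theta^t/\partial u^s$ as the resolvent kernel of the linear Volterra operator, write $\Delta^t=\int_0^t\Phi(t,s)(u_1^s-u_2^s)\,\de s$ by a Duhamel/uniqueness argument, and conclude by Minkowski's integral inequality with the uniform bound $\|\Phi(t,s)\|\le\pRt(T)$ from Lemma~\ref{lem:solution-in-space}, giving the prefactor $\pRt(T)/(\lambda\sqrt{\delta})$. Your variation-of-parameters identity is sound: differentiating $\int_0^t\Phi(t,s)v^s\,\de s$ and applying Fubini to the double integral shows it solves the same equation as $\Delta^t$ with zero initial condition, and the justification goes through because $\Phi$ and $\partial_t\Phi$ are bounded on $[0,T]^2$ and $u_i$ is integrable. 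What your route buys is a structurally cleaner argument that makes the role of the response function explicit (it literally is the impulse response to the forcing $u$), at the cost of having to verify the resolvent identity; the paper's route avoids that identity but requires the more delicate interchange of $\sup$, differentiation and expectation. The final Gaussianization step — passing to a jointly centered Gaussian pair with the second-moment kernel of $(\theta_1,\theta_2)$ — is the same in both, and your remark that the means cancel is correct (in fact $\Ep\|g_1^t-g_2^t\|^2=\Ep\|\theta_1^t-\theta_2^t\|^2$ holds directly from matching second moments). Taking the exact infimum over couplings of $(u_1,u_2)$ rather than a near-optimal coupling, as the paper does, only improves the constant by a factor of $2$ and is immaterial.
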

We defer its proof to Appendix~\ref{proof:diff-Cl-global}.
\begin{lemma} \label{lem:diff-Rl-global}
Suppose $X_1 = (\Cl^1, \Rl^1, \Gamma_1), X_2 = (\Cl^2, \Rl^2, \Gamma_2) \in \mathcal{S}$ and $\vCl^1 = \vCl^2$ on $[0, T]^2$. Let $\trsfrmA(X_i) = (\bCt^i, \bRt^i)$ for $i=1,2$, then we have for any $\epsilon > 0$,
\begin{subequations}
	\begin{align}
		\lbddst{\bCt^1}{\bCt^2} &\leq  \epsilon \cdot \prn{\lbddst{\vRl^1}{\vRl^2} + \lbddst{\vGamma_1}{\vGamma_2}}  \, , \\
		\lbddst{\bRt^1}{\bRt^2} & \leq  \epsilon \cdot \prn{ \lbddst{\vRl^1}{\vRl^2} + \lbddst{\vGamma_1}{\vGamma_2}} \, .
	\end{align}
\end{subequations}
for all $\lambda \geq \wb{\lambda}_2 := \wb{\lambda}_2(\eps, \mathcal{S})$.
\end{lemma}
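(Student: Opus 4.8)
The plan is to couple the two flows through their shared data. Because $\vCl^1=\vCl^2$ on $[0,T]^2$, when we run the defining equations \eqref{eq:trsfrm-t}--\eqref{eq:trsfrm-derivative-t} of $\trsfrmA$ for $X_1$ and $X_2$ we may use the \emph{same} initialization $\vt^0$ and the \emph{same} Gaussian process $\vu\sim\mathsf{GP}(0,\vCl^1/\delta)=\mathsf{GP}(0,\vCl^2/\delta)$, so that $\vt_1^t$ and $\vt_2^t$ differ only through their drifts; moreover the functional–derivative equation \eqref{eq:trsfrm-derivative-t} is deterministic and does not involve $\vCl$ at all, so $\psi_i(t,s):=\partial\vt_i^t/\partial\vu^s$ is a deterministic function of $(\vGamma_i,\vRl^i)$ alone. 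Writing $\delta\vt^t:=\vt_1^t-\vt_2^t$ and $\delta\psi(t,s):=\psi_1(t,s)-\psi_2(t,s)$, subtracting the two copies of \eqref{eq:trsfrm-t} and adding and subtracting crossed terms gives the linear integro-differential identity
\begin{align*}
\frac{\de}{\de t}\delta\vt^t
&= -(\Lambda^t+\vGamma_1^t)\,\delta\vt^t
   -\int_0^t \vRl^1(t,s)\,\delta\vt^s\,\de s
   -(\vGamma_1^t-\vGamma_2^t)\,\vt_2^t
   -\int_0^t \bigl(\vRl^1-\vRl^2\bigr)(t,s)\,\vt_2^s\,\de s ,
\end{align*}
with $\delta\vt^0=0$, and an entirely parallel identity for $\delta\psi(\cdot,s)$ on $\{s\le t\le T\}$, with boundary value $\delta\psi(s,s)=0$ and with the forcing terms multiplied by $\psi_2(\cdot,s)$ in place of $\vt_2$. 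By Lemma~\ref{lem:solution-in-space} we have $\trsfrmA(X_i)\in\wb{\mathcal{S}}$, hence $\E[\|\vt_2^t\|_2^2]=\Tr\bCt^2(t,t)\le k\,\pCt(T)$ and $\|\psi_2(t,s)\|=\|\bRt^2(t,s)\|\le\pRt(T)$; and since $X_i\in\mathcal{S}$ we also have $\|\Lambda^t+\vGamma_i^t\|\le\cstlbd+\cstloss$ and $\|\vRl^i(t,s)\|\le\pRl(T)$ throughout $[0,T]$.

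The next step is a Gr\"onwall estimate in the weighted norms $\normlbd{\cdot}{T}$. Integrating the two difference identities from their base points, and applying Minkowski's inequality to the (random) $\vt$-equation while using the triangle inequality directly on the deterministic $\psi$-equation, one controls $\Delta(t):=(\E[\|\delta\vt^t\|_2^2])^{1/2}$ and $D(t,s):=\|\delta\psi(t,s)\|$. After multiplying by $e^{-\lambda t}$, each self-referential term contributes a factor at most $(\cstlbd+\cstloss)/\lambda$ (from the pointwise drift) or $\pRl(T)/\lambda^2$ (from the convolution), using $\int_0^t e^{-\lambda(t-s)}\de s\le 1/\lambda$ and $\int_0^t\!\int_0^{t'}e^{-\lambda(t-s)}\,\de s\,\de t'\le 1/\lambda^2$; each forcing term contributes a factor at most $C(\mathcal{S},T)/\lambda$ times $\lbddst{\vGamma_1}{\vGamma_2}$ or $\lbddst{\vRl^1}{\vRl^2}$ (here $C(\mathcal{S},T)$ is assembled from $\sqrt{k\pCt(T)}$, $\pRt(T)$ and $T$, since $\|\vGamma_1^t-\vGamma_2^t\|\le e^{\lambda t}\lbddst{\vGamma_1}{\vGamma_2}$ and $\|(\vRl^1-\vRl^2)(t,s)\|\le e^{\lambda t}\lbddst{\vRl^1}{\vRl^2}$). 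Taking $\lambda$ large enough that $(\cstlbd+\cstloss)/\lambda+\pRl(T)/\lambda^2\le\tfrac12$ absorbs the self-referential parts, yielding
\begin{align*}
\normlbd{\Delta}{T} \;\le\; \frac{C(\mathcal{S},T)}{\lambda}\bigl(\lbddst{\vRl^1}{\vRl^2}+\lbddst{\vGamma_1}{\vGamma_2}\bigr),
\qquad
\sup_{0\le s\le t\le T} e^{-\lambda t}\,D(t,s) \;\le\; \frac{C(\mathcal{S},T)}{\lambda}\bigl(\lbddst{\vRl^1}{\vRl^2}+\lbddst{\vGamma_1}{\vGamma_2}\bigr).
\end{align*}

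Finally I would translate these into the stated distance bounds. Since $\bRt^i(t,s)=\psi_i(t,s)$ is deterministic, $\lbddst{\bRt^1}{\bRt^2}=\sup_{0\le s\le t\le T}e^{-\lambda t}D(t,s)$, which gives the second claim. For the first, note that the pair of processes $(\vt_1,\vt_2)$, realized on one probability space, has a positive semidefinite joint second-moment kernel on $[0,T]$; letting $(\tilde g_1,\tilde g_2)$ be the centered Gaussian process with exactly that joint covariance, its marginals are centered Gaussian with covariance kernels $\bCt^1$ and $\bCt^2$, so $(\tilde g_1,\tilde g_2)$ is an admissible coupling in the definition of $\lbddst{\bCt^1}{\bCt^2}$, and $\E[\|\tilde g_1^t-\tilde g_2^t\|_2^2]=\E[\|\vt_1^t-\vt_2^t\|_2^2]=\Delta(t)^2$, since this quantity depends only on second moments. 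Hence $\lbddst{\bCt^1}{\bCt^2}\le\normlbd{\Delta}{T}$, which with the display above gives the first claim. Choosing $\wb{\lambda}_2(\eps,\mathcal{S})$ so that, for every $\lambda\ge\wb{\lambda}_2$, both the $\tfrac12$-absorption holds and $C(\mathcal{S},T)/\lambda\le\eps$, completes the proof.

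The step I expect to be the main obstacle is the bookkeeping of the iterated convolution terms in the weighted norm — in particular verifying that the $\delta\psi$-recursion closes under the norm $\sup_{s\le t}e^{-\lambda t}\|\delta\psi(t,s)\|$ uniformly in the base point $s$, and that the self-referential constant is genuinely below $1$ for large $\lambda$ with no hidden $T$-dependence obstructing the absorption (the factors of $T$ should enter only the harmless forcing terms).
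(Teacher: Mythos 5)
Your proposal is correct and follows essentially the same route as the paper's proof: couple the two flows through the shared Gaussian process $u$ (possible since $\vCl^1=\vCl^2$), derive the linear integro-differential identity for $\vt_1^t-\vt_2^t$ and for the deterministic functional derivatives with the forcing terms $(\vGamma_1^t-\vGamma_2^t)\vt_2^t$ and $\int_0^t(\vRl^1-\vRl^2)(t,s)\vt_2^s\,\de s$, absorb the self-referential terms via a weighted Gr\"onwall argument for $\lambda$ large, bound the forcing by $O(1/\lambda)$ times the input distances using the a priori bounds from Lemma~\ref{lem:solution-in-space}, and conclude for $\bCt$ via the joint-Gaussian coupling whose marginals have kernels $\bCt^1,\bCt^2$. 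The only cosmetic difference is that the paper squares and applies Cauchy--Schwarz with $(t-s+1)^{-2}$ weights before taking expectations, whereas you use Minkowski on $(\E\|\delta\vt^t\|_2^2)^{1/2}$ directly; both are valid for this linear equation.
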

We defer the proof to Appendix~\ref{proof:diff-Rl-global}. We next derive the lemmas for the transformation $\trsfrmB$.
\begin{lemma} \label{lem:diff-Ctheta-global}
Suppose $Y_1 = (\bCt^1, \bRt^1), Y_2 = (\bCt^2, \bRt^2) \in \wb{\mathcal{S}}$ and $\bvRt^1 = \bvRt^2$ on $[0,T]^2$. Let $\trsfrmB(Y_i) = (\bCl^i, \bRl^i, \bGamma_i)$ for $i=1,2$, then there exists a constant $M := M(\mathcal{S})$ such that
\begin{align}
	\lbddst{\bvCl^1}{\bvCl^2} \leq &  M \cdot \lbddst{\bvCt^1}{\bvCt^2} \, , \\
	\lbddst{\bvSl^1}{\bvSl^2} \leq &   M \cdot \lbddst{\bvCt^1}{\bvCt^2} \, , \\
	\lbddst{\bvGamma^1}{\bvGamma^2} \leq &  M \cdot \lbddst{\bvCt^1}{\bvCt^2} \, , 
\end{align}
for all $\lambda \geq \wb{\lambda}_3 := \wb{\lambda}_3(\mathcal{S})$.
\end{lemma}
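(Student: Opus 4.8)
\textbf{Proof proposal for Lemma~\ref{lem:diff-Ctheta-global}.}
The plan is to run a weighted Grönwall estimate in the $(\lambda,T)$-metrics, using that $\trsfrmB$ depends on its input only through the process $(r^t)_{t\le T}$ and its functional derivatives, and that $\bRt$ is the \emph{same} for $Y_1$ and $Y_2$. First, fix $\eta>0$ and choose a coupling of the centered Gaussian processes $w_1\sim\mathsf{GP}(0,\bvCt^1)$, $w_2\sim\mathsf{GP}(0,\bvCt^2)$ with $\sup_{t\in[0,T]}e^{-\lambda t}\prn{\Ep\normtwo{w_1^t-w_2^t}^2}^{1/2}\le\lbddst{\bvCt^1}{\bvCt^2}+\eta$; feeding these two noises, together with the common $\bRt$, into Eq.~\eqref{eq:trsfrm-r} produces coupled solutions $r_1,r_2$ on one probability space. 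Subtracting the two copies of \eqref{eq:trsfrm-r}, using $\normtwo{\ell_s(r_1^s;z)-\ell_s(r_2^s;z)}\le\cstloss\normtwo{r_1^s-r_2^s}$ and $\norm{\bRt(t,s)}\le\pRt(t-s)$, then taking $L^2(\Omega)$ norms (write $\norm{Y}_{L^2}:=\prn{\Ep\normtwo{Y}^2}^{1/2}$ for an $\reals^k$-valued $Y$) and weighting by $e^{-\lambda t}$, one closes
\[ A\ :=\ \sup_{t\in[0,T]}e^{-\lambda t}\norm{r_1^t-r_2^t}_{L^2}\ \le\ \frac{\cstloss}{\delta}\,\intnormlbd{\pRt}{T}\,A\ +\ \lbddst{\bvCt^1}{\bvCt^2}+\eta\, . \]
Since $\pRt,\pRl$ grow at most exponentially (Lemma~\ref{lem:Exist-General-ODE} applied to the system \eqref{eq:def-pRt}--\eqref{eq:def-pCl}), $\intnormlbd{\pRt}{T}\to 0$ as $\lambda\to\infty$, so there is $\wb{\lambda}_3=\wb{\lambda}_3(\mathcal{S})$ with $\tfrac{\cstloss}{\delta}\intnormlbd{\pRt}{T}\le\tfrac12$ for all $\lambda\ge\wb{\lambda}_3$, hence $A\le 2\prn{\lbddst{\bvCt^1}{\bvCt^2}+\eta}$.

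Next I would transfer this to the three outputs of $\trsfrmB$. For $\bGamma$ the bound is immediate: $\norm{\bGamma_1^t-\bGamma_2^t}\le\Ep\norm{\nabla_r\ell_t(r_1^t;z)-\nabla_r\ell_t(r_2^t;z)}\le\cstloss\norm{r_1^t-r_2^t}_{L^2}$, so $\lbddst{\bvGamma^1}{\bvGamma^2}\le\cstloss A$. For $\bCl$, the coupled pair $\prn{\ell_t(r_1^t;z),\ell_t(r_2^t;z)}_{t\le T}$ has some joint covariance; replacing it by the centered jointly Gaussian process with the same joint covariance gives a coupling of two processes $g_i\sim\mathsf{GP}(0,\bCl^i)$ with $\Ep\normtwo{g_1^t-g_2^t}^2=\Ep\normtwo{\ell_t(r_1^t;z)-\ell_t(r_2^t;z)}^2$ for every $t$, whence by the definition of $\lbddst{\cdot}{\cdot}$ on kernels, $\lbddst{\bvCl^1}{\bvCl^2}\le\sup_{t\in[0,T]}e^{-\lambda t}\norm{\ell_t(r_1^t;z)-\ell_t(r_2^t;z)}_{L^2}\le\cstloss A$. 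For $\bRl$, fix $s$ and subtract the two copies of \eqref{eq:trsfrm-derivative-l}; since $\bRt$ is common, the difference $\partial\ell_t(r_1^t;z)/\partial w^s-\partial\ell_t(r_2^t;z)/\partial w^s$ splits into (i) a term carrying $\nabla_r\ell_t(r_1^t;z)-\nabla_r\ell_t(r_2^t;z)$, controlled by the Lipschitz Jacobian of $\ell$ and the a priori pointwise bound $\norm{\partial\ell_t(r_i^t;z)/\partial w^s}\le\pRl(t-s)$ (which follows by comparing \eqref{eq:trsfrm-derivative-l} with the defining equation \eqref{eq:def-pRl} for $\pRl$, as in the proof of Lemma~\ref{lem:well-defined-paths}); (ii) a term linear in $\partial\ell_t(r_1^t;z)/\partial w^s-\partial\ell_t(r_2^t;z)/\partial w^s$ with kernel $\bRt$; (iii) a term carrying $\nabla_r\ell_s(r_1^s;z)-\nabla_r\ell_s(r_2^s;z)$. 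Weighting by $e^{-\lambda t}$, taking $\sup_{t\in[s,T]}$, and absorbing term (ii) exactly as in the first paragraph (its coefficient is again $\le\tfrac12$ for $\lambda\ge\wb{\lambda}_3$) gives, uniformly in $s$, $\sup_{t\in[s,T]}e^{-\lambda t}\norm{\partial\ell_t(r_1^t;z)/\partial w^s-\partial\ell_t(r_2^t;z)/\partial w^s}_{L^2}\le C_{\mathcal{S}}\,A$, so $\lbddst{\bvSl^1}{\bvSl^2}\le C_{\mathcal{S}}A$. Combining the three bounds with $A\le 2(\lbddst{\bvCt^1}{\bvCt^2}+\eta)$ and letting $\eta\downarrow0$ yields the lemma with $M:=M(\mathcal{S})$ the largest constant obtained.

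The only nonroutine ingredient is the passage from an $L^2(\Omega)$ distance of the (non-Gaussian) processes $\ell_t(r_i^t;z)$ to the kernel distance $\lbddst{\bvCl^1}{\bvCl^2}$, which is defined through \emph{Gaussian} processes: the coupling of the $r_i$ does not directly couple the associated Gaussian processes, and one must either Gaussianize the joint process as above or invoke the Gelbrich comparison $\wdst{\normal(0,\Sigma_1)}{\normal(0,\Sigma_2)}^2\le\Ep\normtwo{X-Y}^2$ for any centered coupling $(X,Y)$; this is elementary (the PSD joint covariance of $(X,Y)$ is realized by a joint Gaussian) but worth isolating as a standalone fact, proved on finite-dimensional marginals and then extended by taking the sup over times. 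Everything else is routine weighted-Grönwall bookkeeping; I expect the only mild care to be the requirement that the $\bRl$ estimate be uniform over the second time argument $s$, which works because every prefactor that appears --- $\pRt,\pRl$ restricted to $[0,T]$ and the constant $\cstloss$ --- is bounded by a constant depending only on $\mathcal{S}$ (hence on $T$), by Lemma~\ref{lem:Exist-General-ODE} and Lemma~\ref{lem:well-defined-paths}.
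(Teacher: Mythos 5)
Your proposal is correct and follows essentially the same route as the paper's proof: the same coupling of the noises $w_1,w_2$, the same weighted Grönwall closure on $\sup_t e^{-\lambda t}\|r_1^t-r_2^t\|_{L^2}$ with the coefficient $\tfrac{\cstloss}{\delta}\intnormlbd{\pRt}{T}$ made $\le\tfrac12$ by taking $\lambda$ large, the same Gaussianization of the joint covariance to pass to $\lbddst{\bvCl^1}{\bvCl^2}$, and the same three-term decomposition with absorption of the self-referencing term for $\bRl$. The only differences are cosmetic bookkeeping (the paper squares pathwise and uses a weighted Cauchy--Schwarz rather than applying Minkowski's integral inequality in $L^2$ directly, and derives the a priori bound on $\partial r_i^t/\partial w^s$ by Grönwall rather than citing the $\pRl$ comparison), neither of which affects correctness.
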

We defer the proof to Appendix~\ref{proof:diff-Ctheta-global}.

\begin{lemma} \label{lem:diff-Rtheta-global}
Suppose $Y_1 = (\bCt^1, \bRt^1), Y_2 = (\bCt^2, \bRt^2) \in \wb{\mathcal{S}}$ and $\bvCt^1 = \bvCt^2$ on $[0, T]^2$. Let $\trsfrmB(Y_i) = (\bCl^i, \bRl^i, \bGamma_i)$ for $i=1,2$, then there exists a constant $M := M(\mathcal{S})$ such that
\begin{align}
	\lbddst{\bvCl^1}{\bvCl^2} \leq &  M \cdot \lbddst{\bvRt^1}{\bvRt^2} \, , \\
	\lbddst{\bvSl^1}{\bvSl^2} \leq &   M \cdot \lbddst{\bvRt^1}{\bvRt^2} \, , \\
	\lbddst{\bvGamma^1}{\bvGamma^2} \leq &  M \cdot \lbddst{\bvRt^1}{\bvRt^2} \, , 
\end{align}
for all $\lambda \geq \wb{\lambda}_4 := \wb{\lambda}_4(\mathcal{S})$.
\end{lemma}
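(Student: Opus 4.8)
The plan is to argue exactly as in the proof of Lemma~\ref{lem:diff-Ctheta-global}, with the roles of the covariance and response parts of $Y$ interchanged. Since $\bvCt^1 = \bvCt^2$ on $[0,T]^2$, the Gaussian processes $w^t\sim\mathsf{GP}(0,\bvCt^1)$ entering Eq.~\eqref{eq:trsfrm-r} have the same law under $Y_1$ and $Y_2$, so I would couple them by feeding a single realization $w$ into Eqs.~\eqref{eq:trsfrm-r} and~\eqref{eq:trsfrm-derivative-l} for both $Y_1$ and $Y_2$, producing coupled processes $(r^t_1)_{t\le T},(r^t_2)_{t\le T}$ together with the coupled derivative processes. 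Lemma~\ref{lem:well-defined-paths} guarantees these objects are well defined, and under this coupling the entire discrepancy between $\trsfrmB(Y_1)$ and $\trsfrmB(Y_2)$ is driven by $\bvRt^1-\bvRt^2$.

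First I would control $\Ep[\normtwo{r^t_1-r^t_2}^2]$. Subtracting the two copies of Eq.~\eqref{eq:trsfrm-r} and splitting into a feedback term and a source term,
\begin{align*}
r^t_1-r^t_2 = -\frac1\delta\int_0^t\bvRt^1(t,s)\brk{\ell_s(r^s_1;z)-\ell_s(r^s_2;z)}\,\de s-\frac1\delta\int_0^t\brk{\bvRt^1(t,s)-\bvRt^2(t,s)}\ell_s(r^s_2;z)\,\de s\, .
\end{align*}
For the feedback term I would use the $\cstloss$-Lipschitz bound on $\ell$ together with $\norm{\bvRt^1(t,s)}\le\pRt(t-s)$; for the source term, the bound $\norm{\bvRt^1(t,s)-\bvRt^2(t,s)}\le e^{\lambda t}\lbddst{\bvRt^1}{\bvRt^2}$ together with the a priori estimate $\sup_{s\le T}\Ep[\normtwo{\ell_s(r^s_2;z)}^2]^{1/2}\le M(\mathcal{S})$, which follows from $\Ep[\normtwo{w^s}^2]\le k\,\pCt(T)$, $\norm{\bvRt^2}\le\pRt$ and $\cstthetaz<\infty$ via an ordinary Gr\"onwall argument, just as in the proof of Lemma~\ref{lem:solution-in-space}. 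Taking $L^2$-norms, using the triangle and Jensen inequalities, multiplying through by $e^{-\lambda t}$, and noting that $\frac{\cstloss}{\delta}\int_0^T e^{-\lambda u}\pRt(u)\,\de u\to0$ as $\lambda\to\infty$, the resulting weighted Volterra inequality closes and gives
\begin{align*}
\sup_{0\le t\le T}e^{-\lambda t}\,\Ep[\normtwo{r^t_1-r^t_2}^2]^{1/2}\le M(\mathcal{S})\cdot\lbddst{\bvRt^1}{\bvRt^2}
\end{align*}
for every $\lambda\ge\wb{\lambda}_4(\mathcal{S})$. Next I would propagate this bound to the derivative processes $\partial\ell_t(r^t_i;z)/\partial w^s$ defined by Eq.~\eqref{eq:trsfrm-derivative-l}. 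Taking the difference of the two copies, the discrepancy is fed by: (i) the difference $\nabla_r\ell_t(r^t_1;z)-\nabla_r\ell_t(r^t_2;z)$, controlled by the Lipschitz-Jacobian hypothesis of Assumption~\ref{ass:Normal} and the bound just obtained; (ii) the difference $\bvRt^1-\bvRt^2$, entering both the Volterra integral and the term $-\frac1\delta\bvRt(t,s)\nabla_r\ell_s(r^s;z)$; and (iii) the coupled differences of $\partial\ell_{s'}(r^{s'};z)/\partial w^s$ inside the integral. Using $\norm{\nabla_r\ell_t}\le\cstloss$, $\norm{\bvRt^i(t,s)}\le\pRt(t-s)$ and the almost sure a priori bound $\norm{\partial\ell_t(r^t_i;z)/\partial w^s}\le\pRl(t-s)$ (which follows from Eq.~\eqref{eq:trsfrm-derivative-l} and the definition~\eqref{eq:def-pRl} of $\pRl$), a second weighted Gr\"onwall argument in $t$, uniform over $s$ and again closed by taking $\lambda$ large, yields an $L^2$ bound on the difference of the derivative processes of the same order $M(\mathcal{S})\cdot\lbddst{\bvRt^1}{\bvRt^2}$.

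Finally I would translate these pathwise $L^2$ bounds into the three claimed distances: for $\bvGamma$, since $\bvGamma^t_i=\Ep[\nabla_r\ell_t(r^t_i;z)]$ the Lipschitz-Jacobian hypothesis and the first $L^2$ bound give $\lbddst{\bvGamma^1}{\bvGamma^2}\le M(\mathcal{S})\,\lbddst{\bvRt^1}{\bvRt^2}$; for $\bvSl$, since $\bvSl^i(t,s)=\Ep[\partial\ell_t(r^t_i;z)/\partial w^s]$, Jensen's inequality and the derivative-process bound give $\lbddst{\bvSl^1}{\bvSl^2}\le M(\mathcal{S})\,\lbddst{\bvRt^1}{\bvRt^2}$; and for $\bvCl$, since $\bvCl^i(t,s)=\Ep[\ell_t(r^t_i;z)\ell_s(r^s_i;z)^{\sT}]$ with $\ell$ Lipschitz and $\ell_s(r^s_i;z)$ uniformly $L^2$-bounded, I would invoke the same comparison used for $\bvCl$ in the proof of Lemma~\ref{lem:diff-Ctheta-global}, which bounds the $(\lambda,T)$-distance between two Gaussian kernels by the weighted $L^2$-proximity of their generating processes, to get $\lbddst{\bvCl^1}{\bvCl^2}\le M(\mathcal{S})\,\lbddst{\bvRt^1}{\bvRt^2}$; then $M(\mathcal{S})$ is taken to be the maximum of the three constants. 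The step I expect to be the main obstacle is the second one: organizing the two nested Gr\"onwall arguments---first for $r^t$, then for the derivative process---so that every constant depends only on the data $(\pRt,\pRl,\pCt,\pCl,T)$ of $\mathcal{S}$ and $\wb{\mathcal{S}}$, and in particular verifying that the singular-looking driving term $-\frac1\delta\bvRt(t,s)\nabla_r\ell_s(r^s;z)$ in Eq.~\eqref{eq:trsfrm-derivative-l} is absorbed using the decay encoded in $\pRt$; the remaining steps are bookkeeping with Assumption~\ref{ass:Normal}, the a priori bounds built into $\wb{\mathcal{S}}$, and the already-established Gaussian-kernel comparison.
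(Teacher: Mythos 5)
Your proposal is correct and follows essentially the same route as the paper's proof: couple the two systems through a common realization of $w\sim\mathsf{GP}(0,\bvCt)$, split $r^t_1-r^t_2$ into a feedback term driven by $\bvRt^1$ and a source term driven by $\bvRt^1-\bvRt^2$, close the weighted Volterra inequality by taking $\lambda$ large enough that $\frac{\cstloss}{\delta}\int_0^\infty e^{-\lambda t}\pRt(t)\,\de t\le\frac12$, run a second Gr\"onwall argument for the derivative processes using the a priori bound on $\partial r^t_i/\partial w^s$, and translate to the three distances via the Gaussian-kernel comparison and Jensen. The only cosmetic difference is that the paper keeps $\int_0^t\|\ell_s(r^s_2;z)\|_2\,\de s$ pathwise and takes expectations only at the end, whereas you pass to $L^2$ norms immediately; both yield constants depending only on $(\pRt,\pRl,\pCt,\pCl,T)$.
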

We defer the proof to Appendix~\ref{proof:diff-Rtheta-global}. Now, we are ready to show that $\mathcal{T}$ is a contraction. We take the constant $M$ to be the maximum one among that of Lemma~\ref{lem:diff-Ctheta-global} and \ref{lem:diff-Rtheta-global}. Then we take $\epsilon = (12M)^{-1}$, and any $\lambda \geq \max \left\{\wb{\lambda}_1, \wb{\lambda}_2, \wb{\lambda}_3, \wb{\lambda}_4\right\}$, where $\wb{\lambda}_i$ are defined in above lemmas. For any $X_1 = (\vCl^1, \vRl^1, \Gamma_1), X_2 = (\vCl^2, \vRl^2, \Gamma_2) \in \mathcal{S}$, we set $Y_i = \trsfrmA(X_i) = (\bCt^i, \bRt^i)$ for $i=1,2$. We also define $Y_3 = (\bCt^1, \bRt^2)$, and thus
\begin{align}
\lbddst{\trsfrm(X_1)}{\trsfrm(X_2)} & = \lbddst{\trsfrmB(Y_1)}{\trsfrmB(Y_2)} \nonumber \\
& \leq \lbddst{\trsfrmB(Y_1)}{\trsfrmB(Y_3)} + \lbddst{\trsfrmB(Y_2)}{\trsfrmB(Y_3)} \, .
\end{align}
We can then control $\lbddst{\trsfrmB(Y_1)}{\trsfrmB(Y_3)}$ by Lemma~\ref{lem:diff-Rtheta-global} and $\lbddst{\trsfrmB(Y_2)}{\trsfrmB(Y_3)}$ by Lemma~\ref{lem:diff-Ctheta-global} which further gives
\begin{align}
\lbddst{\trsfrm(X_1)}{\trsfrm(X_2)}
& \le 3M \lbddst{Y_1}{Y_2} \nonumber \\
& =  3M \lbddst{\trsfrmA(X_1)}{\trsfrmA(X_2)} \, . \label{eq:ineq-contraction-mapping-1}
\end{align}
Then, we take $X_3 = (\vCl^1, \vRl^2, \Gamma_2)$ and apply Lemma~\ref{lem:diff-Cl-global} and \ref{lem:diff-Rl-global},
\begin{align}
\lbddst{\trsfrmA(X_1)}{\trsfrmA(X_2)} & \leq \lbddst{\trsfrmA(X_1)}{\trsfrmA(X_3)} + \lbddst{\trsfrmA(X_2)}{\trsfrmA(X_3)} \nonumber \\
& \leq 2 \epsilon  \cdot  \lbddst{X_1}{X_2} \, .
\end{align}
\rev{Substituting} into Eq.~\eqref{eq:ineq-contraction-mapping-1}, and choosing $\epsilon = (12M)^{-1}$, we get
\begin{align}
\lbddst{\trsfrm(X_1)}{\trsfrm(X_2)} & \le \frac 1 2\lbddst{X_1}{X_2} \, . \label{eq:T-contraction}
\end{align}

For any finite $T$, the distance $\lbddst{}{}$ is equivalent to 
an $L^\infty$ distance hence complete,
implying existence and uniqueness 
the fixed point $\Rl, \Rt$ and $\Gamma$ in $\mathcal{S}$ and $\wb{\mathcal{S}}$.

To prove uniqueness of $\Cl, \Ct$
we note that  $\lbddst{C_1}{C_2} =0$ implies that there exist Gaussian processes
$(g^t_1)_{t\in [0,T]}$, $(g^t_2)_{t\in [0,T]}$ such that $\E[\|\rev{g_1^t-}g_2^t\|^2] =0$ for all $t\le T$,
and therefore $C_1(t,s) = C_2(t,s)$ for all $t,s\le T$. 
To see the uniqueness when $\Ct, \Rt$ are bounded functions in any compact set, we can simply take $\pCt(0) \to \infty, \pRt(0) \to \infty$.

\section{Proof of Theorem \ref{thm:StateEvolution}}
\label{sec:proof:StateEvolution}

\subsection{Proof outline}

We first present a proof roadmap.
\begin{enumerate}
\item[I.] Discretization. For the general flow system $\mathfrak{F}$ in Eq.~\eqref{eq:GeneralFlow} we 
construct a Euler's discretization with step size $\eta > 0$, 
$\mathfrak{F}^\eta$. We show that $\mathfrak{F}^\eta$ approximates $\mathfrak{F}$ 
uniformly with respect to  $n,d\to\infty$, $n/d \to \delta$.
\item[II.] We introduce a discrete time approximation $\mathfrak{S}^\eta$ for the 
DMFT  system $\mathfrak{S}$. We prove that $\mathfrak{S}^\eta$  characterizes
the asymptotics of $\mathfrak{F}^\eta$ when $n/d \to \delta$, by showing
that the discretized system is equivalent to an AMP algorithm plus post-processing.
\item[III.] We prove that the unique solution of $\mathfrak{S}^\eta$  
converges to the unique solution of $\mathfrak{S}$ as $\eta\to 0$.
The latter therefore characterizes the general flow system $\mathfrak{F}$.
\end{enumerate}
Throughout this section, we denote by $d_{\sW}$ any distance that metrizes 
weak convergence of probability distributions in $\reals^m$ and with an abuse of notation,
weak convergence in $C([0,T],\reals^k)$. For instance, we can take 
$d_{\sW}=d_{\sBL}$ the bounded Lipschitz distance
\begin{align*}
d_{\sBL}(\mu,\nu):= \sup\big\{\int f \de\mu -\int f \de\nu:\; \|f\|_{\infty}\le 1,\;  
\|f\|_{\sLip}\le 1 \big\}\, .
\end{align*}
We further denote by $\wdst{\mu}{\nu}$ the Wasserstein-$2$ distance between $\mu$ and $\nu$.
Also, we will focus on proving Eq.~\eqref{eq:LimTheta}, since \eqref{eq:LimR} follows 
by repeating the same argument.

\subsection{Part I: Discrete time approximation of the flow} 
For the general flow $\mathfrak{F}$ 
of Eq.~\eqref{eq:GeneralFlow},
%
we consider a discrete time approximation with step size $\eta >0$. For all $t_i = i \eta$ and $i \in \integersp$, we set $\btheta^0_\eta = \btheta^0$ and
\begin{align}
\btheta^{t_{i+1}}_\eta = \btheta^{t_i}_\eta + \eta \cdot \brc{ -\btheta^{t_i}_\eta \Lambda^{t_i, \sT} -\frac{1}{\delta} \bX^{\sT}\bell_{t_i}(\bX\btheta^{t_i}_\eta;\bz)}\, .
\end{align}
This defines $\btheta_\eta^t$ on all $t_i = i \eta$. 
We extend it to $t\to \realsp$ as a piecewise linear function. Specifically, we define
$\flr{t} := \max \brc{i\eta \mid i\eta \leq t, i \in \integersp }$,
and the flow $\mathfrak{F}^\eta$ is given by
\begin{align}
\ddt \btheta^t_\eta = -\btheta^{\flr{t}}_\eta \Lambda^{\flr{t}, \sT} -\frac{1}{\delta} \bX^{\sT}\bell_{\flr{t}}(\bX\btheta^{\flr{t}}_\eta;\bz) \, . \label{eq:def-system-F-eta}
\end{align}
Consider the empirical distributions of the rows of $\btheta^{\tau_1}, \cdots, \btheta^{\tau_m}$ and $\btheta^{\tau_1}_\eta, \cdots, \btheta^{\tau_m}_\eta$ for any $\tau_1, \cdots, \tau_m \in \realsp$, denoted by
\begin{align}
\est{\mu}_{\theta^{\tau_1}, \cdots, \theta^{\tau_m}} & := \frac{1}{d} \sum_{j=1}^d \delta_{\prn{\btheta^{\tau_1}_j, \cdots, \btheta^{\tau_m}_j}} \label{eq:dstrb-empirical-flow} \, , \\
\est{\mu}_{\theta^{\tau_1}_\eta, \cdots, \theta^{\tau_m}_\eta} & := \frac{1}{d} \sum_{j=1}^d \delta_{\prn{\prn{\btheta^{\tau_1}_\eta}_j, \cdots, \prn{\btheta^{\tau_m}_\eta}_j}} \, , \label{eq:dstrb-empirical-discrete-flow}
\end{align}
where $\est{\mu}_{\theta^{\tau_1}, \cdots, \theta^{\tau_m}}$ and $\est{\mu}_{\theta^{\tau_1}_\eta, \cdots, \theta^{\tau_m}_\eta}$ are probability distributions in $\reals^{km}$. The following lemma controls the
distance between the two distributions uniformly with respect to
$n,d\to\infty$. We defer its proof to Appendix~\ref{proof:flow-approximation}.
\begin{lemma} \label{lem:flow-approximation}
Under the same assumptions of Theorem~\ref{thm:StateEvolution}, consider the gradient flow 
$\btheta^t$ and its piecewise linear approximation $\btheta^t_\eta$ by forward Euler 
with step size $\eta > 0$ and the same initialization $\btheta^0$.  Then, almost surely,
for any $t\ge 0$:
\begin{align}
	\lim_{\eta \to 0} \limsup_{n \to \infty}\frac{1}{\sqrt{d}}	\sup_{0 \leq t \leq T} \norm{\btheta^t - \btheta^t_\eta} = 0\, .
	\label{eq:FlowApproxFirstClaim}
\end{align}
As a consequence, for any $\tau_1, \cdots, \tau_m \in [0,T]$, we have almost surely that
\begin{align}
	\lim_{\eta \to 0} \limsup_{n \to \infty} \wdst{\est{\mu}_{\theta^{\tau_1}, \cdots, \theta^{\tau_m}}}{\est{\mu}_{\theta^{\tau_1}_\eta, \cdots, \theta^{\tau_m}_\eta}} = 0 \, .
\end{align}
\end{lemma}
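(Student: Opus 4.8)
The plan is to reduce \eqref{eq:FlowApproxFirstClaim} to a deterministic statement about forward Euler applied to a Lipschitz ODE, valid on a probability-one event, and then feed it the moment hypotheses of Theorem~\ref{thm:StateEvolution}. Write $F_t(\btheta) := -\btheta\Lambda^t - \frac1\delta\bX^\sT\bell_t(\bX\btheta;\bz)$, so that $\mathfrak F$ is $\de\btheta^t/\de t = F_t(\btheta^t)$ and $\mathfrak F^\eta$ (after Eq.~\eqref{eq:def-system-F-eta}) is $\de\btheta^t_\eta/\de t = F_{\flr t}(\btheta^{\flr t}_\eta)$. I would first fix the horizon $T$ and invoke standard bounds on the spectral norm of matrices with i.i.d.\ subgaussian entries to obtain a deterministic constant $C_0 = C_0(\delta,C)$ and a probability-one event on which $\limsup_{n\to\infty}\norm{\bX}\le C_0$; on this event $F_t$ is Lipschitz in its matrix argument with constant $L := \cstlbd + C_0^2\cstloss/\delta$, uniformly over $t\in[0,T]$, because $\norm{\bell_t(\bX\btheta_1;\bz) - \bell_t(\bX\btheta_2;\bz)}_F \le \cstloss\norm{\bX}\norm{\btheta_1-\btheta_2}_F$. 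The second-moment hypotheses on $\widehat\mu_{\theta^0}$ and $\widehat\mu_{z}$, together with the Lipschitzness of $\ell_s$ in $s$, furnish deterministic upper bounds on $d^{-1}\norm{\btheta^0}_F^2$ and on $n^{-1}\sum_{i=1}^n\sup_{s\le T}\normtwo{\ell_s(0;z_i)}^2$ along the sequence.

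Next I would establish \emph{a priori} bounds. Applying Gr\"onwall to $\tfrac{\de}{\de s}\tfrac12\norm{\btheta^s}_F^2 = \<\btheta^s,F_s(\btheta^s)\>$ and using $\norm{\bell_s(\bX\btheta;\bz)}_F \le \norm{\bell_s(0;\bz)}_F + \cstloss\norm{\bX}\norm{\btheta}_F$ gives $\sup_{s\le T}d^{-1/2}\norm{\btheta^s}_F \le R(T)$ for a deterministic $R(T)$. Running the same estimate on the grid $t_i = i\eta$, and extending to $[0,T]$ via the fact that $\btheta^s_\eta$ is the piecewise-linear interpolant (so $d^{-1/2}\norm{\btheta^s_\eta}_F$ is controlled by the two adjacent node values), yields $\sup_{s\le T}d^{-1/2}\norm{\btheta^s_\eta}_F \le R(T)$ for all $\eta\le\eta_0$ and all large $n,d$. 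In particular $\norm{F_{\flr s}(\btheta^{\flr s}_\eta)}_F \le C_1\sqrt d$, hence the within-step displacement satisfies $\norm{\btheta^s_\eta - \btheta^{\flr s}_\eta}_F \le \eta\norm{F_{\flr s}(\btheta^{\flr s}_\eta)}_F \le C_1\sqrt d\,\eta$.

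Then I would compare the flows. With $\bDelta^s := \btheta^s - \btheta^s_\eta$ one has $\bDelta^0 = 0$ and
\begin{align*}
	\tfrac{\de}{\de s}\bDelta^s = \underbrace{\big(F_s(\btheta^s) - F_s(\btheta^s_\eta)\big)}_{\text{(i)}} + \underbrace{\big(F_s(\btheta^s_\eta) - F_{\flr s}(\btheta^s_\eta)\big)}_{\text{(ii)}} + \underbrace{\big(F_{\flr s}(\btheta^s_\eta) - F_{\flr s}(\btheta^{\flr s}_\eta)\big)}_{\text{(iii)}}\, .
\end{align*}
Term (i) is bounded by $L\norm{\bDelta^s}_F$; term (ii) by the Lipschitz-in-$t$ part of Assumption~\ref{ass:Normal} ($\norm{\Lambda^s - \Lambda^{\flr s}}\le\cstlbd\eta$, $\norm{\ell_s(r;z)-\ell_{\flr s}(r;z)}\le\cstloss\eta$) and the \emph{a priori} bounds, giving $\le C_2\sqrt d\,\eta$; term (iii) by $L\norm{\btheta^s_\eta - \btheta^{\flr s}_\eta}_F \le LC_1\sqrt d\,\eta$. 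Gr\"onwall then yields $\sup_{s\le T}\norm{\bDelta^s}_F \le C_3\sqrt d\,\eta\,e^{LT}$, i.e.\ $\limsup_{n\to\infty}d^{-1/2}\sup_{s\le T}\norm{\btheta^s - \btheta^s_\eta}\le C_3e^{LT}\eta\to 0$ as $\eta\to 0$, which is \eqref{eq:FlowApproxFirstClaim}. For the Wasserstein consequence, the coordinatewise coupling that matches coordinate $j$ of $\est\mu_{\theta^{\tau_1},\dots,\theta^{\tau_m}}$ with coordinate $j$ of $\est\mu_{\theta^{\tau_1}_\eta,\dots,\theta^{\tau_m}_\eta}$ gives $\uwdst\big(\est\mu_{\theta^{\tau_1},\dots,\theta^{\tau_m}},\est\mu_{\theta^{\tau_1}_\eta,\dots,\theta^{\tau_m}_\eta}\big)^2 \le \sum_{l=1}^m d^{-1}\norm{\btheta^{\tau_l}-\btheta^{\tau_l}_\eta}_F^2 \le m\big(d^{-1/2}\sup_{s\le T}\norm{\btheta^s-\btheta^s_\eta}\big)^2$, and convergence in $\uwdst$ implies weak convergence, hence $\dW{\cdot}{\cdot}\to0$; taking a countable dense set of $t$ and monotonicity extends the a.s.\ statement to all $t$.

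The main obstacle is not the ODE comparison, which is routine once the pieces are assembled, but keeping every constant ($C_0$, $L$, $R(T)$, $C_1$, $C_2$, $C_3$) independent of $n$, $d$, and $\eta$: this is exactly where the subgaussian operator-norm bound on $\bX$ and the \emph{a priori} Gr\"onwall estimates for both the continuous and the discretized flows do the real work, and one must control the discrete iterates (via the discrete Gr\"onwall step) before the comparison can even be set up.
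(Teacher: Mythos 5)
Your proposal is correct and follows essentially the same route as the paper: an a.s.\ operator-norm bound on $\bX$ (Bai--Yin), a priori Gr\"onwall bounds on the flow, a decomposition of $\frac{\de}{\de s}(\btheta^s-\btheta^s_\eta)$ into a Lipschitz term plus $O(\eta\sqrt d)$ time- and within-step discretization errors, a second Gr\"onwall, and the coordinatewise coupling for the Wasserstein claim. The only (immaterial) difference is that you derive a separate a priori bound on the Euler iterates, whereas the paper writes $\|\btheta^{\flr t}_\eta\|\le\|\btheta^{\flr t}\|+\|\btheta^{\flr t}-\btheta^{\flr t}_\eta\|$ and absorbs the difference into the same Gr\"onwall estimate.
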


\subsection{Part II: Characterizing the discrete flow $\mathfrak{F}^\eta$}

In \cite{celentano2020estimation}, the authors show that
a general first order method of the type $\mathfrak{F}^\eta$ can be
reduced to an AMP algorithm followed by a post-processing operation that
operates row-wise  on $\btheta^t_{\eta}$ and $\br^t_{\eta}$ (and across multiple times).
This allows us to leverage existing high-dimensional characterizations of AMP
that go under the name of `state evolution' \cite{bayati2011dynamics,bayati2015universality,javanmard2013state,chen2021universality}.

We will use the notation $\cil{t} := \flr{t} + \eta$. 
We introduce the following DMFT system 
$\mathfrak{S}^\eta:=\mathfrak{S}^\eta(\theta^0, \theta^*, z, \delta, \lambda, \ell)$:
\begin{subequations}
\begin{align}
	\frac{\de}{\de t} \theta^t_\eta & = -(\Lambda^{\flr{t}} + \Gamma^{\flr{t}}_\eta) \theta^{\flr{t}}_\eta - \int_0^{\flr{t}} R_{\ell}^\eta(\flr{t},\flr{s}) \theta^{\flr{s}}_\eta \de s + u^t_\eta \, , & & u^t_\eta \sim \mathsf{GP}(0, \Cl^\eta / \delta) \, , \label{eq:def-theta-eta} \\
	r^t_\eta & = - \frac{1}{\delta} \int_0^{\flr{t}} R_{\theta}^\eta(\flr{t}, \cil{s}) \ell_{\flr{s}}(r^{s}_\eta; z) \de s + w^t_\eta \, , & & w^t_\eta \sim \mathsf{GP}(0, \Ct^\eta) \, , \label{eq:def-r-eta}  \\ 
	R_\theta^\eta(t,s) & = \Ep \brk{\frac{\partial \theta^t_\eta}{\partial u^s_\eta}} \, , & & 0 \leq s \leq t < \infty\, , \label{eq:def-R-t-eta}  \\
	R_{\ell}^\eta(t,s) & = \Ep \brk{\frac{\partial \ell_{\flr{t}}(r^t_\eta;z)}{\partial w^s_\eta}} \, , & &0 \leq s < t <\infty\, , \label{eq:def-R-l-eta} \\
	\Gamma^t_\eta &= \Ep \brk{\nabla_r \ell_{\flr{t}}(r^t_\eta;z)}\, , \label{eq:def-Gamma-eta}  \\
	C_\theta^\eta(t,s) & = \Ep \brk{\theta^{\flr{t}}_\eta {\theta^{\flr{s}}_\eta}^{\sT}}\, , & & 0 \leq s \leq t < \infty\, ,  \label{eq:def-C-t-eta} \\
	C_\ell^\eta(t,s) & =   \Ep \brk{\ell_{\flr{t}}(r^{\flr{t}}_\eta;z) \ell_{\flr{s}}(r^{\flr{s}}_\eta;z)^{\sT}} \, ,  & & 0 \leq s \leq t < \infty\, , \label{eq:def-C-l-eta} 
\end{align}
\end{subequations}
where the functional derivatives are determined by
\begin{subequations}
\begin{align}
	\frac{\de}{\de t} \frac{\partial \theta^t_\eta}{\partial u^s_\eta} & = -(\Lambda^{\flr{t}} + \Gamma^{\flr{t}}_\eta) \frac{\partial \theta^{\flr{t}}_\eta}{\partial u^s_\eta} - \int_s^{\flr{t}} R_{\ell}^\eta(\flr{t},\flr{s'}) \frac{\partial \theta^{\flr{s'}}_\eta}{\partial u^{s}_\eta} \de s'\, , \label{eq:def-derivative-t-eta} \\
	\frac{\partial \ell_{\flr{t}}(r^t_\eta;z)}{\partial w^s_\eta} & = \nabla_r \ell_{\flr{t}}(r^t_\eta;z) \cdot \prn{- \frac{1}{\delta} \int_{\cil{s}}^{\flr{t}} R_{\theta}^\eta(\flr{t},\cil{s'}) \frac{\partial \ell_{\flr{s'}}(r^{s'}_\eta;z)}{\partial w^s_\eta}  \de s' - \frac 1 \delta R_\theta^\eta(\flr{t},\cil{s}) \nabla_r \ell_{\flr{s}}(r^s_\eta;z) } \, ,  \label{eq:def-derivative-l-eta}
\end{align}
\end{subequations} 
where Eq.~\eqref{eq:def-derivative-l-eta} is defined for $\cil{s} \leq \flr{t}$. For $\cil{s} > \flr{t}$ we set
\begin{align}
\frac{\partial \ell_{\flr{t}}(r^t_\eta;z)}{\partial w^s_\eta} & = -\frac 1 \delta \nabla_r \ell_{\flr{t}}(r^t_\eta;z)\nabla_r \ell_{\flr{t}}(r^s_\eta;z) \, . \label{eq:def-derivative-l-eta-2}
\end{align}
The boundary conditions are $\theta^0_\eta := \theta^0$ and $\partial \theta^s_\eta / \partial u^s_\eta = I$. 
The system $\mathfrak{S}^\eta$ can be viewed as a discrete approximation of $\mathfrak{S}$:
its solution is unique by induction over time.
The next lemma shows that the unique solution of
$\mathfrak{S}^\eta$ characterizes the asymptotic behavior of $\mathfrak{F}^\eta$
(see Appendix~\ref{proof:state-evolution-discrete-flow} for a proof). 
\begin{lemma} \label{lem:state-evolution-discrete-flow}
Under the assumptions of Theorem~\ref{thm:StateEvolution}, suppose $\pCt(0) >\cstthetaz$ and $\pRt(0) > 1 $, then the system $\mathfrak{S}^\eta$ has a unique solution. In particular, the function triplet $(\Cl^\eta, \Rl^\eta, \Gamma_\eta )$ that solves $\mathfrak{S}^\eta$ lies in the space $\mathcal{S}$ (cf.~Definition~\ref{def:space-S}). For any $\tau_1, \cdots, \tau_m \in [0,T]$, denote by $\mu_{\theta^{\tau_1}_\eta, \cdots, \theta^{\tau_m}_\eta}$ the joint distribution of $(\vt^{\tau_1}_\eta, \cdots, \vt^{\tau_m}_\eta)$\rev{. We} have
\begin{align}
	\plim_{n \to \infty} \dW{\widehat{\mu}_{\theta^{\tau_1}_\eta, \cdots, \theta^{\tau_m}_\eta}}{\mu_{\theta^{\tau_1}_\eta, \cdots, \theta^{\tau_m}_\eta}} = 0\, , \label{eq:LP-conv}
\end{align} 
where $\widehat{\mu}_{\theta^{\tau_1}_\eta, \cdots, \theta^{\tau_m}_\eta}$ is the empirical distribution of the discretized flow $\btheta^t_\eta$, defined in Eq.~\eqref{eq:dstrb-empirical-flow}.
\end{lemma}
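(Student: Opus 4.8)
The plan is to prove the statement in two largely independent parts: \emph{(i)} well-posedness of the discrete system $\mathfrak{S}^\eta$ together with the membership $(\Cl^\eta,\Rl^\eta,\Gamma_\eta)\in\mathcal{S}$, which is elementary; and \emph{(ii)} the asymptotic identity \eqref{eq:LP-conv}, obtained by exhibiting the discretized flow $\mathfrak{F}^\eta$ as an approximate message passing (AMP) iteration followed by a row-separable Lipschitz post-processing, and then quoting the state evolution of AMP.

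For part~\emph{(i)} the key observation is that all time arguments in Eqs.~\eqref{eq:def-theta-eta}--\eqref{eq:def-derivative-l-eta-2} enter only through $\flr{\cdot}$ and $\cil{\cdot}$. Hence the kernels $\Cl^\eta$ and $\Ct^\eta$ are constant on each square $[t_i,t_{i+1})\times[t_j,t_{j+1})$ of the grid $t_i=i\eta$; the Gaussian processes $u^t_\eta,w^t_\eta$ have vanishing increments inside each interval $[t_i,t_{i+1})$ and so are a.s.\ constant there; and $\theta^t_\eta,r^t_\eta$ and the functional derivatives are correspondingly piecewise linear, resp.\ piecewise constant. Consequently $\mathfrak{S}^\eta$ collapses to a causal recursion on the grid, in the order $\theta^{t_0}\to(\Ct^\eta\text{ at }t_0)\to w^{t_0}\to r^{t_0}\to(\Cl^\eta\text{ at }t_0)\to u^{t_0}\to\theta^{t_1}\to\cdots$, the response functions $R_\theta^\eta,R_\ell^\eta,\Gamma_\eta$ and the derivative processes being computed along the way, every quantity at $t_{i+1}$ being an explicit Gaussian-expectation function of the quantities already determined at times $\le t_i$. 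Existence and uniqueness then follow by induction on $i$, and uniqueness holds unconditionally. To get $(\Cl^\eta,\Rl^\eta,\Gamma_\eta)\in\mathcal{S}$ one re-runs, now at the level of grid differences, the a priori bounds used in the proof of Lemma~\ref{lem:solution-in-space} (together with Lemma~\ref{lem:well-defined-paths}); the system \eqref{eq:def-pRt}--\eqref{eq:def-pCl} defining $\pRt,\pRl,\pCt,\pCl$ was designed precisely so as to dominate these discrete recursions uniformly in $\eta\in(0,1]$, using $\pCt(0)>\cstthetaz$, $\pRt(0)>1$ and the Lipschitz/boundedness hypotheses on $\ell,\Lambda$ of Assumption~\ref{ass:Normal}. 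Note that $\mathfrak{S}^\eta$ lands in $\mathcal{S}$ but generally not in $\mathcal{S}_{\mathrm{cont}}$, since $\Cl^\eta$ has jumps at grid points; this is exactly why finitely many discontinuities are allowed in Definition~\ref{def:space-S}.

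For part~\emph{(ii)}, write $\br_\eta^{t_i}:=\bX\btheta_\eta^{t_i}$; then $\mathfrak{F}^\eta$ reads $\btheta_\eta^{t_{i+1}}=\btheta_\eta^{t_i}-\eta\,\btheta_\eta^{t_i}\Lambda^{t_i}-\tfrac\eta\delta\bX^{\sT}\bell_{t_i}(\br_\eta^{t_i};\bz)$ and $\br_\eta^{t_{i+1}}=\br_\eta^{t_i}-\eta\,\br_\eta^{t_i}\Lambda^{t_i}-\tfrac\eta\delta\bX\bX^{\sT}\bell_{t_i}(\br_\eta^{t_i};\bz)$, so that $(\btheta_\eta^{t_i},\br_\eta^{t_i})$ evolves by alternately multiplying by $\bX$ and $\bX^{\sT}$ and applying the row-separable Lipschitz map $\bell_{t_i}(\cdot;\bz)$. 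This is a general first order method of the type treated in \cite{celentano2020estimation}, and by the reduction proved there it coincides with an AMP iteration --- with iterates $\bm^1,\bm^2,\dots$ in $\reals^{d\times k}$ and $\widehat{\bm}^1,\widehat{\bm}^2,\dots$ in $\reals^{n\times k}$, carrying the requisite Onsager memory corrections --- post-composed with row-wise Lipschitz functions, $\btheta_\eta^{t_i}=g_i(\bm^1,\dots,\bm^i)$ and $\br_\eta^{t_i}=\widehat g_i(\widehat{\bm}^1,\dots,\widehat{\bm}^i,\bz)$. I would then invoke the state evolution of AMP: for the subgaussian ensembles of Assumption~\ref{ass:Normal} via the universality theorem of Chen and Lam \cite{chen2021universality} (and in the Gaussian case via \cite{bayati2011dynamics,javanmard2013state,berthier2020state}), which gives that the row-wise empirical distribution of $(\bm^1,\dots,\bm^i)$, jointly with that of $(\widehat{\bm}^1,\dots,\widehat{\bm}^i,\bz)$, converges in probability, in $d_\sW$, to a centered Gaussian law whose covariance is generated by the AMP state-evolution recursion. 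Pushing this forward through the Lipschitz maps $g_i,\widehat g_i$ yields $\plim_{n\to\infty} d_\sW(\widehat\mu_{\theta_\eta^{\tau_1},\dots,\theta_\eta^{\tau_m}},\nu)=0$ for a fixed law $\nu$ (and analogously the joint $z,r$-version underlying \eqref{eq:LimR}).

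The hard part is the final identification: one must check that the AMP state-evolution recursion, rewritten in the variables $(\theta_\eta^t,r_\eta^t)$, is exactly the system \eqref{eq:def-theta-eta}--\eqref{eq:def-derivative-l-eta-2}, with the Onsager coefficients matched to $R_\theta^\eta$ and $R_\ell^\eta$ and the induced drift to $\Gamma_\eta$, while carefully tracking the $\flr{\cdot}$/$\cil{\cdot}$ index shifts --- the asymmetry between the $\flr{\cdot}$ in the $\theta$-equation and the $\cil{\cdot}$ in the $r$-equation encodes precisely the two half-steps of one AMP round. Granting this identification, $\nu=\mu_{\theta_\eta^{\tau_1},\dots,\theta_\eta^{\tau_m}}$, the right-hand side being unambiguous by the uniqueness from part~\emph{(i)}, and \eqref{eq:LP-conv} follows; convergence in $d_\sW$ of the distributions (and in Wasserstein-$2$, as needed downstream) is upgraded from test-function convergence using the second-moment bounds supplied by the state-evolution parameters, which furnish the requisite uniform integrability. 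I expect this bookkeeping step, rather than any of the quoted black-box inputs, to be where the real work lies.
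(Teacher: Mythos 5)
Your proposal follows essentially the same approach as the paper's proof: one establishes uniqueness of $\mathfrak{S}^\eta$ and membership of its solution in $\mathcal{S}$ by exploiting the grid structure (the paper's Lemma~\ref{lem:AMP-to-S-eta} and Lemma~\ref{lem:X-eta-upper-bound}), then reduces $\mathfrak{F}^\eta$ to an AMP iteration with row-separable Lipschitz post-processing and invokes the universality/state-evolution result of \cite{chen2021universality}. The "hard part" you flagged — matching the AMP state-evolution recursion, including the Onsager coefficients and the $\flr{\cdot}/\cil{\cdot}$ index shifts, to the system $\mathfrak{S}^\eta$ — is precisely what the paper's Lemma~\ref{lem:AMP-to-S-eta} carries out by induction, so your assessment of where the work lies is accurate.
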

\begin{remark}
In case we are interested in discrete-time flows, e.g. gradient descent with stepsize $\eta$,
Lemma \ref{lem:state-evolution-discrete-flow} provides the relevant characterization.
\end{remark}

\begin{remark}
In the proof of Lemma \ref{lem:state-evolution-discrete-flow} we use the results of 
\cite{chen2021universality} which establishes universality over the class of matrices satisfying the assumptions of
Theorem~\ref{thm:StateEvolution}.

If $\bX$ is a Gaussian matrix, we can use the results of \cite{javanmard2013state}
which imply convergence in Wasserstein-$2$ distance. Hence in this case, Theorem 
\ref{thm:StateEvolution} will hold with convergence of finite-dimensional distributions
in Wasserstein-$2$ distance, as stated
in Remark \ref{remark:MainThm}.
\end{remark}

\subsection{Part III: Approximating $\mathfrak{S}$ by $\mathfrak{S}^\eta$} 

We approximate the 
unique solution of the DMFT system 
$\mathfrak{S}$ by the unique solution of the discretized system $\mathfrak{S}^\eta$. 
In particular, we have the following lemma, whose proof is postponed to 
Appendix~\ref{proof:integral-differential-approximation}.
\begin{lemma} \label{lem:integral-differential-approximation}
Under the assumptions of Theorem~\ref{thm:StateEvolution}, suppose $\pCt(0) >\cstthetaz$ and $\pRt(0) > 1 $, the systems $\mathfrak{S}$ and $\mathfrak{S}^\eta$ both have unique solutions in $\mathcal{S}$. For any $\tau_1, \cdots, \tau_m \in [0,T]$, denote by $\mu_{\theta^{\tau_1}, \cdots, \theta^{\tau_m}}$ the distribution of $(\theta^{\tau_1}, \cdots, \theta^{\tau_m})$ and $\mu_{\theta^{\tau_1}_\eta, \cdots, \theta^{\tau_m}_\eta}$ the distribution of $(\vt^{\tau_1}_\eta, \cdots, \vt^{\tau_m}_\eta)$\rev{. We} have
\begin{align}
	\lim_{\eta \to \infty} \wdst{\mu_{\theta^{\tau_1}_\eta, \cdots, \theta^{\tau_m}_\eta}}{\mu_{\theta^{\tau_1}, \cdots, \theta^{\tau_m}}} = 0\, .
\end{align}
	\end{lemma}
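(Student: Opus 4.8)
\emph{Proof plan.} (Throughout, the relevant limit is $\eta\to 0$.) The plan is to realise both the continuous DMFT system $\mathfrak{S}$ and its Euler discretisation $\mathfrak{S}^\eta$ as fixed points of contraction mappings on the \emph{same} metric space $(\mathcal{S},\lbddst{\cdot}{\cdot})$ constructed in Section~\ref{sec:proof:UniqueExist}, and then to invoke the standard stability of fixed points under perturbation of the map. Mirroring the construction $\trsfrm=\trsfrmB\circ\trsfrmA$, I would introduce a discretised map $\trsfrm^\eta=\trsfrmB^\eta\circ\trsfrmA^\eta:\mathcal{S}\to\mathcal{S}$, where $\trsfrmA^\eta$ solves the grid-sampled forward-Euler equations \eqref{eq:def-theta-eta}, \eqref{eq:def-derivative-t-eta} (the input triplet $(\Cl,\Rl,\Gamma)$ being sampled at the grid points $\flr{t}$) to produce a pair $(\bCt,\bRt)$, and $\trsfrmB^\eta$ then solves \eqref{eq:def-r-eta}, \eqref{eq:def-derivative-l-eta}--\eqref{eq:def-derivative-l-eta-2} to produce $(\bCl,\bRl,\bGamma)$. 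By Lemma~\ref{lem:state-evolution-discrete-flow} the triplet $X_\eta:=(\Cl^\eta,\Rl^\eta,\Gamma_\eta)$ solving $\mathfrak{S}^\eta$ lies in $\mathcal{S}$ and is the unique fixed point of $\trsfrm^\eta$, while $X:=(\Cl,\Rl,\Gamma)$ is the unique fixed point of $\trsfrm$ by the proof of Theorem~\ref{thm:UniqueExist}.

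The first step is to check that $\trsfrm^\eta$ is a contraction with the \emph{same} modulus $1/2$, uniformly over $\eta\in(0,1]$. The Gronwall-type arguments behind Lemmas~\ref{lem:diff-Cl-global}--\ref{lem:diff-Rtheta-global} carry over verbatim to the piecewise-constant integral equations, precisely because their iterated kernels are dominated by the very same auxiliary envelopes $\pRt,\pRl,\pCt,\pCl$ (this is exactly why Lemma~\ref{lem:state-evolution-discrete-flow} can place $X_\eta$ inside $\mathcal{S}$). Repeating the computation that led to \eqref{eq:T-contraction} then furnishes a single $\lambda^\star<\infty$ with $\lbddst{\trsfrm^\eta(X_1)}{\trsfrm^\eta(X_2)}\le\tfrac12\lbddst{X_1}{X_2}$ for all $\eta\in(0,1]$, simultaneously with the same bound for $\trsfrm$; I fix $\lambda=\lambda^\star$ from now on.

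The heart of the argument is a discretisation estimate for the maps themselves: $\lbddst{\trsfrm^\eta(X)}{\trsfrm(X)}\to 0$ as $\eta\to 0$, evaluated at the continuous fixed point $X$. Being the fixed point of $\trsfrm$, $X$ lies in $\mathcal{S}_{\mathrm{cont}}$ (Lemma~\ref{lem:solution-in-space}) and its components are regular enough in $t$ --- $\Cl$ Lipschitz, $\Rl$ and $\Gamma$ continuous --- that grid-sampling changes them by $O(\eta)$ uniformly, and the Gaussian drivers of $\trsfrmA$ and $\trsfrmA^\eta$ (resp.\ of $\trsfrmB$ and $\trsfrmB^\eta$) can be coupled so that their covariance kernels are $O(\eta)$-close in $\lbddst{\cdot}{\cdot}$. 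For this common input $X$, comparing the continuous-time solution $\theta^t$ of \eqref{eq:trsfrm-t} with its forward-Euler counterpart through a Gronwall estimate --- using the uniform bounds on $\Lambda,\Gamma,\Rl$ and the Lipschitz-Jacobian regularity of $\ell$ from Assumption~\ref{ass:Normal} --- bounds their $L^2$-distance by $c(T,\mathcal{S})\,\eta$ uniformly on $[0,T]$, and likewise for the processes $\partial\theta^t/\partial u^s$, $r^t$ and $\partial\ell_t(r^t;z)/\partial w^s$; pushing these estimates through the quadratic covariance and response functionals (which are Lipschitz on the relevant bounded sets) gives $\lbddst{\trsfrm^\eta(X)}{\trsfrm(X)}\le c(T,\mathcal{S})\,\eta$. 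Combining with the uniform contraction,
\begin{align*}
\lbddst{X_\eta}{X}
  &= \lbddst{\trsfrm^\eta(X_\eta)}{\trsfrm(X)}
  \le \lbddst{\trsfrm^\eta(X_\eta)}{\trsfrm^\eta(X)} + \lbddst{\trsfrm^\eta(X)}{\trsfrm(X)}\\
  &\le \tfrac12\,\lbddst{X_\eta}{X} + c(T,\mathcal{S})\,\eta,
\end{align*}
whence $\lbddst{X_\eta}{X}\le 2c(T,\mathcal{S})\,\eta\to 0$.

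Finally I would convert convergence of the kernels into Wasserstein-$2$ convergence of the finite-dimensional marginals of $\theta$. Representing $\theta^t$ and $\theta^t_\eta$ through their drivers $u$ and $u_\eta$ via \eqref{eq:def-theta} and \eqref{eq:def-theta-eta} (with covariances $\Cl/\delta$ and $\Cl^\eta/\delta$, which are $O(\eta)$-close in $\lbddst{\cdot}{\cdot}$ by the step above), coupling $u$ and $u_\eta$ optimally and running the two integral-differential equations on one probability space, a Gronwall estimate in the coefficients --- using $\Gamma_\eta\to\Gamma$ and $\Rl^\eta\to\Rl$ in $\lbddst{\cdot}{\cdot}$ together with the continuity in $t$ of $\Gamma$ and $\Rl$, so that the grid-sampled coefficients of \eqref{eq:def-theta-eta} approach those of \eqref{eq:def-theta} --- gives $\E[\normtwo{\theta^{\tau_i}-\theta^{\tau_i}_\eta}^2]\to 0$ for each $i\le m$, hence $\wdst{\mu_{\theta^{\tau_1}_\eta,\dots,\theta^{\tau_m}_\eta}}{\mu_{\theta^{\tau_1},\dots,\theta^{\tau_m}}}\to 0$; the stated weak convergence then follows since Wasserstein-$2$ convergence implies it. I expect the main obstacle to be the combination of the first two steps: one must verify that every a priori domination and every Gronwall constant from Section~\ref{sec:proof:UniqueExist} survives the passage to the piecewise-constant equations with the \emph{same} envelopes $\pRt,\pRl,\pCt,\pCl$, while simultaneously propagating the forward-Euler error through the coupled ODE for $\theta^t$, the linear ODE for $\partial\theta^t/\partial u^s$, the nonlinear equation for $r^t$ together with its $w$-derivative, and the covariance/response functionals --- the last step forcing one to couple Gaussian processes with grid-sampled covariances to the originals at cost $O(\eta)$ in $\lbddst{\cdot}{\cdot}$.
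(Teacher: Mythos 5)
Your overall strategy---realise both systems as fixed points, control $\lbddst{X}{X^\eta}$ by a triangle inequality combining a contraction estimate with a consistency (discretisation) estimate, then couple the Gaussian drivers and run a Gronwall argument to transfer kernel convergence to $L^2$ convergence of the paths---is exactly the architecture of the paper's proof, and your final coupling step matches the paper's Lemma~\ref{lem:integral-differential-eta-approximation-path}. The one genuine difference is which map carries the contraction. You write
\begin{align*}
\lbddst{X^\eta}{X}\le \lbddst{\trsfrm^\eta(X^\eta)}{\trsfrm^\eta(X)}+\lbddst{\trsfrm^\eta(X)}{\trsfrm(X)},
\end{align*}
so you need a contraction estimate for the \emph{discretised} map $\trsfrm^\eta$ (uniform in $\eta$) together with a consistency estimate at the \emph{continuous} fixed point $X$. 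The paper instead writes $\lbddst{X}{X^\eta}\le \lbddst{\trsfrm(X)}{\trsfrm(X^\eta)}+\lbddst{\trsfrm(X^\eta)}{\trsfrm^\eta(X^\eta)}$, reusing the already-proved contraction of the continuous map $\trsfrm$ (Eq.~\eqref{eq:T-contraction}) and proving consistency only at the single point $X^\eta$ (Lemma~\ref{lem:integral-differential-eta-approximation-1}, via the intermediate comparison of $\bRt^\eta$ with the grid-sampled $[\Rt^\eta]$ in Lemma~\ref{lem:integral-differential-eta-approximation-2}). The paper's choice buys a real economy: no property of $\trsfrm^\eta$ beyond the existence of its fixed point and the envelope bounds of Lemma~\ref{lem:X-eta-upper-bound} is ever needed.

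The place where your route is riskier is precisely the step you flag as the main obstacle. The paper explicitly warns that $\trsfrmA^\eta$ need not map $\mathcal{S}$ into $\wb{\mathcal{S}}$ (nor $\trsfrmB^\eta$ map $\wb{\mathcal{S}}$ into $\mathcal{S}$): the forward-Euler scheme does not automatically inherit the comparison-ODE dominations by $\pRt,\pRl,\pCt,\pCl$ for arbitrary inputs, and the membership $X^\eta\in\mathcal{S}$ is proved by a separate induction specific to the fixed point. So your claim that Lemmas~\ref{lem:diff-Cl-global}--\ref{lem:diff-Rtheta-global} ``carry over verbatim'' to $\trsfrm^\eta$ is overstated; the discrete functional derivatives also have the modified boundary rule \eqref{eq:def-derivative-l-eta-2} for $\cil{s}>\flr{t}$, which has no continuous analogue and would need its own treatment in the stability estimates. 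Your argument is not irreparable---you only ever apply the Lipschitz bound for $\trsfrm^\eta$ to the two specific points $X,X^\eta$, both of which do lie in $\mathcal{S}$---but you would have to re-derive four contraction lemmas for the discrete dynamics with constants uniform in $\eta$, whereas the paper's swap of roles makes all of that unnecessary.
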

	Using Lemma~\ref{lem:state-evolution-discrete-flow},
	\begin{align}
& \plimsup_{n \to \infty}\dW{\est{\mu}_{\theta^{\tau_1}, \cdots, \theta^{\tau_m}}}{\mu_{\theta^{\tau_1}, \cdots, \theta^{\tau_m}}} \nonumber \\
& \leq \plimsup_{n \to \infty}\prn{\dW{\est{\mu}_{\theta^{\tau_1}, \cdots, \theta^{\tau_m}}}{\est{\mu}_{\theta^{\tau_1}_\eta, \cdots, \theta^{\tau_m}_\eta}} + \dW{\widehat{\mu}_{\theta^{\tau_1}_\eta, \cdots, \theta^{\tau_m}_\eta}}{\mu_{\theta^{\tau_1}_\eta, \cdots, \theta^{\tau_m}_\eta}}  + \dW{\mu_{\theta^{\tau_1}_\eta, \cdots, \theta^{\tau_m}_\eta}}{\mu_{\theta^{\tau_1}, \cdots, \theta^{\tau_m}}}} \nonumber \\
& =   \plimsup_{n \to \infty}\dW{\est{\mu}_{\theta^{\tau_1}, \cdots, \theta^{\tau_m}}}{\est{\mu}_{\theta^{\tau_1}_\eta, \cdots, \theta^{\tau_m}_\eta}} +\dW{\mu_{\theta^{\tau_1}_\eta, \cdots, \theta^{\tau_m}_\eta}}{\mu_{\theta^{\tau_1}, \cdots, \theta^{\tau_m}}} \, .
\end{align}
Finally, \rev{taking} $\eta \to 0$ and \rev{combining} Lemma~\ref{lem:flow-approximation} and Lemma~\ref{lem:integral-differential-approximation}, we obtain
\begin{align}
& \plimsup_{n \to \infty}\dW{\est{\mu}_{\theta^{\tau_1}, \cdots, \theta^{\tau_m}}}{\mu_{\theta^{\tau_1}, \cdots, \theta^{\tau_m}}}  = 0.
\end{align}
Now, let $\mu$ be the \rev{probability} law of the DMFT process $\rev{\theta^{[0,T]}}$
on $C([0,T],\reals^k)$. Indeed, by condition \eqref{eq:space-condition-Ct} and Kolmogorov-Chentsov
theorem, $t \mapsto \theta^t$ is almost surely $\alpha$-H\"older for any $\alpha\in (0,1)$.
Also, let $\est{\mu}^{(n)} := d^{-1}\sum_{i=1}^d \delta_{\rev{\theta_i^{[0,T]}}}$.
Denoting by $\mu_{\tau_1,\dots,\tau_m}$ and $\est{\mu}^{(n)}_{\tau_1,\dots,\tau_n}$ the finite-dimensional marginals of these laws,
we proved that 
\begin{align}
\plim_{n\to\infty}
\dW{\est{\mu}^{(n)}_{\tau_1,\dots,\tau_m}}{\mu_{\tau_1,\dots,\tau_m}}
= 0\, .
\end{align}
We are left with the task of proving $\dW{\est{\mu}^{(n)}}{\mu}\to 0$ in probability.
Recall the following basic fact.
\begin{lemma}\label{lemma:Elementary}
For a sequence of random variable $(X_n)_{n\ge 1}$, we have 
$X_{n}\stackrel{p}{\to}0$ if and only if for each diverging subsequence $(n_\ell)$ there 
exists a refinement  $(n'_\ell)\subseteq(n_\ell)$, such that $X_{n'_{\ell}}\stackrel{a.s.}{\to}0$.
\end{lemma}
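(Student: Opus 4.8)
The plan is to establish the two directions of the equivalence by the classical ``subsequence principle'' argument, so this will be a short and elementary proof. For the forward direction, assume $X_n\stackrel{p}{\to}0$ and fix any diverging subsequence $(n_\ell)$. Since $\prob(|X_{n_\ell}|>1/\ell)\to 0$ as $\ell\to\infty$, I would extract a refinement $(n'_\ell)\subseteq(n_\ell)$, strictly increasing in $\ell$, with $\prob(|X_{n'_\ell}|>1/\ell)\le 2^{-\ell}$ for every $\ell$. Then $\sum_\ell\prob(|X_{n'_\ell}|>1/\ell)<\infty$, so by the first Borel--Cantelli lemma, almost surely $|X_{n'_\ell}|\le 1/\ell$ for all sufficiently large $\ell$, whence $X_{n'_\ell}\stackrel{a.s.}{\to}0$.

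For the reverse direction, I would argue by contradiction: if $X_n$ does not converge to $0$ in probability, then there exist $\eps>0$, $\delta>0$, and a diverging subsequence $(n_\ell)$ such that $\prob(|X_{n_\ell}|>\eps)\ge\delta$ for all $\ell$. Applying the hypothesis to $(n_\ell)$, there is a refinement $(n'_\ell)\subseteq(n_\ell)$ with $X_{n'_\ell}\stackrel{a.s.}{\to}0$; since almost sure convergence implies convergence in probability, $\prob(|X_{n'_\ell}|>\eps)\to 0$, contradicting the uniform lower bound $\delta$. Hence $X_n\stackrel{p}{\to}0$.

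There is no real obstacle here; the only non-bookkeeping ingredient is the first Borel--Cantelli lemma used in the extraction step of the forward direction. The role of Lemma~\ref{lemma:Elementary} is to complete the proof of Theorem~\ref{thm:StateEvolution}: having already shown $\plim_{n\to\infty}\dW{\est{\mu}^{(n)}_{\tau_1,\dots,\tau_m}}{\mu_{\tau_1,\dots,\tau_m}}=0$ for all fixed $\tau_1,\dots,\tau_m$, and having the almost-sure H\"older regularity of $t\mapsto\theta^t$ (from condition~\eqref{eq:space-condition-Ct} via Kolmogorov--Chentsov), which yields tightness of $\est{\mu}^{(n)}$ on $C([0,T],\reals^k)$, one passes along any subsequence to a further a.s.-convergent subsequence on which finite-dimensional convergence together with tightness upgrades to $\dW{\est{\mu}^{(n)}}{\mu}\to 0$; Lemma~\ref{lemma:Elementary} then promotes this back to the in-probability statement for the full sequence.
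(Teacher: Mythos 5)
The paper does not actually prove this lemma---it is invoked as a ``basic fact''---so there is no in-paper argument to compare against; your proof is the standard subsequence principle (recursive extraction plus first Borel--Cantelli in one direction, contradiction via ``a.s.\ $\Rightarrow$ in probability'' in the other) and it is correct in substance. One sentence needs repair: the justification ``since $\prob(|X_{n_\ell}|>1/\ell)\to 0$ as $\ell\to\infty$'' is false as stated, because both the index and the threshold vary with $\ell$ (e.g.\ for the deterministic sequence $X_n=n^{-1/2}$ one has $\prob(|X_{n}|>1/n)=1$ for all $n\ge 2$, even though $X_n\to 0$). What actually licenses the extraction is that for each \emph{fixed} $\ell$, $\prob(|X_{m}|>1/\ell)\to 0$ as $m\to\infty$ along the subsequence, so one can recursively choose $n'_\ell>n'_{\ell-1}$ with $\prob(|X_{n'_\ell}|>1/\ell)\le 2^{-\ell}$---which is exactly the construction you then carry out, so the proof goes through once that sentence is corrected. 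Your closing remarks on how the lemma is deployed in the proof of Theorem~\ref{thm:StateEvolution} accurately reflect the paper's usage.
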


Let $(n_\ell)$ be a diverging sequence. 
Then for any $m$, and any $\tau_1,\dots,\tau_m\in[0,T]\cap{\mathbb Q}$, 
we can construct a subsequence along which 
$\dW{\est{\mu}^{(n'_{\ell})}_{\tau_1,\dots,\tau_n}}{\mu_{\tau_1,\dots,\tau_m}}
\stackrel{a.s.}{\to} 0$. By successive refinements and a diagonal argument,
we can assume that the subsequence is such that
\begin{align}
\prob\Big(\est{\mu}^{(n'_{\ell})}_{\tau_1,\dots,\tau_n}\Rightarrow \mu_{\tau_1,\dots,\tau_m}\;\;\;\;
\forall m, \;\;\forall \tau_1,\dots,\tau_m\in[0,T]\cap{\mathbb Q}\Big )=1\, .
\label{eq:SimultaneousFDD}
\end{align}
We finally need a tightness result, whose proof is presented in Appendix
\ref{app:Tightness}.
\begin{lemma}\label{lemma:Tight}
Under the assumptions of Theorem~\ref{thm:StateEvolution}, there exists $\alpha\in(0,1)$ and, 
for any $\eps>0$ there exists  $M(\eps)<\infty$ such that
\begin{align}
	\P\Big(\est{\mu}^{(n)}\big(\{\|\theta^0\|_2>M(\eps)\}\cup 
	\{\|\rev{\theta^{[0,T]}}\|_{C^{0,\alpha}}>M(\eps)\} \big)\ge \eps
	\mbox{ for infinitely many }n\Big) = 0\, .\label{eq:Tightness}
\end{align}
Here $\|f\|_{C^{0,\alpha}}$ denotes the $\alpha$-H\"older seminorm of function $f$.
\end{lemma}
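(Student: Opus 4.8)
The plan is to deduce tightness of the empirical laws $\est{\mu}^{(n)}$ on $C([0,T],\reals^k)$ from two quantitative, uniform-in-$n$ estimates on the flow $\mathfrak F$: a bound on the bulk of the initial data, and a Kolmogorov-type second-moment bound on the increments $\btheta^t-\btheta^s$ valid simultaneously for all large $n$, after which Markov's inequality and Borel--Cantelli finish the job. Both estimates reduce to elementary ODE bounds once a deterministic a.s.\ bound on $\normop{\bX}$ is fixed. \emph{Step 1 (a priori energy bounds).} By standard sub-Gaussian random matrix concentration and Borel--Cantelli, there is a deterministic $K_{\op}<\infty$ with $\normop{\bX}\le K_{\op}$ for all large $n$, almost surely. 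Setting $a(t):=\tfrac1d\|\btheta^t\|_F^2$, we differentiate along \eqref{eq:GeneralFlow} and bound the right-hand side using $\|\Lambda^t\|\le\cstlbd$, the Lipschitz continuity of $\bell$ (so that $\|\bell_t(\bX\btheta^t;\bz)\|_F$ is controlled by $\normop{\bX}\|\btheta^t\|_F$ plus a term of order $\sqrt n$), the moment hypotheses on $\btheta^0$ and $\bz$, and $n/d\to\delta$; this gives $a'(t)\le c_0(1+a(t))$, whence by Gr\"onwall $\sup_{t\in[0,T]}a(t)\le C_1$ with $C_1$ deterministic. Substituting back into \eqref{eq:GeneralFlow} then gives $\sup_{t\in[0,T]}\tfrac1d\|\dot\btheta^t\|_F^2\le C_2$, $C_2$ deterministic; both bounds hold a.s.\ for all large $n$.

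\emph{Step 2 (Kolmogorov bound and Garsia--Rodemich--Rumsey).} From $\theta^t_i-\theta^s_i=\int_s^t\dot\theta^u_i\,\de u$ and Cauchy--Schwarz,
\[
\frac1d\sum_{i=1}^d\|\theta^t_i-\theta^s_i\|_2^2\;\le\;(t-s)\int_s^t\frac1d\|\dot\btheta^u\|_F^2\,\de u\;\le\;C_2\,(t-s)^2,\qquad s,t\in[0,T],
\]
for all large $n$. Fix $a\in(1,\tfrac32)$ and set $\alpha:=a-1\in(0,\tfrac12)$. Applying the Garsia--Rodemich--Rumsey inequality with $\Psi(x)=x^2$, $p(u)=u^{a}$ to each trajectory $(\theta_i)_0^T$ yields $\|(\theta_i)_0^T\|_{C^{0,\alpha}}^2\le c_a\,\Gamma_i$ with $\Gamma_i:=\iint_{[0,T]^2}\|\theta^t_i-\theta^s_i\|_2^2\,|t-s|^{-2a}\,\de s\,\de t$ and $c_a=(16a/(a-1))^2$. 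Averaging over $i$ and using the display,
\[
\frac1d\sum_{i=1}^d\|(\theta_i)_0^T\|_{C^{0,\alpha}}^2\;\le\;c_a\,C_2\iint_{[0,T]^2}|t-s|^{2-2a}\,\de s\,\de t\;=:\;C_3\;<\;\infty,
\]
the integral being finite since $2-2a>-1$. Hence $\est{\mu}^{(n)}(\{\|(\theta)_0^T\|_{C^{0,\alpha}}>M\})\le C_3/M^2$ for all large $n$, almost surely.

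\emph{Step 3 (initial condition and conclusion).} Since $\widehat\mu_{\theta^0}\Rightarrow\mu_{\theta^0}$, the portmanteau theorem gives $\limsup_n\est{\mu}^{(n)}(\{\|\theta^0\|_2>M\})\le\mu_{\theta^0}(\{\|\theta^0\|_2\ge M\})$, which tends to $0$ as $M\to\infty$. Given $\eps>0$, we fix $M=M(\eps)$ so large that $C_3/M^2<\eps/2$ and $\mu_{\theta^0}(\{\|\theta^0\|_2\ge M\})<\eps/2$. Combining Steps 2 and 3, almost surely there is $n_0$ (realization-dependent) with
\[
\est{\mu}^{(n)}\big(\{\|\theta^0\|_2>M\}\cup\{\|(\theta)_0^T\|_{C^{0,\alpha}}>M\}\big)<\eps\qquad\text{for all }n\ge n_0,
\]
so this mass can exceed $\eps$ for at most finitely many $n$, which is exactly \eqref{eq:Tightness}.

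The delicate point will be Step 1: the energy estimates must be made uniform in $n$ with \emph{deterministic} constants, which forces us to pin down the a.s.\ operator-norm bound on $\bX$ and to track the scaling $n\asymp\delta d$ carefully through the Gr\"onwall estimate; the remaining steps (GRR, Markov, Borel--Cantelli) are routine. We note that Step 2 can alternatively be bypassed: each $(\theta_i)_0^T$ is Lipschitz with constant $L_i=\sup_{t\le T}\|\dot\theta^t_i\|_2$, and using in addition the Lipschitz-Jacobian hypotheses of Assumption~\ref{ass:Normal} to bound $\tfrac1d\|\ddot\btheta^t\|_F^2$ uniformly one checks $\tfrac1d\sum_iL_i^2\le C$; since $\|(\theta_i)_0^T\|_{C^{0,\alpha}}\le T^{1-\alpha}L_i$, this would give the conclusion for every $\alpha\in(0,1)$, at the cost of using the second-order regularity of $\ell$ and $\Lambda$.
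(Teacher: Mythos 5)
Your proof is correct and follows the same overall architecture as the paper's: control the initial condition by a moment/weak-convergence argument, control the H\"older seminorm via a uniform second-moment bound on the increments combined with Markov's inequality, and upgrade "a.s. for all large $n$" to the Borel--Cantelli statement. The only substantive difference is the tool used to pass from a derivative bound to a H\"older bound. The paper applies the elementary Cauchy--Schwarz estimate $\|f\|_{C^{0,1/2}}\le\|f'\|_{L^2}$ to each trajectory and then uses Markov on $\frac1d\sum_i\|(\dot\theta_i)_0^T\|_{L^2}^2$, giving $\alpha=1/2$ directly. You instead derive the increment bound $\frac1d\sum_i\|\theta^t_i-\theta^s_i\|^2\le C_2(t-s)^2$ and invoke Garsia--Rodemich--Rumsey per trajectory, which works but is heavier machinery than needed here (the paths are $C^1$ so the derivative-based argument is available) and yields only $\alpha\in(0,1/2)$; since the lemma asks for some $\alpha\in(0,1)$, this is fine. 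Your Step 3 uses the portmanteau theorem where the paper uses Markov with the assumed convergence of second moments; both are valid, though the paper's second-moment assumption makes the Markov route more direct. Step 1 is a self-contained Gr\"onwall bound essentially identical to Eq.~\eqref{eq:BoundNormTheta} of the paper, correctly tracking the $n/d\to\delta$ scaling and the a.s. Bai--Yin bound on $\|\bX\|$.
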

By Eq.~\eqref{eq:SimultaneousFDD} and since finite-dimensional 
distributions on the rationals uniquely identify the limit on $C([0,T],\reals^k)$
\cite{billingsley2013convergence}, we proved $\dW{\est{\mu}^{(n'_{\ell})}}{\mu} \stackrel{a.s.}{\to}0$,
and therefore using Lemma \ref{lemma:Elementary} we obtain the desired claim.

\section*{Acknowledgments}
This work was supported by
NSF through award DMS-2031883 and from the Simons Foundation through Award 814639 for the
Collaboration on the Theoretical Foundations of Deep Learning. C. Cheng is supported by
the William R. Hewlett Stanford graduate fellowship. 
M. Celentano is supported by the Miller Institute for Basic Research in Science, University of California Berkeley.
We also acknowledge NSF grant CCF-2006489 and the ONR grant N00014-18-1-2729.

\bibliographystyle{amsalpha}
\newcommand{\etalchar}[1]{$^{#1}$}
\providecommand{\bysame}{\leavevmode\hbox to3em{\hrulefill}\thinspace}
\providecommand{\MR}{\relax\ifhmode\unskip\space\fi MR }
\providecommand{\MRhref}[2]{%
  \href{http://www.ams.org/mathscinet-getitem?mr=#1}{#2}
}
\providecommand{\href}[2]{#2}


\newpage
\appendix

%
%

\section{Auxiliary lemmas for the proof of Theorem \ref{thm:UniqueExist}}

\subsection{Proofs for the auxiliary real-valued system}

\subsubsection{Proof of Lemma~\ref{lem:Exist-General-ODE}} \label{proof:Exist-General-ODE}

To simplify notations, we recast the ODE system of Eqs.~\eqref{eq:def-pRt} to \eqref{eq:def-pCl} as
\begin{subequations}
\begin{align}
	\frac{\de}{\de t}f_1(t) & = \rev{ \alpha_1 f_1(t) +  \alpha_2 \int_0^t f_2(t-s) f_1(t) \de s \, } \label{eq:def-f1}\\
	f_2(t) & = \alpha_3 f_1(t) + \alpha_4 \int_0^t f_1(t-s) f_2(s) \de s\, , \label{eq:def-f2} \\
	\frac{\de}{\de t} \sqrt{f_3(t)} & = \sqrt{\alpha_5 f_3(t) + \alpha_6 f_4(t) + \alpha_7 \int_0^t (t-s+1)^2 f_2(t-s)^2  f_3(s) \de s  }\, , \label{eq:def-f3}\\
	f_4(t) & = \alpha_8 + \alpha_9 f_3(t) + \alpha_{10} \int_0^t (t-s+1)^2 f_1(t-s)^2 f_4(s) \de s\, , \label{eq:def-f4}
\end{align}
\end{subequations}
where $\alpha_1,\dots \rev{,}\alpha_{10}>0$
and we assume boundary conditions $f_1(0) = \beta_1, f_3(0) = \beta_2$, with $\beta_1,\beta_2>0$.

\paragraph{Existence and uniqueness of $f_1(t)$ and $f_2(t)$.}  These functions are
determined by Eqs.~\eqref{eq:def-f1} and \eqref{eq:def-f2}.
\begin{lemma}\label{lemma:f1-f2}
For any $\beta_1>0$,  Eqs.~\eqref{eq:def-f1} and \eqref{eq:def-f2} admit a unique solution $(f_1,f_2)\in C([0,\infty)\to\reals^2)$ with $f_i(t)>0$ for all $t$. Further, there exist
constants
$\lambda, C>0$  depending uniquely on $(\alpha_1,\dots,\alpha_4, \beta_1)$
such that 
$(f_1(t)\vee f_2(t))\le Ce^{\lambda t}$ for all $t\ge 0$. 
\end{lemma}
\begin{proof}
We can solve explicitly Eq.~\eqref{eq:def-f1} to yield $f_1=G(f_2)$ where we define the mapping $G:C([0,\infty))\to C([0,\infty) )$. \rev{Indeed, combining Eqs.~\eqref{eq:def-f1} and \eqref{eq:def-f2}, one has
	\begin{align}
		\frac{\de}{\de t}f_1(t) & =  \prn{\alpha_1 - \frac{\alpha_2 \alpha_3}{\alpha_4}} f_1(t) +  \frac{\alpha_2}{\alpha_4} f_2(t) \, 
	\end{align}
	and the mapping $G$ solving $f_1 = G(f_2)$ is then}

\begin{align}
	\rev{G(f)(t) := e^{ \prn{\alpha_1 - \frac{\alpha_2 \alpha_3}{\alpha_4}}  t}\beta_1+\frac{\alpha_2}{\alpha_4} \int_0^t e^{ \prn{\alpha_1 - \frac{\alpha_2 \alpha_3}{\alpha_4}}(t-s)}f(s) \, \de s\, .}
\end{align}
Hence we can rewrite Eq.~\eqref{eq:def-f2} as 
\begin{align}
	f_2(t) & = \alpha_3 G(f_2)(t) + \alpha_4 \int_0^t \rev{G(f_2)(t-s)} f_2(s) \de s\, .\label{eq:F1_F2}
\end{align}
Fixing $T>0$ arbitrarily, existence and uniqueness in $C([0,T])$ follows from
\cite[Theorem 1]{wolfersdorf1995class}, whereby $G_0 = -\alpha_3 G$, $G_1=-\alpha_4 G$,
$G_2=\id$ and the Lipschitz properties $(4)$, $(5)$ in the assumption of that theorem hold because,
in our notations, for all $\lambda$ large enough
\begin{align*}
	\big|[G(h_1)(t)-G(h_2)(t)]e^{-\lambda y}\big|\le \alpha_2e^{(\alpha_1-\lambda)t}
	\int_0^te^{-\alpha_1 s}|h_1-h_2|(s) \, \de s \, ,
\end{align*}
$\|G(h_1)-G(h_2)\|_{\lambda,\infty}\le (\alpha_2/(\lambda-\alpha_1))
\|h_1-h_2\|_{\lambda,\infty}$.  Since the solution exists unique on any $[0,T]$
it also exists unique on $[0\rev{,}\infty)$, and Eq.~\eqref{eq:F1_F2} holds because the unique solution
has $\|f_2\|_{\lambda,\infty}<\infty$ by the first part of \cite[Theorem 1]{wolfersdorf1995class}.
\end{proof}

\paragraph{Existence of $f_3(t)$ and $f_4(t)$.} Let $f_1(t)$ and $f_2(t)$ 
be given as per Lemma \ref{lemma:f1-f2}. 
We then seek measurable functions $f_3, f_4: \realsp \to \realsp$ solving 
Eqs.~\eqref{eq:def-f3} and \eqref{eq:def-f4}.
Consider the space
\begin{align}
\mathcal{S}_{\wb{\mathfrak{S}}, 2}(\lambda, \eps_3, \eps_4) := \left\{(f_3, f_4) \mid f_i : \realsp \to \realsp,  \normlbd{\sqrt{f_i}}{\infty} \leq \eps_i, i=3,4; f_3(0) = \beta_2 \right\} \, 
\end{align}
with the metric
\begin{align}
\mathsf{dist}_\lambda ((f_3, f_4), (g_3, g_4)) := 4 \sqrt{\alpha_9} \normlbd{\sqrt{f_3} - \sqrt{g_3}}{\infty} + \normlbd{\sqrt{f_4} - \sqrt{g_4}}{\infty}.
\end{align}
The space $\mathcal{S}_{\wb{\mathfrak{S}}, 2}(\lambda, \eps_3, \eps_4)$ is complete under the metric $\mathsf{dist}_\lambda$. Then we consider the transformation $\mathcal{T}_{\wb{\mathfrak{S}}, 2}(f_3, f_4) := (\wb{f}_3, \wb{f}_4)$ such that
\begin{align}
\frac{\de}{\de t} \sqrt{\wb{f}_3(t)} & = \sqrt{\alpha_5 f_3(t) + \alpha_6 f_4(t) + \alpha_7 \int_0^t (t-s+1)^2 f_2(t-s)^2  f_3(s) \de s  }\, , \label{eq:transform-f3}\\
\wb{f}_4(t) & = \alpha_8 + \alpha_9 f_3(t) + \alpha_{10} \int_0^t (t-s+1)^2 f_1(t-s)^2 f_4(s) \de s \, , \label{eq:transform-f4}
\end{align}
with $\wb{f}_3(0) = \beta_2$. Similarly, in the following lemma we show that for properly chosen $\eps_3, \eps_4$ and large enough $\lambda$, $\mathcal{T}_{\wb{\mathfrak{S}}, 2}$ is a contraction mapping. We postpone its proof to Appendix~\ref{proof:contraction-ODE-2}.
\begin{lemma} \label{lem:contraction-ODE-2}
There exists constants $\eps_i:=\eps_i(\alpha_1, \cdots, \alpha_8, \beta_1, \beta_2) > 0$ for $i=3,4$ such that for any $\lambda \geq \wb{\lambda} := \wb{\lambda}(\alpha_1, \cdots, \alpha_8, \beta_1, \beta_2)$, $\mathcal{T}_{\wb{\mathfrak{S}}, 2}$ is an operator that maps $\mathcal{S}_{\wb{\mathfrak{S}}, 2}(\lambda, \eps_3, \eps_4)$ into itself, and for any $(f_3, f_4), (g_3, g_4) \in \mathcal{S}_{\wb{\mathfrak{S}}, 2}$, the transformation $\mathcal{T}_{\wb{\mathfrak{S}}, 2}$ is a contraction
\begin{align}
	\mathsf{dist}_\lambda \big(\mathcal{T}_{\wb{\mathfrak{S}}, 2}(f_3, f_4) \, , \mathcal{T}_{\wb{\mathfrak{S}}, 2}(g_3, g_4)\big) \leq \frac 1 2 \mathsf{dist}_\lambda \big((f_3, f_4), (g_3, g_4)\big) \, .
\end{align}
\end{lemma}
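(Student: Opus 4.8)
The plan is to follow the proof of Lemma~\ref{lem:contraction-ODE-1}, the one new feature being that the square root on the left of \eqref{eq:def-f3} makes $\sqrt{f_3},\sqrt{f_4}$ --- rather than $f_3,f_4$ --- the natural coordinates; this is exactly why the metric $\mathsf{dist}_\lambda$ is written in terms of $\|\sqrt{f_3}-\sqrt{g_3}\|_{\lambda,\infty}$ and $\|\sqrt{f_4}-\sqrt{g_4}\|_{\lambda,\infty}$. Throughout, $f_1,f_2$ denote the functions already constructed in the first part of the proof; I use only that they are nonnegative and satisfy $\|f_i\|_{\lambda,\infty}\le e\eps_i$ for every $\lambda\ge\lambda_0:=\wb{\lambda}_1$ (monotonicity in $\lambda$ being immediate from $e^{-\lambda t}\le e^{-\lambda_0 t}$). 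I also fix a constant $C_0$ with $(v+1)^2\le C_0 e^{\lambda_0 v}$ for all $v\ge0$, so that $(u-s+1)^2 f_i(u-s)^2\le C_0(e\eps_i)^2 e^{3\lambda_0(u-s)}$, and I will repeatedly use the convolution bound $\int_0^u e^{3\lambda_0(u-s)}e^{2\lambda s}\,\de s\le e^{2\lambda u}/(2\lambda-3\lambda_0)$, valid for $\lambda>\tfrac32\lambda_0$.

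For the self-map property I would put \eqref{eq:transform-f3} in integrated form, $\sqrt{\wb f_3(t)}=\sqrt{\beta_2}+\int_0^t\sqrt{A_f(u)}\,\de u$ with $A_f(u)$ the expression under the root in \eqref{eq:transform-f3}, bound $\sqrt{A_f}$ by the sum of the square roots of its three summands, and estimate each using $f_i(s)\le\eps_i^2 e^{2\lambda s}$ and the convolution bound; this yields $\|\sqrt{\wb f_3}\|_{\lambda,\infty}\le\sqrt{\beta_2}+o_\lambda(1)$ as $\lambda\to\infty$ with $\eps_3,\eps_4$ fixed. The analogous, simpler estimate for \eqref{eq:transform-f4} (no derivative, no root on the left) gives $\|\sqrt{\wb f_4}\|_{\lambda,\infty}\le\sqrt{\alpha_8}+\sqrt{\alpha_9}\,\eps_3+o_\lambda(1)$. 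Choosing first $\eps_3:=2\sqrt{\beta_2}$, then $\eps_4:=2(\sqrt{\alpha_8}+\sqrt{\alpha_9}\,\eps_3)$, and finally $\lambda$ large enough, $\mathcal{T}_{\wb{\mathfrak{S}},2}$ sends $\mathcal{S}_{\wb{\mathfrak{S}},2}(\lambda,\eps_3,\eps_4)$ into itself.

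For the contraction I would use two elementary facts about nonnegative quantities, picked precisely to keep the estimate \emph{linear} in $\sqrt{f_i}-\sqrt{g_i}$: (i) $\big|\sqrt{\sum_j a_j}-\sqrt{\sum_j b_j}\big|\le\sum_j\big|\sqrt{a_j}-\sqrt{b_j}\big|$, obtained by writing the left side as $\sum_j(a_j-b_j)/\big(\sqrt{\sum a}+\sqrt{\sum b}\big)$ and bounding the denominator below by $\sqrt{a_j}+\sqrt{b_j}$; and (ii) for a nonnegative weight $w$, $\big|\sqrt{\int wf}-\sqrt{\int wg}\big|\le\sqrt2\,\big(\int w(\sqrt f-\sqrt g)^2\big)^{1/2}$, obtained from Cauchy--Schwarz, $\int w(\sqrt f+\sqrt g)^2\le2\int w(f+g)$, and $(\sqrt p+\sqrt q)^2\ge p+q$. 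Applying (i)--(ii) to $\sqrt{A_f(u)}-\sqrt{A_g(u)}$ and using $(\sqrt{f_3(s)}-\sqrt{g_3(s)})^2\le\|\sqrt{f_3}-\sqrt{g_3}\|_{\lambda,\infty}^2 e^{2\lambda s}$, then integrating in $u$ and picking up a factor $1/\lambda$, gives $\|\sqrt{\wb f_3}-\sqrt{\wb g_3}\|_{\lambda,\infty}\le\lambda^{-1}\big(c_3(\lambda)\,\|\sqrt{f_3}-\sqrt{g_3}\|_{\lambda,\infty}+\sqrt{\alpha_6}\,\|\sqrt{f_4}-\sqrt{g_4}\|_{\lambda,\infty}\big)$ with $c_3(\lambda)\to\sqrt{\alpha_5}$. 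Because the term $\alpha_9 f_3$ in \eqref{eq:def-f4} carries no convolution and hence no $1/\lambda$, the corresponding estimate for $\wb f_4$ reads $\|\sqrt{\wb f_4}-\sqrt{\wb g_4}\|_{\lambda,\infty}\le\sqrt{\alpha_9}\,\|\sqrt{f_3}-\sqrt{g_3}\|_{\lambda,\infty}+c_4(\lambda)\,\|\sqrt{f_4}-\sqrt{g_4}\|_{\lambda,\infty}$ with $c_4(\lambda)\to0$. Substituting into $\mathsf{dist}_\lambda$, the weight $4\sqrt{\alpha_9}$ on the $f_3$-component is exactly what makes the undamped $\sqrt{\alpha_9}\,\|\sqrt{f_3}-\sqrt{g_3}\|_{\lambda,\infty}$ term coming from $\wb f_4$ equal to one quarter of that component; the remaining contributions each carry a factor $1/\lambda$ or $c_4(\lambda)$, so for $\lambda\ge\wb{\lambda}$ large the total is at most $\tfrac12\,\mathsf{dist}_\lambda\big((f_3,f_4),(g_3,g_4)\big)$.

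The only genuinely new obstacle, relative to Lemma~\ref{lem:contraction-ODE-1}, is preserving linearity in the contraction despite the square root in \eqref{eq:def-f3}: the naive inequality $|\sqrt a-\sqrt b|\le\sqrt{|a-b|}$ would introduce $\sqrt{\|\sqrt{f_3}-\sqrt{g_3}\|_{\lambda,\infty}}$ and break the fixed-point argument, which is why one must work with $\sqrt{f_i}$ as coordinates and use (i)--(ii). A secondary point is the undamped term $\alpha_9 f_3$ in \eqref{eq:def-f4}, which cannot be controlled by taking $\lambda$ large and is instead absorbed by the asymmetric weights in $\mathsf{dist}_\lambda$.
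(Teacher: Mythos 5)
Your proposal is correct and takes essentially the same route as the paper: both arguments work with $\sqrt{f_3},\sqrt{f_4}$ as the natural coordinates, split $\sqrt{A_f}-\sqrt{A_g}$ additively via the inequality $|\sqrt{\sum a_j}-\sqrt{\sum b_j}|\le\sum|\sqrt{a_j}-\sqrt{b_j}|$, treat the convolution term with a Minkowski/Cauchy--Schwarz bound that keeps the estimate linear in $\sqrt{f_i}-\sqrt{g_i}$, pick up a factor $1/\lambda$ from the outer time integral in \eqref{eq:transform-f3}, and absorb the undamped $\sqrt{\alpha_9}\,\|\sqrt{f_3}-\sqrt{g_3}\|_{\lambda,\infty}$ contribution from \eqref{eq:transform-f4} via the asymmetric weight $4\sqrt{\alpha_9}$ in $\mathsf{dist}_\lambda$. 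The only differences are bookkeeping: the paper controls the convolution weight by introducing $F_i(\lambda)=\int_0^\infty e^{-\lambda s}(s+1)^2 f_i(s)^2\,\de s\to0$, while you use the uniform bound $(v+1)^2\le C_0 e^{\lambda_0 v}$ and the explicit convolution estimate, and your constant $\sqrt{2}$ in step (ii) is slightly worse than the paper's constant $1$ from Minkowski, but both suffice.
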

The proof of existence is then concluded by the applying Banach fixed-point theorem and 
Lemma~\ref{lem:contraction-ODE-2}. 
We note that $\mathcal{T}_{\wb{\mathfrak{S}}, 2}$ maps $f_3, f_3$ into continuous function, hence the fixed point is continuous.

\subsubsection{Proof of Lemma~\ref{lem:contraction-ODE-2}} \label{proof:contraction-ODE-2}
We will use several times the following basic inequality for $I(f)(t):=\int_0^tf(s) \de s$
\begin{align}
\big\|I(f)\big\|_{\lambda, T}&=
\sup_{0\le t\le T}e^{-\lambda t}\left|\int_0^tf(s) \de s\right|\nonumber \\
&= \sup_{0\le t\le T}
\left|\int_0^te^{-\lambda(t-s)}f(s) e^{-\lambda s}\de s\right|\le \frac{1}{\lambda}|f\|_{\lambda,T}\, .
\label{eq:Int-Ineq}
\end{align}

In the first step, we show that for some properly chosen $\eps_3, \eps_4$ and large enough 
$\lambda$, $\mathcal{T}_{\wb{\mathfrak{S}}, 2}$ maps $\mathcal{S}_{\wb{\mathfrak{S}}, 2}(\lambda, \eps_3,
\eps_4)$ into itself. We set $F_i(\lambda) := \int_0^\infty e^{-\lambda s} (s+1)^2 f_i(s)^2 \de s$ for 
$i=1,2$ ($F_i(\lambda)$ is well-defined for any $\lambda$ large enough and
$F_i(\lambda) \to 0$ as $\lambda \to \infty$ by Lemma \ref{lemma:f1-f2}).
Letting $J(f_3)(t):=\sqrt{\int_0^t (t-s+1)^2 f_2(t-s)^2  f_3(s) \de s  }$,
we have 
\begin{align*}
\normlbd{J(f_3)}{\infty} &= \sup_{s\ge 0}
\sqrt{\int_0^s e^{-2\lambda(s-s')}(s-s'+1)^2 f_2(s-s')^2 \cdot e^{-2\lambda s'} f_3(s') \de s'} \\
&\le \sqrt{\alpha_7 F_2(2\lambda)} \normlbd{\sqrt{f_3}}{\infty}
\end{align*}
Hence, using Eq.~\eqref{eq:Int-Ineq}
\begin{align}
\normlbd{\sqrt{\wb{f}_3}}{\infty} &\le \sqrt{\beta_2}
+\frac{1}{\lambda}
\big[\sqrt{\alpha_5} \normlbd{\sqrt{f_3}}{\infty} +  \sqrt{\alpha_6} \normlbd{\sqrt{f_4}}{\infty}  +  \sqrt{\alpha_7}
\normlbd{J(f_3)}{\infty}\big]\nonumber\\
&\sqrt{\beta_2}
+\frac{\sqrt{\alpha_5}}{\lambda} \eps_3
+\frac{\sqrt{\alpha_6}}{\lambda} \eps_4
+ \frac{1}{\lambda}\sqrt{\alpha_7 F_2(2\lambda)} \eps_3\, .\label{eq:contraction-transform-2-1}
\end{align}
Proceeding analogously for $\wb{f}_4$, we get
\begin{align}
\normlbd{\sqrt{\wb{f}_4}}{\infty} &\leq \alpha_8 + \alpha_9 \eps_3 +\sqrt{\alpha_{10} F_1(2\lambda)} \eps_4 \, . \label{eq:contraction-transform-2-2}
\end{align}
By taking
\begin{align}
\eps_3 & \geq 2 \sqrt{\beta_2} \, , \qquad
\eps_4 \geq  2 \alpha_9 \eps_3 + 2 \alpha_8 \, , \nonumber
\end{align}
and large enough $\lambda$ 
we can then get from Eqs.~\eqref{eq:contraction-transform-2-1} and \eqref{eq:contraction-transform-2-2} 
$\normlbd{\wb{f}_3}{\infty}  \leq \eps_3$,  $\normlbd{\wb{f}_4}{\infty} \leq \eps_4$.

Next, we show $\mathcal{T}_{\wb{\mathfrak{S}}, 2}$ is a contraction mapping. Let $\mathcal{T}_{\wb{\mathfrak{S}}, 2}(g_3, g_4) = (\wb{g}_3, \wb{g}_4)$, we can get from Eq.~\eqref{eq:transform-f3}
\begin{align}
&\left|\sqrt{\wb{f}_3(t)} - \sqrt{\wb{g}_3(t)}\right| \nonumber \\
& \leq \int_0^t \Bigg(\sqrt{\alpha_5} \cdot \left|\sqrt{f_3(s)} - \sqrt{g_3(s)} \right| + \sqrt{\alpha_6} \cdot \left|\sqrt{f_4(s)} - \sqrt{g_4(s)} \right| \nonumber \\
& \qquad + \sqrt{\alpha_7} \cdot \left|\sqrt{\int_0^s (s-s'+1)^2 f_2(s-s')^2  f_3(s') \de s'} - \sqrt{\int_0^s (s-s'+1)^2 f_2(s-s')^2  g_3(s') \de s' }\right| \Bigg)  \de s \\
&\leq \int_0^t \Bigg(\sqrt{\alpha_5} \cdot \left|\sqrt{f_3(s)} - \sqrt{g_3(s)} \right| + \sqrt{\alpha_6} \cdot \left|\sqrt{f_4(s)} - \sqrt{g_4(s)} \right| \nonumber \\
& \qquad + \sqrt{\alpha_7} \sqrt{\int_0^s (s-s'+1)^2 f_2(s-s')^2  \prn{\sqrt{f_3(s')} - \sqrt{g_3(s')}}^2 \de s'} \Bigg)\de s\, .\label{eq:BoundKK}
\end{align}
Letting $K^2(f_3,g_3)(s):=   \int_0^s (s-s'+1)^2 f_2(s-s')^2  \prn{\sqrt{f_3(s')} - \sqrt{g_3(s')}}^2 \de s'$,
we have
\begin{align}
\normlbd{K(f_3,g_3)}{t}&\le  \sup_{0\le s\le t}
\sqrt{\int_0^s e^{-2\lambda(s-s')} (s-s'+1)^2 f_2(s-s')^2  e^{-2\lambda s'} 
	\prn{\sqrt{f_3(s')}
		-\sqrt{g_3(s')}} \de s'}\nonumber\\
&\le \sqrt{F_2(2\lambda)}   \normlbd{\sqrt{f_3} - \sqrt{g_3}}{t}\, .\label{eq:KK}
\end{align}
Using this bound and Eq.~\eqref{eq:Int-Ineq} in  Eq.~\eqref{eq:BoundKK}, we get
\begin{align}
\normlbd{\sqrt{\wb{f}_3} - \sqrt{\wb{g}_3}}{\infty}  
\leq \frac{1}{\lambda} \prn{\sqrt{\alpha_5} \normlbd{\sqrt{f_3} - 
		\sqrt{g_3}}{\infty} + \sqrt{\alpha_6} \normlbd{\sqrt{f_4} - 
		\sqrt{g_4}}{\infty} + \sqrt{\alpha_7  F_2(2\lambda)}   \normlbd{\sqrt{f_3} - \sqrt{g_3}}{\infty} }\, .
\label{eq:contraction-transform-2-3}
\end{align}
Next from Eq.~\eqref{eq:transform-f4} it follows similarly that
\begin{align}
\normlbd{\sqrt{\wb{f}_4} - \sqrt{\wb{g}_4}}{\infty} \leq \sqrt{\alpha_9} \normlbd{\sqrt{f_3} - \sqrt{g_3}}{\infty} +\sqrt{\alpha_{10}  F_1(2\lambda)}   \normlbd{\sqrt{f_4} - \sqrt{g_4}}{\infty} \, . \label{eq:contraction-transform-2-4}
\end{align}
We take $\lambda$ large enough such that
\begin{align*}
\frac{\sqrt{\alpha_5} + \sqrt{\alpha_7  F_2(2\lambda)} }{\lambda} & \leq \frac 1 4 \, , \qquad	\frac{4\sqrt{\alpha_6 \alpha_9}}{\lambda}   \leq \frac 1 4 \, , \qquad \sqrt{\alpha_{10}  F_1(2\lambda)} \leq \frac1 4 \, ,
\end{align*}
and further with Eqs.~\eqref{eq:contraction-transform-2-3} and \eqref{eq:contraction-transform-2-4}, 
\begin{align}
&\mathsf{dist}_\lambda \big(\mathcal{T}_{\wb{\mathfrak{S}}, 2}(f_3, f_4) \, , \mathcal{T}_{\wb{\mathfrak{S}}, 2}(g_3, g_4)\big)  = \mathsf{dist}_\lambda \big((\wb{f}_3, \wb{f}_4), (\wb{g}_3, \wb{g}_4)\big)  =4 \sqrt{\alpha_9} \normlbd{\sqrt{\wb{f}_3} - \sqrt{\wb{g}_3}}{\infty}  + \normlbd{\sqrt{\wb{f}_4} - \sqrt{\wb{g}_4}}{\infty} \nonumber \\
& \leq 2 \cdot \prn{\sqrt{\alpha_9} \normlbd{\sqrt{f_3} - \sqrt{g_3}}{\infty} + \frac 1 4  \normlbd{\sqrt{f_4} - \sqrt{g_4}}{\infty}}  = \frac 1 2 \mathsf{dist}_\lambda \big((f_3, f_4), (g_3, g_4)\big).
\end{align}

\subsection{Proof of Lemma~\ref{lem:well-defined-paths}} \label{proof:well-defined-paths}
We first show $\vt^t$ is uniquely defined. Note that $u^t$ has covariance kernel $\Cl/\delta$,
which implies for any $0 \leq s \leq t \leq T$,
\begin{align}
\Ep \brk{\normtwo{u^t - u^s}^2} & = \frac 1 \delta \cdot \Tr \prn{\Cl(t,t) - 2\Cl(t,s) + \Cl(s,s)} 
\leq \frac{k}{\delta} \norm{\Cl(t,t) - 2\Cl(t,s) + \Cl(s,s)} \, .
\end{align}
By the definition of $\wb{\mathcal{S}}$ and in particular Eq.~\eqref{eq:space-condition-Cl},
we can invoke Kolmogorov continuity theorem (cf.~\cite[Cor.~2.1.4]{stroock1997multidimensional}) 
and conclude that the sample path $u^t$ is 
locally $\alpha$-H\"{o}lder continuous for any $\alpha \in (0, 1)$. 

Since the sample path $u^t$ is continuous almost surely, 
we can solve Eq.~\eqref{eq:def-theta} per each given realization of $u^t$. Namely,
we rewrite Eq.~\eqref{eq:def-theta} as 
\begin{align}
&\frac{\de}{\de t} \wb{\theta}^t  +\int_0^t \wb{R}_{\ell}(t,s) \wb{\theta}^s \de s = \wb{u}^t\, ,\\
&\wb{\theta}^t := A(t)^{-1} \theta^t\, ,\;\;\;\; \wb{R}_{\ell}(t,s) := A(t)^{-1}R_{\ell}(t,s)A(s)
\, ,\;\;\;\; \wb{u}^t := A(t)^{-1}u^t\, ,\\
& \frac{\de A}{\de t}(t) := - (\Lambda^t + \Gamma^t) A(t)\, ,
\end{align}
or, in \rev{the} integral form,
\begin{align}
&\wb{\theta}^t  +\int_0^t K_{\ell}(t,s) \wb{\theta}^s \de s = v^t\, ,\label{eq:WbTheta}\\
&K_{\ell}(t,s) := \int_s^t \wb{R}_{\ell}(z,s)\, \de z\, ,\;\;\; v^t:= \theta^0+\int_0^t\wb{u}^s\de s\, .
\end{align} 
This is a linear Volterra integral equation of second kind, with 
domain $[0,T]$ and kernel $K_{\ell} : [0,T]\times[0,T]$ by assumptions
\eqref{eq:Rl-Bound}
\eqref{eq:Gamma-Bound-Ass} in the definition of $\cS$ (and using Lemma \ref{lem:Exist-General-ODE}). Since $v^t$ is continuous (and hence integrable)
over the same domain. 
By Theorem 3.6 and Corollary 4.3 in \cite[Chapter 9]{gripenberg1990volterra} admits a unique 
continuous solution which is also bounded and continuous  by Eq.~\eqref{eq:WbTheta}.

Next we show $r^t$ is uniquely defined. Again by Eq.~\eqref{eq:space-condition-Ct} and 
Kolmogorov continuity theorem, the sample path $w^t$ is $\alpha$-H\"{o}lder continuous for any 
$\alpha \in (0, 1)$ with probability $1$. 
Further, the equation defining $r$ is a nonlinear Volterra integral equation of second
kind with kernel $\Rt(t,s)/\delta$ that is bounded on $[0,T]^2$ by Eq.~\eqref{eq:RtBoundDef}
and Lemma \ref{lem:Exist-General-ODE}, and Lipschitz continuous nonlinearity $\ell_t(\,\cdot\,;z)$
by Assumption \ref{ass:Normal}.
By Theorem \cite[Theorem 2.6, Chapter 12]{gripenberg1990volterra} this equation
admits a unique solution $t\mapsto r^t$ that is continuous. 

The proof of uniqueness and existence for the functional derivative $\partial \ell_t(r^t;z) / \partial w^s$ is the same, provided that now the path $r^t$ is uniquely defined.

\subsection{Proof of Lemma~\ref{lem:solution-in-space}} \label{proof:solution-in-space}
\paragraph{$\trsfrmA$ maps $\mathcal{S}$ into $\wb{\mathcal{S}}_{\mathrm{cont}}$.} Directly from Eq.~\eqref{eq:trsfrm-t}, we can get
\begin{align}
\frac{\mathrm d}{\mathrm d t} \left\|\vt^t \right\|_2 & \leq \prn{\norm{\Lambda^t}+ \norm{\vGamma^t}}   \left\| \vt^t\right\|_2  + \int_0^t \left\| \vRl(t,s) \right\| \left\| \vt^s \right\|_2 \mathrm d s +  \left\| \vu^t \right\|_2 \nonumber \\
& \leq \prn{\cstlbd + \cstloss }   \left\| \vt^t\right\|_2  + \int_0^t \pRl(t-s) \left\| \vt^s \right\|_2 \mathrm d s +  \left\| \vu^t \right\|_2 \, ,
\end{align}
where the last line follows from the assumptions that $\norm{\Lambda^t} \leq \cstlbd, \norm{\Gamma^t} \leq \cstloss$ and $\left\| \vRl(t,s) \right\|  \leq \pRl(t-s)$, which further gives us
\begin{align}
& \ddt \sqrt{\mathbb{E} \norm{\vt^t}_2^2}  \leq \sqrt{\mathbb{E} \brk{\prn{\prn{\cstlbd + \cstloss}   \left\| \vt^t\right\|_2 + \int_0^t \pRl(t-s) \left\| \vt^s \right\|_2 \mathrm d s+ \left\| \vu^t \right\|_2}^2}} \nonumber \\
& \stackrel{\mathrm{(i)}}{\leq} \sqrt{\mathbb{E} \brk{\prn{\prn{\cstlbd + \cstloss}^2   \left\| \vt^t\right\|_2^2 + \int_0^t (t-s+1)^2 \pRl(t-s)^2 \left\| \vt^s \right\|_2^2 \mathrm d s + \left\| \vu^t \right\|_2^2} \cdot  \prn{1 + \int_0^t (t-s+1)^{-2} \mathrm d s + 1}}} \nonumber \\
& \leq \sqrt{3 \cdot \brc{\prn{\cstlbd + \cstloss}^2   \mathbb{E} \brk{\left\| \vt^t\right\|_2^2} + \int_0^t (t-s+1)^2 \pRl(t-s)^2 \mathbb{E} \brk{\left\| \vt^s \right\|_2^2} \mathrm d s + \frac{k}{\delta} \Phi_{\vCl}(t)}} \, , \label{eq:mid-solution-in-space-1}
\end{align}
where in (i) we use Cauchy-Schwarz inequality and in the last line it is used that
\begin{align}
\mathbb{E} \brk{\left\| \vu^t \right\|_2^2} = \Tr \prn{\mathbb{E} \brk{\vu^t {\vu^t}^\sT}} \leq k \norm{\mathbb{E} \brk{\vu^t {\vu^t}^\sT}} = \frac{k}{\delta} \norm{\vCl(t,t)} \leq \frac{k}{\delta} \Phi_{\vCl}(t) \, .
\end{align}
While since $\pCt(0) > \cstthetaz \geq \mathbb{E} \brk{\norm{\vt^0}_2^2}$ and recall
Eq.~\eqref{eq:def-pCt},
we obtain that $\mathbb{E} \brk{\norm{\vt^t}_2^2} < \Phi_{\vCt}(t)$ for all $t \in [0,T]$. We thus have
\begin{align}
\norm{\bvCt(t,t)} =  \norm{\mathbb{E} \brk{\vt^t {\vt^t}^\sT}} \leq  \mathbb{E} \brk{\norm{\vt^t}_2^2} \leq \Phi_{\vCt}(t)\, .
\end{align}
Next we look at the definition for the formal partial derivative $\partial \theta^t / \partial u^s$, as it is not a random function we have from Eq.~\eqref{eq:trsfrm-derivative-t} that
\begin{align}
\ddt \bRt(t,s) &  = - \prn{\vLambda^t + \Gamma^t} \bRt(t,s) - \int_s^t \vRl(t,s') \bRt(s',s) \mathrm d s'\, ,
\end{align}
for $0 \leq s \leq t \leq T$ and with $\bRt(s,s) = I$. Substituting in assumptions of $\mathcal{S}$ in Definition~\ref{def:space-S}, it holds that
\begin{align}
\ddt  \norm{\bRt(t,s)} 
& \leq \prn{\cstlbd + \cstloss}\norm{\bRt(t,s)}  + \int_s^t \pRl(t-s') \norm{\bRt(s',s)} \mathrm d s' \, ,
\end{align}
with $\norm{\bRt(s,s)} = 1$. Since $\pRt(0) > 1$ and by Eq.~\eqref{eq:def-pRt},
we can obtain that
\begin{align}
\norm{\bRt(t,s)} \leq \pRt(t-s).
\end{align}
Finally, we note that for any $ 0\leq s \leq t \leq T$,
\begin{align*}
\norm{\bCt(t,t) - 2 \bCt(t,s) + \bCt(s,s)} & = \norm{\Ep \brk{(\vt^t - \vt^s)(\vt^t - \vt^s)^\sT}} 
\leq \Ep \brk{\norm{\vt^t - \vt^s}_2^2} \, ,  
\end{align*}
and thus further
\begin{align}
& \norm{\bCt(t,t) - 2 \bCt(t,s) + \bCt(s,s)} \nonumber \\
& \leq \Ep \brk{\norm{\int_s^t \brc{-(\Lambda^{t'} + \Gamma^{t'}) \theta^{t'} - \int_0^{t'} R_{\ell}(t',s') \theta^{s'} \de s' + u^{t'}}\de t'}_2^2 } \nonumber \\
& \leq (t-s)^2 \sup_{0 \leq t \leq T} \Ep \brk{\prn{(\cstlbd + \cstloss) \norm{\theta^t}_2 + \int_0^t \pRl(t-s) \norm{\theta^s}_2 \de s + \norm{u^t}_2}^2} \nonumber \\
& \leq (t-s)^2  \cdot \sup_{0 \leq t \leq T} 3\brc{\prn{\cstlbd + \cstloss}^2   \pCt(t) + \int_0^t (t-s+1)^2 \pRl(t-s)^2 \pCt(s) \mathrm d s + \frac{k}{\delta} \Phi_{\vCl}(t)} \, .
\end{align}
Since $\pRl, \pRt$ and $\pCl$ are nondecreasing, we get
\begin{align}
\norm{\bCt(t,t) - 2 \bCt(t,s) + \bCt(s,s)} &\leq 3\brc{\brk{\prn{\cstlbd + \cstloss}^2  + T(T+1)^2 \pRl(T)^2 }\pCt(T) + \frac{k}{\delta} \Phi_{\vCl}(T)}(t-s)^2 \nonumber \\
& =\cstdualspaceS (t-s)^2 \, .
\end{align}
Similarly, we can see the continuity of $\bCt$ by Cauchy-Schwarz (as an even stronger result, we show $\bCt$ is Lipschitz continuous)
\begin{align}
\norm{\bCt(t,s) - \bCt(t,s')} & \leq \sqrt{\Ep \bigg[\norm{\theta^t}_2^2\bigg] \cdot \Ep \brk{\norm{\vt^s - \vt^{s'}}_2^2}} \leq \sqrt{\pCt(T)\cstdualspaceS } \cdot |s-s'|\, . 
\end{align}
This shows that $(\bCt, \bRt) \in \wb{\mathcal{S}}_{\mathrm{cont}}$ and concludes the first part.
\paragraph{$\trsfrmB$ maps $\wb{\mathcal{S}}$ into $\mathcal{S}_{\mathrm{cont}}$.}  Next we will show $\norm{\bCl(t,t)} \leq \pCl(t)$ assuming that $(\bCt, \bRt) \in \wb{\mathcal{S}}$. By Definition~\ref{def:space-S-dual} and Eq.~\eqref{eq:trsfrm-r}, it follows that
\begin{align}
\norm{\vl_t \prn{\vr^t; \veps} }_2 & \leq \norm{\vl_t \prn{0; \veps} }_2 + \cstloss \norm{\vr^t}_2  \leq \norm{\vl_t \prn{0; \veps} }_2  + \frac{\cstloss}{\delta} \int_0^t \norm{\bRt(t,s)} \big\|\vl_s \prn{\vr^s; \veps} \big\|_2 \mathrm d s + \cstloss \norm{\vw^t}_2 \nonumber \\
& \leq \norm{\vl_t \prn{0; \veps} }_2 + \frac{\cstloss}{\delta} \int_0^t \Phi_{\vRt}(t-s) \norm{\vl_s \prn{\vr^s; \veps} }_2 \mathrm d s + \cstloss\norm{\vw^t}_2 \, .
\end{align} 
Hence
\begin{align}
&\mathbb{E} \brk{\norm{\vl_t \prn{\vr^t; \veps} }_2^2}  \leq \mathbb{E} \brk{\prn{\norm{\vl_t \prn{0; \veps} }_2 + \frac{\cstloss}{\delta} \int_0^t \pRt(t-s) \norm{\vl_s \prn{\vr^s; \veps} }_2 \mathrm d s + \cstloss\norm{\vw^t}_2}^2} \nonumber \\
& \stackrel{\mathrm{(i)}}{\leq} \mathbb{E} \brk{\prn{\norm{\vl_t \prn{0; \veps} }_2^2 + \frac{\cstloss^2}{\delta^2} \int_0^t (t-s+1)^2\Phi_{\vRt}(t-s)^2 \norm{\vl_s \prn{\vr^s; \veps} }_2^2 \mathrm d s + \cstloss^2\norm{\vw^t}_2^2} \cdot \prn{1 + \int_0^t (t-s+1)^{-2} \mathrm d s + 1}} \nonumber \\
& \leq 3 \brc{\cstthetaz + \frac{\cstloss^2}{\delta^2} \int_0^t (t-s+1)^2\Phi_{\vRt}(t-s)^2 \mathbb{E} \brk{\norm{\vl_s \prn{\vr_s; \veps} }_2^2} \mathrm d s + k\cstloss^2 \Phi_{\vCt}(t)} \, ,  \label{eq:mid-solution-in-space-2}
\end{align}
where in (i) we use Cauchy-Schwarz inequality and in the last line it is used that $\Ep \brk{\norm{\vw^t}_2^2} \leq k \norm{\bCt(t,t)} \leq k \pCt(t)$. By Eq.~\eqref{eq:def-pCl}
and along with the fact that $\pCl(0) \geq 3 \cstthetaz + 3 k\cstloss^2 \pCt(0) > \cstthetaz \geq  \mathbb{E} \brk{\norm{\vl_0 \prn{\vr^0; \veps} }_2^2} $, it must follow that
\begin{align}
\norm{\bCl(t,t)} = \norm{\Ep \brk{\vl_t(\vr^t; z) \vl_t(\vr^t; z)^\sT }} \leq \mathbb{E} \brk{\norm{\vl_t \prn{\vr^t; \veps} }_2^2} \leq \pCl(t) \, .
\end{align}
Next we show $\bRt(t,s) \leq \pRt(t-s)$ for all $0 \leq s < t \leq T$. By Eq.~\eqref{eq:trsfrm-derivative-l},
\begin{align*}
\frac{\partial \ell_t(r^t;z)}{\partial w^s} & = \nabla_r \ell_t(r^t;z) \cdot \prn{- \frac{1}{\delta} \int_s^t \bRt(t,s') \frac{\partial \ell_{s'}(r^{s'};z)}{\partial w^s}  \de s' - \frac 1 \delta \bRt(t,s) \nabla_r \ell_s(r^s;z) } \, , 
\end{align*}
and the Lipschitz property of the function $\ell$, i.e. $\norm{\nabla_r \ell_t(r^t;z)} \leq \cstloss$, we have
\begin{align}
\mathbb{E}\brk{\norm{\frac{\partial \ell_t(r^t;z)}{\partial w^s}}} & \leq \frac{\cstloss}{\delta} \prn{\int_s^t \norm{\bRt(t,s')} \mathbb{E} \brk{\norm{\frac{\partial \ell_{s'}(r^{s'};z)}{\partial w^s}}} \mathrm d s' + \cstloss \norm{\bRt(t,s) }} \nonumber \\
& \leq \frac{\cstloss}{\delta} \prn{\int_s^t \pRt(t-s')  \mathbb{E} \brk{\norm{\frac{\partial \ell_{s'}(r^{s'};z)}{\partial w^s}}} \mathrm d s' + \cstloss \pRt(t-s)}.
\end{align}
Comparing to  Eq.~\eqref{eq:def-pRl},
we see $\mathbb{E}\brk{\norm{\frac{\partial \ell_t(r^t;z)}{\partial w^s}}} \leq \pRl(t-s)$ for all $0 \leq s \leq t \leq T$, and further
\begin{align}
\norm{\bRl(t,s)} = \norm{\mathbb{E}\brk{\frac{\partial \ell_t(r^t;z)}{\partial w^s}}}  \leq \mathbb{E}\brk{\norm{\frac{\partial \ell_t(r^t;z)}{\partial w^s}}} \leq \pRl(t-s).
\end{align}
Then we conclude from the Lipschitz property and Eq.~\eqref{eq:trsfrm-Gamma} that
\begin{align}
\norm{\bGamma^t} & = \norm{\Ep \brk{\nabla_r \ell_t(r^t;z)}} \leq \Ep \brk{\norm{\nabla_r \ell_t(r^t;z)}} \leq \cstloss\, .
\end{align}
We note that for any $0 \leq s \leq t \leq T$,
\begin{align*}
&\norm{\bCl(t,t) - 2 \bCl(t,s) + \bCl(s,s)} = \norm{\Ep \brk{(\ell_t(r^t;z) - \ell_s(r^s;z))(\ell_t(r^t;z) - \ell_s(r^s;z))^\sT}} \\ 
& \leq \Ep \brk{\norm{\ell_t(r^t;z) - \ell_s(r^s;z)}_2^2}  \leq \cstloss \cdot \Ep \brk{\prn{\norm{r^t - r^s }_2 + |t-s|}^2} \\
& \leq \cstloss \cdot \Ep \brk{\prn{\frac 1 \delta \int_s^t \pRt(t-s') \norm{\vl_{s'}(r^{s'};z) }_2 \de s' + \norm{w^t - w^s}_2 + |t-s| }^2} \nonumber \\
& \leq \cstloss \cdot 3\Ep \brk{\prn{\frac{1}{\delta^2} \int_s^t (t-s'+1)^2\pRt(t-s')^2 \norm{\vl_{s'}(r^{s'};z) }_2^2 \de s' + \norm{w^t - w^s}_2^2 + (t-s)^2 }} \, ,
\end{align*}
where in the last line we use the Cauchy-Schwarz inequality. Further, taking into the following inequalities
\begin{align*}
\Ep \brk{\norm{\vl_{s'}(r^{s'};z) }_2^2} &\leq \pCl(s') \, , \\
\Ep \brk{\norm{w^t - w^s}_2^2} & \leq k \norm{\bCt(t,t) - 2\bCt(t,s) + \bCt(s,s)} \nonumber \\
& \leq 3k\brc{\brk{\prn{\cstlbd + \cstloss}^2  + T(T+1)^2 \pRl(T)^2 }\pCt(T) + \frac{k}{\delta} \Phi_{\vCl}(T)}(t-s)^2 \, ,
\end{align*}
it then follows that
\begin{align*}
& \norm{\bCl(t,t) - 2 \bCl(t,s) + \bCl(s,s)} \nonumber \\
& \leq 3\cstloss (t-s)^2 \nonumber \\
& \qquad \cdot \brc{\frac{1}{\delta^2} (T+1)^2 \pRt(T)^2 \pCl(T) + 3k\brc{\brk{\prn{\cstlbd + \cstloss}^2  + T(T+1)^2 \pRl(T)^2 }\pCt(T) + \frac{k}{\delta} \Phi_{\vCl}(T)} + 1} \nonumber \\
& = \cstspaceS (t-s)^2 \, .
\end{align*}
Similar to Eq.~\eqref{eq:lip-continuity-Ct} in the previous part, we also have Lipschitz continuity for $\bCl$, namely $\forall s,s' \in [0,t]$,
\begin{align}
\norm{\bCl(t,s) - \bCl(t,s')} & \leq \sqrt{\Ep \bigg[\norm{\vl_t(r^t; z)}_2^2\bigg] \cdot \Ep \brk{\norm{\vl_s(r^s; z) - \vl_{s'}(r^{s'}; z) }_2^2}} \leq \sqrt{\pCl(T)\cstspaceS } \cdot |s-s'|\, .
\end{align}
This concludes the proof.
\subsection{Proofs for contraction property of the mapping $\mathcal{T}$}

\subsubsection{Proof of Lemma \ref{lem:diff-Cl-global}} \label{proof:diff-Cl-global}
\paragraph{Controlling the distance between \texorpdfstring{$\bCt^1$}{TEXT} and \texorpdfstring{$\bCt^2$}{TEXT}.}
By Eq.~\eqref{eq:trsfrm-t}, the equations that define $\vt_1$ and $\vt_2$ can be put as for all $t \in [0,T]$ and $i=1,2$,
\begin{equation}
\frac{\mathrm d}{\mathrm d t} \vt^t_i = -(\vLambda^t + \vGamma^t) \vt^t_i - \int_0^t \vRl(t,s) \vt^s_i \mathrm d s + \vu^t_i\ ,
\end{equation}
where $\vu^t_i$ are centered Gaussian processes with autocovariances $\vCl^i/\delta$ and $\vRl := \vRl^1 = \vRl^2$. By definition, we can couple $\vu^t_1$ and $\vu^t_2$ such that
\begin{align}
\sup_{t \in [0, T]} e^{-\lambda t} \sqrt{\mathbb{E} \brk{\norm{\vu^t_1 - \vu^t_2}_2^2}} \leq 2 \cdot \lbddst{\vu^t_1}{\vu^t_2} = 2 \cdot \lbddst{\vCl^1/\delta}{\vCl^2/\delta} = \frac{2}{\sqrt \delta} \lbddst{\vCl^1}{\vCl^2}.
\end{align}
We observe that
\begin{align}
\frac{\mathrm d}{\mathrm d t} \norm{\vt^t_1 - \vt^t_2}_2 & 
& \leq \int_0^t \Phi_{\vRl}(t-s) \norm{\vt^s_1 - \vt^s_2}_2 \mathrm d s + \prn{\cstlbd + \cstloss} \norm{\vt^t_1 - \vt^t_2}_2 + \norm{\vu^t_1 - \vu^t_2}_2 \, .
\end{align}
By Lemma~\ref{lem:Exist-General-ODE} we can choose a $\bar{\lambda}$ large enough such that $\int_0^\infty e^{-\wb{\lambda} s}\Phi_{\vRl}(s) \mathrm d s \leq \cstlbd + \cstloss$, which implies that
\begin{align}
& e^{-\wb{\lambda} t} \frac{\mathrm d}{\mathrm d t} \|\vt^t_1 - \vt^t_2\|_2 \nonumber \\
& \leq \prn{\cstlbd + \cstloss + \int_0^\infty e^{-\wb{\lambda} s}\Phi_{\vRl}(s) \mathrm d s} \cdot \sup_{0 \leq s \leq t} e^{-\wb{\lambda} s} \norm{\vt^s_1 - \vt^s_2}_2 + e^{-\wb{\lambda} t}  \norm{\vu^t_1 - \vu^t_2}_2 \nonumber \\
&\leq 2\prn{\cstlbd + \cstloss} \cdot \sup_{0 \leq s \leq t} e^{-\wb{\lambda} s} \norm{\vt^s_1 - \vt^s_2}_2 + e^{-\wb{\lambda} t}  \norm{\vu^t_1 - \vu^t_2}_2 \, . \label{eq:argument-lambda-bar}
\end{align}
Using the observation
\begin{align}
\frac{\mathrm d}{\mathrm d t} \sup_{0 \leq s \leq t} e^{-\wb{\lambda} s} \norm{\vt^s_1 - \vt^s_2}_2 & \leq \max \brc{\frac{\mathrm d}{\mathrm d t}  e^{-\wb{\lambda} t} \norm{\vt^t_1 - \vt^t_2}_2, 0} \leq \max \brc{ e^{-\wb{\lambda} t} \frac{\mathrm d}{\mathrm d t}  \norm{\vt^t_1 - \vt^t_2}_2, 0} \nonumber \\
& \leq 2\prn{\cstlbd + \cstloss} \cdot \sup_{0 \leq s \leq t} e^{-\wb{\lambda} s} \norm{\vt^s_1 - \vt^s_2}_2 + e^{-\wb{\lambda} t}  \norm{\vu^t_1 - \vu^t_2}_2 \, , \label{eq:argument-lambda-bar-2}
\end{align}
we can derive that
\begin{align}
\frac{\mathrm d}{\mathrm d t} \prn{e^{-2\prn{\cstlbd + \cstloss} t} \sup_{0 \leq s \leq t} e^{-\wb{\lambda} s} \norm{\vt^s_1 - \vt^s_2}_2 } \leq e^{-2\prn{\cstlbd + \cstloss} t -\wb{\lambda} t} \norm{\vu^t_1 - \vu^t_2}_2, \label{eq:argument-lambda-bar-3}
\end{align}
and consequently by Cauchy-Schwarz inequality
\begin{align}
e^{-4\prn{\cstlbd + \cstloss} t -2\wb{\lambda} t} \norm{\vt^t_1 - \vt^t_2}_2^2 &\leq \prn{e^{-2\prn{\cstlbd + \cstloss} t } \sup_{0 \leq s \leq t} e^{-\wb{\lambda} s} \norm{\vt^s_1 - \vt^s_2}_2}^2 \leq \prn{\int_0^t e^{-2\prn{\cstlbd + \cstloss} s -\wb{\lambda} s} \norm{\vu^s_1 - \vu^s_2}_2 \mathrm d s}^2 \nonumber \\
& \leq \prn{\int_0^t \frac{1}{(t-s+1)^2} \mathrm d s} \prn{\int_0^t (t-s+1)^2e^{-4\prn{\cstlbd + \cstloss} s -2\wb{\lambda} s} \norm{\vu^s_1 - \vu^s_2}_2^2 \mathrm d s} \nonumber \\
& \leq \int_0^t (t-s+1)^2e^{-4\prn{\cstlbd + \cstloss} s -2\wb{\lambda} s} \norm{\vu^s_1 - \vu^s_2}_2^2 \mathrm d s \, .
\end{align}
Taking expectation on both sides, and choose some $\lambda \geq 2(\cstlbd + \cstloss) +\wb{\lambda}$, we have
\begin{align}
e^{-2\lambda t} \Ep \brk{\norm{\vt^t_1 - \vt^t_2}_2^2} & \leq e^{- 2\prn{\lambda - 2\prn{\cstlbd + \cstloss}  -\wb{\lambda}} t} \cdot e^{-4\prn{\cstlbd + \cstloss} t -2\wb{\lambda} t} \Ep \brk{\norm{\vt^t_1 - \vt^t_2}_2^2} \nonumber \\
& \leq e^{- 2\prn{\lambda - 2\prn{\cstlbd + L}  -\wb{\lambda}} t} \cdot \int_0^t (t-s+1)^2 e^{-4\prn{\cstlbd + L} s -2\wb{\lambda} s} \Ep \brk{\norm{\vu^s_1 - \vu^s_2}_2^2} \mathrm d s \nonumber \\
& \leq \prn{\int_0^\infty e^{- 2\prn{\lambda - 2\prn{\cstlbd + L}  -\wb{\lambda}} t} (t+1)^2 \de t} \cdot \sup_{0 \leq s \leq t} e^{- 2\lambda s} \Ep \brk{\norm{\vu^s_1 - \vu^s_2}_2^2} \, .
\end{align}
Taking supremum on both sides for $t \in [0, T]$ and choosing a large enough $\lambda$ yields
\begin{align}
\sup_{t \in [0, T]} e^{-\lambda t} \sqrt{ \Ep \brk{\norm{\vt^t_1 - \vt^t_2}_2^2}} &\leq \prn{\int_0^\infty e^{- 2\prn{\lambda - 2\prn{\cstlbd + L}  -\wb{\lambda}} t} (t+1)^2 \de t} \cdot \frac{2}{\sqrt \delta} \cdot \lbddst{\vCl^1}{\vCl^2} \nonumber \\
& \leq \epsilon \cdot  \lbddst{\vCl^1}{\vCl^2} \, ,
\end{align}
for any prescribed $\epsilon > 0$. Consider a centered Gaussian process $\begin{bmatrix} g_1 \\ g_2 \end{bmatrix} \in \mathbb{R}^{2p}$ with covariance $\mathbb{E}\brk{\begin{bmatrix} \vt^t_1 \\ \vt^t_2 \end{bmatrix} \begin{bmatrix} \vt^s_1 \\ \vt^s_2 \end{bmatrix}^\sT}$. Clearly $\mathbb{E} \brk{\norm{{g}^t_1 - g^t_2}_2^2}= \mathbb{E} \brk{\norm{\vt^t_1 - \vt^t_2}_2^2}$ for all $t \in [0, \infty)$. Since $g_1$ and $g_2$ have covariance kernels $\bCt^1$ and $\bCt^2$, we have
\begin{align}
\lbddst{\bCt^1}{\bCt^2} \leq \sup_{t \in [0, T]} e^{-\lambda t} \sqrt{ \Ep \brk{\norm{\vt^t_1 - \vt^t_2}_2^2}} \leq \epsilon \cdot \lbddst{\vCl^1}{\vCl^2} \, .
\end{align}
\paragraph{Controlling the distance between \texorpdfstring{$\bRt^1$}{TEXT} and \texorpdfstring{$\bRt^2$}{TEXT}.}
Note that both $\bRt^1$ and $\bRt^2$ are defined by the same ODE by Eq.~\eqref{eq:trsfrm-derivative-t} for $i=1,2$,
\begin{equation}
\frac{\mathrm d}{\mathrm d t} \bRt^i(t,s) = -\Lambda^t \bRt^i(t,s) - \int_s^t \vRl(t,s') \bRt^i(s',s) \mathrm d s' \, ,
\end{equation}
and with the same boundary condition $\bRt^i(s,s) = I$. Thus $\bRt^1 = \bRt^2$ on $[0, T]^2$.

\subsubsection{Proof of Lemma \ref{lem:diff-Rl-global}} \label{proof:diff-Rl-global}
\paragraph{Controlling the distance between \texorpdfstring{$\bCt^1$}{TEXT} and \texorpdfstring{$\bCt^2$}{TEXT}.}
Since $\vCl^1 = \vCl^2$ on $[0, T]^2$, we have for all $t \in [0, T]$ and $i=1,2$,
\begin{equation}
\frac{\mathrm d}{\mathrm d t} \vt^t_i = -\prn{\vLambda^t + \Gamma^t_i} \vt^t_i - \int_0^t \vRl^i(t,s) \vt^s_i \mathrm d s + \vu_t\, ,
\end{equation}
where $\vu_t$ is a centered Gaussian process with the covariance kernel $\vCl/\delta := \vCl^1/\delta = \vCl^2/\delta$. Using
\begin{align}
&\frac{\mathrm d}{\mathrm d t} \prn{\vt^t_1 - \vt^t_2} = - \prn{\vLambda^t + \Gamma^t_1} \prn{\vt^t_1 - \vt^t_2} - \prn{\Gamma^t_1 - \Gamma^t_2} \vt^t_2 - \int_0^t \vRl^1(t,s) (\vt^s_1 - \vt^s_2) \mathrm d s - \int_0^t  (\vRl^1(t,s) - \vRl^2(t,s)) \vt^s_2 \mathrm d s\, ,
\end{align}
it follows that
\begin{align}
&\frac{\mathrm d}{\mathrm d t} \|\vt^t_1 - \vt^t_2\|_2  \leq \norm{\frac{\mathrm d}{\mathrm d t} \prn{\vt^t_1 - \vt^t_2}}_2 \nonumber \\
& = \norm{\prn{\Lambda^t + \vGamma^t_1} \prn{\vt^t_1 - \vt^t_2} + \int_0^t \vSl^1(t,s) (\vt^s_1 - \vt^s_2) \mathrm d s + \prn{\vGamma^t_1 - \vGamma^t_2}\vt^t_2 + \int_0^t  (\vSl^1(t,s) - \vSl^2(t,s)) \vt^s_2 \mathrm d s}_2 \nonumber \\
& \leq \prn{\cstlbd + \cstloss} \norm{\vt^t_1 - \vt^t_2}_2 + \int_0^t \Phi_{\vSl}(t-s) \norm{\vt^s_1 - \vt^s_2}_2 \mathrm d s + \norm{\vGamma^t_1 - \vGamma^t_2} \norm{\vt^t_2}_2   + \int_0^t  \norm{\vSl^1(t,s) - \vSl^2(t,s)} \norm{\vt^s_2}_2 \mathrm d s\, .
\end{align}
By Lemma~\ref{lem:Exist-General-ODE} we can choose a $\bar{\lambda}$ large enough such that $\int_0^\infty e^{-\wb{\lambda} s}\Phi_{\vSl}(s) \mathrm d s \leq \cstlbd + \cstloss$, and therefore
\begin{align}
& e^{-\wb{\lambda} t}\frac{\mathrm d}{\mathrm d t} \|\vt^t_1 - \vt^t_2\|_2 \nonumber \\
& \leq \prn{\cstlbd + \cstloss} e^{-\wb{\lambda} t}\norm{\vt^t_1 - \vt^t_2}_2 + \int_0^t e^{-\wb{\lambda} (t-s)}\Phi_{\vSl}(t-s) \cdot e^{-\wb{\lambda} s}\norm{\vt^s_1 - \vt^s_2}_2 \mathrm d s \nonumber \\
&\qquad  + e^{-\wb{\lambda} t}\norm{\vGamma^t_1 - \vGamma^t_2} \norm{\vt^t_2}_2  + e^{-\wb{\lambda} t}\int_0^t \norm{\vSl^1(t,s) - \vSl^2(t,s)}  \norm{\vt^s_2}_2 \mathrm d s \nonumber \\
& \leq \prn{\cstlbd + \cstloss +\int_0^\infty e^{-\wb{\lambda} s}\Phi_{\vSl}(s) \mathrm d s} \cdot \sup_{0 \leq s \leq t} e^{-\wb{\lambda} s}\norm{\vt^s_1 - \vt^s_2}_2 + e^{-\wb{\lambda} t}\norm{\vGamma^t_1 - \vGamma^t_2} \norm{\vt^t_2}_2 \nonumber \\
& \qquad  + e^{-\wb{\lambda} t}\int_0^t \norm{\vSl^1(t,s) - \vSl^2(t,s)}  \norm{\vt^s_2}_2 \mathrm d s \nonumber \\
& \leq 2\prn{\cstlbd + \cstloss} e^{-\wb{\lambda} t} \sup_{0 \leq s \leq t} e^{-\wb{\lambda} s}\norm{\vt^s_1 - \vt^s_2}_2+ e^{-\wb{\lambda} t}\norm{\vGamma^t_1 - \vGamma^t_2} \norm{\vt^t_2}_2 + e^{-\wb{\lambda} t}\int_0^t \norm{\vSl^1(t,s) - \vSl^2(t,s)}  \norm{\vt^s_2}_2 \mathrm d s \, .
\end{align}
Similar to the proof in Appendix~\ref{proof:diff-Cl-global}, we obtain
\begin{align}
&\frac{\mathrm d}{\mathrm d t} \prn{e^{-2\prn{\cstlbd + \cstloss} t} \sup_{0 \leq s \leq t} e^{-\wb{\lambda} s} \norm{\vt^s_1 - \vt^s_2}_2 } \nonumber \\
&\leq  e^{-2\prn{\cstlbd + \cstloss} t-\wb{\lambda} t} \prn{\norm{\vGamma^t_1 - \vGamma^t_2} \norm{\vt^t_2}_2 + \int_0^t \norm{\vSl^1(t,s) - \vSl^2(t,s)}  \norm{\vt^s_2}_2 \mathrm d s} \, ,
\end{align}
and consequently 
\begin{align}
&e^{-4\prn{\cstlbd + \cstloss} t -2\wb{\lambda} t} \norm{\vt^t_1 - \vt^t_2}_2^2 \nonumber \\
& \leq \prn{\int_0^t e^{-2\prn{\cstlbd + \cstloss} s-\wb{\lambda} s} \prn{\norm{\vGamma^s_1 - \vGamma^s_2}\norm{\vt^s_2}_2 + \int_0^s \norm{\vSl^1(s,s') - \vSl^2(s,s')}  \norm{\vt_{s'}^2}_2 \mathrm d s'} \mathrm d s}^2 \nonumber \\
& \stackrel{\mathrm{(i)}}{\leq} \brc{\int_0^t (t-s+1)^{-2}\prn{1 + \int_0^s (s'+1)^{-2} \mathrm d s' } \mathrm d s}\cdot \bigg\{\int_0^t e^{-4\prn{\cstlbd + \cstloss} s-2\wb{\lambda} s}(t-s+1)^2\nonumber \\
& \qquad \cdot \prn{\norm{\vGamma^s_1 - \vGamma^s_2}^2  \norm{\vt^s_2}_2^2+ \int_0^s (s'+1)^2\norm{\vSl^1(s,s') - \vSl^2(s,s')}^2  \norm{\vt_{s'}^2}_2^2 \mathrm d s'} \mathrm d s\bigg\} \nonumber \\
& \leq 2\int_0^t e^{-4\prn{\cstlbd + \cstloss} s-2\wb{\lambda} s}(t-s+1)^2 \prn{\norm{\vGamma^s_1 - \vGamma^s_2}^2 \norm{\vt^s_2}_2^2 + \int_0^s (s'+1)^2\norm{\vSl^1(s,s') - \vSl^2(s,s')}^2  \norm{\vt_{s'}^2}_2^2 \mathrm d s'} \mathrm d s\, ,
\end{align}
where we invoke Cauchy-Schwarz inequality in (i). Take expectation on both sides and use Lemma~\ref{lem:solution-in-space} which implies that $\mathbb{E} \brk{\norm{\vt_{s'}^2}_2^2} \leq k \norm{\Ep \brk{\vt_{s'}^2 {\vt_{s'}^2}^\sT }}_2 \leq k \Phi_{\vCt}(s')$, we have
\begin{align}
&e^{-4\prn{\cstlbd + \cstloss} t -2\wb{\lambda} t} \mathbb{E} \brk{\norm{\vt^t_1 - \vt^t_2}_2^2} \nonumber \\
&\leq 2\int_0^t  \bigg\{ \prn{e^{-4\prn{\cstlbd + \cstloss} s-2\wb{\lambda} s}(t-s+1)^2} \nonumber \\
& \qquad \cdot \prn{\norm{\vGamma^s_1 - \vGamma^s_2}^2 \mathbb{E} \brk{\norm{\vt_{s}^2}_2^2} + \int_0^s (s'+1)^2\norm{\vSl^1(s,s') - \vSl^2(s,s')}^2   \mathbb{E} \brk{\norm{\vt_{s'}^2}_2^2} \mathrm d s'} \bigg\} \mathrm d s  \nonumber \\
& \leq 2k \int_0^t e^{-4\prn{\cstlbd + \cstloss} s-2\wb{\lambda} s}(t-s+1)^2 \prn{\norm{\vGamma^s_1 - \vGamma^s_2}^2 \Phi_{\vCt}(s) + \int_0^s (s'+1)^2\norm{\vSl^1(s,s') - \vSl^2(s,s')}^2   \Phi_{\vCt}(s') \mathrm d s'} \mathrm d s\, .
\end{align}
Now we take $\lambda > 2(\cstlbd + \cstloss) + \wb{\lambda}$, and for any $t \in [0,T]$,
\begin{align}
&e^{-2\lambda t} \mathbb{E} \brk{\norm{\vt^t_1 - \vt^t_2}_2^2} \nonumber \\
&\leq 2k\int_0^t \Bigg[ e^{- 2\prn{\lambda - 2\prn{\cstlbd + \cstloss}  -\wb{\lambda}} (t-s)}(t-s+1)^2 \nonumber \\
& \qquad \cdot e^{-2\lambda s} \prn{\norm{\vGamma^s_1 - \vGamma^s_2}^2 \Phi_{\vCt}(s)+ k\int_0^s (s'+1)^2\norm{\vSl^1(s,s') - \vSl^2(s,s')}^2   \Phi_{\vCt}(s') \mathrm d s'} \Bigg] \mathrm d s \nonumber \\
& \leq 2k \prn{\int_0^\infty e^{-2\prn{\lambda - 2\prn{\cstlbd + \cstloss}  -\wb{\lambda}} t}(t+1)^2 \de t }  \nonumber \\
& \qquad \cdot \prn{\Phi_{\vCt}(t) \cdot \sup_{0 \leq s \leq t} e^{-2\lambda s} \norm{\vGamma^s_1 - \vGamma^s_2}^2 +  \prn{k\int_0^s (s'+1)^2 \Phi_{\vCt}(s') \mathrm d s'} \cdot \sup_{0 \leq s' \leq s} e^{-2 \lambda s}\norm{\vSl^1(s,s') - \vSl^2(s,s')}^2  }  \nonumber \\
& \leq 2k\prn{\int_0^\infty e^{-2\prn{\lambda - 2\prn{\cstlbd + \cstloss}  -\wb{\lambda}} t}(t+1)^2 \de t }\cdot  \prn{\Phi_{\vCt}(T) \cdot \lbddst{\Gamma_1}{\Gamma_2}^2 +  kT (T+1)^2 \Phi_{\vCt}(T) \cdot \lbddst{\vSl^1}{\vSl^2}^2 }.
\end{align}
Therefore, we can always take a large enough $\lambda$ such that for any $\epsilon > 0$
\begin{align}
e^{-\lambda t} \sqrt{\mathbb{E} \brk{\norm{\vt^t_1 - \vt^t_2}_2^2}} \leq \epsilon \cdot \sqrt{\lbddst{\Gamma_1}{\Gamma_2}^2 + \lbddst{\vSl^1}{\vSl^2}^2} \leq \epsilon \cdot \prn{\lbddst{\Gamma_1}{\Gamma_2} + \lbddst{\vSl^1}{\vSl^2}}.
\end{align}
Using the same argument in Appendix~\ref{proof:diff-Cl-global}, we conclude that
\begin{align}
\lbddst{\bCt^1}{\bCt^2} \leq \sup_{t \in [0, T]} e^{-\lambda t} \sqrt{ \Ep \brk{\norm{\vt^t_1 - \vt^t_2}_2^2}} \leq \epsilon \cdot \prn{\lbddst{\vSl^1}{\vSl^2} + \lbddst{\Gamma_1}{\Gamma_2}}.
\end{align}

\paragraph{Controlling the distance between \texorpdfstring{$\bRt^1$}{TEXT} and \texorpdfstring{$\bRt^2$}{TEXT}.}
Again from Eq.~\eqref{eq:trsfrm-derivative-t} we get for any $0 \leq s \leq t \leq T$ and $i=1,2$,
\begin{equation}
\frac{\mathrm d}{\mathrm d t} \bRt^i(t,s) = - \prn{\Lambda^t + \Gamma^t_i} \bRt^i(t,s) - \int_s^t \vRl^i(t,s') \bRt^i(s',s) \mathrm d s' \, ,
\end{equation}
with the same boundary conditions $\bRt^i(s,s) = I$, and thus for any $0 \leq s \leq t \leq T$,
\begin{align}
\frac{\mathrm d}{\mathrm d t} \prn{\bRt^1(t,s) - \bRt^2(t,s)} & = - \prn{\Lambda^t + \Gamma^t_1} \prn{\bRt^1(t,s) - \bRt^2(t,s)} - \prn{\Gamma^t_1 - \Gamma^t_2} \bRt^2(t,s) \nonumber \\
& \qquad - \int_{s}^t \vRl^1(t,s') (\bRt^1(s',s) - \bRt^2(s',s)) \mathrm d s' - \int_{s}^t  (\vRl^1(t,s') - \vRl^2(t,s')) \bRt^2(s',s) \mathrm d s'  \, ,
\end{align}
and $\bRt^1(s,s) - \bRt^2(s,s) = 0$. It then follows that
\begin{align}
&\frac{\mathrm d}{\mathrm d t} \norm{\bRt^1(t,s) - \bRt^2(t,s)} \leq \norm{\frac{\mathrm d}{\mathrm d t} \prn{\bRt^1(t,s) - \bRt^2(t,s)}} \nonumber \\
& \leq \prn{\cstlbd + \cstloss} \norm{\bRt^1(t,s) - \bRt^2(t,s)} + \int_s^t \Phi_{\vSl}(t-s') \norm{\bRt^1(s',s) - \bRt^2(s',s)} \mathrm d s' \nonumber \\
& \qquad + \norm{\vGamma^t_1 - \vGamma^t_2} \cdot \Phi_{\vRt}(t-s) + \int_s^t \norm{\vSl^1(t,s') - \vSl^2(t,s')} \cdot \Phi_{\vRt}(s'-s) \mathrm d s' \, ,
\end{align}
where in the last line we use $\norm{\vGamma^t_1} \leq \cstloss$ and $\norm{\bRt^2(t,s)} \leq \Phi_{\vRt}(t-s)$ by invoking Lemma~\ref{lem:solution-in-space}. We now proceed almost identically to the proof in the previous part. We find some large enough $\wb{\lambda}$ such that $\int_0^\infty e^{-\wb{\lambda} t}\pRl(t) \de t \leq \cstlbd + \cstloss$ and on which $\wb{\lambda}$ it holds that for any $ 0 \leq s \leq t \leq T$,
\begin{align}
&\frac{\mathrm d}{\mathrm d t} \prn{e^{-2\prn{\cstlbd + \cstloss}t} \sup_{s \leq s' \leq t} e^{-\wb{\lambda} s'} \norm{\bRt^1(s', s) - \bRt^2(s', s)} } \nonumber \\
& \leq  e^{-2\prn{\cstlbd + \cstloss} t-\wb{\lambda} t} \prn{\norm{\vGamma^t_1 - \vGamma^t_2} \Phi_{\vRt}(t-s) + \int_s^t \norm{\vSl^1(t,s') - \vSl^2(t,s')}  \Phi_{\vRt}(s'-s) \mathrm d s'}
\end{align}
and then we have
\begin{align}
&e^{-2\prn{\cstlbd + \cstloss} t-\wb{\lambda} t} \norm{\bRt^1(t, s) - \bRt^2(t, s)}  \nonumber \\
& \leq \int_s^t e^{-2\prn{\cstlbd + \cstloss} s'-\wb{\lambda} s'} \prn{\norm{\vGamma_{s'}^1 - \vGamma_{s'}^2} \Phi_{\vRt}(s'-s) + \int_s^{s'} \norm{\vSl^1(s',s'') - \vSl^2(s',s'')}  \Phi_{\vRt}(s''-s) \mathrm d s''} \mathrm d s' \, .
\end{align}
For any $\lambda > 2(\cstlbd + \cstloss) + \wb{\lambda}$ and $0 \leq s \leq t \leq T$,
\begin{align}
&e^{-\lambda t} \norm{\bRt^1(t, s) - \bRt^2(t, s)}  \nonumber \\
& \leq \int_s^t e^{- \prn{\lambda - 2\prn{\cstlbd + \cstloss}  -\wb{\lambda}}(t-s')} \nonumber \\
& \qquad  \cdot e^{-\lambda s'} \prn{\norm{\vGamma_{s'}^1 - \vGamma_{s'}^2} \Phi_{\vRt}(s'-s) + \int_s^{s'} \norm{\vSl^1(s',s'') - \vSl^2(s',s'')}  \Phi_{\vRt}(s''-s) \mathrm d s''} \mathrm d s' \nonumber \\
& \leq \frac{1}{\lambda - 2\prn{\cstlbd + \cstloss}  -\wb{\lambda}} \nonumber \\
& \qquad \cdot \prn{\sup_{s \leq s' \leq t} e^{-\lambda s'}\norm{\vGamma_{s'}^1 - \vGamma_{s'}^2} \Phi_{\vRt}(s'-s) + \sup_{s \leq s' \leq t} \int_s^{s'} e^{-\lambda s'} \norm{\vSl^1(s',s'') - \vSl^2(s',s'')} \Phi_{\vRt}(s''-s) \mathrm d s''  } \nonumber \\
& \leq \frac{T \Phi_{\vRt}(T)}{\lambda - 2\prn{\cstlbd + \cstloss}  -\wb{\lambda}} \cdot \prn{\lbddst{\Gamma_1}{\Gamma_2} + \lbddst{\vSl^1}{\vSl^2}} \, .
\end{align}
For any $\epsilon > 0$, we can take a large enough $\lambda$ such that
\begin{align}
\lbddst{\bRt^1}{\bRt^2} = \sup_{0 \leq s \leq t \leq T}e^{-\lambda t} \norm{\bRt^1(t, s) - \bRt^2(t, s)} \leq \epsilon \cdot \prn{\lbddst{\vSl^1}{\vSl^2} + \lbddst{\Gamma_1}{\Gamma_2} } \, .
\end{align}

\subsubsection{Proof of Lemma \ref{lem:diff-Ctheta-global}} \label{proof:diff-Ctheta-global}
\paragraph{Controlling the distance between \texorpdfstring{$\bvCl^1$}{TEXT} and \texorpdfstring{$\bvCl^2$}{TEXT}.}
Given that $\bvRt :=\bvRt^1 = \bvRt^2$ on $[0,T]^2$, we can write the equations that define $\vr_1$ and $\vr_2$ as
\begin{equation}
\vr_i^t = - \frac 1 \delta \int_0^t \bvRt(t,s) \vl_s(\vr^s_i; \veps) \mathrm d s + \vw^t_i \, ,
\end{equation}
for $i=1,2$, where $\vw^t_i$ are centered Gaussian processes with covariance kernels $\bvCt^i$. We couple $\vw^t_1$ and $\vw^t_2$ such that they achieve small $(\lambda, T)$-distance, namely
\begin{align}
\sup_{t \in [0, T]} e^{-\lambda t} \sqrt{\mathbb{E} \brk{\norm{\vw^t_1 - \vw^t_2}_2^2}}  \leq 2 \cdot \lbddst{\vw^t_1}{\vw^t_2} = 2 \cdot \lbddst{\bvCt^1}{\bvCt^2} \, .
\end{align}
For any $t \leq T$, we have
\begin{align}
& e^{-\lambda t}\left\|\vr^t_1 - \vr^t_2\right\|_2  \leq e^{-\lambda t} \prn{\frac 1 \delta \int_0^{t} \left\|\bvRt(t,s)\right\| \left\|\vl_s(\vr^s_1;\veps) - \vl_s(\vr^s_2;\veps)\right\|_2 \mathrm d s + \norm{\vw^t_1 - \vw^t_2}_2} \nonumber \\
& \leq \frac 1 \delta \int_0^{t} e^{-\lambda(t-s)} \left\|\bvRt(t,s)\right\| \cdot e^{-\lambda s}\left\|\vl_s(\vr^s_1;\veps) - \vl_s(\vr^s_2;\veps)\right\|_2 \mathrm d s +  e^{-\lambda t}\norm{\vw^t_1 - \vw^t_2}_2\nonumber \\
& \leq \frac \cstloss \delta \int_0^{t} e^{-\lambda(t-s)} \Phi_{\vRt}(t-s) \cdot e^{-\lambda s}\left\|\vr^s_1 - \vr^s_2\right\|_2 \mathrm d s +  e^{-\lambda t}\norm{\vw^t_1 - \vw^t_2}_2 \, .
\end{align}
Therefore square both sides and taking expectations, we have
\begin{align}
&e^{-2\lambda t} \Ep \brk{\left\|\vr^t_1 - \vr^t_2\right\|_2^2} \leq \Ep \brk{\prn{\frac \cstloss \delta \int_0^{t} e^{-\lambda(t-s)} \Phi_{\vRt}(t-s) \cdot e^{-\lambda s}\left\|\vr^s_1 - \vr^s_2\right\|_2 \mathrm d s +  e^{-\lambda t}\norm{\vw^t_1 - \vw^t_2}_2}^2}	\nonumber \\
& \leq \prn{\int_0^t (t-s+1)^{-2} \mathrm d s + 1} \nonumber \\
& \qquad \cdot  \Ep \brk{\frac{\cstloss^2}{\delta^2} \int_0^{t} (t-s+1)^2e^{-2\lambda(t-s)} \Phi_{\vRt}(t-s)^2 \cdot e^{-2\lambda s}\left\|\vr^s_1 - \vr^s_2\right\|_2^2 \mathrm d s +  e^{-2\lambda t}\norm{\vw^t_1 - \vw^t_2}_2^2} \nonumber \\
&\leq \frac{2\cstloss^2}{\delta^2} \cdot \prn{\int_0^t e^{-2\lambda t}(t+1)^2 \Phi_{\vRt}(t)^2 \de t} \cdot \sup_{0 \leq s \leq t} e^{-2\lambda s} \Ep \brk{\left\|\vr^s_1 - \vr^s_2\right\|_2^2} + 2 \cdot \prn{2 \cdot \lbddst{\bvCt^1}{\bvCt^2}}^2 \, .
\end{align}
Note that the right hand side is increasing in $t$. By taking $\lambda$  to be large enough such that
\begin{align}
\frac{2\cstloss^2}{\delta^2} \cdot \int_0^t e^{-2\lambda t}(t+1)^2 \Phi_{\vRt}(t)^2 \de t \leq \frac 1 2\, , \label{eq:condition-Ctheta-global-1}
\end{align}
we have
\begin{align}
\sup_{0 \leq s \leq t} e^{-2\lambda s} \Ep \brk{ \left\|\vr^s_1 - \vr^s_2\right\|_2^2} \leq 16 \cdot \lbddst{\bvCt^1}{\bvCt^2}^2\, . \label{eq:inequality-Ctheta-global-1}
\end{align}
It then can be established following the same argument in Appendix~\ref{proof:diff-Cl-global} that
\begin{align}
\lbddst{\bvCl^1}{\bvCl^2} & \leq \sup_{t \in [0,T]} e^{-\lambda t} \sqrt{\mathbb{E} \brk{\left\|\vl_t(\vr^t_1; \veps) -\vl_t(\vr^t_1; \veps) \right\|_2^2}} \leq \cstloss \cdot \sqrt{\sup_{0 \leq s \leq t} e^{-2\lambda s} \brk{\Ep \left\|\vr^s_1 - \vr^s_2\right\|_2^2}} \nonumber \\
& \leq 4\cstloss \cdot \lbddst{\bvCt^1}{\bvCt^2}\, .
\end{align}
\paragraph{Controlling the distances between \texorpdfstring{$\bvRl^1$}{TEXT} and \texorpdfstring{$\bvRl^2$}{TEXT}, \texorpdfstring{$\bGamma_1$}{TEXT} and \texorpdfstring{$\bGamma_2$}{TEXT}.} From Eq.~\eqref{eq:trsfrm-derivative-l} we can obtain for any $0 \leq s \leq t \leq T$ and $i=1,2$,
\begin{align}
\frac{\partial \vl_t(\vr_i^t; \veps)}{\partial w^s} & = \nabla_r \ell_t(r^t;z) \cdot \frac{\partial \vr_i^t}{\partial \vw^s}\, , \\
\frac{\partial \vr_i^t}{\partial \vw^s} & := - \frac 1 \delta \int_s^t \bvRt(t,s') \frac{\partial \vl_{s'}(\vr^{s'}_i; \veps)}{\partial \vw^{s'}} \mathrm d s' - \frac 1 \delta \bRt(t,s) \nabla_r \ell_s(r_i^s;z) \, . \label{eq:trsfrm-derivative-r}
\end{align}
and by Eq.~\eqref{eq:trsfrm-Gamma},
\begin{align}
\bvGamma^t_i = \mathbb{E} \brk{\nabla_r\vl_t(\vr_i^t, \veps)} \, .
\end{align}
Therefore, for any $\lambda$ satisfying Eq.~\eqref{eq:condition-Ctheta-global-1} we have
\begin{align}
e^{-\lambda t} \norm{\bvGamma^t_1 - \bvGamma^t_2} & \leq e^{-\lambda t} \mathbb{E} \brk{\norm{\nabla_r \vl_t(\vr^t_1; \veps) - \nabla_r \vl_t(\vr^t_2; \veps)}} \leq e^{-\lambda t} \sqrt{\mathbb{E} \brk{\norm{\nabla_r \vl_t(\vr^t_1; \veps) - \nabla_r \vl_t(\vr^t_2; \veps)}^2}} \, ,
\end{align}
and then we use Eq.~\eqref{eq:inequality-Ctheta-global-1} and obtain
\begin{align}
e^{-\lambda t} \norm{\bvGamma^t_1 - \bvGamma^t_2} &  \leq e^{-\lambda t} \sqrt{\mathbb{E} \brk{\norm{\nabla_r \vl_t(\vr^t_1; \veps) - \nabla_r \vl_t(\vr^t_2; \veps)}^2}}  \nonumber \\
& \leq \cstloss \cdot e^{-\lambda t} \sqrt{\mathbb{E} \brk{\norm{\vr^t_1 - \vr^t_2}_2^2}}  =  4\cstloss   \cdot \lbddst{\bvCt^1}{\bvCt^2} \, .\label{eq:Cl-diff-Gamma-lbddst-bound}  
\end{align}
Thus
\begin{align}
\lbddst{\bvGamma_1}{\bvGamma_2} = \sup_{t \in [0, T]} e^{-\lambda t} \norm{\bvGamma^t_1 - \bvGamma^t_2} \leq 4 \cstloss \cdot \lbddst{\bvCt^1}{\bvCt^2} \, .
\end{align}
Next we focus on the $\lambda$-distance between $\bvSl^1$ and $\bvSl^2$. For any $0 \leq s < t \leq T$, we have
\begin{align}
\bvSl^i(t,s) = \Ep \brk{\frac{\partial \vl_t(\vr_i^t; \veps)}{\partial \vw^s}} = \Ep \brk{\nabla_r \ell_t(r^t;z) \cdot \frac{\partial \vr_i^t}{\partial \vw^s}} \, ,
\end{align}
which implies
\begin{align}
& \norm{\bvSl^1(t,s) - \bvSl^2(t,s)} \nonumber \\
& \leq \Ep \brk{\norm{\nabla_r \ell_t(\vr^t_1;z) \cdot \frac{\partial \vr^t_1}{\partial \vw^s} - \nabla_r \ell_t(\vr^t_2;z) \cdot \frac{\partial \vr^t_2}{\partial \vw^s}}} \nonumber \\
& \leq \sqrt{\Ep \brk{\norm{\frac{\partial \vr^t_1}{\partial \vw^s}}^2}} \cdot \sqrt{\Ep \brk{\norm{\nabla_r \ell_t(\vr^t_1;z) - \nabla_r \ell_t(\vr^t_2;z)}^2}} + \cstloss \cdot \Ep \brk{\norm{\frac{\partial \vr^t_1}{\partial \vw^s} - \frac{\partial \vr^t_2}{\partial \vw^s}}} \, , 
\end{align}
where in the last line we use the fact that $\norm{\nabla_r \ell_t(\vr^t_2;z)} \leq \cstloss$. Taking in Eq.~\eqref{eq:Cl-diff-Gamma-lbddst-bound}, we have
\begin{align}
e^{-\lambda t}\norm{\bvSl^1(t,s) - \bvSl^2(t,s)} &\leq e^{-\lambda t} \Ep \brk{\norm{\frac{\partial \vl_t(\vr^t_1;\veps)}{\partial \vw^s} - \frac{\partial \vl_t(\vr^t_2;\veps)}{\partial \vw^s}}}\nonumber \\
& \leq \sqrt{\Ep \brk{\norm{\frac{\partial \vr^t_1}{\partial \vw^s}}^2}} \cdot 4 \cstloss  \cdot \lbddst{\bvCt^1}{\bvCt^2} + \cstloss \cdot e^{-\lambda t} \Ep \brk{\norm{\frac{\partial \vr^t_1}{\partial \vw^s} - \frac{\partial \vr^t_2}{\partial \vw^s}}}.\label{eq:Cl-diff-global-mid-1}
\end{align}
It only remains to bound the quantities $\sqrt{\Ep \brk{\norm{\frac{\partial \vr^t_1}{\partial \vw^s}}^2}}$ and $e^{-\lambda t}\Ep \brk{\norm{\frac{\partial \vr^t_1}{\partial \vw^s} - \frac{\partial \vr^t_2}{\partial \vw^s}}}$. Substituting in the definition of $\frac{\partial \vr^t_i}{\partial \vw^s}$ gives us 
\begin{align}
\norm{\frac{\partial \vr_i^t}{\partial \vw^s}} & \leq \frac{1}{\delta} \int_s^t \norm{\bvRt(t,s')} \norm{\frac{\partial \vl_{s'}(\vr^{s'}_i, \veps)}{\partial \vw^s}} \mathrm d s' + \frac{1}{\delta} \norm{\bvRt (t,s)} \cdot \norm{\nabla_r \vl_s(\vr^s_i, \veps)} \nonumber \\
& \leq \frac{1}{\delta} \int_s^t\Phi_{\vRt}(t-s') \cdot \norm{\nabla_r \vl_{s'}(\vr^{s'}_i, \veps)} \cdot  \norm{\frac{\partial \vr^{s'}_i}{\partial \vw^s}} \mathrm d s' + \frac{\cstloss}{\delta} \Phi_{\vRt}(t-s) \leq  \frac{\cstloss \Phi_{\vRt}(T)}{\delta} \prn{1+\int_s^t \norm{\frac{\partial \vr^{s'}_i}{\partial \vw^s}}  \mathrm d s'}\, .
\end{align}
Invoking Gronwall's inequality gives the upper-bound
\begin{align}
\norm{\frac{\partial \vr_i^t}{\partial \vw^s}} \leq \frac{\cstloss \Phi_{\vRt}(T)}{\delta} \cdot \exp \prn{\frac{\cstloss \Phi_{\vRt}(T)}{\delta} \prn{t-s}} \leq \frac{\cstloss \Phi_{\vRt}(T)}{\delta} \cdot \exp \prn{\frac{\cstloss T \Phi_{\vRt}(T)}{\delta}}\, , \label{eq:Cl-diff-drdw-upper-bound}
\end{align}
and thus
\begin{align}
\sqrt{\Ep \brk{\norm{\frac{\partial \vr^t_1}{\partial \vw^s}}^2}} \leq \frac{\cstloss \Phi_{\vRt}(T)}{\delta} \cdot \exp \prn{\frac{\cstloss T \Phi_{\vRt}(T)}{\delta}}. \label{eq:Cl-diff-global-mid-1.5}
\end{align}
On the other hand we have using Eq.~\eqref{eq:trsfrm-derivative-r},
\begin{align}
& e^{-\lambda t}\Ep \brk{\norm{\frac{\partial \vr^t_1}{\partial \vw^s} - \frac{\partial \vr^t_2}{\partial \vw^s}}}  = \frac 1 \delta \cdot e^{-\lambda t}\Ep \brk{\norm{ \int_s^t \bvRt(t,s') \prn{\frac{\partial \vl_{s'}(\vr^{s'}_1; \veps)}{\partial \vw^s} -\frac{\partial \vl_{s'}(\vr^{s'}_2; \veps)}{\partial \vw^s}} \mathrm d s'}} \nonumber \\
& \qquad  +\frac 1 \delta \cdot e^{-\lambda t} \Ep \brk{\norm{\bvRt (t,s) \prn{\nabla_r \vl_s(\vr^s_1; \veps) -  \nabla_r \vl_s(\vr^s_2; \veps)}}}. \label{eq:Cl-diff-global-mid-2}
\end{align}
We bound the two parts separately. First we have
\begin{align}
& e^{-\lambda t} \Ep \brk{\norm{\int_s^t \bvRt(t,s')  \prn{\frac{\partial \vl_{s'}(\vr^{s'}_1; \veps)}{\partial \vw^s}  - \frac{\partial \vl_{s'}(\vr^{s'}_2; \veps)}{\partial \vw^s} }\mathrm d s'}  } \nonumber \\
& \leq \int_s^t e^{-\lambda(t-s')} \norm{\bvRt(t,s')} \cdot e^{-\lambda s'} \Ep \brk{\norm{\frac{\partial \vl_{s'}(\vr^{s'}_1; \veps)}{\partial \vw^s} - \frac{\partial \vl_{s'}(\vr^{s'}_2; \veps)}{\partial \vw^s}}} \mathrm d s' \nonumber \\
& \leq \prn{\int_0^\infty e^{-\lambda t} \pRt(t) \de t} \cdot \sup_{s \leq s' \leq t} e^{-\lambda s'} \Ep \brk{\norm{\frac{\partial \vl_{s'}(\vr^{s'}_1; \veps)}{\partial \vw^s} - \frac{\partial \vl_{s'}(\vr^{s'}_2; \veps)}{\partial \vw^s}}} \, .\label{eq:Cl-diff-global-mid-3}
\end{align}
Next we get
\begin{align}
&e^{-\lambda t} \Ep \brk{\norm{\bvRt (t,s) \prn{\nabla_r \vl_s(\vr^s_1; \veps) - \nabla_r \vl_s(\vr^s_2; \veps)}}}  \leq e^{-\lambda (t-s)}\Phi_{\vRt}(t-s) \cdot  e^{-\lambda s} \Ep \brk{\norm{\nabla_r \vl_s(\vr^s_1; \veps) - \nabla_r \vl_s(\vr^s_2; \veps)}} \nonumber \\
& \leq  \Phi_{\vRt}(T) \cdot 4 \cstloss  \cdot \lbddst{\bvCt^1}{\bvCt^2}\, , \label{eq:Cl-diff-global-mid-4}
\end{align}
where we invoke Eq.~\eqref{eq:Cl-diff-Gamma-lbddst-bound} again in the last step. Take Eq.~\eqref{eq:Cl-diff-global-mid-3} and \eqref{eq:Cl-diff-global-mid-4} into \eqref{eq:Cl-diff-global-mid-2} and we get
\begin{align}
& e^{-\lambda t}\Ep \brk{\norm{\frac{\partial \vr^t_1}{\partial \vw^s} - \frac{\partial \vr^t_2}{\partial \vw^s}}} \nonumber \\
& \leq \frac{1}{\delta} \cdot\prn{\int_0^\infty e^{-\lambda t} \pRt(t) \de t} \cdot \sup_{s \leq s' \leq t} e^{-\lambda s'} \Ep \brk{\norm{\frac{\partial \vl_{s'}(\vr^{s'}_1; \veps)}{\partial \vw^s} - \frac{\partial \vl_{s'}(\vr^{s'}_2; \veps)}{\partial \vw^s}}} + \frac{4\cstloss \pRt(T)}{\delta} \cdot \lbddst{\bvCt^1}{\bvCt^2}\, .
\end{align}
Further we substitute Eq.~\eqref{eq:Cl-diff-global-mid-1.5} into Eq.~\eqref{eq:Cl-diff-global-mid-1}, we get
\begin{align}
&e^{-\lambda t} \Ep \brk{\norm{\frac{\partial \vl_t(\vr^t_1;\veps)}{\partial \vw^s} - \frac{\partial \vl_t(\vr^t_2;\veps)}{\partial \vw^s}}} \leq \sqrt{\Ep \brk{\norm{\frac{\partial \vr^t_1}{\partial \vw^s}}^2}} \cdot 4 \cstloss \cdot \lbddst{\bvCt^1}{\bvCt^2} + \cstloss \cdot e^{-\lambda t} \Ep \brk{\norm{\frac{\partial \vr^t_1}{\partial \vw^s} - \frac{\partial \vr^t_2}{\partial \vw^s}}} \nonumber \\
& \leq \frac{4 \cstloss^2 \Phi_{\vRt}(T)}{\delta} \cdot \exp \prn{\frac{\cstloss T \Phi_{\vRt}(T)}{\delta}} \cdot \lbddst{\bvCt^1}{\bvCt^2} \nonumber \\
& \qquad + \frac{\cstloss}{\delta} \cdot\prn{\int_0^\infty e^{-\lambda t} \pRt(t) \de t}  \cdot \sup_{s \leq s' \leq t} e^{-\lambda s'} \Ep \brk{\norm{\frac{\partial \vl_{s'}(\vr^{s'}_1; \veps)}{\partial \vw^s} - \frac{\partial \vl_{s'}(\vr^{s'}_2; \veps)}{\partial \vw^s}}}  + \frac{4\cstloss^2 \Phi_{\vRt}(T)}{\delta} \cdot \lbddst{\bvCt^1}{\bvCt^2} \nonumber \\
& \leq \frac{\cstloss}{\delta} \cdot \prn{\int_0^\infty e^{-\lambda t} \pRt(t) \de t} \cdot \sup_{s \leq s' \leq t} e^{-\lambda s'} \Ep \brk{\norm{\frac{\partial \vl_{s'}(\vr^{s'}_1; \veps)}{\partial \vw^s} - \frac{\partial \vl_{s'}(\vr^{s'}_2; \veps)}{\partial \vw^s}}} \nonumber \\
& \qquad + \prn{\frac{4\cstloss^2 \Phi_{\vRt}(T)}{\delta} \cdot \exp \prn{\frac{\cstloss T \Phi_{\vRt}(T)}{\delta}}+ \frac{4\cstloss^2 \Phi_{\vRt}(T)}{\delta}}  \cdot \lbddst{\bvCt^1}{\bvCt^2}.
\end{align}
Additionally we can also choose $\lambda$ such that $ \frac{\cstloss}{\delta} \cdot \prn{\int_0^\infty e^{-\lambda t} \pRt(t) \de t}  \leq \frac 1 2$ and using the fact that the right hand side of the inequality is increasing in $t$, we can get
\begin{align}
&e^{-\lambda t} \Ep \brk{\norm{\frac{\partial \vl_t(\vr^t_1;\veps)}{\partial \vw^s} - \frac{\partial \vl_t(\vr^t_2;\veps)}{\partial \vw^s}}} \leq \sup_{s \leq s' \leq t} e^{-\lambda s'} \Ep \brk{\norm{\frac{\partial \vl_{s'}(\vr^{s'}_1; \veps)}{\partial \vw^s} - \frac{\partial \vl_{s'}(\vr^{s'}_2; \veps)}{\partial \vw^s}}} \nonumber \\
& \leq \prn{\frac{8\cstloss^2 \Phi_{\vRt}(T)}{\delta} \cdot \exp \prn{\frac{\cstloss T \Phi_{\vRt}(T)}{\delta}}+ \frac{8\cstloss^2 \Phi_{\vRt}(T)}{\delta}} \cdot \lbddst{\bvCt^1}{\bvCt^2}.
\end{align}
From Eq.~\eqref{eq:Cl-diff-global-mid-1}, it follows that
\begin{align}
\lbddst{\bvSl^1}{\bvSl^2} & \leq \sup_{0 \leq s < t \leq T} e^{-\lambda t} \Ep \brk{\norm{\frac{\partial \vl_t(\vr^t_1;\veps)}{\partial \vw^s} - \frac{\partial \vl_t(\vr^t_2;\veps)}{\partial \vw^s}}_2}  \nonumber \\
& \leq  \prn{\frac{8\cstloss^2 \Phi_{\vRt}(T)}{\delta} \cdot \exp \prn{\frac{\cstloss T \Phi_{\vRt}(T)}{\delta}}+ \frac{8\cstloss^2 \Phi_{\vRt}(T)}{\delta}} \cdot \lbddst{\bvCt^1}{\bvCt^2}.
\end{align}
The proof is completed by taking
\begin{align}
M := \max \brc{4 \cstloss , \prn{\frac{8\cstloss^2 \Phi_{\vRt}(T)}{\delta} \cdot \exp \prn{\frac{\cstloss T \Phi_{\vRt}(T)}{\delta}}+ \frac{8\cstloss^2 \Phi_{\vRt}(T)}{\delta}} }.
\end{align}

\subsubsection{Proof of Lemma \ref{lem:diff-Rtheta-global}} \label{proof:diff-Rtheta-global}
\paragraph{Controlling the distance between \texorpdfstring{$\bvCl^1$}{TEXT} and \texorpdfstring{$\bvCl^2$}{TEXT}.}
Since $\bvCt^1 = \bvCt^2$ on $[0,T]^2$, we can write the equations that define $\vr^1$ and $\vr^2$ as
\begin{equation}
\vr^t_i = - \frac 1 \delta \int_0^t \bvRt^i(t,s) \vl_s(\vr^s_i; \veps) \mathrm d s + \vw^t \, ,
\end{equation}
for $i=1,2$, where $\vw^t$ is a centered Gaussian process with autocovariance $\bvCt := \bvCt^1 = \bvCt^2$. For any $t \in [0,T]$, we have
\begin{align}
& e^{-\lambda t}\left\|\vr^t_1 - \vr^t_2\right\|_2 \nonumber \\
& \leq e^{-\lambda t} \prn{\frac 1 \delta \int_0^{t} \left\|\bvRt^1(t,s)\right\| \left\|\vl_s(\vr^s_1;\veps) - \vl_s(\vr^s_2;\veps)\right\|_2 \mathrm d s + \int_0^t  \left\|\bvRt^1(t,s) - \bvRt^2(t,s)\right\| \left\|\vl_s(\vr^s_2; \veps)\right\|_2\mathrm d s} \nonumber \\
& \leq \frac \cstloss \delta \int_0^{t} e^{-\lambda(t-s)} \Phi_{\vRt}(t-s) \cdot e^{-\lambda s}\left\|\vr^s_1 - \vr^s_2\right\|_2 \mathrm d s + \prn{ \int_0^t   \left\|\vl_s(\vr^s_2; \veps)\right\|_2\mathrm d s} \cdot  \sup_{0 \leq s \leq t \leq T} e^{-\lambda t}\left\|\bvRt^1(t,s) - \bvRt^2(t,s)\right\|  \nonumber \\
& \leq \frac{\cstloss }{\delta} \cdot \prn{\int_0^\infty e^{-\lambda t} \pRt(t) \de t} \cdot \sup_{0 \leq s \leq t}  e^{-\lambda s}\left\|\vr^s_1 - \vr^s_2\right\|_2 + \prn{ \int_0^t   \left\|\vl_s(\vr^s_2; \veps)\right\|_2\mathrm d s} \cdot \lbddst{\bvRt^1}{\bvRt^2}  \, .
\end{align}
The right hand side is increasing in $t$ and therefore
\begin{align}
& \sup_{0 \leq s \leq t}  e^{-\lambda s}\left\|\vr^s_1 - \vr^s_2\right\|_2 \nonumber \\
& \leq \frac{\cstloss }{\delta} \cdot \prn{\int_0^\infty e^{-\lambda t} \pRt(t) \de t} \cdot \sup_{0 \leq s \leq t}  e^{-\lambda s}\left\|\vr^s_1 - \vr^s_2\right\|_2 + \prn{ \int_0^t   \left\|\vl_s(\vr^s_2; \veps)\right\|_2\mathrm d s} \cdot \lbddst{\bvRt^1}{\bvRt^2} \, ,
\end{align}
which by choosing $\lambda$ large enough such that $\frac{\cstloss }{\delta} \cdot \prn{\int_0^\infty e^{-\lambda t} \pRt(t) \de t} \leq \frac 1 2$ yields
\begin{align}
e^{-\lambda t}\left\|\vr^t_1 - \vr^t_2\right\|_2 \leq \sup_{0 \leq s \leq t}  e^{-\lambda s}\left\|\vr^s_1 - \vr^s_2\right\|_2 \leq 2 \prn{ \int_0^t   \left\|\vl_s(\vr^s_2; \veps)\right\|_2\mathrm d s} \cdot \lbddst{\bvRt^1}{\bvRt^2} \, .
\end{align}
Therefore
\begin{align}
& e^{-\lambda t} \sqrt{\mathbb{E} \brk{\left\|\vr^t_1 - \vr^t_2\right\|_2^2}} \nonumber \\
& \leq 2 \sqrt{\Ep \brk{ \prn{ \int_0^t   \left\|\vl_s(\vr^s_2; \veps)\right\|_2\mathrm d s}^2}} \cdot \lbddst{\bvRt^1}{\bvRt^2}
\leq  2 \sqrt{\Ep \brk{ t \cdot \int_0^t   \left\|\vl_s(\vr^s_2; \veps)\right\|_2^2\mathrm d s}} \cdot \lbddst{\bvRt^1}{\bvRt^2} \nonumber \\
& \leq 2 \sqrt{kt \cdot \int_0^t \norm{\Ep \brk{\vl_s(\vr^s_2; \veps) \vl_s(\vr^s_2; \veps)^\sT} } \mathrm d s} \cdot \lbddst{\bvRt^1}{\bvRt^2} 
\leq 2 t \sqrt{k \Phi_{\vCl}(t)}  \cdot \lbddst{\bvRt^1}{\bvRt^2} \, ,
\end{align}
and consequently by Lipschitz continuity
\begin{align}
e^{-\lambda t} \sqrt{\mathbb{E} \brk{\left\|\nabla_r \vl_t(\vr^t_1; \veps) -\nabla_r \vl_t(\vr^t_1; \veps) \right\|_2^2}} \leq \cstloss \cdot e^{-\lambda t} \sqrt{\mathbb{E} \brk{\left\|\vr^t_1 - \vr^t_2\right\|_2^2}} \leq 2 \cstloss t \sqrt{k \Phi_{\vCl}(t)}  \cdot \lbddst{\bvRt^1}{\bvRt^2} \, . \label{eq:rt-lbddst-bound}
\end{align}
We then use the same argument in Appendix~\ref{proof:diff-Cl-global} which gives us
\begin{align}
\lbddst{\bvCl^1}{\bvCl^2} \leq \sup_{t \in [0,T]} e^{-\lambda t} \sqrt{\mathbb{E} \brk{\left\|\nabla_r \vl_t(\vr^t_1; \veps) -\nabla_r \vl_t(\vr^t_1; \veps) \right\|_2^2}} \leq 2 \cstloss t \sqrt{k \Phi_{\vCl}(t)} \cdot \lbddst{\bvRt^1}{\bvRt^2} \, .
\end{align}

\paragraph{Controlling the distances between \texorpdfstring{$\bvRl^1$}{TEXT} and \texorpdfstring{$\bvRl^2$}{TEXT}, \texorpdfstring{$\bGamma_1$}{TEXT} and \texorpdfstring{$\bGamma_2$}{TEXT}.} From Eq.~\eqref{eq:trsfrm-derivative-l} we can obtain for any $0 \leq s \leq t \leq T$ and $i=1,2$,
\begin{align}
\frac{\partial \vl_t(\vr_i^t; \veps)}{\partial w^s} & = \nabla_r \ell_t(r^t;z) \cdot \frac{\partial \vr_i^t}{\partial \vw^s}\, , \\
\frac{\partial \vr_i^t}{\partial \vw^s} & := - \frac 1 \delta \int_s^t \bvRt^i(t,s') \frac{\partial \vl_{s'}(\vr^{s'}_i; \veps)}{\partial \vw^{s'}} \mathrm d s' - \frac 1 \delta \bRt^i(t,s) \nabla_r \ell_s(r_i^s;z) \, . \label{eq:trsfrm-derivative-r-alternate}
\end{align}
and by Eq.~\eqref{eq:trsfrm-Gamma},
\begin{align}
\bvGamma^t_i = \mathbb{E} \brk{\nabla_r\vl_t(\vr_i^t, \veps)} \, .
\end{align}
First, for any $\lambda$ satisfying $\frac{\cstloss }{\delta} \cdot \prn{\int_0^\infty e^{-\lambda t} \pRt(t) \de t} \leq \frac 1 2$, we have
\begin{align}
e^{-\lambda t} \norm{\bvGamma^t_1 - \bvGamma^t_2}_2 & \leq e^{-\lambda t} \mathbb{E} \brk{\norm{\nabla_r \vl_t(\vr^t_1; \veps)- \nabla_r \vl_t(\vr^t_2; \veps)}} \leq e^{-\lambda t} \sqrt{\mathbb{E} \brk{\norm{\nabla_r \vl_t(\vr^t_1; \veps)- \nabla_r \vl_t(\vr^t_2; \veps)}^2}} \nonumber \\
& =  2 \cstloss T \sqrt{k \Phi_{\vCl}(T)}  \cdot \lbddst{\bvRt^1}{\bvRt^2}\, , \label{eq:Gamma-lbddst-bound}
\end{align}
where in the last line we invoke Eq.~\eqref{eq:rt-lbddst-bound}. Thus
\begin{align}
\lbddst{\bvGamma^1}{\bvGamma^2} = \sup_{t \in [0, T]} e^{-\lambda t} \norm{\bvGamma^t_1 - \bvGamma^t_2}_2 \leq 2 \cstloss T \sqrt{k \Phi_{\vCl}(T)}  \cdot \lbddst{\bvRt^1}{\bvRt^2} \, .
\end{align}
Next we turn to the $(\lambda,T)$-distance between $\bvSl^1$ and $\bvSl^2$. Note that for any $0 \leq s < t \leq T$, we have
\begin{align}
\bvSl^i(t,s) = \Ep \brk{\frac{\partial \vl_t(\vr^t_i;\veps)}{\partial \vw^s}} = \Ep \brk{\nabla_r \vl_t(\vr^t_i; \veps) \cdot \frac{\partial \vr^t_i}{\partial \vw^s}}\, ,
\end{align}
which gives us that
\begin{align}
& \norm{\bvSl^1(t,s) - \bvSl^2(t,s)} \leq \Ep \brk{\norm{\frac{\partial \vl_t(\vr^t_1; \veps)}{\partial \vw^s} - \frac{\partial \vl_t(\vr^t_2;\veps)}{\partial \vw^s}}} \nonumber \\
& \leq \Ep \brk{\norm{\nabla_r \vl_t(\vr^t_1; \veps) - \nabla_r \vl_t(\vr^t_2; \veps)} \cdot \norm{\frac{\partial \vr^t_1}{\partial \vw^s}} + \norm{\nabla_r \vl_t(\vr^t_2; \veps)} \cdot  \norm{\frac{\partial \vr^t_1}{\partial \vw^s} - \frac{\partial \vr^t_2}{\partial \vw^s}}} \nonumber \\
& \leq \sqrt{\Ep \brk{\norm{\frac{\partial \vr^t_1}{\partial \vw^s}}^2}} \cdot \sqrt{\Ep \brk{\norm{\nabla_r \vl_t(\vr^t_1; \veps) - \nabla_r \vl_t(\vr^t_2; \veps)}^2}} + \cstloss \cdot \Ep \brk{\norm{\frac{\partial \vr^t_1}{\partial \vw^s} - \frac{\partial \vr^t_2}{\partial \vw^s}}} \, ,
\end{align}
where in the last line we use Cauchy-Schwarz inequality and  $\norm{\nabla_r \vl_t(\vr^t_2; \veps)} \leq \cstloss$. Taking in Eq.~\eqref{eq:Gamma-lbddst-bound}, we can have for all $\lambda$ that $\frac{\cstloss }{\delta} \cdot \prn{\int_0^\infty e^{-\lambda t} \pRt(t) \de t} \leq \frac 1 2$,
\begin{align}
& e^{-\lambda t}\norm{\bvSl^1(t,s) - \bvSl^2(t,s)} \leq e^{-\lambda t} \Ep \brk{\norm{\frac{\partial \vl_t(\vr^t_1; \veps)}{\partial \vw^s} - \frac{\partial \vl_t(\vr^t_2;\veps)}{\partial \vw^s}}}\nonumber \\
& \leq \sqrt{\Ep \brk{\norm{\frac{\partial \vr^t_1}{\partial \vw^s}}^2}} \cdot 2 \cstloss T \sqrt{k \Phi_{\vCl}(T)}  \cdot \lbddst{\bvRt^1}{\bvRt^2} + \cstloss \cdot e^{-\lambda t} \Ep \brk{\norm{\frac{\partial \vr^t_1}{\partial \vw^s} - \frac{\partial \vr^t_2}{\partial \vw^s}}}\, .\label{eq:Rl-diff-global-mid-1}
\end{align}
It only remains to bound the quantities $\sqrt{\Ep \brk{\norm{\frac{\partial \vr^t_1}{\partial \vw^s}}^2}}$ and $e^{-\lambda t}\Ep \brk{\norm{\frac{\partial \vr^t_1}{\partial \vw^s} - \frac{\partial \vr^t_2}{\partial \vw^s}}}$. From Eq.~\eqref{eq:trsfrm-derivative-r-alternate} we have
\begin{align}
\norm{\frac{\partial \vr^t_i}{\partial \vw^s}} & \leq \frac{1}{\delta} \int_s^t \norm{\bvRt^i(t,s')} \norm{\frac{\partial \vl_{s'}(\vr_{s'}^i; \veps)}{\partial \vw^s}} \mathrm d s' + \frac{1}{\delta} \norm{\bvRt^i (t,s)}_2 \cdot \norm{\nabla_r \vl_s(\vr^s_i; \veps)}_2 \nonumber \\
& \leq \frac{1}{\delta} \int_s^t\Phi_{\vRt}(t-s') \cdot  \norm{\nabla_r \vl_{s'}(\vr^{s'}_i; \veps)} \cdot \norm{\frac{\partial \vr_{s'}^i}{\partial \vw^s}} \mathrm d s' + \frac{\cstloss}{\delta} \Phi_{\vRt}(t-s) \nonumber \\
& \leq  \frac{\cstloss \Phi_{\vRt}(T)}{\delta} \prn{1+\int_s^t \norm{\frac{\partial \vr_{s'}^i}{\partial \vw^s}} \mathrm d s'}\, .
\end{align}
This allows us to invoke Gronwall's inequality, giving an non-random upper bound
\begin{align}
\norm{\frac{\partial \vr^t_i}{\partial \vw^s}} \leq \frac{\cstloss \Phi_{\vRt}(T)}{\delta} \cdot \exp \prn{\frac{\cstloss \Phi_{\vRt}(T)}{\delta} \prn{t-s}} \leq \frac{\cstloss \Phi_{\vRt}(T)}{\delta} \cdot \exp \prn{\frac{\cstloss T \Phi_{\vRt}(T)}{\delta}}\, , \label{eq:drdw-upper-bound}
\end{align}
and thus
\begin{align}
\sqrt{\Ep \brk{\norm{\frac{\partial \vr^t_1}{\partial \vw^s}}^2}} \leq \frac{\cstloss \Phi_{\vRt}(T)}{\delta} \cdot \exp \prn{\frac{\cstloss T \Phi_{\vRt}(T)}{\delta}}\, . \label{eq:Rl-diff-global-mid-1.5}
\end{align}
On the other hand we have
\begin{align}
& e^{-\lambda t}\Ep \brk{\norm{\frac{\partial \vr^t_1}{\partial \vw^s} - \frac{\partial \vr^t_2}{\partial \vw^s}}} = \frac 1 \delta \cdot e^{-\lambda t}\Ep \brk{\norm{ \int_s^t \prn{\bvRt^1(t,s') \frac{\partial \vl_{s'}(\vr^{s'}_1; \veps)}{\partial \vw^s} - \bvRt^2(t,s') \frac{\partial \vl_{s'}(\vr^{s'}_2; \veps)}{\partial \vw^s}} \mathrm d s'}} \nonumber \\
& \qquad +\frac 1 \delta \cdot e^{-\lambda t} \Ep \brk{\norm{\bvRt^1 (t,s) \nabla_r \vl_r(\vr^s_1; \veps) - \bvRt^2(t,s) \nabla_r \vl_r(\vr^s_2; \veps)}}\, . \label{eq:Rl-diff-global-mid-2}
\end{align}
We bound the two parts respectively, first we have
\begin{align}
&e^{-\lambda t}\Ep \brk{\norm{ \int_s^t \prn{\bvRt^1(t,s') \frac{\partial \vl_{s'}(\vr^{s'}_1; \veps)}{\partial \vw^s} - \bvRt^2(t,s') \frac{\partial \vl_{s'}(\vr^{s'}_2; \veps)}{\partial \vw^s}} \mathrm d s'}} \nonumber \\
& \leq e^{-\lambda t} \Ep \brk{\int_s^t\norm{ \bvRt^1(t,s')  \prn{\frac{\partial \vl_{s'}(\vr^{s'}_1; \veps)}{\partial \vw^s}  - \frac{\partial \vl_{s'}(\vr^{s'}_2; \veps)}{\partial \vw^s} }} \mathrm d s' }  +e^{-\lambda t} \Ep \brk{\int_s^t\norm{ \prn{\bvRt^1(t,s') -  \bvRt^2(t,s')} \frac{\partial \vl_{s'}(\vr^{s'}_2; \veps)}{\partial \vw^s} } \mathrm d s' } \nonumber \\
& \leq \int_s^t e^{-\lambda(t-s')} \Phi_{\vRt}(t-s')\cdot e^{-\lambda s'} \Ep \brk{\norm{\frac{\partial \vl_{s'}(\vr^{s'}_1;\veps)}{\partial \vw^s} - \frac{\partial \vl_{s'}(\vr^{s'}_2;\veps)}{\partial \vw^s}}} \mathrm d s' \nonumber \\
& \qquad + \cstloss  \int_s^t  e^{-\lambda t} \norm{\bvRt^1(t,s') -  \bvRt^2(t,s')} \cdot \Ep \brk{\norm{\frac{\partial \vr_{s'}^2}{\partial \vw^s}}} \mathrm d s' \nonumber \\
& \leq \prn{\int_0^\infty e^{-\lambda t} \pRt(t) \de t} \cdot \sup_{s <s' \leq t} e^{-\lambda s'} \Ep \brk{\norm{\frac{\partial \vl_{s'}(\vr^{s'}_1;\veps)}{\partial \vw^s} - \frac{\partial \vl_{s'}(\vr^{s'}_2;\veps)}{\partial \vw^s}}} \nonumber \\
& \qquad  + \frac{\cstloss^2 T \Phi_{\vRt}(T)}{\delta} \cdot \exp \prn{\frac{\cstloss T \Phi_{\vRt}(T)}{\delta}} \cdot \lbddst{\bvRt^1}{\bvRt^2}\, ,\label{eq:Rl-diff-global-mid-3}
\end{align}
where in the last line we use the upper bound from Eq.~\eqref{eq:drdw-upper-bound}. For the second term in Eq.~\eqref{eq:Rl-diff-global-mid-2} we have
\begin{align}
&e^{-\lambda t} \Ep \brk{\norm{\bvRt^1 (t,s) \nabla_r \vl_r(\vr^s_1; \veps) - \bvRt^2(t,s) \nabla_r \vl_r(\vr^s_2; \veps)}} \nonumber \\
& \leq e^{-\lambda t} \Ep \brk{\norm{\prn{\bvRt^1 (t,s) - \bvRt^2 (t,s)} \nabla_r \vl_r(\vr^s_1; \veps)}} + e^{-\lambda t} \Ep \brk{\norm{\bvRt^2 (t,s) \prn{\nabla_r \vl_r(\vr^s_1; \veps) - \nabla_r \vl_r(\vr^s_2; \veps)}}} \nonumber \\
& \leq \cstloss \cdot \lbddst{\bvRt^1}{\bvRt^2} + \Phi_{\vRt}(T) \cdot 2 \cstloss T \sqrt{k \Phi_{\vCl}(T)}  \cdot \lbddst{\bvRt^1}{\bvRt^2}\, , \label{eq:Rl-diff-global-mid-4}
\end{align}
where we invoke Eq.~\eqref{eq:Gamma-lbddst-bound} in the last line. Define
\begin{align}
\wb{M}_1 & := \frac{\cstloss^2 T \Phi_{\vRt}(T)}{\delta} \cdot \exp \prn{\frac{\cstloss T \Phi_{\vRt}(T)}{\delta}} + \cstloss +  \Phi_{\vRt}(T) \cdot 2 \cstloss T \sqrt{k \Phi_{\vCl}(T)}\, ,\\
\wb{M}_2& := \frac{\cstloss \Phi_{\vRt}(T)}{\delta} \cdot \exp \prn{\frac{\cstloss T \Phi_{\vRt}(T)}{\delta}} \cdot 2\cstloss T \sqrt{k \Phi_{\vCl}(T)} + \frac{\cstloss}{\delta} \wb{M}_1\, ,
\end{align}
and take Eqs.~\eqref{eq:Rl-diff-global-mid-3} and \eqref{eq:Rl-diff-global-mid-4} into \eqref{eq:Rl-diff-global-mid-2} and we get
\begin{align}
& e^{-\lambda t}\Ep \brk{\norm{\frac{\partial \vr^t_1}{\partial \vw^s} - \frac{\partial \vr^t_2}{\partial \vw^s}}} \nonumber \\
& \leq \frac{1}{\delta} \cdot \prn{\int_0^\infty e^{-\lambda t} \pRt(t) \de t} \cdot \sup_{s <s' \leq t} e^{-\lambda s'} \Ep \brk{\norm{\frac{\partial \vl_{s'}(\vr^{s'}_1;\veps)}{\partial \vw^s} - \frac{\partial \vl_{s'}(\vr^{s'}_2;\veps)}{\partial \vw^s}}} + \frac{1}{\delta} \wb{M}_1 \cdot \lbddst{\bvRt^1}{\bvRt^2} \, .
\end{align}
Further substituting Eq.~\eqref{eq:Rl-diff-global-mid-1.5} into Eq.~\eqref{eq:Rl-diff-global-mid-1}, it follows that
\begin{align}
&e^{-\lambda t} \Ep \brk{\norm{\frac{\partial \vl_t(\vr^t_1; \veps)}{\partial \vw^s} - \frac{\partial \vl_t(\vr^t_2;\veps)}{\partial \vw^s}}}\nonumber \\
& \leq \sqrt{\Ep \brk{\norm{\frac{\partial \vr^t_1}{\partial \vw^s}}^2}} \cdot 2\cstloss T \sqrt{k \Phi_{\vCl}(T)}  \cdot \lbddst{\bvRt^1}{\bvRt^2} + \cstloss \cdot e^{-\lambda t} \Ep \brk{\norm{\frac{\partial \vr^t_1}{\partial \vw^s} - \frac{\partial \vr^t_2}{\partial \vw^s}}} \nonumber \\
& \leq \frac{\cstloss \Phi_{\vRt}(T)}{\delta} \cdot \exp \prn{\frac{\cstloss T \Phi_{\vRt}(T)}{\delta}} \cdot 2\cstloss T \sqrt{k \Phi_{\vCl}(T)}  \cdot  \lbddst{\bvRt^1}{\bvRt^2} \nonumber \\
& \qquad + \frac{\cstloss}{\delta} \cdot \prn{\int_0^\infty e^{-\lambda t} \pRt(t) \de t} \cdot \sup_{s <s' \leq t} e^{-\lambda s'} \Ep \brk{\norm{\frac{\partial \vl_{s'}(\vr^{s'}_1;\veps)}{\partial \vw^s} - \frac{\partial \vl_{s'}(\vr^{s'}_2;\veps)}{\partial \vw^s}}}  + \frac{\cstloss}{\delta} \wb{M}_1 \cdot \lbddst{\bvRt^1}{\bvRt^2} \nonumber \\
& \leq \frac{\cstloss}{\delta} \cdot\prn{\int_0^\infty e^{-\lambda t} \pRt(t) \de t} \cdot \sup_{s <s' \leq t} e^{-\lambda s'} \Ep \brk{\norm{\frac{\partial \vl_{s'}(\vr^{s'}_1;\veps)}{\partial \vw^s} - \frac{\partial \vl_{s'}(\vr^{s'}_2;\veps)}{\partial \vw^s}}} + \wb{M}_2 \cdot \lbddst{\bvRt^1}{\bvRt^2} \, .
\end{align}
Recall that we choose $\lambda$ such that $ \frac{\cstloss}{\delta} \cdot\prn{\int_0^\infty e^{-\lambda t} \pRt(t) \de t} \leq \frac 1 2$ and the right hand side of the inequality is increasing in $t$, we can get
\begin{align}
e^{-\lambda t} \Ep \brk{\norm{\frac{\partial \vl_t(\vr^t_1; \veps)}{\partial \vw^s} - \frac{\partial \vl_t(\vr^t_2;\veps)}{\partial \vw^s}}_2} 
\leq 2 \wb{M}_2 \cdot \lbddst{\bvRt^1}{\bvRt^2} \, .
\end{align}
Again by Eq.~\eqref{eq:Rl-diff-global-mid-1},
\begin{align}
\lbddst{\bvSl^1}{\bvSl^2} & \leq \sup_{0 \leq s < t \leq T} e^{-\lambda t} \Ep \brk{\norm{\frac{\partial \vl_t(\vr^t_1; \veps)}{\partial \vw^s} - \frac{\partial \vl_t(\vr^t_2;\veps)}{\partial \vw^s}}_2}  \leq  2 \wb{M}_2 \cdot \lbddst{\bvRt^1}{\bvRt^2} \, .
\end{align}
The proof is completed by taking
\begin{align}
M := \max \brc{2\cstloss T \sqrt{k \Phi_{\vCl}(T)}, 2\wb{M}_2} \, .
\end{align}

\section{Auxiliary lemmas for the proof of Theorem \ref{thm:StateEvolution}}

\subsection{Proof of Lemma~\ref{lem:flow-approximation}} \label{proof:flow-approximation} 

Claim \eqref{eq:FlowApproxFirstClaim} immediately follows from 
basic results about Euler method, see, for instance,
\cite[Theorem II.3.6]{hairer1996solving}.
In order to apply these results, letting 
$\bF(\btheta,t) :=  -\btheta\Lambda^{t, \sT} -\frac{1}{\delta}
\bX^{\sT}\bell_t(\bX\btheta;\bz)$, we need to check that the following two 
conditions hold with high probability for some constants $L$, $C$ possibly dependent on $T$
but not on $n,d$:
\begin{enumerate}
\item $(\btheta,t)\mapsto \bF(\btheta,t)$ is $L$-Lipschitz, for a constant $L$ independent 
of $n,d$. This holds by the Lipschitz continuity of
$(u,t)\mapsto \ell_t(u;z)$ and $t\mapsto\Lambda_t$, see Assumptions \ref{ass:Normal}
and because $\|\bX\|_{\op}\le C$ with high probability.
\item Along the trajectory $\btheta^t$, we have 
$\|\btheta^{t+\eta}-\btheta^t-\eta \bF(\btheta^t,t)\|\le C_0\eta^2\sqrt{d}$. 

To prove this note that, with high probability $\|\bF(\btheta,t)\|\le C_1\sqrt{d}+C_2\|\btheta\|$
(this follows from the Lipschitz continuity, and a simple bound on $\|\bF(\bfzero,t)\|$).
This implies $\|\btheta^t\|\le C_3\exp(C_3t)$ by Gronwall, whence
$\|\btheta^{t+\eta}-\btheta^t\|\le C_4\eta\sqrt{d}$ for any $t\le T$. 
Finally, the claimed bound
$\|\btheta^{t+\eta}-\btheta^t-\eta \bF(\btheta^t,t)\|\le C_0\eta^2\sqrt{d}$
follows by using once more the Lipschitz property of $\bF$.
\end{enumerate}

Since
\begin{align}
\wdst{\est{\mu}_{\theta^{\tau_1}, \cdots, \theta^{\tau_m}}}{\est{\mu}_{\theta^{\tau_1}_\eta, \cdots, \theta^{\tau_m}_\eta}} 	& \leq \sqrt{\frac{1}{d} \sum_{j=1}^d \sum_{l=1}^m \norm{\btheta^{\tau_l}_j - \prn{\btheta^{\tau_l}_\eta}_j}_2^2} = \sqrt{\frac 1 d \sum_{l=1}^m \norm{\btheta^{\tau_l} - \btheta^{\tau_l}_\eta}_F^2} \, ,
\end{align}
the the second claim of the lemma follows immediately.

\subsection{Proof of Lemma~\ref{lem:state-evolution-discrete-flow}} \label{proof:state-evolution-discrete-flow}
We introduce the following approximate message passing (AMP) algorithm that admits an asymptotic characterization by state evolution. For sequences of Lipschitz functions $f_{i}: \reals^{k(i+1) + 1} \to \reals^{k}$ and $g_{i}: \reals^{k(i+1)} \to \reals^k$ with $i=0,1,\cdots$, we consider the following matrix sequences $\brc{\boldsymbol{a}^{i+1}, \boldsymbol{b}^i}_{i \geq 0}$ in $\reals^{d \times k}$ and $\reals^{n \times k}$ respectively, generated by
\begin{align}
\boldsymbol{a}^{i+1} & = -\frac 1 \delta \bX^\sT \boldsymbol{f}_i(\boldsymbol{b}^0, \cdots, \boldsymbol{b}^i; \bz) + \sum_{j=0}^i \boldsymbol{g}_j(\ba^1, \cdots, \ba^j; \btheta^0) \xi_{i,j} \, , \\
\bb^{i} & = \bX \boldsymbol{g}_i(\ba^1, \cdots, \ba^i; \btheta^0) + \frac1 \delta \sum_{j=0}^{i-1} \boldsymbol{f}_j (\boldsymbol{b}^0, \cdots, \boldsymbol{b}^j; \bz) \zeta_{i,j} \, , 
\end{align} 
where $\boldsymbol{f}_i, \boldsymbol{g}_i$ are functions that apply $f_i, g_i$ row-wise similar to $\bell_t$. $\brc{\xi_{i,j}}_{0 \leq j \leq i}$ and $\brc{\zeta_{i,j}}_{0 \leq j \leq i-1}$ are sequences of deterministic matrices in $\reals^{k \times k}$ that depend on the $\brc{f_i, g_i}_{i \geq 0}$ in a specific way that we shall explicitly define later. The algorithm is initialized by $\boldsymbol{g}_0(\btheta^0) = \btheta^0, \bb^0 = \bX \btheta^0$. To relate this AMP algorithm with the discretized flow system $\mathfrak{F}^\eta$, we consider the specific choice of
\begin{align}
\boldsymbol{g}_i(\ba^1, \cdots, \ba^i; \btheta^0) & := \btheta^{t_i}_\eta \, , \\
\boldsymbol{f}_i(\bb^0, \cdots, \bb^i; \bz) & := \bell_{t_i}(\bX \btheta^{t_i}_\eta; \bz) \, ,
\end{align}
where $t_i = i \eta$. We next show that $\btheta^{t_i}_\eta$ is indeed a function of $\ba^1, \cdots, \ba^i, \btheta^0$ and $- \bell_{t_i}(\bX \btheta^{t_i}_\eta; \bz)$ is indeed a function of $\bb^0, \cdots, \bb^i, \bz$. This can be seen by induction
\begin{align}
\btheta^{t_i}_\eta & = \btheta^{t_{i-1}}_\eta + \eta \cdot \brc{ -\btheta^{t_{i-1}}_\eta \Lambda^{t_{i-1}, \sT} -\frac{1}{\delta} \bX^{\sT}\bell_{t_{i-1}}(\bX\btheta^{t_{i-1}}_\eta;\bz)} \nonumber \\
&  = \btheta^{t_{i-1}}_\eta + \eta \cdot \brc{ -\btheta^{t_{i-1}}_\eta \Lambda^{t_{i-1}, \sT} -\frac{1}{\delta} \bX^{\sT}\boldsymbol{f}_{i-1}(\bb^0, \cdots, \bb^{i-1}; \bz)} \nonumber  \\
& = \boldsymbol{g}_{i-1}(\ba^1, \cdots, \ba^{i-1}; \btheta^0) \prn{I - \eta \Lambda^{t_{i-1}, \sT}} + \eta \prn{\ba^i - \sum_{j=0}^{i-1} \boldsymbol{g}_j(\ba^1, \cdots, \ba^j; \btheta^0) \xi_{i-1,j} } \, , \label{eq:def-recursive-g} \\
\bell_{t_i}(\bX \btheta^{t_i}_\eta; \bz) & = \bell_{t_i}(\bX \boldsymbol{g}_i(\ba^1, \cdots, \ba^i; \btheta^0); \bz) \nonumber \\
& = \bell_{t_i}\prn{\bb^i -\frac1 \delta \sum_{j=0}^{i-1} \boldsymbol{f}_j (\boldsymbol{b}^0, \cdots, \boldsymbol{b}^j; \bz) \zeta_{i,j} ; \bz} \, \label{eq:def-recursive-f} . 
\end{align}
By Lipschitz property of $\ell_t$ in Assumption~\ref{ass:Normal}, we can see by this inductive definition, $g_i$ and $f_i$ are all Lipschitz continuous. To apply the standard AMP result in~\cite{chen2021universality} we only need to specify the matrices $\brc{\xi_{i,j}}_{0 \leq j \leq i}$ and $\brc{\zeta_{i,j}}_{0 \leq j \leq i-1}$. To this end we iteratively define sequences of centered Gaussian vectors $\brc{\wb{u}^{t_{i+1}}_\eta, \wb{w}^{t_i}_\eta}_{i \geq 0}$ in $\reals^k$ according to
\begin{subequations}
\begin{align}
	\Ep \brk{\wb{w}^{t_i}_\eta \prn{\wb{w}^{t_j}_\eta}^\sT} & =\Ep \brk{g_i(\wb{u}^{t_1}_\eta, \cdots, \wb{u}^{t_i}_\eta; \theta^0)g_j(\wb{u}^{t_1}_\eta, \cdots, \wb{u}^{t_j}_\eta; \theta^0)^\sT}\, , & & 0 \leq j \leq i < \infty\, , \label{eq:def-amp-w} \\
	\Ep \brk{\wb{u}^{t_{i+1}}_\eta \prn{\wb{u}^{t_{j+1}}_\eta}^\sT} & = \frac 1 \delta \Ep \brk{f_i(\wb{w}^{t_0}_\eta, \cdots, \wb{w}^{t_i}_\eta; z) f_j(\wb{w}^{t_0}_\eta, \cdots, \wb{w}^{t_j}_\eta;z)^\sT}\, , & & 0 \leq j \leq i <\infty \, , \label{eq:def-amp-u} \\ 
	\zeta_{i, j} &= \Ep \brk{\frac{\partial}{\partial \wb{u}^{t_{j+1}}_\eta} g_i(\wb{u}^{t_1}_\eta, \cdots, \wb{u}^{t_i}_\eta; \theta^0)}\, ,  & & 0 \leq j \leq i - 1 \, , \label{eq:def-amp-eta} \\
	\xi_{i,j} & = \Ep \brk{\frac{\partial}{\partial \wb{w}^{t_j}_\eta} f_i(\wb{w}^{t_0}_\eta, \cdots, \wb{w}^{t_i}_\eta; z)}\, , & & 0 \leq j \leq i \, . \label{eq:def-amp_zeta}
\end{align}
\end{subequations}
Here the expectation is taking over the Gaussian random vectors $\wb{u}^{t_i}_\eta, \wb{w}^{t_i}_\eta$ and also on the independently distributed random variables $(\theta^0, z) \sim \mu_{\theta^0, z}$. 

The above equations define inductively 
the matrices $\brc{\xi_{i,j}}_{0 \leq j \leq i}$, $\brc{\zeta_{i,j}}_{0 \leq j \leq i-1}$
and also the Gaussian vectors $\brc{\wb{u}^{t_{i+1}}_\eta, \wb{w}^{t_i}_\eta}_{i \geq 0}$. 
The sequence is initialized by $\wb{w}^{t_0}_\eta \sim \normal\prn{0, \Ep \brk{\theta^0 \prn{\theta^0}^\sT}}$ and $\xi_{0,0} = \Ep \brk{\frac{\partial}{\partial \wb{w}^{t_0}_\eta } f_0(\wb{w}^{t_0}_\eta; z)}$. Suppose for some $r=0,1,\cdots$, we have define $\wb{u}^{t_i}_\eta, \wb{w}^{t_i}_\eta$ and the matrices $\zeta_{i,j}, \xi_{i,j}$ for $i \leq r$. According to Eqs.~\eqref{eq:def-recursive-g} and \eqref{eq:def-recursive-f}, the functions $f_0, \cdots, f_{r+1}; g_0, \cdots, g_{r+1}$ are all explicitly defined. Substituting into Eq.~\eqref{eq:def-amp-u} we can then determine  $\wb{w}^{t_{r+1}}_\eta$ and next by Eq.~\eqref{eq:def-amp-w} we obtain $\wb{u}^{t_{r+1}}_\eta$. Finally, by Eqs.~\eqref{eq:def-amp-eta} and \eqref{eq:def-amp_zeta} the matrices $\zeta_{i,j}, \xi_{i,j}$ for $i=r+1$ are determined.

Under the conditions of Theorem~\ref{thm:StateEvolution}, we can invoke~\cite[Theorem 2.4]{chen2021universality}
and \cite[Theorem 1]{javanmard2013state} to 
obtain\footnote{Note that \cite[Theorem 1]{javanmard2013state} only considers AMP
algorithms on which the nonlinearities depends on the last iterate. However by enlarging the dimension $k$,
this also covers the case of nonlinearities depend on any constant number of
previous times. This reduction is explained in several earlier papers, e.g. 
\cite[Appendix A]{montanari2021optimization}.} that, for any fixed
$t_1 = \eta, \cdots, t_m = m \eta$ and any Lipschitz bounded function $\psi: \reals^{k(m+1)} \to \reals$,
\begin{align}
\frac 1 d \sum_{j=1}^d \psi\prn{\prn{\ba^{t_1}_\eta}_j, \cdots,\prn{\ba^{t_m}_\eta}_j; 
	\prn{\btheta^0}_j} \stackrel{p}\to \Ep \brk{\psi \prn{\wb{u}^{t_1}_\eta, \cdots, \wb{u}^{t_m}_\eta; \theta^0 }} \, . \label{eq:amp-convergence-pseudo-lipschitz-expectation}
	\end{align}
	
	For any  Lipschitz bounded function $\widetilde{\psi} : (\reals^{k})^{m+1} \to \reals$,
	define $\psi: (\reals^{k})^{m+1} \to \reals$ via
	\begin{align*}
\psi(\wb{u}^{t_1}_\eta, \cdots, \wb{u}^{t_m}_\eta;\theta^0) := \widetilde{\psi}(\wb{\theta}^{t_1}_\eta, \cdots, \wb{\theta}^{t_m}_\eta) \, ,
\wb{\theta}^{t_1}_\eta &:= g_{i_1}(\wb{u}^1_\eta, \cdots, \wb{u}^{i_1}_\eta; \theta^0)\, ,\\
&\cdots\\
\wb{\theta}^{t_m}_\eta & := g_{i_m}(\wb{u}^1_\eta, \cdots, \wb{u}^{i_m}_\eta; \theta^0)\, .
\end{align*}
By the Lipschitz property of $g_{i_1}, \cdots, g_{i_m}$,  $\psi$ is also Lipschitz bounded. 
We thus proved that, for any Lipschitz bounded function $\tpsi$,
\begin{align}
\frac 1 d \sum_{j=1}^d
\tpsi\prn{\prn{\btheta^{t_1}_\eta}_j, \cdots,\prn{\btheta^{t_m}_\eta}_j; \prn{\btheta^0}_j} \stackrel{p}\to \Ep \brk{\tpsi \prn{\wb{\theta}^{t_1}_\eta, \cdots, \wb{\theta}^{t_m}_\eta; \theta^0 }} \, . \label{eq:amp-convergence-pseudo-lipschitz-expectation-theta}
\end{align}

The next lemma relates the random variables 
$\wb{\theta}^{t_1}_\eta, \cdots, \wb{\theta}^{t_m}_\eta$ to the DMFT
system $\mathfrak{S}^\eta$. We defer the proof to Appendix~\ref{proof:AMP-to-S-eta}.
\begin{lemma} \label{lem:AMP-to-S-eta}
The discrete-time DMFT system $\mathfrak{S}^\eta$ has a unique solution in the space
$\mathcal{S}$ and $(\theta^{t}_\eta)_{t=i\eta, i\le m} \ed (\theta^{t}_\eta)_{t=i\eta, i\le m}$.
Further $t\mapsto \theta_\eta^t$ is piecewise linear with knots $t_i = i\eta$.
\end{lemma}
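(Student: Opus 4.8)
The plan is to prove the two assertions of the lemma in turn: first that $\mathfrak{S}^\eta$ has a unique solution in $\mathcal{S}$ which is piecewise linear in $t$ with knots $t_i=i\eta$, and then that this solution coincides in law with the state evolution of the AMP recursion built from $g_i,f_i$, which yields the distributional identity between $(\wb\theta^{t_i}_\eta)_{i\le m}$ and $(\theta^{t_i}_\eta)_{i\le m}$ and hence \eqref{eq:LP-conv}. For existence and uniqueness I would induct on $i$. Note first that $\Cl^\eta(t,s)$ and $\Ct^\eta(t,s)$ depend on $t,s$ only through $\flr{t},\flr{s}$ (see \eqref{eq:def-C-l-eta}, \eqref{eq:def-C-t-eta}), so $u^t_\eta,w^t_\eta$ may be taken piecewise constant on the intervals $[t_j,t_{j+1})$, and then the right-hand side of \eqref{eq:def-theta-eta} is constant on each such interval; this already gives the piecewise-linearity of $t\mapsto\theta^t_\eta$ with knots $t_i$. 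Assuming all quantities of $\mathfrak{S}^\eta$ uniquely constructed on the grid $\{0,\eta,\dots,t_i\}$, integrating \eqref{eq:def-theta-eta} and \eqref{eq:def-derivative-t-eta} over $[t_i,t_{i+1}]$ turns them into explicit Euler updates for $\theta^{t_{i+1}}_\eta$ and $\partial\theta^{t_{i+1}}_\eta/\partial u^{s'}_\eta$, affine in the already-known data with the integral terms replaced by finite Riemann sums; \eqref{eq:def-r-eta} determines $r^{t_{i+1}}_\eta$ and \eqref{eq:def-derivative-l-eta}–\eqref{eq:def-derivative-l-eta-2} determine $\partial\ell_{t_{i+1}}(r^{t_{i+1}}_\eta;z)/\partial w^{s'}_\eta$; feeding all of this into \eqref{eq:def-R-t-eta}–\eqref{eq:def-C-l-eta} produces the new rows of the deterministic kernels. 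This proves uniqueness and constructs the solution. Membership $(\Cl^\eta,\Rl^\eta,\Gamma_\eta)\in\mathcal{S}$ follows by re-running the a priori estimates of the proof of Lemma~\ref{lem:solution-in-space} with integrals replaced by Riemann sums: the nondecreasing envelopes $\pRt,\pRl,\pCt,\pCl$ dominate their own discretizations, and the continuity and $(t-s)^2$-type conditions of $\mathcal{S}$ are trivial here because the discretized kernels are piecewise constant on the (finitely many) grid cells of $[0,T]^2$.

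For the identification I would build the dictionary between \eqref{eq:def-amp-w}–\eqref{eq:def-amp_zeta} and $\mathfrak{S}^\eta$ and check it by induction on $i$. Put $\wb\theta^{t_i}_\eta:=g_i(\wb u^{t_1}_\eta,\dots,\wb u^{t_i}_\eta;\theta^0)$ and, motivated by \eqref{eq:def-recursive-f}, $\wb r^{t_i}_\eta:=\wb w^{t_i}_\eta-\tfrac1\delta\sum_{j=0}^{i-1}\ell_{t_j}(\wb r^{t_j}_\eta;z)\,\zeta_{i,j}$ (up to the transpose dictated by the row-wise action of $\ell_t$), so that $f_j(\wb w^{t_0}_\eta,\dots,\wb w^{t_j}_\eta;z)=\ell_{t_j}(\wb r^{t_j}_\eta;z)$. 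Differentiating the inductive definitions \eqref{eq:def-recursive-g}, \eqref{eq:def-recursive-f} in $\wb u^{t_{j+1}}_\eta$ and $\wb w^{t_j}_\eta$ shows that $\partial g_i/\partial\wb u^{t_{j+1}}_\eta$ solves the discrete Euler form of \eqref{eq:def-derivative-t-eta}, and that $\partial f_i/\partial\wb w^{t_j}_\eta=\nabla_r\ell_{t_i}(\wb r^{t_i}_\eta;z)\cdot(\partial\wb r^{t_i}_\eta/\partial\wb w^{t_j}_\eta)$ with $\partial\wb r^{t_i}_\eta/\partial\wb w^{t_j}_\eta$ solving the discrete form of \eqref{eq:def-derivative-l-eta}, and of the boundary case \eqref{eq:def-derivative-l-eta-2} when $j=i$. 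Taking expectations identifies $\zeta_{i,j}$ and $\xi_{i,j}$ for $j<i$ with the Riemann-sum discretizations of $R^\eta_\theta$ and $R^\eta_\ell$ (up to the $\eta$ factor produced by the Euler step), and the diagonal term $\xi_{i,i}$ with $\Gamma^{t_i}_\eta$ via \eqref{eq:def-Gamma-eta}; the covariance recursions \eqref{eq:def-amp-w}, \eqref{eq:def-amp-u} then become \eqref{eq:def-C-t-eta}, \eqref{eq:def-C-l-eta}. By the uniqueness from the first part, $(\wb u^{t_i}_\eta,\wb w^{t_i}_\eta,\wb\theta^{t_i}_\eta,\wb r^{t_i}_\eta)_{i\le m}$ has the same law as $(u^{t_i}_\eta,w^{t_i}_\eta,\theta^{t_i}_\eta,r^{t_i}_\eta)_{i\le m}$; combining $\wb\theta^{t_i}_\eta\ed\theta^{t_i}_\eta$ with \eqref{eq:amp-convergence-pseudo-lipschitz-expectation-theta} and the fact that bounded-Lipschitz test functions metrize weak convergence gives \eqref{eq:LP-conv}. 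One also records, by an easy induction using Assumption~\ref{ass:Normal}, that the $f_i,g_i$ produced by \eqref{eq:def-recursive-g}, \eqref{eq:def-recursive-f} are Lipschitz, so that \cite{chen2021universality} and \cite{javanmard2013state} apply, and that the reduction from multi-time to last-iterate nonlinearities (footnoted via \cite{montanari2021optimization}) is invoked consistently.

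The step I expect to be the main obstacle is the chain-rule bookkeeping in the second part: because $f_i$ and $g_i$ are defined recursively, their partial derivatives satisfy recursions of their own, and one must verify that unwinding these reproduces \eqref{eq:def-derivative-t-eta}–\eqref{eq:def-derivative-l-eta-2} of $\mathfrak{S}^\eta$ exactly — including the correct placement of the $\flr{\cdot}$ and $\cil{\cdot}$ operators, the isolation of the $j=i$ boundary term \eqref{eq:def-derivative-l-eta-2}, the transposes forced by the row-wise convention for $\ell_t$, and the accounting of the $\eta$ factors so that each discretized Riemann sum against a kernel matches the corresponding AMP Onsager sum against $\zeta_{i,j}$ or $\xi_{i,j}$. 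None of this is conceptually deep, but it is where essentially all of the work of the proof lies.
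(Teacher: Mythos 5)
Your proposal follows essentially the same route as the paper: reduce $\mathfrak{S}^\eta$ to its values on the grid $t_i=i\eta$ (piecewise constancy of $u^t_\eta,w^t_\eta$ forcing piecewise linearity of $\theta^t_\eta$), construct and uniquify the solution by forward induction, verify membership in $\mathcal{S}$ by discretized versions of the a priori bounds of Lemma~\ref{lem:solution-in-space} (the paper packages this as Lemma~\ref{lem:X-eta-upper-bound}), and then match the AMP state-evolution quantities to $\mathfrak{S}^\eta$ via the induction hypotheses $\Rt^\eta(t_i,t_j)=\zeta_{i,j-1}/\eta$, $\Rl^\eta(t_i,t_j)=\xi_{i,j}/\eta$, $\Gamma^{t_i}_\eta=\xi_{i,i}$. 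The chain-rule bookkeeping you flag as the main obstacle is indeed where the paper's proof spends its effort, and your dictionary is the correct one.
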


Fix $T$ and set $m=T/\eta$. 
By this lemma, and since $t\mapsto \btheta^t_{\eta}$ is also piecewise linear 
with knots at $t=i\eta$, Eq.~\eqref{eq:amp-convergence-pseudo-lipschitz-expectation-theta}
implies that, for any $\ell$, any $\tau_1,\dots \tau_{\ell}\in [0,T]$, and any 
bounded Lipschitz function $\psi:(\reals^k)^{\ell}\to\reals$, we have
\begin{align}
\frac 1 d \sum_{j=1}^d
\psi\prn{\prn{\btheta^{\tau_1}_\eta}_j, \cdots,\prn{\btheta^{\tau_\ell}_\eta}_j} \stackrel{p}\to \Ep \brk{\psi \prn{\wb{\theta}^{t_1}_\eta, \cdots, \wb{\theta}^{t_m}_\eta; \theta^0 }} \, . 
\end{align}
The proof is completed by applying the following basic fact about weak convergence
to the probability measures $\nu_n=\widehat{\mu}_{\theta^{\tau_1}_\eta, \cdots, \theta^{\tau_\ell}_\eta}$,
$\nu=\mu_{\theta^{\tau_1}_\eta, \cdots, \theta^{\tau_m}_\eta}$ on $\reals^d$,
$d=\ell k$.
\begin{lemma}
Let $(\nu_n)_{n\ge 1}$ be a sequence of random probability measures on $\reals^d$, and
assume that, for any bounded Lipschitz function $\psi:\reals^d\to\reals$, we have
$\int \psi(x) \, \nu_n(\de x) \stackrel{p}{\to} \int \psi(x)\, \nu(\de x)$.

Then $\dW{\nu_n}{\nu}\stackrel{p}\to 0$.
\end{lemma}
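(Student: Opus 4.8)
The plan is to combine a classical deterministic fact about weak convergence on $\reals^d$ with the subsequence criterion for convergence in probability already recorded as Lemma~\ref{lemma:Elementary}.

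First I would fix, once and for all, a countable \emph{convergence-determining} family $\{g_k\}_{k\ge 1}$ of bounded Lipschitz functions $g_k:\reals^d\to\reals$; that is, a countable family with the property that for probability measures $\mu_n,\mu$ on $\reals^d$ one has $\mu_n\Rightarrow\mu$ whenever $\int g_k\,\de\mu_n\to\int g_k\,\de\mu$ for every $k$. Such a family exists because $\reals^d$ is a separable metric space (this is standard; see e.g.\ \cite{billingsley2013convergence}, and concretely one may take suitable finite products of truncated-linear bumps with rational parameters). I would also record two elementary facts: that $\dW{\cdot}{\cdot}$ metrizes weak convergence by assumption, so $\mu_n\Rightarrow\mu$ is equivalent to $\dW{\mu_n}{\mu}\to 0$; and that when $\dW{\cdot}{\cdot}$ is the bounded Lipschitz distance the supremum defining $\dW{\nu_n}{\nu}$ can be restricted to a countable subfamily of test functions (by Arzel\`a--Ascoli applied to the unit ball of the bounded-Lipschitz norm in the topology of locally uniform convergence), so that $X_n:=\dW{\nu_n}{\nu}$ is a bona fide random variable.

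The main argument then runs as follows. Fix an arbitrary diverging subsequence $(n_j)$. The hypothesis gives $\int g_k\,\de\nu_n\stackrel{p}{\to}\int g_k\,\de\nu$ for each individual $k$, hence the same holds along $(n_j)$. Extracting nested subsequences for $k=1,2,\dots$ and then taking a diagonal subsequence, I obtain a refinement $(n'_\ell)\subseteq(n_j)$ and a single almost sure event $\Omega_0$ on which $\int g_k\,\de\nu_{n'_\ell}\to\int g_k\,\de\nu$ as $\ell\to\infty$, simultaneously for all $k\ge 1$. On $\Omega_0$, since $\{g_k\}$ is convergence-determining and all the $\nu_{n'_\ell}$ and $\nu$ are probability measures, $\nu_{n'_\ell}\Rightarrow\nu$, hence $X_{n'_\ell}=\dW{\nu_{n'_\ell}}{\nu}\to 0$; that is, $X_{n'_\ell}\stackrel{a.s.}{\to}0$. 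Since the diverging subsequence $(n_j)$ was arbitrary, Lemma~\ref{lemma:Elementary} gives $X_n\stackrel{p}{\to}0$, which is exactly the claim.

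The only input that is not completely routine is the existence of a countable convergence-determining class of bounded Lipschitz functions on $\reals^d$; once that is granted, the remainder is the standard ``every subsequence has an almost surely convergent further subsequence'' bookkeeping, together with the diagonal extraction that upgrades ``for each $k$, almost surely'' to ``almost surely, for all $k$''. So the proof is short and presents no genuine obstacle; I would only be mildly careful about the measurability of $X_n$, which is why the Arzel\`a--Ascoli remark is included.
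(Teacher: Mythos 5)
Your argument is correct and is essentially identical to the paper's proof: both apply Lemma~\ref{lemma:Elementary}, pass to a countable convergence-determining family of bounded Lipschitz test functions, extract nested almost-surely-convergent subsequences and diagonalize to get $\dW{\nu_{n'_j}}{\nu}\stackrel{a.s.}{\to}0$. The only addition in your write-up is the explicit remark about measurability of $\dW{\nu_n}{\nu}$, which the paper leaves implicit; otherwise the two proofs match step for step.
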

\begin{proof}
By Lemma \ref{lemma:Elementary}, it is sufficient to show that for any subsequence
$(n_j)_{j\ge 1}$ we can construct a further subsequence $(n'_j)_{j\ge 1}$
such that $\dW{\nu_{n'_j}}{\nu}\stackrel{a.s.}\to 0$.

Fix such a subsequence $(n_j)$, and let $(\psi_i)_{i\in \naturals}$ be a countable collection of 
bounded Lipschitz functions on $\reals^d$ which determine weak convergence (i.e. such that
$\int \psi_i(x) \, q_n(\de x) \to \int \psi_i(x) \, q(\de x)$ imply $\dW{q_n}{q}\to 0$\rev{).}
One can take for instance all functions of the form $\psi(x) = (1-d(x,Q)/\eps)_+$
where $Q\subseteq\reals^d$ is a rectangle with rational corners, and $\eps>0$ is rational.

By Borel-Cantelli, we can construct a subsequence $(n^1_j)\subseteq (n_j)$ such that
$\int \psi_1(x)\nu_{n^1_j}(\de x)\to \int \psi_1(x)\nu(\de x)$. Refining this sequence, we
obtain, for each $k$ a subsequence $(n^k_j)$
such that $\int \psi_a(x)\nu_{n^k_j}(\de x)\to \int \psi_a(x)\nu(\de x)$ for all $a\le k$.
Taking the diagonal $n'_j=n^j_j$ yields a subsequence along which $\dW{\nu_{n'_j}}{\nu}$
as desired.
\end{proof}
This concludes the proof of Lemma \ref{lem:state-evolution-discrete-flow}.

\begin{remark}
By \cite{berthier2020state},
Eq.~\eqref{eq:amp-convergence-pseudo-lipschitz-expectation} holds for test functions $\psi$ which are pseudo-Lipschitz of order 2 when the matrix $\bX$ has Gaussian entries. 
In this case, using the same argument as above,
we may conclude that \eqref{eq:LP-conv} holds also for the Wasserstein distance.
\end{remark}


\subsection{Proof of Lemma~\ref{lem:integral-differential-approximation}} \label{proof:integral-differential-approximation}
First, we define the transformation $\trsfrm^\eta = \trsfrmB^\eta \circ \trsfrmA^\eta$ 
where we let $\trsfrmA^\eta: (\Cl, \Rl, \Gamma) \mapsto (\bCt, \bRt)$ and $\trsfrmB^\eta: (\bCt, \bRt) \mapsto (\bCl, \bRl, \bGamma)$. We remind the readers that $\trsfrmA^\eta$ does not necessarily map $\mathcal{S}$ into $\wb{\mathcal{S}}$ and nor does $\trsfrmB^\eta$ map $\wb{\mathcal{S}}$ into $\mathcal{S}$. We use this notation here because exactly similar to our previous definitions of $\trsfrmA$ and $\trsfrmB$, the transformation $\trsfrmA^\eta$ is defined by taking the input function triplet through Eqs.~\eqref{eq:def-theta-eta} and \eqref{eq:def-derivative-t-eta} and then we obtain $(\bCt, \bRt)$ by Eqs.~\eqref{eq:def-C-t-eta} and \eqref{eq:def-R-t-eta}; $\trsfrmB$ is defined by taking the input function pair into Eqs.~\eqref{eq:def-r-eta}, \eqref{eq:def-derivative-l-eta} and \eqref{eq:def-derivative-l-eta-2} and $(\bCl, \bRl, \bGamma)$ is obtained by Eqs.~\eqref{eq:def-C-l-eta}, \eqref{eq:def-R-l-eta} and \eqref{eq:def-Gamma-eta}. 

As we have shown in the proof of Lemma~\ref{lem:state-evolution-discrete-flow}, the mappings $\trsfrmA^\eta$ and $\trsfrmB^\eta$ are essentially determined recursively on the discrete time knots $t_i = i\eta$, $i=0,1,\cdots$, so they are uniquely defined. We express the solution of the system $\mathfrak{S}^\eta$ as the unique fixed-point of $\trsfrm^\eta$, namely if we let $X^\eta = (\Cl^\eta, \Rl^\eta, \Gamma_\eta )$ be the function triplet that solves $\mathfrak{S}^\eta$, it holds that
\begin{align}
\trsfrm^\eta(X^\eta) = X^\eta \, .
\end{align}
Suppose $\trsfrmA^\eta(X^\eta) = (\Ct^\eta, \Rt^\eta)$, we have the following lemma characterizing the unique solution of $\mathfrak{S}^\eta$.
\begin{lemma} \label{lem:X-eta-upper-bound}
Under the same conditions of Lemma~\ref{lem:integral-differential-approximation}, the unique solution of $\mathfrak{S}^\eta$ satisfies $X^\eta = (\Cl^\eta, \Rl^\eta, \Gamma_\eta ) \in \mathcal{S}$ and $ (\Ct^\eta, \Rt^\eta) \in \wb{\mathcal{S}}$.
\end{lemma}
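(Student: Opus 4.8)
The plan is to leverage what is already known. The first assertion $X^\eta=(\Cl^\eta,\Rl^\eta,\Gamma_\eta)\in\mathcal{S}$ is precisely (part of) Lemma~\ref{lem:state-evolution-discrete-flow} (established via Lemma~\ref{lem:AMP-to-S-eta}), so the only new content is $(\Ct^\eta,\Rt^\eta)=\trsfrmA^\eta(X^\eta)\in\wb{\mathcal{S}}$. I would obtain this by transcribing the ``$\trsfrmA$ maps $\mathcal{S}$ into $\wb{\mathcal{S}}$'' half of the proof of Lemma~\ref{lem:solution-in-space}, replacing the time derivatives and convolution integrals appearing there by forward differences and Riemann sums over the grid $\{t_i=i\eta\}$, and using the envelope bounds $\norm{\Cl^\eta(t,t)}\le\pCl(t)$, $\norm{\Rl^\eta(t,s)}\le\pRl(t-s)$, $\norm{\Gamma_\eta^t}\le\cstloss$ (available since $X^\eta\in\mathcal{S}$) as data. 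A useful preliminary remark is that $\Ct^\eta(t,s)=\Ep[\theta_\eta^{\flr t}(\theta_\eta^{\flr s})^{\sT}]$ is piecewise constant and $\Rt^\eta$ piecewise affine on the grid --- the drift in \eqref{eq:def-theta-eta}--\eqref{eq:def-derivative-t-eta} is constant on each cell and $u^t_\eta=u^{\flr t}_\eta$ is piecewise constant, the latter because the kernel $\Cl^\eta$ depends only on $\flr t,\flr s$. Consequently $\Ct^\eta,\Rt^\eta$ are continuous off the finite set $P=\{t_i\}\cap[0,T]$, the second-difference condition \eqref{eq:space-condition-Ct} is vacuous on squares straddling a knot and trivially $0$ inside a cell, $\Ct^\eta$ is a symmetric covariance kernel by construction, $\Rt^\eta(t,s)=0$ for $t<s$, and $\Ct^\eta(0,0)=\Ep[\theta^0(\theta^0)^{\sT}]$ since $\theta^0_\eta=\theta^0$; so membership in $\wb{\mathcal{S}}$ reduces to the two envelope inequalities $\norm{\Ct^\eta(t,t)}\le\pCt(t)$ and $\norm{\Rt^\eta(t,s)}\le\pRt(t-s)$.

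Both are proved by induction on the grid. Since $\theta_\eta^t$ is affine on $[t_i,t_{i+1}]$ with $\theta_\eta^{t_{i+1}}=\theta_\eta^{t_i}+\eta v_i$, $v_i=-(\Lambda^{t_i}+\Gamma_\eta^{t_i})\theta_\eta^{t_i}-\sum_{j<i}\eta\,\Rl^\eta(t_i,t_j)\theta_\eta^{t_j}+u_\eta^{t_i}$, Minkowski gives $\sqrt{\Ep\norm{\theta_\eta^{t_{i+1}}}^2}\le\sqrt{\Ep\norm{\theta_\eta^{t_i}}^2}+\eta\sqrt{\Ep\norm{v_i}^2}$, and the same Cauchy--Schwarz estimate as in Lemma~\ref{lem:solution-in-space} --- splitting the memory sum with weights $(t_i-t_j+1)^{\pm1}$, using $\sum_{j<i}\eta(t_i-t_j+1)^{-2}\le\int_0^{t_i}(s+1)^{-2}\de s<1$ (the grid sum of the decreasing weight undershoots its integral) and $\Ep\norm{u_\eta^{t_i}}^2\le(k/\delta)\pCl(t_i)$ --- bounds $\Ep\norm{v_i}^2$ by $3$ times the bracket in \eqref{eq:def-pCt}. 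Comparing with $\sqrt{\pCt(t_{i+1})}-\sqrt{\pCt(t_i)}=\int_{t_i}^{t_{i+1}}\ddt\sqrt{\pCt(t)}\,\de t$, using that the right-hand side of \eqref{eq:def-pCt} is nondecreasing in $t$ (as $\pCt,\pCl,\pRl$ all are) and that the discrete memory sum lines up with the ODE's convolution integral after a harmless $\eta$-shift of the kernel argument (compensated by comparing against the integrand at the later knot $t_{i+1}$), closes the step. The identical argument applied to \eqref{eq:def-derivative-t-eta} against \eqref{eq:def-pRt} gives $\norm{\Rt^\eta(t_i,t_j)}\le\pRt(t_i-t_j)$, the base value $\Rt^\eta(t_i,t_i)=I$ being compatible with $\pRt(0)>1$; affine interpolation plus monotonicity of the envelopes then extends both bounds from the grid to all $s\le t$, and the base case $i=0$ uses $\pCt(0)>\cstthetaz\ge\Ep\norm{\theta^0}^2$ exactly as in Lemma~\ref{lem:solution-in-space}.

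I expect this discrete comparison to be the one delicate point. A forward Euler step on an essentially exponentially growing quantity overshoots its continuous analogue, so a priori the bound obtained at $t_{i+1}$ can slightly exceed $\pCt(t_{i+1})$; this is exactly why the hypotheses retain the strict inequalities $\pCt(0)>\cstthetaz$ and $\pRt(0)>1$. The cumulative Euler error over $[0,T]$ is $O(\eta)$ uniformly over the grid, so for $\eta$ below a threshold depending only on $T$ and on the gaps $\pCt(0)-\cstthetaz$, $\pRt(0)-1$ the induction goes through with the envelopes as chosen --- which is all that Lemma~\ref{lem:integral-differential-approximation} needs, since it subsequently sends $\eta\to0$ --- and alternatively one may fix $\pCt(0),\pRt(0)$ large enough once and for all to absorb the error for every $\eta\le1$. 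Everything else is a mechanical repetition of the estimates already performed in the proof of Lemma~\ref{lem:solution-in-space}.
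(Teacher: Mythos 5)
Your strategy for the $\wb{\mathcal{S}}$ half (a grid induction transcribing the ``$\trsfrmA$ maps $\mathcal{S}$ into $\wb{\mathcal{S}}$'' estimates, plus the observation that piecewise constancy of the kernels makes the continuity/second-difference conditions automatic) is exactly what the paper does. But there are two genuine problems.

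First, the reduction of the $\mathcal{S}$ half to Lemma~\ref{lem:state-evolution-discrete-flow} is circular. In the paper, the assertion of that lemma that $(\Cl^\eta,\Rl^\eta,\Gamma_\eta)\in\mathcal{S}$ is itself proved by invoking Lemma~\ref{lem:X-eta-upper-bound} (see the last line of the proof of Lemma~\ref{lem:AMP-to-S-eta}), so you cannot use it as input here. You must establish all five envelope bounds directly, and they cannot be decoupled the way you propose: the induction bounding $\Ep\|\theta^{t}_\eta\|^2$ against $\pCt$ consumes the bound $\Ep\|u^t_\eta\|^2\le (k/\delta)\pCl(t)$, while the bound on $\Cl^\eta(t,t)$ consumes $\Ep\|w^t_\eta\|^2\le k\pCt(t)$, and likewise $\Rt^\eta$ and $\Rl^\eta$ feed each other through Eqs.~\eqref{eq:def-derivative-t-eta}--\eqref{eq:def-derivative-l-eta}. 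The paper therefore runs a single joint induction over the grid establishing $(\pRt,\pRl)$ and $(\pCt,\pCl)$ bounds simultaneously, exactly mirroring the coupled systems \eqref{eq:def-pRt}--\eqref{eq:def-pCl}.

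Second, the ``delicate point'' you flag is a misdiagnosis, and your proposed remedies would damage the lemma. There is no forward-Euler overshoot: the continuous-time derivative of the discretized quantity at time $t$ involves only values at floor times ($\theta^{\flr{t}}_\eta$, $\Rl^\eta(\flr{t},\flr{s})$, integrals over $[0,\flr{t}]\subseteq[0,t]$, etc.), each of which is dominated, via monotonicity of $\pRt,\pRl,\pCt,\pCl$ and nonnegativity of the integrands, by the corresponding term in the right-hand sides of \eqref{eq:def-pRt}--\eqref{eq:def-pCl} evaluated at the \emph{current} time $t$. Hence $\ddt\|\Rt^\eta(t,s)\|\le\ddt\pRt(t-s)$ and $\ddt\sqrt{\Ep\|\theta^t_\eta\|^2}\le\ddt\sqrt{\pCt(t)}$ pointwise, and integrating from the (strictly dominated) initial data $\pRt(0)>1$, $\pCt(0)>\cstthetaz$ gives the bounds exactly, for every $\eta>0$. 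Restricting to small $\eta$ would prove a strictly weaker statement (the lemma is also needed for fixed-$\eta$ discrete-time flows via Lemma~\ref{lem:state-evolution-discrete-flow}), and enlarging $\pCt(0),\pRt(0)$ after the fact is not available, since these constants fix the spaces $\mathcal{S},\wb{\mathcal{S}}$ used consistently throughout the contraction argument.
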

Let $X \in \mathcal{S}$ be the unique fixed-point of $\mathcal{T}$, we can then control the distance between $X$ and $X^\eta$ by
\begin{align}
\lbddst{X}{X^\eta} & = \lbddst{\mathcal{T}(X)}{\mathcal{T}^\eta(X^\eta)} \nonumber \\
& \leq \underbrace{\lbddst{\mathcal{T}(X)}{\mathcal{T}(X^\eta)}}_{\mathrm{(I)}} + \underbrace{\lbddst{\mathcal{T}(X^\eta)}{\mathcal{T}^\eta(X^\eta)}}_{\mathrm{(II)}} \, , \label{eq:mid-integral-differential-approximation-1}
\end{align}
where by Eq.~\eqref{eq:T-contraction} we can choose $\lambda$ large enough such that
\begin{align}
\mathrm{(I)} \leq \frac 1 2 \lbddst{X}{X^\eta} \, . \label{eq:mid-integral-differential-approximation-2}
\end{align}
The following lemma controls the quantity (ii). We defer its proof to Appendix~\ref{proof:integral-differential-eta-approximation-1}.
\begin{lemma} \label{lem:integral-differential-eta-approximation-1}
Under the same conditions of Lemma~\ref{lem:integral-differential-approximation}, it holds 	for all $\lambda \geq \wb{\lambda}_5 := \wb{\lambda}_5(\mathcal{S}, \wb{\mathcal{S}})$ that
\begin{align}
	\lbddst{\mathcal{T}(X^\eta)}{\mathcal{T}^\eta(X^\eta)} \leq h(\eta) 
\end{align}
for some nondecreasing continuous function $h(\eta)$ with $h(0) = 0$. Here the function $h$ only depends on the spaces $\mathcal{S}$ and $\wb{\mathcal{S}}$.
\end{lemma}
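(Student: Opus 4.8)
The plan is to factor both transformations through the intermediate space $\wb{\mathcal{S}}$, writing $\trsfrm = \trsfrmB\circ\trsfrmA$ and $\trsfrm^\eta = \trsfrmB^{\eta}\circ\trsfrmA^{\eta}$, and to compare them one half at a time. Set $(\bCt,\bRt) := \trsfrmA(X^\eta)$ and $(\bCt^\eta,\bRt^\eta) := \trsfrmA^{\eta}(X^\eta)$; by Lemma~\ref{lem:solution-in-space} and Lemma~\ref{lem:X-eta-upper-bound} both pairs lie in $\wb{\mathcal{S}}$. Then
\begin{align*}
\lbddst{\trsfrm(X^\eta)}{\trsfrm^\eta(X^\eta)} &\le \lbddst{\trsfrmB(\bCt,\bRt)}{\trsfrmB(\bCt^\eta,\bRt^\eta)} + \lbddst{\trsfrmB(\bCt^\eta,\bRt^\eta)}{\trsfrmB^{\eta}(\bCt^\eta,\bRt^\eta)}.
\end{align*}
The first term is controlled, after inserting the mixed pair $(\bCt^\eta,\bRt)\in\wb{\mathcal{S}}$ and applying Lemma~\ref{lem:diff-Ctheta-global} and Lemma~\ref{lem:diff-Rtheta-global} exactly as in the contraction argument of Section~\ref{sec:proof:UniqueExist}, by $M\bigl(\lbddst{\bCt}{\bCt^\eta} + \lbddst{\bRt}{\bRt^\eta}\bigr) = M\,\lbddst{\trsfrmA(X^\eta)}{\trsfrmA^{\eta}(X^\eta)}$. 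Hence it suffices to establish two ``discretization error'' bounds, each uniform over the admissible input and vanishing as $\eta\to0$: (a) $\lbddst{\trsfrmA(X^\eta)}{\trsfrmA^{\eta}(X^\eta)}\le h_1(\eta)$, and (b) $\lbddst{\trsfrmB(\bCt^\eta,\bRt^\eta)}{\trsfrmB^{\eta}(\bCt^\eta,\bRt^\eta)}\le h_2(\eta)$; then $h := Mh_1 + h_2$ works.

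\textbf{Step (a).} The two maps solve Eqs.~\eqref{eq:trsfrm-t}, \eqref{eq:trsfrm-derivative-t} and Eqs.~\eqref{eq:def-theta-eta}, \eqref{eq:def-derivative-t-eta} with the \emph{same} data $X^\eta = (\Cl^\eta,\Rl^\eta,\Gamma_\eta)$; in particular the driving Gaussian processes share the kernel $\Cl^\eta/\delta$ and may be coupled to be equal. By Lemma~\ref{lem:AMP-to-S-eta} (together with Lemma~\ref{lem:X-eta-upper-bound}) the data $\Gamma_\eta$, $\Rl^\eta$, $\Cl^\eta$ are piecewise constant with knots on the grid $\{i\eta\}$, so the only non-piecewise-constant ingredient of the continuous equation is the Lipschitz function $\Lambda^t$. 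Writing the integral--differential equation for $\theta^t-\theta^t_\eta$, and separately for the deterministic $\partial\theta^t/\partial u^s - \partial\theta^t_\eta/\partial u^s_\eta$, the forcing collects exactly: the replacement of $\Lambda^t$ by $\Lambda^{\flr t}$, bounded by $\cstlbd\eta$; the Euler error from evaluating $\theta$ at $\flr t,\flr s$ in place of $t,s$, bounded by $\eta$ times an $L^2$ bound on $\tfrac{\de}{\de t}\theta^t_\eta$ (itself controlled by $\cstlbd,\cstloss,\pRl(T),\pCl(T)$); and the shift of the upper limit $\int_0^t\!\to\!\int_0^{\flr t}$, again $O(\eta)$. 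A Gr\"onwall estimate in the $\normlbd{\cdot}{T}$ norm --- choosing $\lambda$ large enough that $\int_0^\infty e^{-\lambda t}\pRl(t)\,\de t$ is small, as in Lemma~\ref{lem:Exist-General-ODE} and the proof of Lemma~\ref{lem:diff-Cl-global} --- then gives $\sup_{t\le T}e^{-\lambda t}\sqrt{\Ep\|\theta^t-\theta^t_\eta\|_2^2}\le C(\mathcal{S})\eta$ and likewise for the functional derivatives, whence $\lbddst{\bCt}{\bCt^\eta}$ and $\lbddst{\bRt}{\bRt^\eta}$ are $O(\eta)$ via the passage from process distances to kernel distances used in Appendix~\ref{proof:diff-Cl-global}. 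Thus one may take $h_1(\eta) = C(\mathcal{S})\eta$.

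\textbf{Step (b) and assembly.} Step (b) is entirely analogous for $r^t$ and $\partial\ell_t(r^t;z)/\partial w^s$, comparing Eqs.~\eqref{eq:trsfrm-r}, \eqref{eq:trsfrm-derivative-l} with Eqs.~\eqref{eq:def-r-eta}, \eqref{eq:def-derivative-l-eta}--\eqref{eq:def-derivative-l-eta-2} on the \emph{fixed} input $(\bCt^\eta,\bRt^\eta)$, whose component $\bRt^\eta$ is Lipschitz (being the expectation of a piecewise-linear-in-$t$ solution) and whose Gaussian noise $w^t$ is coupled equal to $w^t_\eta$ (same kernel $\bCt^\eta$). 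The discretization forcing now comes from the grid-shifted evaluations $\bRt^\eta(\flr t,\cil s)$ versus $\bRt^\eta(t,s)$, of size $O(\eta)$ by Lipschitz continuity; the replacement of $\ell_s(r^s;z)$ by $\ell_{\flr s}(r^{\flr s}_\eta;z)$, of size $O(\eta)$ by the joint Lipschitz property of $\ell$ in $(t,r)$ and the $O(\eta)$ $L^2$-control of $r^s_\eta-r^{\flr s}_\eta$; and the near-diagonal block governed by Eq.~\eqref{eq:def-derivative-l-eta-2} for $\cil s>\flr t$, which matches the continuous boundary behavior of $\partial\ell_t(r^t;z)/\partial w^s$ as $s\uparrow t$ up to $O(\eta)$ since $\bRt^\eta(\flr t,\cil s)\to I$ there. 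Another Gr\"onwall step with $\lambda$ large (so that $\tfrac{\cstloss}{\delta}\int_0^\infty e^{-\lambda t}\pRt(t)\,\de t\le\tfrac12$) gives $h_2(\eta)=C'(\mathcal{S},\wb{\mathcal{S}})\eta$. Setting $h := Mh_1 + h_2$ --- a nondecreasing continuous function with $h(0)=0$ depending only on $\mathcal{S}$ and $\wb{\mathcal{S}}$ --- and taking $\wb{\lambda}_5$ to be the maximum of all thresholds $\lambda$ invoked above completes the proof.

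\textbf{Main obstacle.} The Gr\"onwall bookkeeping is routine; the delicate points are (i) invoking the structural fact that $X^\eta$ and $(\bCt^\eta,\bRt^\eta)$ have the correct grid-piecewise regularity, so that applying the \emph{continuous} transformation to $X^\eta$ is meaningful and the residual error is genuinely $O(\eta)$ --- this rests on Lemma~\ref{lem:AMP-to-S-eta} and Lemma~\ref{lem:X-eta-upper-bound}; and (ii) matching the singular near-diagonal structure of the continuous functional derivative $\partial\ell_t(r^t;z)/\partial w^s$ against the discrete prescription \eqref{eq:def-derivative-l-eta-2}, which requires care to ensure the error there is $o(1)$ uniformly in the input. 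Keeping every constant expressed through the parameters of $\mathcal{S}$ and $\wb{\mathcal{S}}$, never through $\eta$ or the particular $X^\eta$, is what delivers the claimed uniformity.
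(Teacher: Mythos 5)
Your proposal is correct and follows essentially the same route as the paper: the paper likewise isolates the $\trsfrmA$-versus-$\trsfrmA^\eta$ discrepancy as a separate lemma (its Lemma~\ref{lem:integral-differential-eta-approximation-2}, including the grid-shifted comparison $\bRt^\eta$ versus $\Rt^\eta(\flr{t},\cil{s})$ that you flag) and then couples the Gaussian noises and runs a Gr\"onwall estimate in the weighted norm for the $\trsfrmB$ half. The only cosmetic difference is that you split the second half via a triangle inequality into a Lipschitz-of-$\trsfrmB$ step (reusing Lemmas~\ref{lem:diff-Ctheta-global} and \ref{lem:diff-Rtheta-global}) plus a pure-discretization step, whereas the paper folds both effects into a single combined estimate.
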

Substituting Lemma~\ref{lem:integral-differential-eta-approximation-1} and Eq.~\eqref{eq:mid-integral-differential-approximation-2} into Eq.~\eqref{eq:mid-integral-differential-approximation-1} yields
\begin{align}
\lbddst{X}{X^\eta} \leq 2h(\eta) \to 0 
\end{align}
as $\eta \to 0$. The following lemma establishes if $X$ and $X^\eta$ are close and the step size $\eta$ is small, we can couple $\theta^t$ and $\theta^t_\eta$ such that their $(\lambda, T)$-distance is small. A proof can be found in Appendix~\ref{proof:integral-differential-eta-approximation-path}.

\begin{lemma} \label{lem:integral-differential-eta-approximation-path}
Under the same conditions of Lemma~\ref{lem:integral-differential-approximation}, for all $\lambda \geq \wb{\lambda}_6 := \wb{\lambda}_6(\mathcal{S}, \wb{\mathcal{S}})$ we can find a coupling for $\theta^t$ and $\theta^t_\eta$ such that
\begin{align}
	\sup_{0 \leq t \leq T} e^{-\lambda t} \sqrt{\Ep \brk{\normtwo{\theta^t - \theta^t_\eta}^2}} \leq H(\eta, \lbddst{X}{X^\eta}) \, , 
\end{align}
where $H$ is a nondecreasing function in each coordinate and $\lim_{(u, v) \to (0, 0)} H(u, v) \to 0$. Here the function $H$ only depends on the spaces $\mathcal{S}$ and $\wb{\mathcal{S}}$.
\end{lemma}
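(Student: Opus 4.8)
The plan is to introduce an intermediate process and split the error into a ``kernel-mismatch'' part, controlled by $\lbddst{X}{X^\eta}$, and a ``time-discretization'' part, controlled by $\eta$. Write $X=(\Cl,\Rl,\Gamma)$ for the fixed point of $\mathcal{T}$ (equivalently, the kernels of $\mathfrak{S}$) and $X^\eta=(\Cl^\eta,\Rl^\eta,\Gamma_\eta)$ for the fixed point of $\mathcal{T}^\eta$; by Lemma~\ref{lem:X-eta-upper-bound} both lie in $\mathcal{S}$ and $(\Ct^\eta,\Rt^\eta)=\trsfrmA^\eta(X^\eta)\in\wb{\mathcal{S}}$. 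I would let $\tilde\theta^t$ be the process produced by the \emph{continuous}-time transformation $\trsfrmA$ applied to the discretized kernels $X^\eta$, i.e.\ the solution of Eq.~\eqref{eq:trsfrm-t} with data $(\Lambda^t,\Gamma^{\flr t}_\eta,\Rl^\eta(\flr t,\flr s))$ and noise $u\sim\mathsf{GP}(0,\Cl^\eta/\delta)$. Because $\Cl^\eta(t,s)=\Cl^\eta(\flr t,\flr s)$ by construction, the kernel $\Cl^\eta/\delta$ is already piecewise constant and coincides with the covariance of the noise $u_\eta$ driving $\mathfrak{S}^\eta$; hence $\tilde\theta$ and $\theta_\eta$ can be driven by the \emph{same} process $u_\eta$. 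To produce the coupling of $\theta$ and $\theta_\eta$ asked for in the lemma, I would fix a coupling of $u\sim\mathsf{GP}(0,\Cl/\delta)$ and $u_\eta\sim\mathsf{GP}(0,\Cl^\eta/\delta)$ attaining, up to a factor $2$, the distance $\lbddst{\Cl/\delta}{\Cl^\eta/\delta}=\delta^{-1/2}\lbddst{\Cl}{\Cl^\eta}\le\delta^{-1/2}\lbddst{X}{X^\eta}$, generate $\theta$ from $u$ via Eq.~\eqref{eq:def-theta} and $\tilde\theta,\theta_\eta$ from $u_\eta$ via Eqs.~\eqref{eq:trsfrm-t} and \eqref{eq:def-theta-eta}. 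Then $\normtwo{\theta^t-\theta^t_\eta}\le\normtwo{\theta^t-\tilde\theta^t}+\normtwo{\tilde\theta^t-\theta^t_\eta}$ and it remains to bound the two summands.

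For the first summand, $\theta$ and $\tilde\theta$ are exactly the $\trsfrmA$-processes attached to $X$ and $X^\eta$, driven by coupled noises with $\sup_{t\le T}e^{-\lambda t}\sqrt{\Ep\normtwo{u^t-u^t_\eta}^2}\le 2\delta^{-1/2}\lbddst{X}{X^\eta}$. Re-running the computations in the proofs of Lemma~\ref{lem:diff-Cl-global} and Lemma~\ref{lem:diff-Rl-global}, chained through the auxiliary triplet $(\Cl,\Rl^\eta,\Gamma_\eta)$ exactly as in the contraction argument of Section~\ref{sec:proof:UniqueExist}, I would obtain, for every $\epsilon>0$ and all $\lambda$ large enough in terms of $\epsilon$ and $\mathcal{S}$, the bound $\sup_{0\le t\le T}e^{-\lambda t}\sqrt{\Ep[\normtwo{\theta^t-\tilde\theta^t}^2]}\le\epsilon\,\lbddst{X}{X^\eta}$.

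For the second summand, $\tilde\theta$ and $\theta_\eta$ are governed by the same kernels $X^\eta$ and the same noise and differ only because $\theta_\eta$ evaluates $\Lambda$, $\Gamma_\eta$, $\Rl^\eta$ and the iterate at $\flr t$ and integrates only up to $\flr t$. Writing the equation for $\tilde\theta^t-\theta^t_\eta$ and isolating the contractive part $-(\Lambda^t+\Gamma^{\flr t}_\eta)(\tilde\theta^t-\theta^t_\eta)-\int_0^{\flr t}\Rl^\eta(\flr t,\flr s)(\tilde\theta^s-\theta^s_\eta)\,\de s$, the leftover terms are $(\Lambda^t-\Lambda^{\flr t})\theta^t_\eta$, $(\Lambda^{\flr t}+\Gamma^{\flr t}_\eta)(\theta^t_\eta-\theta^{\flr t}_\eta)$, $\int_{\flr t}^t\Rl^\eta(\flr t,\flr s)\tilde\theta^s\,\de s$, and $\int_0^{\flr t}\Rl^\eta(\flr t,\flr s)(\theta^s_\eta-\theta^{\flr s}_\eta)\,\de s$, all of which are $O(\eta)$ in $L^2$ uniformly over $\eta\le 1$: $\Lambda$ is Lipschitz in time; $\theta_\eta$ is piecewise linear with knots $\{i\eta\}$ whose slope on each cell has $L^2$ norm bounded by a constant $C(\mathcal{S},\wb{\mathcal{S}})$ (using $\norm{\Gamma^{\flr t}_\eta}\le\cstloss$, $\norm{\Rl^\eta(\flr t,\flr s)}\le\pRl(T)$, $\Ep\normtwo{\theta^{\flr t}_\eta}^2\le k\pCt(T)$ and $\Ep\normtwo{u^t_\eta}^2\le k\pCl(T)/\delta$, from $X^\eta\in\mathcal{S}$ and $(\Ct^\eta,\Rt^\eta)\in\wb{\mathcal{S}}$), whence $\Ep\normtwo{\theta^t_\eta-\theta^{\flr t}_\eta}^2\le C\eta^2$; and $\Ep\normtwo{\tilde\theta^s}^2\le\pCt(T)$ by the $\trsfrmA$-estimate inside the proof of Lemma~\ref{lem:solution-in-space}. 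Running the Gronwall argument in the $\normlbd{\cdot}{T}$ norm used repeatedly in this paper---absorbing $\int_0^\infty e^{-\lambda s}\pRl(s+1)\,\de s$ into a constant for $\lambda$ large, which is legitimate by the exponential-growth bound of Lemma~\ref{lem:Exist-General-ODE}---then gives $\sup_{t\le T}e^{-\lambda t}\sqrt{\Ep\normtwo{\tilde\theta^t-\theta^t_\eta}^2}\le C(\mathcal{S},\wb{\mathcal{S}})\,\eta$ for all $\lambda\ge\lambda_0(\mathcal{S},\wb{\mathcal{S}})$. Taking $\epsilon=1$ in the previous step, letting $\wb{\lambda}_6$ be the larger of the two thresholds (the bounds only improve for larger $\lambda$), and adding the two estimates proves the lemma with $H(u,v):=C(\mathcal{S},\wb{\mathcal{S}})\,u+v$, which depends only on the spaces and satisfies $H(0,0)=0$.

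The main obstacle is the time-discretization step. One has to verify that \emph{every} Euler-type error term is genuinely $O(\eta)$ in the exponentially weighted $L^2$ norm---in particular that the Lipschitz-in-time regularity of the auxiliary solution $\tilde\theta$ (with modulus depending only on $\mathcal{S},\wb{\mathcal{S}}$) together with the mere boundedness of the step-kernel data suffices to control the discretization of the memory integral---and that the ensuing Gronwall inequality, which is of convolution type rather than pointwise, closes uniformly in $\eta$. This is precisely why the bound must be the bivariate $H(\eta,\lbddst{X}{X^\eta})$: the mismatch part vanishes with $\lbddst{X}{X^\eta}$, but the discretization part only vanishes with $\eta$. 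Everything else is a rerun of estimates already established in Section~\ref{sec:proof:UniqueExist} and in the proof of Lemma~\ref{lem:flow-approximation}.
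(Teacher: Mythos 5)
Your proposal is correct, and it takes a genuinely different route from the paper's own proof. The paper proves Lemma~\ref{lem:integral-differential-eta-approximation-path} by a single Gronwall-type argument applied directly to the pair of equations for $\theta^t$ and $\theta^t_\eta$: it couples the driving noises $u,u_\eta$, differentiates $\normtwo{\theta^t-\theta^t_\eta}$, and controls in one pass all the source terms---kernel mismatch ($\normtwo{u^t-u^t_\eta}$, $\norm{\Gamma^t-\Gamma^t_\eta}$, $\norm{\Rl-\Rl^\eta}$) and Euler discretization ($\normtwo{\theta^t_\eta-\theta^{\flr t}_\eta}$, $\norm{\Lambda^t-\Lambda^{\flr t}}$, tail integrals)---absorbing everything into a single bound $H(\eta,\lbddst{X}{X^\eta})$. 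You instead interpolate through the intermediate process $\tilde\theta$, which is exactly the $\wb\theta_\eta$ already introduced in the paper's proof of Lemma~\ref{lem:integral-differential-eta-approximation-2}, so that the two error sources land in separate summands handled independently. This is more modular: your kernel-mismatch summand literally reuses the inner $L^2$ estimates established inside the proofs of Lemmas~\ref{lem:diff-Cl-global} and~\ref{lem:diff-Rl-global}, chained through the triplet $(\Cl,\Rl^\eta,\Gamma_\eta)\in\mathcal{S}$, while the discretization summand is essentially what the paper already shows in the course of proving Lemma~\ref{lem:integral-differential-eta-approximation-2}. The paper avoids introducing an extra process at the cost of redoing more of the exponentially weighted bookkeeping inline; your decomposition exposes the two error sources cleanly. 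Both arguments ultimately require the same ingredients: the near-optimal coupling of the two Gaussian inputs, the piecewise-constancy of $\Gamma_\eta,\Rl^\eta,\Cl^\eta$, the $O(\eta^2)$ bound on $\Ep\normtwo{\theta^t_\eta-\theta^{\flr t}_\eta}^2$, and a $\lambda$-threshold depending only on $\mathcal{S},\wb{\mathcal{S}}$. One detail worth stating explicitly: when chaining Lemmas~\ref{lem:diff-Cl-global} and~\ref{lem:diff-Rl-global} to control $\normtwo{\theta-\tilde\theta}$, you must work under a single consistent coupling of $u$ with $u_\eta$; this works because Lemma~\ref{lem:diff-Rl-global} uses the \emph{same} noise for its two processes, so you first pass to the auxiliary process with kernels $(\Cl,\Rl^\eta,\Gamma_\eta)$ driven by $u$ and then switch to the optimally-coupled $u_\eta$ in the final step. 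With that noted, $H(u,v)=C(\mathcal{S},\wb{\mathcal{S}})\,u+v$ is a valid choice.
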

By coupling $\theta^t$ and $\theta^t_\eta$ as in Lemma~\ref{lem:integral-differential-eta-approximation-path}, we can then conclude the proof by invoking Lemma~\ref{lem:integral-differential-eta-approximation-1} and Lemma~\ref{lem:integral-differential-eta-approximation-path} since
\begin{align}
&\wdst{\mu_{\theta^{\tau_1}_\eta, \cdots, \theta^{\tau_m}_\eta}}{\mu_{\theta^{\tau_1}, \cdots, \theta^{\tau_m}}} \leq \sqrt{\frac{1}{m} \sum_{j=1}^m \normtwo{\theta_\eta^{\tau_j} - \theta^{\tau_j}}^2} \leq e^{\lambda T} \cdot H(\eta, 2h(\eta))\, .
\end{align}
The proof is completed by taking $\eta \to 0$.

\subsection{Proof of Lemma~\ref{lem:AMP-to-S-eta}} \label{proof:AMP-to-S-eta}
First we show any solution of $\mathfrak{S}^\eta$ must be uniquely determined by its values at discrete time knots $t_i = i\eta$ for $i=0,1,\cdots$. From Eqs.~\eqref{eq:def-C-t-eta} and \eqref{eq:def-C-l-eta} we have $u^t_\eta$ and $w^t_\eta$ must be piecewise constant, namely
\begin{align}
u^t_\eta = u^{\flr{t}}_\eta \, , \qquad w^t_\eta = w^{\flr{t}}_\eta \, ,
\end{align}
and therefore we have $\theta^t_\eta$ is piecewise linear with time knots $t_i$ and $r^t_\eta$ is piecewise constant with time knots $t_i$. Finally, from Eqs.~\eqref{eq:def-derivative-t-eta} and \eqref{eq:def-derivative-l-eta} we have $\partial \theta^t_\eta / \partial u^s_\eta$ is piecewise linear and $\partial \ell_{\flr{t}}(r^t_\eta; z)/\partial w^s_\eta$ is piecewise constant with time knots $t_i$, this then implies $\Rt^\eta$ is piecewise linear and $\Rl^\eta$ is piecewise constant with knots $t_i$. By Eq.~\eqref{eq:def-Gamma-eta} we have $\Gamma_\eta^t$ must be piecewise constant with knots $t_i$. We then conclude that $\mathfrak{S}^\eta$ is uniquely determined at $t_i = i \eta$.

We show by induction the unique solution of $\mathfrak{S}^\eta$ at discrete time knots $t_i = i\eta$ must be
\begin{subequations}
\begin{align}
	(\theta^{t_0}_\eta, \cdots, \theta^{t_r}_\eta) & \stackrel{d}{=} (\wb{\theta}_\eta^{t_0}, \cdots, \wb{\theta}_\eta^{t_r}) \, , & & \label{eq:amp-induction-1} \\
	\Rt^\eta(t_i, t_j) & = \zeta_{i,j-1} / \eta \, , & & 0 \leq j \leq i \leq r \, , \label{eq:amp-induction-2} \\
	\Rl^\eta(t_i, t_j) & = \xi_{i, j} / \eta \, , & & 0 \leq j < i \leq r \, , \label{eq:amp-induction-3} \\
	\Gamma_\eta^{t_i} & = \xi_{i, i} \, , & & 0 \leq i  \leq r \, , \label{eq:amp-induction-4}
\end{align}
\end{subequations}
where we define
\begin{align}
\zeta_{i, -1} &= \Ep \brk{\frac{\partial}{\partial \prn{\theta^0 / \eta}} g_i(\wb{u}^{t_1}_\eta, \cdots, \wb{u}^{t_i}_\eta; \theta^0)}\, .
\end{align}
For $r=0$, provided that $\theta^0_\eta \stackrel{d}{=} \wb{\theta}^0_\eta \stackrel{d}{=} \theta^0$, it follows immediately that $w_\eta^{0} \stackrel{d}{=} \wb{w}_\eta^{0} \stackrel{d}{=} \normal\prn{0, \Ep \brk{\theta^0 \prn{\theta^0}^\sT}}$ and  therefore $\Gamma_\eta^0 = \Ep \brk{\nabla_r \ell_0(r^0_\eta; z)}  = \Ep \brk{\frac{\partial}{\partial \wb{w}^{0}_\eta } f_0(\wb{w}^{0}_\eta; z)} = \xi_{0,0}$. Suppose the induction hypothesis holds for $r$, we next show Eqs.~\eqref{eq:amp-induction-1} to \eqref{eq:amp-induction-4} hold for $r+1$. 
\paragraph{Induction on Eq.~\eqref{eq:amp-induction-1}.} First, by Eqs.~\eqref{eq:def-amp-u} and \eqref{eq:def-C-t-eta} we have
\begin{align}
\Ep \brk{\wb{w}^{t_i}_\eta \prn{\wb{w}^{t_j}_\eta}^\sT} & =\Ep \brk{g_i(\wb{u}^{t_1}_\eta, \cdots, \wb{u}^{t_i}_\eta; \theta^0)g_j(\wb{u}^{t_1}_\eta, \cdots, \wb{u}^{t_j}_\eta; \theta^0)^\sT} = \Ep \brk{\wb{\vt}^{t_i}_\eta \prn{\wb{\vt}^{t_j}_\eta}^{\sT}} = \Ep \brk{{\vt}^{t_i}_\eta \prn{{\vt}^{t_j}_\eta}^{\sT}} = \Ct^\eta(t_i, t_j)\, ,
\end{align}
which implies $(\wb{w}^{t_0}_\eta, \cdots, \wb{w}^{t_r}_\eta) \stackrel{d}{=} (w^{t_0}_\eta, \cdots, w^{t_r}_\eta)$. Similarly it also holds $(\wb{u}^{t_1}_\eta, \cdots, \wb{u}^{t_{r+1}}_\eta) \stackrel{d}{=} (u^{t_0}_\eta, \cdots, u^{t_{r}}_\eta)$. Thus, substituting into Eq.~\eqref{eq:def-theta-eta} gives us for $t \in [t_r, t_{r+1})$,
\begin{align}
\frac{\de}{\de t} \theta^t_\eta & = -(\Lambda^{\flr{t}} + \Gamma^{\flr{t}}_\eta) \theta^{\flr{t}}_\eta - \int_0^{\flr{t}} R_{\ell}^\eta(\flr{t},\flr{s}) \theta^{\flr{s}}_\eta \de s + u^t_\eta \nonumber \\
& = -(\Lambda^{t_r} + \Gamma^{t_r}_\eta) \theta^{t_r}_\eta - \int_0^{t_r} R_{\ell}^\eta(\flr{t},\flr{s}) \theta^{\flr{s}}_\eta \de s + u^{t_r}_\eta \nonumber \\
& = -(\Lambda^{t_r} + \xi_{r, r}) \theta^{t_r}_\eta - \sum_{j=0}^{r-1} \xi_{r,j} \theta^{t_j}_\eta  + u_\eta^{t_r}\, ,
\end{align} 
and further
\begin{align}
\theta^{t_{r+1}}_\eta = (I - \eta \Lambda^{t_r}) \theta_\eta^{t_r} + \eta \prn{u_\eta^{t_r} - \sum_{j=0}^r \xi_{r,j} \theta^{t_j}_\eta} \, .
\end{align}
Comparing to Eq.~\eqref{eq:def-recursive-g} which asserts
\begin{align}
\wb{\theta}^{t_{r+1}}_\eta = (I - \eta \Lambda^{t_r}) \wb{\theta}_\eta^{t_r} + \eta \prn{\wb{u}_\eta^{t_{r+1}} - \sum_{j=0}^r \xi_{r,j} \wb{\theta}^{t_j}_\eta},
\end{align}
which immediately implies Eq.~\eqref{eq:amp-induction-1} holds for $r+1$. 
\paragraph{Induction on Eq.~\eqref{eq:amp-induction-2}.} With the same calculations applied to Eq.~\eqref{eq:def-derivative-t-eta}, for an $0 \leq i \leq r$ it follows that
\begin{align}
\frac{\partial \theta^{t_{r+1}}_\eta}{\partial u^{t_i}_\eta} & = (I - \eta \Lambda^{t_r}) \frac{\partial \theta^{t_{r}}_\eta}{\partial u^{t_i}_\eta} - \eta \sum_{j=i}^r \xi_{r,j} \frac{\partial \theta^{t_{j}}_\eta}{\partial u^{t_i}_\eta} \nonumber \\
& = (I - \eta \Lambda^{t_r}) \frac{\partial \theta^{t_{r}}_\eta}{\partial u^{t_i}_\eta} - \eta \sum_{j=i+1}^r \xi_{r,j} \frac{\partial \theta^{t_{j}}_\eta}{\partial u^{t_i}_\eta} - \eta \xi_{r,i} \, ,
\end{align}
where we use $\partial \theta_\eta^{t_i}/ \partial u^{t_i}_\eta = I$. We slightly abuse the notation here by taking $\wb{u}_\eta^{t_{0}} := \theta^0/\eta$, and as a direct consequence of Eq.~\eqref{eq:def-recursive-g}, we get the recursion when $0 \leq i \leq r$,
\begin{align}
& \frac{\partial}{\partial \wb{u}^{t_i}_\eta} g_{r+1}(\wb{u}^{t_1}_\eta, \cdots, \wb{u}^{t_{r+1}}_\eta; \theta^0)  = \frac{\partial \wb{\theta}^{t_{r+1}}_\eta}{\partial \wb{u}^{t_i}_\eta} \nonumber \\
& = (I - \eta \Lambda^{t_r})\frac{\partial}{\partial \wb{u}^{t_i}_\eta} g_{r}(\wb{u}^{t_1}_\eta, \cdots, \wb{u}^{t_{r}}_\eta; \theta^0) - \eta \sum_{j=i+1}^r \xi_{r,j} \frac{\partial}{\partial \wb{u}^{t_i}_\eta} g_{j}(\wb{u}^{t_1}_\eta, \cdots, \wb{u}^{t_{j}}_\eta; \theta^0) \nonumber \\
& = (I - \eta \Lambda^{t_r})\frac{\partial}{\partial \wb{u}^{t_i}_\eta} g_{r}(\wb{u}^{t_1}_\eta, \cdots, \wb{u}^{t_{r}}_\eta; \theta^0) - \eta \sum_{j=i+1}^r \xi_{r,j} \frac{\partial}{\partial \wb{u}^{t_i}_\eta} g_{j}(\wb{u}^{t_1}_\eta, \cdots, \wb{u}^{t_{j}}_\eta; \theta^0) - \eta^2 \xi_{r,i}\, ,
\end{align}
where in the last line it is used that $\partial g_i(\wb{u}^{t_1}_\eta, \cdots, \wb{u}^{t_{i}}_\eta; \theta^0) / \partial \wb{u}^{t_i}_\eta = \eta I$. We thus have
\begin{align}
&\prn{\eta^{-1} \frac{\partial}{\partial \wb{u}^{t_i}_\eta} g_{r+1}(\wb{u}^{t_1}_\eta, \cdots, \wb{u}^{t_{r+1}}_\eta; \theta^0) } \nonumber \\
& = (I - \eta \Lambda^{t_r}) \prn{\eta^{-1}\frac{\partial}{\partial \wb{u}^{t_i}_\eta} g_{r}(\wb{u}^{t_1}_\eta, \cdots, \wb{u}^{t_{r}}_\eta; \theta^0)} - \eta \sum_{j=i+1}^r \xi_{r,j} \prn{\eta^{-1} \frac{\partial}{\partial \wb{u}^{t_i}_\eta} g_{j}(\wb{u}^{t_1}_\eta, \cdots, \wb{u}^{t_{j}}_\eta; \theta^0)} - \eta \xi_{r,i} \, .
\end{align}
Together with the induction hypothesis we then show for all $0 \leq i \leq r+1$,
\begin{align}
\Rt^\eta(t_{r+1}, t_i) = \Ep \brk{\frac{\partial \theta^{t_{r+1}}_\eta}{\partial u^{t_i}_\eta}} = \Ep \brk{\eta^{-1} \frac{\partial}{\partial \wb{u}^{t_i}_\eta} g_{r+1}(\wb{u}^{t_1}_\eta, \cdots, \wb{u}^{t_{r+1}}_\eta; \theta^0) } = \zeta_{r+1, i-1} / \eta \, .
\end{align}
\paragraph{Induction on Eqs.~\eqref{eq:amp-induction-3} and \eqref{eq:amp-induction-4}.} By Eq.~\eqref{eq:def-r-eta}, for all $0 \leq i \leq r+1$,
\begin{align}
r^{t_i}_\eta & = - \frac{1}{\delta} \int_0^{t_i} R_{\theta}^\eta(\flr{t},\cil{s}) \ell_{\flr{s}}(r^{s}_\eta; z) \de s + w^{t_i}_\eta \nonumber \\
& = - \frac{1}{\delta} \sum_{j=0}^{i-1} \eta R_{\theta}^\eta(t_i,t_{j+1}) \ell_{t_j}(r^{t_j}_\eta; z)  + w^{t_i}_\eta \nonumber \\
& = - \frac{1}{\delta} \sum_{j=0}^{i-1} \zeta_{i,j} \ell_{t_j}(r^{t_j}_\eta; z) + w^{t_i}_\eta \, ,
\end{align}
which further gives
\begin{align}
\ell_{t_i}(r^{t_i}_\eta; z) & =  \ell_{t_i} \prn{- \frac{1}{\delta} \sum_{j=0}^{i-1} \zeta_{i,j} \ell_{t_j}(r^{t_j}_\eta; z) + w^{t_i}_\eta; z} \, .
\end{align}
From Eq.~\eqref{eq:def-recursive-f} we get similarly
\begin{align}
f_i(\wb{w}^{t_0}_\eta, \cdots, \wb{w}^{t_i}_\eta; z) & =  \ell_{t_i} \prn{- \frac{1}{\delta} \sum_{j=0}^{i-1} \zeta_{i,j} f_j(\wb{w}^{t_0}_\eta, \cdots, \wb{w}^{t_j}_\eta; z) + \wb{w}^{t_i}_\eta; z} \, .
\end{align}
Since Eq.~\eqref{eq:amp-induction-1} holds for $r+1$, this implies we can assume without loss of generality that $(w_\eta^{t_0}, \cdots, w_\eta^{t_{r+1}}) = (\wb{w}_\eta^{t_0}, \cdots, \wb{w}_\eta^{t_{r+1}})$. In this case, it always holds that $f_i(\wb{w}^{t_0}_\eta, \cdots, \wb{w}^{t_i}_\eta; z) = \ell_{t_i}(r^{t_i}_\eta; z)$ for $0 \leq i \leq r+1$. In particular
\begin{align}
\nabla_r \ell_{t_i}(r^{t_i}_\eta; z) & = \nabla_r \ell_{t_i} \prn{- \frac{1}{\delta} \sum_{j=0}^{i-1} \zeta_{i,j} \ell_{t_j}(r^{t_j}_\eta; z) + w^{t_i}_\eta; z} \nonumber \\
& = \nabla_r \ell_{t_i} \prn{- \frac{1}{\delta} \sum_{j=0}^{i-1} \zeta_{i,j} f_j(\wb{w}^{t_0}_\eta, \cdots, \wb{w}^{t_j}_\eta; z) + \wb{w}^{t_i}_\eta; z} \nonumber \\
& = \frac{\partial }{\partial \wb{w}^{t_i}_\eta} \ell_{t_i} \prn{- \frac{1}{\delta} \sum_{j=0}^{i-1} \zeta_{i,j} f_j(\wb{w}^{t_0}_\eta, \cdots, \wb{w}^{t_j}_\eta; z) + \wb{w}^{t_i}_\eta; z} \nonumber \\
& = \frac{\partial }{\partial \wb{w}^{t_i}_\eta} f_i(\wb{w}^{t_0}_\eta, \cdots, \wb{w}^{t_i}_\eta; z) \, .
\end{align}
Taking expectation on both sides and we obtain $\Gamma_\eta^{t_i} = \xi_{i,i}$ for $0 \leq i \leq r+1$. It then only remains to be shown that Eq.~\eqref{eq:amp-induction-3} holds for $r+1$. From Eq.~\eqref{eq:def-derivative-l-eta}, we have for all $0 \leq i \leq r$,
\begin{align}
& \frac{\partial \ell_{t_{r+1}}(r^{t_{r+1}}_\eta;z)}{\partial w^{t_i}_\eta} \nonumber \\
& = \nabla_r \ell_{t_{r+1}}(r^{t_{r+1}}_\eta;z) \cdot \prn{- \frac{1}{\delta} \int_{t_{i+1}}^{t_{r+1}} R_{\theta}^\eta(t_{r+1},\cil{s'}) \frac{\partial \ell_{\flr{s'}}(r^{s'}_\eta;z)}{\partial w^{t_i}_\eta}  \de s' - \frac 1 \delta R_\theta^\eta(t_{r+1},t_{i+1}) \nabla_r \ell_{t_i}(r^{t_i}_\eta;z) } \nonumber \\
& = \nabla_r \ell_{t_{r+1}}(r^{t_{r+1}}_\eta;z) \cdot \prn{- \frac 1 \delta \sum_{j=i+1}^{r} \eta\Rt^\eta(t_{r+1}, t_{j+1}) \frac{\partial \ell_{t_j}(r^{t_j}_\eta;z)}{\partial w^{t_i}_\eta}  - \frac 1 \delta R_\theta^\eta(t_{r+1},t_{i+1}) \nabla_r \ell_{t_i}(r^{t_i}_\eta;z)}  \nonumber \\
& = \nabla_r \ell_{t_{r+1}}(r^{t_{r+1}}_\eta;z) \cdot \prn{- \frac 1 \delta \sum_{j=i+1}^{r} \zeta_{r+1, j} \frac{\partial \ell_{t_j}(r^{t_j}_\eta;z)}{\partial w^{t_i}_\eta}  - \frac 1 \delta \zeta_{r+1,i} \nabla_r \ell_{t_i}(r^{t_i}_\eta;z) / \eta} \, .
\end{align}
Since
\begin{align}
& \frac{\partial }{\partial \wb{w}^{t_i}_\eta} f_{r+1}(\wb{w}^{t_0}_\eta, \cdots, \wb{w}^{t_{r+1}}_\eta; z) \nonumber \\
& =\frac{\partial }{\partial \wb{w}^{t_i}_\eta} \ell_{t_{r+1}} \prn{- \frac{1}{\delta} \sum_{j=0}^{r} \zeta_{r+1,j} f_j(\wb{w}^{t_0}_\eta, \cdots, \wb{w}^{t_j}_\eta; z) + \wb{w}^{t_{r+1}}_\eta; z} \nonumber \\
& = \nabla_r \ell_{t_{r+1}} \prn{- \frac{1}{\delta} \sum_{j=0}^{r} \zeta_{r+1,j} f_j(\wb{w}^{t_0}_\eta, \cdots, \wb{w}^{t_j}_\eta; z) + \wb{w}^{t_{r+1}}_\eta; z} \cdot \prn{-\frac{1}{\delta} \sum_{j=i}^r \zeta_{r+1,j}\frac{\partial }{\partial \wb{w}^{t_i}_\eta} f_j(\wb{w}^{t_0}_\eta, \cdots, \wb{w}^{t_j}_\eta; z) } \nonumber \\
& = \nabla_r \ell_{t_{r+1}}(r^{t_{r+1}}_\eta;z) \cdot \prn{-\frac{1}{\delta} \sum_{j=i}^r \zeta_{r+1,j}\frac{\partial }{\partial \wb{w}^{t_i}_\eta} f_j(\wb{w}^{t_0}_\eta, \cdots, \wb{w}^{t_j}_\eta; z) } \nonumber \\
& = \nabla_r \ell_{t_{r+1}}(r^{t_{r+1}}_\eta;z) \cdot \prn{-\frac{1}{\delta} \sum_{j=i+1}^r \zeta_{r+1,j}\frac{\partial }{\partial \wb{w}^{t_i}_\eta} f_j(\wb{w}^{t_0}_\eta, \cdots, \wb{w}^{t_j}_\eta; z) - \frac 1 \delta \zeta_{r+1,i} \nabla_r \ell_{t_i}(r^{t_i}_\eta;z) }\, ,
\end{align}
where in the last line we use $\partial f_i(\wb{w}^{t_0}_\eta, \cdots, \wb{w}^{t_i}_\eta; z) /  \partial \wb{w}^{t_i}_\eta= \nabla_r \ell_{t_i}(r^{t_i}_\eta;z)$. Comparing the above two equations, it then follows that
\begin{align}
\frac{\partial \ell_{t_{r+1}}(r^{t_{r+1}}_\eta;z)}{\partial w^{t_i}_\eta} = \eta^{-1} \frac{\partial }{\partial \wb{w}^{t_i}_\eta} f_{r+1}(\wb{w}^{t_0}_\eta, \cdots, \wb{w}^{t_{r+1}}_\eta; z) \, ,
\end{align}
which further implies Eq.~\eqref{eq:amp-induction-3} for $r+1$ by taking expectation on both sides. This concludes the induction.

Finally, we invoke Lemma~\ref{lem:X-eta-upper-bound} to show the solution of $\mathfrak{S}^\eta$ is in the space $\mathcal{S}$.

\subsection{Proof of Lemma~\ref{lem:X-eta-upper-bound}} \label{proof:X-eta-upper-bound}
Since the covariance kernels $\Cl^\eta$ and $\Ct^\eta$ are piecewise constant, the continuity conditions are automatically satisfied by Definition~\ref{def:space-S} and \ref{def:space-S-dual}. To show the lemma we only need to prove the upper bounds
\begin{subequations}
\begin{align}
	&\norm{\Rt^\eta(t,s)} \leq \pRt(t-s)\, , & & \norm{\Rl^\eta(t,s)} \leq \Ep \brk{\norm{\frac{\partial \ell_{\flr{t}}(r^t_\eta;z)}{\partial w^s_\eta}}} \leq \pRl(t-s)\, , & &  0 \leq s \leq t \leq T \, ,  \label{eq:R-eta-upper-bound} \\
	&\norm{\Ct^\eta(t,t)} \leq \pCt(t)\, , & &\norm{\Cl^\eta(t,t)} \leq \pCl(t)\, , & &  0 \leq t \leq T \, , \label{eq:C-eta-upper-bound} \\
	&\norm{\Gamma^t_\eta} \leq \cstloss \, , & & & & 0 \leq t \leq T \,.  \label{eq:Gamma-eta-upper-bound}
\end{align}
\end{subequations}
Note that
\begin{align}
\norm{\Gamma_\eta^t} \leq \Ep \brk{\norm{\nabla_r \ell_{\flr{t}}(r^t_\eta;z)}} \leq \cstloss\, ,
\end{align}
which proves Eq.~\eqref{eq:Gamma-eta-upper-bound}.
\paragraph{Upper bounds for $\Rt^\eta$ and $\Rl^\eta$.} From the definition of $\Rl^\eta$ in Eq.~\eqref{eq:def-R-l-eta} and that $r_\eta^t$ and $w_\eta^t$ are piecewise constant, we know $\Rl^\eta(t,s) = \Rl^\eta(\flr{t}, \flr{s})$. Since $\max \brc{\flr{t} - \cil{s}, 0} \leq t-s$, it suffices to prove $\norm{\Rl^\eta(\flr{t}, \flr{s})} \leq \pRl(\max \brc{\flr{t} - \cil{s}, 0})$. We prove Eq.~\eqref{eq:R-eta-upper-bound} for all $\flr{t} - \flr{s} \leq m\eta$ for all $m \in \integersp$. When $m=0$, we have
\begin{align}
\ddt \norm{\Rt^\eta(t,s)} & \leq \prn{\cstlbd + \cstloss} \norm{\Rt^\eta(\flr{t},s)} \leq \cstlbd + \cstloss \, , \\
\norm{\Rl^\eta(t,s)} & = \norm{-\frac 1 \delta \Ep \brk{\nabla_r \ell_{\flr{t}}(r^t_\eta;z)\nabla_r \ell_{\flr{t}}(r^s_\eta;z)}} \leq \frac{\cstloss^2}{\delta^2} \, .
\end{align}
Comparing to Eqs.~\eqref{eq:def-pRl} and \eqref{eq:def-pRt} we see $\norm{\Rt^\eta(t,s)} \leq \pRt(t-s), \norm{\pRl(t,s)} \leq \pRl(0)$ when $\flr{t} = \flr{s}$. Suppose Eq.~\eqref{eq:R-eta-upper-bound} holds for $\flr{t} - \flr{s} \leq m\eta$, then by Eq.~\eqref{eq:def-derivative-t-eta} one has for $\flr{t} - \flr{s} = (m+1) \eta$,
\begin{align}
\ddt \norm{\Rt^\eta(t,s)} & \leq \prn{\cstlbd + \cstloss} \norm{\Rt^\eta(\flr{t},s)} + \int_s^{\flr{t}} \norm{\Rl^\eta(\flr{t}, \flr{s'})} \norm{\Rt^\eta(\flr{s'}, s)} \de s' \nonumber \\
& \stackrel{\mathrm{(i)}}{=} \lim_{t' \uparrow \flr{t}}  \brc{\prn{\cstlbd + \cstloss} \norm{\Rt^\eta(t',s)} + \int_s^{t'} \norm{\Rl^\eta(\flr{t}, \flr{s'})} \norm{\Rt^\eta(\flr{s'}, s)}} \de s' \nonumber \\
& \leq \prn{\cstlbd +\cstloss} \pRt(t-s) + \int_s^{\flr{t}} \pRl(t-s') \pRt(s' -s) \de s' \nonumber \\
& \stackrel{\mathrm{(ii)}}{\leq} \ddt \pRt(t-s) \, ,
\end{align}
where in (i) we use that $\Rt^\eta(t,s)$ is continuous in $t$ and in (ii) we use Eq.~\eqref{eq:def-pRt}. We conclude $\norm{\Rt^\eta(t,s)} \leq \pRt(t-s)$ when $\flr{t} - \flr{s} \leq (m+1)\eta$. Similarly by Eq.~\eqref{eq:def-derivative-l-eta} when $\flr{t} - \flr{s} = (m+1) \eta$ it holds
\begin{align}
\Ep \brk{\norm{\frac{\partial \ell_{\flr{t}}(r^t_\eta;z)}{\partial w^s_\eta}}} & \leq \cstloss \cdot \prn{\frac{1}{\delta} \int_{\cil{s}}^{\flr{t}} R_{\theta}^\eta(\flr{t},\cil{s'}) \Ep \brk{\norm{ \frac{\partial \ell_{\flr{s'}}(r^{s'}_\eta;z)}{\partial w^s_\eta}}}  \de s' + \frac{\cstloss}{\delta}\cdot R_\theta^\eta(\flr{t},\cil{s})}
\nonumber \\
& \stackrel{\mathrm{(i)}}{\leq} \frac{\cstloss}{\delta} \cdot \brc{\cstloss \pRt(t) + \int_{\cil{s}}^{\flr{t}} \pRt(t-s') \pRl(s'-s) \de s'} \nonumber \\
& \stackrel{\mathrm{(ii)}}{\leq}  \pRl(t-s) \, , 
\end{align}
where we use $\flr{t} - \cil{s} \leq m \eta$ and the induction hypothesis at $m$ in (i), in (ii) we invoke Eq.~\eqref{eq:def-pRl}. Finally, note that $\norm{\Rl^\eta(t,s)}  \leq \Ep \brk{\norm{\frac{\partial \ell_{\flr{t}}(r^t_\eta;z)}{\partial w^s_\eta}}}$ holds and we complete the proof by induction.

\paragraph{Upper bounds for $\Ct^\eta$ and $\Cl^\eta$.} Since $\Ct^\eta$ and $\Cl^\eta$ are piecewise constant we only need to show $\norm{\Ct^\eta(\flr{t}, \flr{t})} \leq \pCt(\flr{t})$ and $\norm{\Cl^\eta(\flr{t}, \flr{t})} \leq \pCl(\flr{t}, \flr{t})$ and Eq.~\eqref{eq:C-eta-upper-bound} will follow by monotonicity of $\pCt$ and $\pCl$. We show this by induction on $\flr{t} = r \eta$ with hypotheses
\begin{align}
\Ep \brk{\norm{\theta^{\flr{t}}_\eta}^2} \leq \pCt(\flr{t}) \, ,
\end{align}
When $r =0$, the initial condition holds at time $0$. Suppose the inductive hypotheses hold for $\flr{t} \leq r \eta$, when $r\eta \leq t < (r+1)\eta$, we can obtain from Eq.~\eqref{eq:def-theta-eta} that
\begin{align}
\frac{\de}{\de t} \normtwo{\theta^t_\eta} = \prn{\cstlbd +\cstloss} \normtwo{\theta^{\flr{t}}_\eta} + \int_0^{\flr{t}} \pRl(t-s) \normtwo{\theta^{\flr{s}}_\eta} \de s + \normtwo{u^{\flr{t}}_\eta} \, . 
\end{align}
By the same calculations in Eq.~\eqref{eq:mid-solution-in-space-1} we get
\begin{align}
&\ddt \sqrt{\Ep \brk{\normtwo{\theta^t_\eta}^2}} \nonumber \\
&\leq \sqrt{3 \cdot \brc{\prn{\cstlbd + \cstloss}^2   \mathbb{E} \brk{\left\| \vt^{\flr{t}}_\eta\right\|_2^2} + \int_0^{\flr{t}} (t-s+1)^2 \pRl(t-s)^2 \mathbb{E} \brk{\left\| \vt^{\flr{s}}_\eta \right\|_2^2} \mathrm d s + \frac{k}{\delta} \Phi_{\vCl}(t)}} \nonumber \\
&\leq \sqrt{3 \cdot \brc{\prn{\cstlbd + \cstloss}^2   \pCt(t)^2 + \int_0^{\flr{t}} (t-s+1)^2 \pRl(t-s)^2 \pCt(s)^2 \mathrm d s + \frac{k}{\delta} \Phi_{\vCl}(t)}} \, ,
\end{align}
which together with Eq.~\eqref{eq:def-pCt} implies $\sqrt{\Ep \brk{\normtwo{\theta^t_\eta}^2}}  \leq \pCt(t)$ when $t \leq (r+1)\eta$. It then follows that
\begin{align}
\norm{\vCt^\eta(t,t)} =  \norm{\mathbb{E} \brk{\vt^t_\eta {\vt^t_\eta}^\sT}} \leq  \mathbb{E} \brk{\norm{\vt^t_\eta}_2^2} \leq \Phi_{\vCt}(t)\, .
\end{align}
By Eq.~\eqref{eq:def-r-eta} we have
\begin{align}
\norm{\vl_{\flr{t}} \prn{\vr^t_\eta; \veps} }_2 & \leq \norm{\vl_{\flr{t}} \prn{0; \veps} }_2 + \cstloss \norm{\vr^t_\eta}_2 \nonumber \\
& \leq \norm{\vl_{\flr{t}} \prn{0; \veps} }_2  + \frac{\cstloss}{\delta} \int_0^{\flr{t}} \norm{\Rt^\eta(\flr{t},\cil{s})} \big\|\vl_{\flr{s}} \prn{\vr^s_\eta; \veps} \big\|_2 \mathrm d s + \cstloss \norm{\vw^t_\eta}_2 \nonumber \\
& \leq \norm{\vl_{\flr{t}} \prn{0; \veps} }_2 + \frac{\cstloss}{\delta} \int_0^{\flr{t}} \Phi_{\vRt}(t-s) \norm{\vl_{\flr{s}} \prn{\vr^{\flr{s}}_\eta; \veps} }_2 \mathrm d s + \cstloss\norm{\vw_\eta^{t}}_2 \, ,
\end{align}
where in the last line we use the fact that $r_\eta^t$ is piecewise constant. Repeat the same argument in Eq.~\eqref{eq:mid-solution-in-space-2}, we obtain
\begin{align}
&\Ep \brk{\norm{\vl_{\flr{t}} \prn{\vr^t_\eta; \veps} }_2^2} \nonumber \\
& \leq 3 \brc{\mathbb{E} \brk{ \norm{\vl_{\flr{t}} \prn{0; \veps} }_2^2} + \frac{\cstloss^2}{\delta^2} \int_0^{\flr{t}} (t-s+1)^2\Phi_{\vRt}(t-s)^2 \mathbb{E} \brk{\norm{\vl_{\flr{s}} \prn{\vr^{\flr{s}}_\eta; \veps} }_2^2} \mathrm d s + \cstloss^2 \mathbb{E} \brk{\norm{\vw^t_\eta}_2^2}} \nonumber \\
& \leq 3 \brc{\cstthetaz + \frac{\cstloss^2}{\delta^2} \int_0^{\flr{t}} (t-s+1)^2\Phi_{\vRt}(t-s)^2 \pCl(s)\mathrm d s + k\cstloss^2 \Phi_{\vCt}(t)} \, , 
\end{align}
where we use $\mathbb{E} \brk{\norm{\vw^t_\eta}_2^2} \leq k \norm{\Ct^\eta(t,t)} \leq k \pCt(t)$. Comparing to Eq.~\eqref{eq:def-pCl} we have $\Ep \brk{\norm{\vl_{\flr{t}} \prn{\vr^t_\eta; \veps} }_2^2} \leq \pCl(t)$ and
\begin{align}
\norm{\Cl^\eta(t,t)} = \norm{\Ep \brk{\vl_{\flr{t}}(\vr^t_\eta; z) \vl_{\flr{t}}(\vr^t_\eta; z)^\sT }} \leq \mathbb{E} \brk{\norm{\vl_{\flr{t}} \prn{\vr^t_\eta; \veps} }_2^2} \leq \pCl(t) \, .
\end{align}
We conclude the proof by induction.

\subsection{Proof of Lemma~\ref{lem:integral-differential-eta-approximation-1}} \label{proof:integral-differential-eta-approximation-1}
We first introduce a lemma for mappings $\trsfrmA$ and $\trsfrmA^\eta$ on $X^\eta$. The reader can find its proof in Appendix~\ref{proof:integral-differential-eta-approximation-2}.
\begin{lemma} \label{lem:integral-differential-eta-approximation-2}
Under the same conditions of Lemma~\ref{lem:integral-differential-approximation}, suppose $\trsfrmA(X^\eta) = (\bCt^\eta, \bRt^\eta)$, $\trsfrmA^\eta(X^\eta)  = (\Ct^\eta, \Rt^\eta)$ and define $\brk{\Rt^\eta}(t,s):= \Rt^\eta(\flr{t}, \cil{s})$ when $\cil{s} \leq \flr{t}$, $\brk{\Rt^\eta}(t,s):= I$ when $\cil{s} > \flr{t}$. It then holds for all $\lambda \geq \wb{\lambda}_6 := \wb{\lambda}_6(\mathcal{S}, \wb{\mathcal{S}})$.
\begin{align}
	\lbddst{\bCt^\eta}{\Ct^\eta} \leq \wb{h}(\eta) \, ,\\
	\lbddst{\bRt^\eta}{\brk{\Rt^\eta}} \leq \wb{h}(\eta) \, ,
\end{align}
for some nondecreasing function $\wb{h}(\eta)$ with $\wb{h}(0) = 0$. Here the function $\wb{h}$ only depends on the spaces $\mathcal{S}$ and $\wb{\mathcal{S}}$.
\end{lemma}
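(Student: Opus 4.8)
The plan is to exploit the piecewise-constant structure of the input $X^\eta$. By Lemma~\ref{lem:AMP-to-S-eta} and Lemma~\ref{lem:X-eta-upper-bound}, $X^\eta = (\Cl^\eta, \Rl^\eta, \Gamma_\eta) \in \mathcal{S}$ and all three functions are piecewise constant on the grid $t_i = i\eta$; in particular $\Cl^\eta(t,s) = \Cl^\eta(\flr{t},\flr{s})$ is a piecewise constant covariance kernel. Hence when $\trsfrmA$ is run on $X^\eta$ via Eqs.~\eqref{eq:trsfrm-t}--\eqref{eq:trsfrm-derivative-t} and $\trsfrmA^\eta$ via Eqs.~\eqref{eq:def-theta-eta}--\eqref{eq:def-derivative-t-eta}, the two driving Gaussian processes $u^t$ and $u_\eta^t$ have the same covariance kernel $\Cl^\eta/\delta$, so I will couple them so that $u^t = u_\eta^t$ almost surely; this common process is piecewise constant in $t$. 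Under this coupling $\theta^t$ solves \eqref{eq:trsfrm-t} and $\theta_\eta^t$ solves its forward Euler discretization with identical coefficients $(\Lambda^t, \Gamma_\eta^t, \Rl^\eta)$ and identical noise. Since $\Gamma_\eta$ and $\Rl^\eta$ are piecewise constant, the only discretization errors are the freezing of the Lipschitz coefficient $\Lambda^t \to \Lambda^{\flr t}$, the truncation of the memory integral's upper limit from $t$ to $\flr t$, and the replacement of $\theta^t, \theta^s$ by $\theta_\eta^{\flr t}, \theta_\eta^{\flr s}$.

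First I would run the standard Euler error estimate for $\Delta^t := \theta^t - \theta_\eta^t$. Expanding $\tfrac{\de}{\de t}\Delta^t$ and splitting each term into a freezing contribution and a contribution proportional to $\Delta$, one obtains a forcing of size $O(\eta)$ controlled via: the Lipschitz constant $\cstlbd$ of $\Lambda$; the a priori bounds $\Ep\normtwo{\theta_\eta^t}^2 \le \pCt(T)$, $\norm{\Rl^\eta} \le \pRl(T)$, $\Ep\normtwo{u^t}^2 \le (k/\delta)\pCl(T)$ of Lemma~\ref{lem:X-eta-upper-bound}, which bound $\normtwo{\tfrac{\de}{\de t}\theta_\eta^t}$ and hence $\normtwo{\theta_\eta^t - \theta_\eta^{\flr t}} = O(\eta)$; and the memory kernel $\int_0^\infty e^{-\lambda s}\pRl(s)\,\de s$, which is small for $\lambda \ge \wb\lambda_6$ by the exponential-growth bound of Lemma~\ref{lem:Exist-General-ODE}. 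A Gronwall argument in the weighted norm $\normlbd{\cdot}{T}$, mirroring Appendix~\ref{proof:diff-Cl-global}, then gives
\begin{align}
  \sup_{0 \le t \le T} e^{-\lambda t}\sqrt{\Ep\brk{\normtwo{\theta^t - \theta_\eta^t}^2}} \le \wb h(\eta)
\end{align}
for a nondecreasing $\wb h$ with $\wb h(0) = 0$ depending only on $\mathcal{S}, \wb{\mathcal{S}}$; the same argument applied to the deterministic functional derivatives (Eqs.~\eqref{eq:trsfrm-derivative-t} and \eqref{eq:def-derivative-t-eta} have the same form, with boundary value $I$ on the diagonal) gives the analogous bound for $\partial\theta^t/\partial u^s$ versus $\partial\theta_\eta^t/\partial u_\eta^s$.

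Next I would convert these process-level bounds into the kernel and response bounds. Because $\theta^t$ and $\theta_\eta^{\flr t}$ are both linear images of the common noise $u = u_\eta$, the pair $(\theta^t, \theta_\eta^{\flr t})$ is a valid coupling of centered Gaussian processes with kernels $\bCt^\eta$ and $\Ct^\eta$ (recall $\Ct^\eta(t,s) = \Ep[\theta_\eta^{\flr t}(\theta_\eta^{\flr s})^\sT]$ from \eqref{eq:def-C-t-eta}), so by the definition of $\lbddst{\cdot}{\cdot}$ on covariance kernels $\lbddst{\bCt^\eta}{\Ct^\eta} \le \sup_{t\le T}e^{-\lambda t}\sqrt{\Ep\normtwo{\theta^t - \theta_\eta^{\flr t}}^2}$, and $\normtwo{\theta^t - \theta_\eta^{\flr t}} \le \normtwo{\Delta^t} + \normtwo{\theta_\eta^t - \theta_\eta^{\flr t}}$ is bounded by $\wb h(\eta) + O(\eta)$ after the weighting. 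For the response, $\bRt^\eta(t,s) = \partial\theta^t/\partial u^s$ while $\brk{\Rt^\eta}(t,s) = \Rt^\eta(\flr t,\cil s) = \partial\theta_\eta^{\flr t}/\partial u_\eta^{\cil s}$; triangulating through $\partial\theta_\eta^t/\partial u_\eta^s$ and absorbing the shifts $t\to\flr t$, $s\to\cil s$ into further $O(\eta)$ terms — using that $\partial\theta_\eta^\cdot/\partial u_\eta^\cdot$ is Lipschitz in both arguments with constants controlled by $\pRt(T),\pRl(T)$ and continuously matches the boundary value $I$ near the diagonal (consistent with $\brk{\Rt^\eta}(t,s) := I$ when $\cil s > \flr t$) — yields $\lbddst{\bRt^\eta}{\brk{\Rt^\eta}} \le \wb h(\eta)$ after enlarging $\wb h$.

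The hard part will be the bookkeeping in the Euler estimate: verifying that the per-step increments of $\theta_\eta$ and of $\partial\theta_\eta/\partial u_\eta$ are genuinely $O(\eta)$ with constants expressible through $\pCt,\pRt,\pRl,\pCl,\cstlbd,\cstloss$ (so that $\wb h$ depends only on $\mathcal{S},\wb{\mathcal{S}}$), and correctly matching the grid shifts $t\leftrightarrow\flr t$ and $s\leftrightarrow\flr s\leftrightarrow\cil s$ that appear in the definitions \eqref{eq:def-C-t-eta}, \eqref{eq:def-R-t-eta}, \eqref{eq:def-derivative-t-eta} of the discrete objects. Conceptually, though, the coupling $u = u_\eta$ eliminates all probabilistic content and reduces the claim to a deterministic Gronwall-type discretization estimate of a familiar kind.
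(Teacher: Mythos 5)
Your proposal is correct and follows essentially the same route as the paper's proof: couple the driving noises exactly (both have covariance $\Cl^\eta/\delta$, which is piecewise constant), bound the per-step increments $\|\theta_\eta^t-\theta_\eta^{\flr{t}}\|$ and the freezing/truncation errors using the a priori bounds from $\mathcal{S},\wb{\mathcal{S}}$, run a weighted Gronwall estimate as in Appendix~\ref{proof:diff-Cl-global}, and then convert the process-level bounds into the kernel distance and handle the response function by triangulating through $\Rt^\eta(t,s)$ and absorbing the grid shifts $t\to\flr{t}$, $s\to\cil{s}$ (including the $I$ convention near the diagonal) into further $O(\eta)$ terms. The remaining work you flag — the bookkeeping for the increments of $\partial\theta_\eta/\partial u_\eta$ and the grid-shift matching — is exactly what the paper's proof carries out.
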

Suppose $\mathcal{T}(X^\eta) = (\bCl^\eta, \bRl^\eta, \bGamma_\eta)$ and the fixed point equation $\mathcal{T}^\eta(X^\eta) = X^\eta = (\Cl^\eta, \Rl^\eta, \Gamma_\eta)$. Using the same notations in Lemma~\ref{lem:integral-differential-eta-approximation-2} we can then write out the equations determining $(\bCl^\eta, \bRl^\eta, \bGamma_\eta)$ and $(\Cl^\eta, \Rl^\eta, \Gamma_\eta)$ as
\begin{align*}
\wb{r}_\eta^t & = - \frac{1}{\delta} \int_0^t \bRt^\eta(t,s) \ell_s(\wb{r}_\eta^s; z) \de s + \wb{w}_\eta^t \, , & & w^t \sim \mathsf{GP}(0, \bCt^\eta) \, , \\
\frac{\partial \ell_t(\wb{r}_\eta^t;z)}{\partial \wb{w}_\eta^s} & = \nabla_r \ell_t(\wb{r}_\eta^t;z) \cdot \prn{- \frac{1}{\delta} \int_s^t \bRt^\eta(t,s') \frac{\partial \ell_{s'}(\wb{r}_\eta^{s'};z)}{\partial \wb{w}_\eta^s}  \de s' - \frac 1 \delta \bRt^\eta(t,s) \nabla_r \ell_s(\wb{r}_\eta^s;z) } \, , & & 0 \leq s < t \leq T \, , \\
\bCl^\eta(t,s) & = \Ep \brk{\ell_t(\wb{r}_\eta^t; z) {\ell_s(\wb{r}_\eta^s; z)}^\sT}\, , & & 0 \leq s \leq t \leq T \, , \\
\bRl^\eta(t,s) & = \Ep \brk{\frac{\partial \ell_t(\wb{r}_\eta^t;z)}{\partial \wb{w}_\eta^s}} \, , & & 0 \leq s < t \leq T \, , \\
\bGamma_\eta^t & = \Ep \brk{\nabla_r \ell_t(\wb{r}_\eta^t;z)}\, , & & 0 \leq t \leq T \, ,
\end{align*}
and
\begin{align*}
r^t_\eta & = - \frac{1}{\delta} \int_0^{\flr{t}} \brk{\Rt^\eta}(t,s) \ell_{\flr{s}}(r^{s}_\eta; z) \de s + w^t_\eta \, , \\
\frac{\partial \ell_{\flr{t}}(r^t_\eta;z)}{\partial w^s_\eta} & = \nabla_r \ell_{\flr{t}}(r^t_\eta;z) \cdot \prn{- \frac{1}{\delta} \int_{\min \brc{\cil{s}, \flr{t}}}^{\flr{t}} \brk{R_{\theta}^\eta}(t, s') \frac{\partial \ell_{\flr{s'}}(r^{s'}_\eta;z)}{\partial w^s_\eta}  \de s' - \frac 1 \delta \brk{R_\theta^\eta}(t,s) \nabla_r \ell_{\flr{s}}(r^s_\eta;z) } \, ,   \\
\Cl^\eta(t,s) & = \Ep \brk{\ell_{\flr{t}}(r^{\flr{t}}_\eta;z) \ell_{\flr{s}}(r^{\flr{s}}_\eta;z)^{\sT}} \, , \\
\Rl^\eta(t,s) & = \Ep \brk{\frac{\partial \ell_{\flr{t}}(r^t_\eta;z)}{\partial w^s_\eta}} \, ,  \\
\Gamma_\eta^t & = \Ep \brk{\nabla_r \ell_{\flr{t}}(r^t_\eta;z)} \, ,
\end{align*}
where $ w^t_\eta \sim \mathsf{GP}(0, \Ct^\eta)$. Note that since we set $\brk{\Rt^\eta}(t,s) = I$ when $\cil{s} >\flr{t}$, it is consistent with the definition in Eq.~\eqref{eq:def-derivative-l-eta-2}.

\paragraph{Controlling the distance between \texorpdfstring{$\bCl^\eta$}{TEXT} and \texorpdfstring{$\Cl^\eta$}{TEXT}.} By Lemma~\ref{lem:integral-differential-eta-approximation-2}, we can couple the Gaussian processes $\wb{w}^t_\eta$ and $w^t_\eta$ such that for all $\lambda \geq \wb{\lambda}_6$,
\begin{align}
\sup_{t \in [0,T]} e^{-\lambda t} \sqrt{\Ep \brk{\normtwo{\wb{w}^t_\eta - w^t_\eta}^2}} \leq 2 \cdot \lbddst{\bCt^\eta}{\Ct^\eta} \leq 2 \wb{h}(\eta) \, . \label{eq:mid-transformB-eta-1}
\end{align}
By definition of $\Ct^\eta$ we know $w_\eta^t$ is piecewise constant in the sense that $w_\eta^{t} = w_\eta^{\flr{t}}$. Hence $r^t_\eta = r^{\flr{t}}_\eta$ and
\begin{align}
\normtwo{\wb{r}^t_\eta - r^t_\eta} & = \normtwo{- \frac{1}{\delta} \int_0^t \bRt^\eta(t,s) \ell_s(\wb{r}_\eta^s; z) \de s + \wb{w}_\eta^t + \frac{1}{\delta} \int_0^{\flr{t}} \brk{\Rt^\eta}(t,s) \ell_{\flr{s}}(r^{s}_\eta; z) \de s - w^t_\eta } \nonumber \\
& \leq \frac 1 \delta \int_0^{\flr{t}} \norm{\bRt^\eta(t,s)} \normtwo{\ell_s(\wb{r}_\eta^s; z) - \ell_{\flr{s}}(r^{s}_\eta; z)} \de s + \frac 1 \delta \int_0^{\flr{t}} \norm{\bRt^\eta(t,s) - \brk{\Rt^\eta}(t,s)} \normtwo{\ell_{\flr{s}}(r^{s}_\eta; z)} \de s \nonumber \\
& \qquad + \frac 1 \delta \int_{\flr{t}}^t \norm{\bRt^\eta(t,s)} \normtwo{\ell_s(\wb{r}_\eta^s; z)} \de s + \normtwo{\wb{w}_\eta^t - w_\eta^t} \nonumber \\
& \leq \frac{\cstloss}{\delta} \int_0^{\flr{t}} \pRt(t-s) \prn{\normtwo{\wb{r}^s_\eta - r^s_\eta} + |s-\flr{s}|} \de s + \frac{1}{\delta} \int_0^{\flr{t}} \norm{\bRt^\eta(t,s) - \brk{\Rt^\eta}(t,s)} \normtwo{\ell_{\flr{s}}(r^{s}_\eta; z)} \de s \nonumber \\
& \qquad + \frac{\pRt(T)}{\delta} \int_{\flr{t}}^t \normtwo{\ell_s(\wb{r}_\eta^s; z)} \de s + \normtwo{\wb{w}_\eta^t - w_\eta^t} \, .
\end{align}
We choose $\lambda$ large enough such that Lemma~\ref{lem:integral-differential-eta-approximation-2} holds and we can get
\begin{align}
&e^{-\lambda t} \normtwo{\wb{r}^t_\eta - r^t_\eta} \nonumber \\
& \leq \frac{\cstloss}{\delta} \int_0^{\flr{t}} e^{-\lambda(t-s)} \pRt(t-s) \cdot e^{-\lambda s} \prn{\normtwo{\wb{r}^s_\eta - r^s_\eta} +\eta} \de s + \frac{\wb{h}(\eta)}{\delta} \int_0^{\flr{t}} \normtwo{\ell_{\flr{s}}(r^{s}_\eta; z)} \de s \nonumber \\
& \qquad + \frac{e^{-\lambda t}\pRt(T)}{\delta} \int_{\flr{t}}^t  \normtwo{\ell_s(\wb{r}_\eta^s; z)} \de s + e^{-\lambda t} \normtwo{\wb{w}_\eta^t - w_\eta^t}\, .
\end{align}
Square both sides and take expectations. It follows by Cauchy-Schwarz inequality that
\begin{align}
& e^{-2\lambda t} \Ep \brk{\normtwo{\wb{r}^t_\eta - r^t_\eta}^2} \nonumber \\
& \leq \brc{2\int_0^{\flr{t}} (t-s+1)^{-2} \de s + \int_0^t (t-s+1)^{-2} + 1} \nonumber \\
& \qquad \cdot \Bigg\{\frac{\cstloss^2}{\delta^2} \int_0^{\flr{t}} e^{-2\lambda(t-s)} \pRt(t-s)^2 \cdot e^{-2\lambda s} \prn{\Ep \brk{\normtwo{\wb{r}^s_\eta - r^s_\eta}^2} +\eta^2} \de s \nonumber \\
& \qquad  + \frac{\wb{h}(\eta)^2}{\delta^2} \int_0^{\flr{t}} \Ep \brk{\normtwo{\ell_{\flr{s}}(r^{s}_\eta; z)}^2} \de s + \frac{\pRt(T)^2}{\delta^2} \int_{\flr{t}}^t  \Ep \brk{\normtwo{\ell_s(\wb{r}_\eta^s; z)}^2} \de s  + e^{-2\lambda t} \Ep \brk{\normtwo{\wb{w}_\eta^t - w_\eta^t}^2} \Bigg\} \nonumber \\
& \leq 4 \cdot \Bigg\{\frac{\cstloss^2}{\delta^2} \int_0^{\flr{t}} e^{-2\lambda(t-s)} \pRt(t-s)^2 \cdot e^{-2\lambda s} \prn{\Ep \brk{\normtwo{\wb{r}^s_\eta - r^s_\eta}^2} +\eta^2} \de s + \frac{\wb{h}(\eta)^2 kT \pCl(T)}{\delta^2} \nonumber \\
& \qquad + \frac{\eta k\pCl(T)\pRt(T)^2 }{\delta^2} + 4 \wb{h}(\eta)^2 \Bigg\} \, ,
\end{align}
where in the last line we invoke Eq.~\eqref{eq:mid-transformB-eta-1} and use $\Ep \brk{\normtwo{\ell_s(\wb{r}_\eta^s; z)}^2} \leq k \norm{\bCl(t,t)} \leq k \pCl(T)$. Next, we take $\lambda$ large enough such that
\begin{align*}
\frac{\cstloss^2}{\delta^2} \int_0^{\infty} e^{-2\lambda t} \pRt(t)^2 \leq \frac 1 8\, ,
\end{align*}
which will further induce that
\begin{align}
& e^{-2\lambda t} \Ep \brk{\normtwo{\wb{r}^t_\eta - r^t_\eta}^2} \nonumber \\
& \leq \frac 1 2 \sup_{0 \leq s \leq t} e^{-2\lambda s} \Ep \brk{\normtwo{\wb{r}^s_\eta - r^s_\eta}^2} + \frac 1 2 \eta^2 +  \frac{4\wb{h}(\eta)^2 kT \pCl(T)}{\delta^2} + \frac{4\eta k\pCl(T)\pRt(T)^2 }{\delta^2} + 16 \wb{h}(\eta)^2 \, ,
\end{align}
and taking supremum over $t \in [0, T]$ on both sides
\begin{align}
\sup_{t \in [0, T]} e^{-\lambda t} \sqrt{\Ep \brk{\normtwo{\wb{r}^t_\eta - r^t_\eta}^2}} & \leq \sqrt{\eta^2+  \frac{8\wb{h}(\eta)^2 kT \pCl(T)}{\delta^2} + \frac{8\eta k\pCl(T)\pRt(T)^2 }{\delta^2} + 32 \wb{h}(\eta)^2} \, .
\end{align}
Further following the same coupling argument in Appendix~\ref{proof:diff-Cl-global}, we obtain
\begin{align}
\lbddst{\bCl^\eta}{\Cl^\eta} & \leq \sup_{t \in [0, T]} e^{-\lambda t} \sqrt{\Ep \brk{\normtwo{\ell_t(\wb{r}_\eta^t; z) - \ell_{\flr{t}}(r^{\flr{t}}_\eta;z) }^2}} \nonumber \\
& = \sup_{t \in [0, T]} e^{-\lambda t} \sqrt{\Ep \brk{\normtwo{\ell_t(\wb{r}_\eta^t; z) - \ell_{\flr{t}}(r^{t}_\eta;z) }^2}} \nonumber \\
& \leq \cstloss \sup_{t \in [0, T]} e^{-\lambda t} \sqrt{\Ep \brk{\prn{\normtwo{\wb{r}^t_\eta - r^t_\eta} + \eta}^2}} \,.
\end{align}
By triangle inequality, we then get
\begin{align}
\lbddst{\bCl^\eta}{\Cl^\eta} & \leq \cstloss	\sup_{t \in [0, T]} e^{-\lambda t} \sqrt{\Ep \brk{\normtwo{\wb{r}^t_\eta - r^t_\eta}^2}} +  \cstloss \eta \nonumber \\
& \leq\cstloss \sqrt{\eta^2+  \frac{8\wb{h}(\eta)^2 kT \pCl(T)}{\delta^2} + \frac{8\eta k\pCl(T)\pRt(T)^2 }{\delta^2} + 32 \wb{h}(\eta)^2} + \cstloss \eta \nonumber \\
& =: h_1(\eta) \, . \label{eq:mid-transformB-eta-2}
\end{align}
Clearly $h_1(\eta) \to 0$ as $\eta \to 0$.
\paragraph{Controlling the distances between \texorpdfstring{$\bRl^\eta$}{TEXT} and \texorpdfstring{$\Rl^\eta$}{TEXT}, \texorpdfstring{$\bGamma_\eta$}{TEXT} and \texorpdfstring{$\Gamma_\eta$}{TEXT}.} First we consider the distance between $\bGamma_\eta$ and $\Gamma_\eta$. By Eq.~\eqref{eq:mid-transformB-eta-2}, we can get
\begin{align}
\lbddst{\bGamma_\eta}{\Gamma_\eta} & = \sup_{t \in [0, T]} e^{-\lambda t} \norm{\Ep \brk{\nabla_r \ell_t(\wb{r}_\eta^t;z)} - \Ep \brk{\nabla_r \ell_{\flr{t}}(r^t_\eta;z)}} \nonumber \\
& \leq \sup_{t \in [0, T]} e^{-\lambda t} \Ep \brk{\norm{\nabla_r \ell_t(\wb{r}_\eta^t;z) - \nabla_r \ell_{\flr{t}}(r^t_\eta;z)}} \nonumber \\
& \leq \cstloss	\sup_{t \in [0, T]} e^{-\lambda t} \sqrt{\Ep \brk{\normtwo{\wb{r}^t_\eta - r^t_\eta}^2}} +  \cstloss \eta \nonumber \\
& \leq h_1(\eta) \, . \label{eq:mid-transformB-eta-3}
\end{align} 
Now we only need to bound the distance between $\bRl^\eta$ and $\Rl^\eta$. To this end, we introduce two auxiliary functions
\begin{align}
\frac{\partial \wb{r}_\eta^t}{\partial \wb{w}_\eta^s} & := - \frac{1}{\delta} \int_s^t \bRt^\eta(t,s') \frac{\partial \ell_{s'}(\wb{r}_\eta^{s'};z)}{\partial \wb{w}_\eta^s}  \de s' - \frac 1 \delta \bRt^\eta(t,s) \nabla_r \ell_s(\wb{r}_\eta^s;z) \, , & &0 \leq s < t \leq T\, ,  \\
\frac{\partial r_\eta^t}{\partial w_\eta^s} &:= - \frac{1}{\delta} \int_{\min \brc{\cil{s}, \flr{t}}}^{\flr{t}} \brk{R_{\theta}^\eta}(t, s') \frac{\partial \ell_{\flr{s'}}(r^{s'}_\eta;z)}{\partial w^s_\eta}  \de s' - \frac 1 \delta \brk{R_\theta^\eta}(t,s) \nabla_r \ell_{\flr{s}}(r^s_\eta;z)  \, , & &0 \leq s < t \leq T\, .
\end{align}
We can then write
\begin{align*}
\frac{\partial \ell_t(\wb{r}_\eta^t;z)}{\partial \wb{w}_\eta^s} & = \nabla_r \ell_t(\wb{r}_\eta^t;z) \cdot 	\frac{\partial \wb{r}_\eta^t}{\partial \wb{w}_\eta^s} \, , \nonumber \\
\frac{\partial \ell_{\flr{t}}(r^t_\eta;z)}{\partial w^s_\eta} & = \nabla_r \ell_{\flr{t}}(r^t_\eta;z) \cdot 	\frac{\partial r_\eta^t}{\partial w_\eta^s} \, .
\end{align*}
Therefore, we can derive
\begin{align}
& e^{-\lambda t} \Ep \brk{\norm{\frac{\partial \ell_t(\wb{r}_\eta^t;z)}{\partial \wb{w}_\eta^s} - \frac{\partial \ell_{\flr{t}}(r^t_\eta;z)}{\partial w^s_\eta}}} \nonumber \\
& \leq  e^{-\lambda t} \Ep \brk{\Bigg\|\nabla_r \ell_t(\wb{r}_\eta^t;z) -  \nabla_r \ell_{\flr{t}}(r^t_\eta;z) \Bigg\| \Bigg\|\frac{\partial \wb{r}_\eta^t}{\partial \wb{w}_\eta^s}\Bigg\| }  + e^{-\lambda t} \Ep \brk{\Bigg\|\nabla_r \ell_{\flr{t}}(r^t_\eta;z) \Bigg\| \Bigg\|\frac{\partial \wb{r}_\eta^t}{\partial \wb{w}_\eta^s} - \frac{\partial r_\eta^t}{\partial w_\eta^s} \Bigg\| } \, . \label{eq:mid-transformB-eta-4}
\end{align}
Since $(\bCt^\eta, \bRt^\eta) \in \wb{\mathcal{S}}$, we are allowed to invoke Eq.~\eqref{eq:Cl-diff-drdw-upper-bound} that helps us bound the first term
\begin{align}
& e^{-\lambda t} \Ep \brk{\Bigg\|\nabla_r \ell_t(\wb{r}_\eta^t;z) -  \nabla_r \ell_{\flr{t}}(r^t_\eta;z) \Bigg\| \Bigg\|\frac{\partial \wb{r}_\eta^t}{\partial \wb{w}_\eta^s}\Bigg\| }  \nonumber \\
& \leq \frac{\cstloss \Phi_{\vRt}(T)}{\delta} \cdot \exp \prn{\frac{\cstloss T \Phi_{\vRt}(T)}{\delta}} \cdot e^{-\lambda t}  \Ep \brk{\Bigg\|\nabla_r \ell_t(\wb{r}_\eta^t;z) -  \nabla_r \ell_{\flr{t}}(r^t_\eta;z) \Bigg\|} \nonumber \\
& \leq \frac{\cstloss \Phi_{\vRt}(T)}{\delta} \cdot \exp \prn{\frac{\cstloss T \Phi_{\vRt}(T)}{\delta}} \cdot h_1(\eta) \,  , \label{eq:mid-transformB-eta-5}
\end{align}
where in the last line we apply Eq.~\eqref{eq:mid-transformB-eta-3} when $t \in [0,T]$. By Lipschitz property in Assumption~\ref{ass:Normal}, we can upper bound the second term by
\begin{align}
& e^{-\lambda t} \Ep \brk{\Bigg\|\nabla_r \ell_{\flr{t}}(r^t_\eta;z) \Bigg\| \Bigg\|\frac{\partial \wb{r}_\eta^t}{\partial \wb{w}_\eta^s} - \frac{\partial r_\eta^t}{\partial w_\eta^s} \Bigg\| } \nonumber \\
& \leq \cstloss \cdot e^{-\lambda t} \Ep \brk{ \Bigg\|\frac{\partial \wb{r}_\eta^t}{\partial \wb{w}_\eta^s} - \frac{\partial r_\eta^t}{\partial w_\eta^s} \Bigg\|}  \nonumber \\
& \leq \frac{\cstloss}{\delta} \cdot e^{-\lambda t} \cdot \Bigg\{ \Ep \brk{\norm{\int_s^t \bRt^\eta(t,s') \frac{\partial \ell_{s'}(\wb{r}_\eta^{s'};z)}{\partial \wb{w}_\eta^s}  \de s' -   \int_{\min \brc{\cil{s}, \flr{t}}}^{\flr{t}} \brk{R_{\theta}^\eta}(t, s') \frac{\partial \ell_{\flr{s'}}(r^{s'}_\eta;z)}{\partial w^s_\eta}  \de s'} } \nonumber \\
& \qquad + \Ep \brk{\norm{\bRt^\eta(t,s) \nabla_r \ell_s(\wb{r}_\eta^s;z)  - \brk{R_\theta^\eta}(t,s) \nabla_r \ell_{\flr{s}}(r^s_\eta;z) }} \Bigg\} \, . \label{eq:mid-transformB-eta-6}
\end{align}
From Eq.~\eqref{eq:Cl-diff-drdw-upper-bound} we can also get for any $0 \leq s < t \leq T$,
\begin{align}
\norm{\frac{\partial \ell_{t}(\wb{r}_\eta^{t};z)}{\partial \wb{w}_\eta^s}} & \leq \cstloss \cdot \norm{\frac{\partial \wb{r}_\eta^t}{\partial \wb{w}_\eta^s}} \leq \frac{\cstloss^2 \Phi_{\vRt}(T)}{\delta} \cdot \exp \prn{\frac{\cstloss T \Phi_{\vRt}(T)}{\delta}} \, ,
\end{align}
and therefore
\begin{align}
&e^{-\lambda t} \Ep \brk{\norm{\int_s^t \bRt^\eta(t,s') \frac{\partial \ell_{s'}(\wb{r}_\eta^{s'};z)}{\partial \wb{w}_\eta^s}  \de s' -   \int_{\min \brc{\cil{s}, \flr{t}}}^{\flr{t}} \brk{R_{\theta}^\eta}(t, s') \frac{\partial \ell_{\flr{s'}}(r^{s'}_\eta;z)}{\partial w^s_\eta}  \de s'} } \nonumber \\
& \leq 2 \eta \pRt(T) \cdot \frac{\cstloss^2 \Phi_{\vRt}(T)}{\delta} \cdot \exp \prn{\frac{\cstloss T \Phi_{\vRt}(T)}{\delta}} \nonumber \\
& \qquad + e^{-\lambda t} \Ep \brk{\norm{\int_{\min \brc{\cil{s}, \flr{t}}}^{\flr{t}} \bRt^\eta(t,s') \frac{\partial \ell_{s'}(\wb{r}_\eta^{s'};z)}{\partial \wb{w}_\eta^s}  \de s' -   \int_{\min \brc{\cil{s}, \flr{t}}}^{\flr{t}} \brk{R_{\theta}^\eta}(t, s') \frac{\partial \ell_{\flr{s'}}(r^{s'}_\eta;z)}{\partial w^s_\eta}  \de s'}} \nonumber \\
& \leq 2 \eta \pRt(T) \cdot \frac{\cstloss^2 \Phi_{\vRt}(T)}{\delta} \cdot \exp \prn{\frac{\cstloss T \Phi_{\vRt}(T)}{\delta}} \nonumber \\
& \qquad + e^{-\lambda t} \Ep \brk{\int_{\min \brc{\cil{s}, \flr{t}}}^{\flr{t}} \norm{\bRt^\eta(t,s') - \brk{R_{\theta}^\eta}(t, s')} \cdot \norm{\frac{\partial \ell_{s'}(\wb{r}_\eta^{s'};z)}{\partial \wb{w}_\eta^s} } \de s' } \nonumber \\
& \qquad + e^{-\lambda t} \Ep \brk{\int_{\min \brc{\cil{s}, \flr{t}}}^{\flr{t}} \norm{\brk{R_{\theta}^\eta}(t,s')} \cdot \norm{\frac{\partial \ell_{s'}(\wb{r}_\eta^{s'};z)}{\partial \wb{w}_\eta^s} - \frac{\partial \ell_{\flr{s'}}(r^{s'}_\eta;z)}{\partial w^s_\eta} } \de s' } \nonumber \\
& \leq 2 \eta \pRt(T) \cdot \frac{\cstloss^2 \Phi_{\vRt}(T)}{\delta} \cdot \exp \prn{\frac{\cstloss T \Phi_{\vRt}(T)}{\delta}} + \lbddst{\bRt^\eta}{\brk{R_{\theta}^\eta}} \cdot T \cdot \frac{\cstloss^2 \Phi_{\vRt}(T)}{\delta} \cdot \exp \prn{\frac{\cstloss T \Phi_{\vRt}(T)}{\delta}} \nonumber \\
& \qquad + \int_{\min \brc{\cil{s}, \flr{t}}}^{\flr{t}} e^{-\lambda(t-s')} \pRt(t-s') \cdot e^{-\lambda s'} \Ep \brk{\norm{\frac{\partial \ell_{s'}(\wb{r}_\eta^{s'};z)}{\partial \wb{w}_\eta^s} - \frac{\partial \ell_{\flr{s'}}(r^{s'}_\eta;z)}{\partial w^s_\eta} }} \de s' \, .
\end{align}
Take $\lambda$ large enough such that Lemma~\ref{lem:integral-differential-eta-approximation-2} holds and also
\begin{align*}
\int_0^\infty e^{-\lambda t} \pRt(t) \de t \leq \frac 1 2\, ,
\end{align*}
we can further get
\begin{align}
&e^{-\lambda t} \Ep \brk{\norm{\int_s^t \bRt^\eta(t,s') \frac{\partial \ell_{s'}(\wb{r}_\eta^{s'};z)}{\partial \wb{w}_\eta^s}  \de s' -   \int_{\min \brc{\cil{s}, \flr{t}}}^{\flr{t}} \brk{R_{\theta}^\eta}(t, s') \frac{\partial \ell_{\flr{s'}}(r^{s'}_\eta;z)}{\partial w^s_\eta}  \de s'} } \nonumber \\
& \leq  2 \eta \pRt(T) \cdot \frac{\cstloss^2 \Phi_{\vRt}(T)}{\delta} \cdot \exp \prn{\frac{\cstloss T \Phi_{\vRt}(T)}{\delta}} + \wb{h}(\eta) \cdot T \cdot \frac{\cstloss^2 \Phi_{\vRt}(T)}{\delta} \cdot \exp \prn{\frac{\cstloss T \Phi_{\vRt}(T)}{\delta}} \nonumber \\
& \qquad + \frac 1 2 \sup_{0 \leq s < t \leq T} e^{-\lambda t} \Ep \brk{\norm{\frac{\partial \ell_t(\wb{r}_\eta^t;z)}{\partial \wb{w}_\eta^s} - \frac{\partial \ell_{\flr{t}}(r^t_\eta;z)}{\partial w^s_\eta}}} \nonumber \\
& =: h_2(\eta) + \frac 1 2 \sup_{0 \leq s < t \leq T} e^{-\lambda t} \Ep \brk{\norm{\frac{\partial \ell_t(\wb{r}_\eta^t;z)}{\partial \wb{w}_\eta^s} - \frac{\partial \ell_{\flr{t}}(r^t_\eta;z)}{\partial w^s_\eta}}} \, ,  \label{eq:mid-transformB-eta-7}
\end{align}
where $h_2(\eta) \to 0$ when $\eta$ approaches $0$. For the same $\lambda$, we also get
\begin{align}
& e^{-\lambda t}  \Ep \brk{\norm{\bRt^\eta(t,s) \nabla_r \ell_s(\wb{r}_\eta^s;z)  - \brk{R_\theta^\eta}(t,s) \nabla_r \ell_{\flr{s}}(r^s_\eta;z) }} \nonumber \\
& \leq e^{-\lambda t} \Ep \brk{\norm{\bRt^\eta(t,s)  - \brk{R_\theta^\eta}(t,s)}\norm{\nabla_r \ell_s(\wb{r}_\eta^s;z)}} + e^{-\lambda t} \Ep \brk{\norm{\brk{R_\theta^\eta}(t,s)} \norm{\nabla_r \ell_s(\wb{r}_\eta^s;z) - \nabla_r \ell_{\flr{s}}(r^s_\eta;z)}} \nonumber \\
& \leq \cstloss \wb{h}(\eta) + \pRt(T) h_1(\eta) \, , \label{eq:mid-transformB-eta-8}
\end{align}
in the last line we make use of Eq.~\eqref{eq:mid-transformB-eta-3}. Taking Eqs.~\eqref{eq:mid-transformB-eta-7} and \eqref{eq:mid-transformB-eta-8} into Eq.~\eqref{eq:mid-transformB-eta-6} yields
\begin{align}
& e^{-\lambda t} \Ep \brk{\Bigg\|\nabla_r \ell_{\flr{t}}(r^t_\eta;z) \Bigg\| \Bigg\|\frac{\partial \wb{r}_\eta^t}{\partial \wb{w}_\eta^s} - \frac{\partial r_\eta^t}{\partial w_\eta^s} \Bigg\| } \nonumber \\
&\leq  h_2(\eta) +  \cstloss \wb{h}(\eta) + \pRt(T) h_1(\eta) + \frac 1 2 \sup_{0 \leq s < t \leq T} e^{-\lambda t} \Ep \brk{\norm{\frac{\partial \ell_t(\wb{r}_\eta^t;z)}{\partial \wb{w}_\eta^s} - \frac{\partial \ell_{\flr{t}}(r^t_\eta;z)}{\partial w^s_\eta}}} \, .
\end{align}
Further with Eq.~\eqref{eq:mid-transformB-eta-5}, substituting into Eq.~\eqref{eq:mid-transformB-eta-4} gives us 
\begin{align}
&e^{-\lambda t} \Ep \brk{\norm{\frac{\partial \ell_t(\wb{r}_\eta^t;z)}{\partial \wb{w}_\eta^s} - \frac{\partial \ell_{\flr{t}}(r^t_\eta;z)}{\partial w^s_\eta}}} \nonumber \\
& \leq \frac{\cstloss \Phi_{\vRt}(T)}{\delta} \cdot \exp \prn{\frac{\cstloss T \Phi_{\vRt}(T)}{\delta}} \cdot h_1(\eta) +  h_2(\eta) +  \cstloss \wb{h}(\eta) + \pRt(T) h_1(\eta) \nonumber \\
& \qquad +  \frac 1 2 \sup_{0 \leq s < t \leq T} e^{-\lambda t} \Ep \brk{\norm{\frac{\partial \ell_t(\wb{r}_\eta^t;z)}{\partial \wb{w}_\eta^s} - \frac{\partial \ell_{\flr{t}}(r^t_\eta;z)}{\partial w^s_\eta}}} \, .
\end{align} 
Taking supremum on both sides for $0 \leq s < t \leq T$ it then follows that
\begin{align}
&\sup_{0 \leq s < t \leq T} e^{-\lambda t} \Ep \brk{\norm{\frac{\partial \ell_t(\wb{r}_\eta^t;z)}{\partial \wb{w}_\eta^s} - \frac{\partial \ell_{\flr{t}}(r^t_\eta;z)}{\partial w^s_\eta}}} \nonumber \\
&\leq 2 \brc{\frac{\cstloss \Phi_{\vRt}(T)}{\delta} \cdot \exp \prn{\frac{\cstloss T \Phi_{\vRt}(T)}{\delta}} \cdot h_1(\eta) +  h_2(\eta) +  \cstloss \wb{h}(\eta) + \pRt(T) h_1(\eta)} =: h_3(\eta) \, ,
\end{align}
and $h_3(\eta) \to 0$ when $\eta \to 0$. Finally
\begin{align}
\lbddst{\bCl^\eta}{\Cl^\eta} & = \sup_{0 \leq s < t \leq T} e^{-\lambda t} \norm{\Ep \brk{\frac{\partial \ell_t(\wb{r}_\eta^t;z)}{\partial \wb{w}_\eta^s}} - \Ep \brk{\frac{\partial \ell_{\flr{t}}(r^t_\eta;z)}{\partial w^s_\eta}}} \nonumber \\
& \leq e^{-\lambda t} \Ep \brk{\norm{\frac{\partial \ell_t(\wb{r}_\eta^t;z)}{\partial \wb{w}_\eta^s} - \frac{\partial \ell_{\flr{t}}(r^t_\eta;z)}{\partial w^s_\eta}}} \nonumber \\
& \leq h_3(\eta) \, . \label{eq:mid-transformB-eta-9}
\end{align}
By Eqs.~\eqref{eq:mid-transformB-eta-2}, \eqref{eq:mid-transformB-eta-3} and \eqref{eq:mid-transformB-eta-9}, we conclude the proof by taking $h(\eta) = \max \brc{h_1(\eta), h_3(\eta)}$.
\subsection{Proof of Lemma~\ref{lem:integral-differential-eta-approximation-path}} \label{proof:integral-differential-eta-approximation-path}
By Eqs.~\eqref{eq:def-theta} and \eqref{eq:def-theta-eta}, we can write
\begin{align*}
\frac{\de}{\de t} \theta^t & = -(\Lambda^t + \Gamma^t) \theta^t - \int_0^t R_{\ell}(t,s) \theta^s \de s + u^t \, , & & u^t \sim \mathsf{GP}(0, \Cl / \delta) \, , \\
\frac{\de}{\de t} \theta^t_\eta & = -(\Lambda^{\flr{t}} + \Gamma^{t}_\eta) \theta^{\flr{t}}_\eta - \int_0^{\flr{t}} R_{\ell}^\eta(t,s) \theta^{\flr{s}}_\eta \de s + u^t_\eta \, , & & u^t_\eta \sim \mathsf{GP}(0, \Cl^\eta / \delta) \, ,
\end{align*}
where we use the fact that $\Rl^\eta(\flr{t}, \flr{s})= \Rl^\eta(t,s)$ and $\Gamma_\eta^{\flr{t}} = \Gamma_\eta^t$. We can couple $u^t$ and $u^t_\eta$ such that
\begin{align}
\sup_{t \in [0, T]} e^{-\lambda t} \sqrt{\mathbb{E} \brk{\norm{\vu^t - \vu^t_\eta}_2^2}} \leq2 \cdot \lbddst{u^t}{u^t_\eta} = 2 \cdot \lbddst{\Cl / \delta}{\Cl^\eta / \delta} = \frac{2}{\sqrt \delta} \lbddst{\Cl}{\Cl^\eta} \, . \label{eq:mid-approximation-path-1}
\end{align}
We can derive the upper bound
\begin{align}
& e^{-\lambda t} \cdot \ddt \normtwo{\theta^t - \theta^t_\eta} \nonumber \\
& \leq e^{-\lambda t} \cdot \normtwo{\ddt \theta^t - \ddt \theta^t_\eta} \nonumber \\
& \leq e^{-\lambda t} \cdot \normtwo{-(\Lambda^t + \Gamma^t) \theta^t - \int_0^t R_{\ell}(t,s) \theta^s \de s + u^t + (\Lambda^{\flr{t}} + \Gamma^{t}_\eta) \theta^{\flr{t}}_\eta + \int_0^{\flr{t}} R_{\ell}^\eta(t,s) \theta^{\flr{s}}_\eta \de s - u^t_\eta} \nonumber \\
& \leq \underbrace{e^{-\lambda t} \cdot \Bigg\|(\Lambda^t + \Gamma^t) \theta^t - (\Lambda^{\flr{t}} + \Gamma^{t}_\eta) \theta^{\flr{t}}_\eta\Bigg\|_2}_{\mathrm{(I)}} + \underbrace{e^{-\lambda t} \cdot \normtwo{\int_0^t R_{\ell}(t,s) \theta^s \de s - \int_0^{\flr{t}} R_{\ell}^\eta(t,s) \theta^{\flr{s}}_\eta \de s }}_{\mathrm{(II)}} \nonumber \\
& \qquad + e^{-\lambda t} \cdot \normtwo{u^t - u^t_\eta} \, .
\end{align}
We upper bound (I) and (II) respectively
\begin{align}
\mathrm{(I)} & \leq e^{-\lambda t} \normtwo{(\Lambda^t + \Gamma^t) (\theta^t - \theta_\eta^t)} + e^{-\lambda t} \normtwo{(\Lambda^t + \Gamma^t) (\theta_\eta^t - \theta_\eta^{\flr{t}})} + e^{-\lambda t} \normtwo{(\Lambda^t + \Gamma^t - \Lambda^{\flr{t}} - \Gamma_\eta^t) \theta_\eta^{\flr{t}}} \nonumber \\
& \leq e^{-\lambda t} \prn{\cstlbd + \cstloss} \cdot \prn{\normtwo{\theta^t - \theta_\eta^t} + \normtwo{\theta_\eta^t - \theta_\eta^{\flr{t}}}} + e^{-\lambda t} \prn{\norm{\Lambda^t - \Lambda^{\flr{t}}} + \norm{\Gamma^t - \Gamma_\eta^t} } \normtwo{\theta_\eta^{\flr{t}}} \nonumber \\
& \leq e^{-\lambda t} \prn{\cstlbd + \cstloss} \cdot \prn{\normtwo{\theta^t - \theta_\eta^t} + \normtwo{\theta_\eta^t - \theta_\eta^{\flr{t}}}} +  e^{-\lambda t} \prn{ \eta \cstlbd + \norm{\Gamma^t - \Gamma_\eta^t}}  \normtwo{\theta_\eta^{\flr{t}}}  \, ,  
\end{align}
and
\begin{align}
\mathrm{(II)} &\leq  e^{-\lambda t} \cdot  \normtwo{\int_{\flr{t}}^t \Rl(t,s) \theta^s \de s} + e^{-\lambda t} \cdot \normtwo{\int_0^{\flr{t}} \prn{\Rl(t,s) - \Rl^\eta(t,s)} \theta^s  \de s} \nonumber \\
& \qquad +e^{-\lambda t} \cdot \normtwo{\int_0^{\flr{t}} \Rl^\eta(t,s) \prn{\theta^s - \theta_\eta^{s}}  \de s} + e^{-\lambda t} \cdot \normtwo{\int_0^{\flr{t}} \Rl^\eta(t,s) \prn{\theta^s_\eta - \theta_\eta^{\flr{s}}}  \de s} \nonumber \\
& \leq e^{-\lambda t} \pRl(T) \int_{\flr{t}}^t \normtwo{\theta^s} \de s + e^{-\lambda t} \int_0^{\flr{t}} \norm{\Rl(t,s) - \Rl^\eta(t,s)} \normtwo{\theta^s} \de s \nonumber \\
& \qquad +  \int_0^{\flr{t}} e^{-\lambda(t-s)} \pRl(t-s) \cdot e^{-\lambda s} \normtwo{\theta^s - \theta^s_\eta} \de s + e^{-\lambda t}\pRl(T) \int_0^{\flr{t}} \normtwo{\theta^s_\eta - \theta_\eta^{\flr{s}}} \de s \, .
\end{align}
Suppose $\wb{\lambda}$ satisfies $\int_0^\infty e^{-\wb{\lambda} t} \pRl(t) \de t \leq \cstlbd + \cstloss$, combining inequalities above yields
\begin{align}
& e^{-\wb{\lambda} t} \cdot \ddt \normtwo{\theta^t - \theta^t_\eta} \nonumber \\
& \leq e^{-\wb{\lambda} t} \prn{\cstlbd + \cstloss} \cdot \prn{\normtwo{\theta^t - \theta_\eta^t} + \normtwo{\theta_\eta^t - \theta_\eta^{\flr{t}}}} + e^{-\wb{\lambda} t} \prn{ \eta \cstlbd + \norm{\Gamma^t - \Gamma_\eta^t}}  \normtwo{\theta_\eta^{\flr{t}}} \nonumber \\
&\qquad +  e^{-\wb{\lambda} t} \pRl(T) \int_{\flr{t}}^t \normtwo{\theta^s} \de s +  e^{-\wb{\lambda} t} \int_0^{\flr{t}} \norm{\Rl(t,s) - \Rl^\eta(t,s)} \normtwo{\theta^s} \de s \nonumber \\
& \qquad + (\cstlbd +\cstloss) \cdot \sup_{0 \leq s \leq t} e^{-\wb{\lambda} s} \normtwo{\theta^s - \theta_\eta^s} + e^{-\wb{\lambda} t} \pRl(T) \int_0^{\flr{t}} \normtwo{\theta_\eta^s - \theta_\eta^{\flr{s}}} \de s  + e^{-\wb{\lambda} t} \normtwo{u^t - u^t_\eta} \nonumber \\
& \leq 2(\cstlbd + \cstloss) \cdot \sup_{0 \leq s \leq t} e^{-\wb{\lambda} s} \normtwo{\theta^s - \theta_\eta^s} + e^{-\wb{\lambda} t} \normtwo{u^t - u^t_\eta} + e^{-\wb{\lambda} t} (\cstlbd + \cstloss) \normtwo{\theta_\eta^t - \theta_\eta^{\flr{t}}} \nonumber \\
& \qquad  + e^{-\wb{\lambda} t} \pRl(T) \int_0^{\flr{t}} \normtwo{\theta_\eta^s - \theta_\eta^{\flr{s}}} \de s + e^{-\wb{\lambda} t} \prn{ \eta \cstlbd + \norm{\Gamma^t - \Gamma_\eta^t}} \normtwo{\theta_\eta^{\flr{t}}} \nonumber \\
& \qquad  + e^{-\wb{\lambda} t} \pRl(T) \int_{\flr{t}}^t \normtwo{\theta^s} \de s + e^{-\wb{\lambda} t} \int_0^{\flr{t}} \norm{\Rl(t,s) - \Rl^\eta(t,s)} \normtwo{\theta^s} \de s \, .
\end{align}
Similar to previous proofs in Eqs.~\eqref{eq:argument-lambda-bar-2} and \eqref{eq:argument-lambda-bar-3}, it follows that
\begin{align}
&e^{-2(\cstlbd + \cstloss)t  - \wb{\lambda}t} \normtwo{\theta^t - \theta^t_\eta} \nonumber \\
& \leq \int_0^t e^{-2(\cstlbd + \cstloss)s  - \wb{\lambda}s} \cdot \Bigg\{  \normtwo{u^s - u^s_\eta}  + (\cstlbd + \cstloss) \normtwo{\theta_\eta^s - \theta_\eta^{\flr{s}}} +  \pRl(T) \int_0^{\flr{s}} \normtwo{\theta_\eta^{s'} - \theta_\eta^{\flr{s'}}} \de s' \nonumber \\
& \qquad +  \prn{ \eta \cstlbd + \norm{\Gamma^s - \Gamma_\eta^s}} \normtwo{\theta_\eta^{\flr{s}}} + \pRl(T) \int_{\flr{s}}^s \normtwo{\theta^{s'}} \de s' + \int_0^{\flr{s}} \norm{\Rl(s,s') - \Rl^\eta(s,s')} \normtwo{\theta^{s'}} \de s' \Bigg\} \de s \, .
\end{align}
This implies for any $\lambda \geq 2(\cstlbd + \cstloss) + \wb{\lambda}$, one has
\begin{align}
&e^{-\lambda t} \normtwo{\theta^t - \theta^t_\eta} \nonumber \\
& \leq \int_0^t e^{-\lambda s} \cdot \Bigg\{  \normtwo{u^s - u^s_\eta}  + (\cstlbd + \cstloss) \normtwo{\theta_\eta^s - \theta_\eta^{\flr{s}}} +  \pRl(T) \int_0^{\flr{s}}  \normtwo{\theta_\eta^{s'} - \theta_\eta^{\flr{s'}}} \de s' \nonumber \\
& \qquad +  \prn{ \eta \cstlbd + \norm{\Gamma^s - \Gamma_\eta^s}} \normtwo{\theta_\eta^{\flr{s}}} + \pRl(T) \int_{\flr{s}}^s \normtwo{\theta^{s'}} \de s' + \int_0^{\flr{s}} \norm{\Rl(s,s') - \Rl^\eta(s,s')} \normtwo{\theta^{s'}} \de s' \Bigg\} \de s \nonumber \\
& \leq \int_0^t \Bigg( e^{-\lambda s} \normtwo{u^s - u^s_\eta}  + (\cstlbd + \cstloss) \normtwo{\theta_\eta^s - \theta_\eta^{\flr{s}}} +  \pRl(T) \int_0^{\flr{s}} \normtwo{\theta_\eta^{s'} - \theta_\eta^{\flr{s'}}} \de s' \nonumber \\
& \qquad +  \prn{ \eta \cstlbd + \lbddst{\Gamma}{\Gamma_\eta}} \normtwo{\theta_\eta^{\flr{s}}} + \pRl(T) \int_{\flr{s}}^s \normtwo{\theta^{s'}} \de s' + \lbddst{\Rl}{\Rl^\eta} \cdot \int_0^{\flr{s}} \normtwo{\theta^{s'}} \de s' \Bigg)\de s \, .
\end{align}
Square both sides and take expectations, we have
\begin{align}
& e^{-2\lambda t} \Ep \brk{\normtwo{\theta^t - \theta^t_\eta}^2} \nonumber \\
& \leq \Ep \Bigg[ \Bigg\{ \int_0^t \Bigg( e^{-\lambda s} \normtwo{u^s - u^s_\eta}  + (\cstlbd + \cstloss) \normtwo{\theta_\eta^s - \theta_\eta^{\flr{s}}} +  \pRl(T) \int_0^{\flr{s}} \normtwo{\theta_\eta^{s'} - \theta_\eta^{\flr{s'}}} \de s' \nonumber \\
& \qquad +  \prn{ \eta \cstlbd + \lbddst{\Gamma}{\Gamma_\eta}} \normtwo{\theta_\eta^{\flr{s}}} + \pRl(T) \int_{\flr{s}}^s \normtwo{\theta^{s'}} \de s' + \lbddst{\Rl}{\Rl^\eta} \cdot \int_0^{\flr{s}} \normtwo{\theta^{s'}} \de s' \Bigg)\de s  \Bigg\}^2 \Bigg] \nonumber \\
& \stackrel{\mathrm{(i)}}{\leq } \Ep \Bigg[ \Bigg\{ \int_0^t \prn{1 + 1 + \int_0^{\flr{s}} 1 \de s' + 1 + \int_{\flr{s}}^s 1 \de s' + \int_0^{\flr{s}} 1 \de s'} \de s \Bigg\} \nonumber \\
& \qquad \cdot \Bigg\{ \int_0^t \Bigg( e^{-2\lambda s} \normtwo{u^s - u^s_\eta}^2  + (\cstlbd + \cstloss)^2 \normtwo{\theta_\eta^s - \theta_\eta^{\flr{s}}}^2 +  \pRl(T)^2 \int_0^{\flr{s}} \normtwo{\theta_\eta^{s'} - \theta_\eta^{\flr{s'}}}^2 \de s' \nonumber \\
& \qquad +  \prn{ \eta \cstlbd + \lbddst{\Gamma}{\Gamma_\eta}}^2 \normtwo{\theta_\eta^{\flr{s}}}^2 + \pRl(T)^2 \int_{\flr{s}}^s \normtwo{\theta^{s'}}^2 \de s' + \lbddst{\Rl}{\Rl^\eta}^2 \cdot \int_0^{\flr{s}} \normtwo{\theta^{s'}}^2 \de s' \Bigg)\de s  \Bigg\} \Bigg] \nonumber \\
& \leq \Bigg\{ \int_0^t (2s + 3) \de s \Bigg\} \cdot  \Bigg\{ \int_0^t \Bigg( e^{-2\lambda s} \Ep \brk{\normtwo{u^s - u^s_\eta}^2}  + (\cstlbd + \cstloss)^2 \Ep \brk{ \normtwo{\theta_\eta^s - \theta_\eta^{\flr{s}}}^2}  \nonumber \\
& \qquad  +  \pRl(T)^2 \int_0^{\flr{s}} \Ep \brk{\normtwo{\theta_\eta^{s'} - \theta_\eta^{\flr{s'}}}^2} \de s' +  \prn{ \eta \cstlbd + \lbddst{\Gamma}{\Gamma_\eta}}^2 \Ep \brk{\normtwo{\theta_\eta^{\flr{s}}}^2} + \pRl(T)^2 \int_{\flr{s}}^s \Ep \brk{\normtwo{\theta^{s'}}^2} \de s' \nonumber \\
& \qquad  + \lbddst{\Rl}{\Rl^\eta}^2 \cdot \int_0^{\flr{s}}\Ep \brk{ \normtwo{\theta^{s'}}^2} \de s' \Bigg)\de s  \Bigg\} \, , 
\end{align}
where in (i) we use Cauchy-Schwarz inequality. Substituting in Eqs.~\eqref{eq:mid-transformA-eta-1}, \eqref{eq:mid-approximation-path-1} and $\Ep \brk{\normtwo{\theta}^2} \leq k \norm{\Ep \brk{\theta \theta^\sT}}$ yields
\begin{align}
& e^{-2\lambda t} \Ep \brk{\normtwo{\theta^t - \theta^t_\eta}^2} \nonumber \\
& \leq (T^2 + 3T) \cdot \int_0^t \Bigg( \frac{4}{\delta} \lbddst{\Cl}{\Cl^\eta}^2 + (\cstlbd + \cstloss)^2 \wb{h}_1(\eta) +  \pRl(T)^2 \int_0^{\flr{s}} \wb{h}_1(\eta) \de s' \nonumber \\
& \qquad   +  \prn{ \eta \cstlbd + \lbddst{\Gamma}{\Gamma_\eta}}^2 k \pCt(T) + \pRl(T)^2 \int_{\flr{s}}^s k \pCt(T) \de s'  + \lbddst{\Rl}{\Rl^\eta}^2 \cdot \int_0^{\flr{s}}k \pCt(T) \de s'   \Bigg)\de s   \nonumber \\
& \leq (T^3 + 3T^2) \cdot \Bigg( \frac{4}{\delta} \lbddst{\Cl}{\Cl^\eta}^2 + (\cstlbd + \cstloss)^2 \wb{h}_1(\eta) +   T\pRl(T)^2  \wb{h}_1(\eta) +  \prn{ \eta \cstlbd + \lbddst{\Gamma}{\Gamma_\eta}}^2 k \pCt(T)\nonumber \\
& \qquad    + \eta \pRl(T)^2 k \pCt(T)   + \lbddst{\Rl}{\Rl^\eta}^2 kT \pCt(T)  \Bigg) \, .
\end{align}
The proof is then completed by
\begin{align}
&e^{-\lambda t} \sqrt{\Ep \brk{\normtwo{\theta^t - \theta^t_\eta}^2}} \nonumber \\
& \leq  (T^3 + 3T^2)^{\frac 1 2} \cdot \Bigg( \frac{4}{\delta} \lbddst{X}{X^\eta}^2 + (\cstlbd + \cstloss)^2 \wb{h}_1(\eta) +   T\pRl(T)^2  \wb{h}_1(\eta)   \nonumber \\
& \qquad +  \prn{ \eta \cstlbd + \lbddst{X}{X^\eta}}^2 k \pCt(T) + \eta \pRl(T)^2 k \pCt(T)     + \lbddst{X}{X^\eta}^2 kT \pCt(T)  \Bigg)^{\frac 1 2} \nonumber \\
& =: H(\eta, \lbddst{X}{X^\eta})\, .
\end{align}

\subsection{Proof of Lemma~\ref{lem:integral-differential-eta-approximation-2}} \label{proof:integral-differential-eta-approximation-2}
We write the equations that define $(\bCt^\eta, \bRt^\eta)$ and $(\Ct^\eta, \Rt^\eta)$ as
\begin{align*}
\frac{\de}{\de t} \wb{\theta}^t_\eta & = -(\Lambda^t + \Gamma^t_\eta) \wb{\theta}^t_\eta - \int_0^t R_{\ell}^\eta(t,s) \wb{\theta}^s_\eta \de s + \wb{u}^t_\eta \, , & & \wb{u}^t_\eta \sim \mathsf{GP}(0, \Cl^\eta/\delta) \, , \\
\frac{\de}{\de t} \frac{\partial \wb{\theta}^t_\eta}{\partial \wb{u}^s_\eta} & = -(\Lambda^t + \Gamma^t_\eta) \frac{\partial \wb{\theta}_\eta^t}{\partial \wb{u}_\eta^s} - \int_s^t R_{\ell}^\eta(t,s') \frac{\partial \wb{\theta}_\eta^{s'}}{\partial \wb{u}_\eta^s} \de s'\, , & & 0 \leq s \leq t \leq T \, , \\
\bCt^\eta(t,s) & = \Ep \brk{\wb{\vt}^t_\eta {\wb{\vt}^s_\eta}^\sT}\, , & & 0 \leq s \leq t \leq T \, , \\
\bRt^\eta(t,s) & = \Ep \brk{\frac{\partial \wb{\theta}^t_\eta}{\partial \wb{u}^s_\eta}} \, , & & 0 \leq s \leq t \leq T \, ,
\end{align*}
and
\begin{align*}
\ddt  \theta^t_\eta & = -(\Lambda^{\flr{t}} + \Gamma^{\flr{t}}_\eta) \theta^{\flr{t}}_\eta - \int_0^{\flr{t}} R_{\ell}^\eta(\flr{t},\flr{s}) \theta^{\flr{s}}_\eta \de s + u^t_\eta \, , & & u^t_\eta \sim \mathsf{GP}(0, \Cl^\eta / \delta) \, , \\
\frac{\de}{\de t} \frac{\partial \theta^t_\eta}{\partial u^s_\eta} & = -(\Lambda^{\flr{t}} + \Gamma^{\flr{t}}_\eta) \frac{\partial \theta^{\flr{t}}_\eta}{\partial u^s_\eta} - \int_s^{\flr{t}} R_{\ell}^\eta(\flr{t},\flr{s'}) \frac{\partial \theta^{\flr{s'}}_\eta}{\partial u^{s}_\eta} \de s'\, , & & 0 \leq s \leq t \leq T \, ,  \\
\Ct^\eta(t,s) & = \Ep \brk{\vt^{\flr{t}}_\eta {\vt^{\flr{s}}_\eta}^\sT} \, , & & 0 \leq s \leq t \leq T \, , \\
\Rt^\eta(t,s) & = \Ep \brk{\frac{\partial \theta^t_\eta}{\partial u^s_\eta}} \, , & & 0 \leq s \leq t \leq T \, .
\end{align*}
\paragraph{Controlling the distance between \texorpdfstring{$\bCt^\eta$}{TEXT} and \texorpdfstring{$\Ct^\eta$}{TEXT}.} From Eq.~\eqref{eq:def-C-l-eta} we know that $\Cl^\eta$ is piecewise constant, i.e. $\Cl^\eta(t,s) = \Cl^\eta (\flr{t}, \flr{s})$ and $\vu^t_\eta = \vu^{\flr{t}}_\eta$. Further since $\vRl^\eta$ is piecewise constant from Eq.~\eqref{eq:def-R-l-eta}, it follows that
\begin{align}
\vt_\eta^t - \vt_\eta^{\flr{t}} = (t- \flr{t}) \cdot \prn{-(\Lambda^{\flr{t}} + \Gamma^{\flr{t}}_\eta) \theta^{\flr{t}}_\eta - \int_0^{\flr{t}} R_{\ell}^\eta(t,s) \theta^{\flr{s}}_\eta \de s + u^{\flr{t}}_\eta } \, .
\end{align}
Hence using the fact that $(\Cl^\eta, \Rl^\eta, \Gamma_\eta) \in \mathcal{S}$,
\begin{align}
\Ep \brk{\normtwo{\vt_\eta^t - \vt_\eta^{\flr{t}}}^2} & \leq \eta^2 \cdot \Ep \brk{\normtwo{-(\Lambda^{\flr{t}} + \Gamma^{\flr{t}}_\eta) \theta^{\flr{t}}_\eta - \int_0^{\flr{t}} R_{\ell}^\eta(t, s) \theta^{\flr{s}}_\eta \de s + u^{\flr{t}}_\eta}^2} \nonumber \\
& \leq \eta^2 \cdot \Ep \brk{\prn{\prn{\cstlbd + \cstloss} \cdot \normtwo{\theta_\eta^{\flr{t}}} + \int_0^{\flr{t}} \pRl(t-s) \normtwo{\theta_\eta^{\flr{s}}} ds + \normtwo{u_\eta^{\flr{t}}}   }^2} \nonumber \\
& \leq \eta^2 \cdot \Ep \brk{\prn{\prn{\cstlbd + \cstloss}^2 \cdot \normtwo{\theta_\eta^{\flr{t}}}^2 + \int_0^{\flr{t}} \pRl(t-s)^2 \normtwo{\theta_\eta^{\flr{s}}}^2 ds + \normtwo{u_\eta^{\flr{t}}}^2   } \cdot \prn{1 + T + 1}} \nonumber \\
& \leq \eta^2 \cdot (T+2) \cdot \brc{\prn{\cstlbd + \cstloss}^2 \cdot k\pCt(T) + T \pRl(T)^2 \cdot k\pCt(T) + \frac{k}{\delta} \pCl(T)} \nonumber \\
& =: \wb{h}_1(\eta) \, . \label{eq:mid-transformA-eta-1}
\end{align}
where in the last line we use the inequality $\Ep \brk{\norm{\theta}_2^2} = \Tr \prn{\Ep \brk{\theta \theta^\sT}} \leq k \norm{\Ep \brk{\theta \theta^\sT}}$ for any $k$ dimensional random vector $\theta$. Note that $\wb{h}_1(\eta) \to 0$ as $\eta \to 0$. Next by using the coupling $\wb{u}^t_\eta = u^t_\eta$ and using $\Gamma_\eta^t = \Gamma_\eta^{\flr{t}}$ by Eq.~\eqref{eq:def-Gamma-eta},
\begin{align}
\ddt \normtwo{\wb{\vt}^t_\eta - \vt^t_\eta} & \leq \normtwo{\ddt \prn{\wb{\vt}^t_\eta - \vt^t_\eta}} \nonumber \\
& = \normtwo{-(\Lambda^t + \Gamma^t_\eta) \wb{\theta}^t_\eta - \int_0^t R_{\ell}^\eta(t,s) \wb{\theta}^s_\eta \de s + (\Lambda^{\flr{t}} + \Gamma^{t}_\eta) \theta^{\flr{t}}_\eta + \int_0^{\flr{t}} R_{\ell}^\eta(t,s) \theta^{\flr{s}}_\eta \de s } \nonumber \\
& \leq \normtwo{-\prn{\vLambda^t + \vGamma_\eta^t} \prn{\wb{\vt}_\eta^t- \vt_\eta^t} - \prn{\vLambda^t + \vGamma_\eta^t} \prn{\vt_\eta^t - \vt_\eta^{\flr{t}}} - \prn{\Lambda^t - \Lambda^{\flr{t}}} \vt_\eta^{\flr{t}} } \nonumber \\
& \qquad + \normtwo{\int_0^t \vRl^\eta(t,s) \prn{\wb{\vt}_\eta^s - \vt_\eta^s} \de s + \int_0^t \vRl^\eta(t,s) \prn{\vt_\eta^s - \vt_\eta^{\flr{s}}} \de s - \int_{\flr{t}}^t \vRl^\eta(t,s)  \vt_\eta^{\flr{s}} \de s} \nonumber \\
& \leq \prn{\cstlbd + \cstloss} \normtwo{\wb{\vt}_\eta^t- \vt_\eta^t} + \prn{\cstlbd + \cstloss} \normtwo{\vt_\eta^t - \vt_\eta^{\flr{t}}} + \eta \cstlbd \normtwo{\vt_\eta^{\flr{t}}} \nonumber \\
& \qquad + \int_0^t \pRl(t-s) \normtwo{\wb{\vt}_\eta^s - \vt_\eta^s} \de s + \int_0^t \pRl(t-s) \normtwo{\vt_\eta^s - \vt_\eta^{\flr{s}}} \de s + \eta \pRl(T) \normtwo{\vt_\eta^{\flr{t}}} \, , \label{eq:mid-transformA-eta-1.5}
\end{align}
where in the last line we use Assumption~\ref{ass:Normal} which gives $\norm{\Lambda^t - \Lambda^{\flr{t}}} \leq \eta \cstlbd$, the fact $\vt_\eta^{\flr{s}} = \vt_\eta^{\flr{t}}$ when $\flr{t} \leq s \leq t$ and also $\pRl(t)$ is a nondecreasing function in $t \in \realsp$. By taking $\wb{\lambda}$ such that $\int_0^\infty e^{-\wb{\lambda}t}\pRl(t) \de \leq \cstlbd + \cstloss$ and repeating the argument in Eq.~\eqref{eq:argument-lambda-bar}, we can have
\begin{align}
&e^{-\wb{\lambda} t} \ddt \normtwo{\wb{\vt}^t_\eta - \vt^t_\eta} \nonumber \\
& \leq 2\prn{\cstlbd + \cstloss} \cdot \sup_{0 \leq s \leq t} \normtwo{\wb{\vt}^s_\eta - \vt^s_\eta} \nonumber \\
& \qquad + e^{-\wb{\lambda}t} \brc{\prn{\cstlbd + \cstloss}\normtwo{\vt_\eta^t - \vt_\eta^{\flr{t}}} + \eta \prn{ \cstlbd + \pRl(T)} \normtwo{\vt_\eta^{\flr{t}}} + \pRl(T) \int_0^t \normtwo{\vt_\eta^s - \vt_\eta^{\flr{s}}} \de s } \, .
\end{align}
Consequently
\begin{align}
&e^{- 2\prn{\cstlbd + \cstloss}t -\wb{\lambda} t}\normtwo{\wb{\vt}^t_\eta - \vt^t_\eta} \nonumber \\
& \leq \int_0^t e^{- 2\prn{\cstlbd + \cstloss}s -\wb{\lambda} s}  \nonumber \\
& \qquad \cdot \brc{\prn{\cstlbd + \cstloss}\normtwo{\vt_\eta^s - \vt_\eta^{\flr{s}}} + \eta \prn{ \cstlbd + \pRl(T)} \normtwo{\vt_\eta^{\flr{s}}} + \pRl(T) \int_0^s \normtwo{\vt_\eta^{s'} - \vt_\eta^{\flr{s'}}} \de s' } \de t \, ,
\end{align}
which further implies for any $\lambda > 2(\cstlbd + \cstloss) + \wb{\lambda}$, we have
\begin{align}
&e^{-\lambda t} \normtwo{\wb{\vt}^t_\eta - \vt^t_\eta} \nonumber \\
& \leq \int_0^t  \prn{\prn{\cstlbd + \cstloss}\normtwo{\vt_\eta^s - \vt_\eta^{\flr{s}}} + \eta \prn{ \cstlbd + \pRl(T)} \normtwo{\vt_\eta^{\flr{s}}} + \pRl(T) \int_0^s \normtwo{\vt_\eta^{s'} - \vt_\eta^{\flr{s'}}} \de s' } \de t \nonumber \\
& \leq \int_0^T \prn{\prn{\cstlbd + \cstloss + T \pRl(T)}\normtwo{\vt_\eta^t - \vt_\eta^{\flr{t}}} + \eta \prn{ \cstlbd + \pRl(T)} \normtwo{\vt_\eta^{\flr{t}}} } \de t  \, .
\end{align}
By triangle inequality,
\begin{align}
&e^{-\lambda t} \sqrt{\Ep \brk{\normtwo{\wb{\vt}^t_\eta - \vt^t_\eta}^2}} \nonumber \\
& \leq \sqrt{\Ep \brk{\prn{\int_0^T \prn{\prn{\cstlbd + \cstloss + T \pRl(T)}\normtwo{\vt_\eta^t - \vt_\eta^{\flr{t}}} + \eta \prn{ \cstlbd + \pRl(T)} \normtwo{\vt_\eta^{\flr{t}}} } \de t}^2 }} \nonumber \\
& \leq \prn{\cstlbd + \cstloss + T \pRl(T)} \cdot \sqrt{\Ep \brk{\prn{\int_0^T \normtwo{\vt_\eta^t - \vt_\eta^{\flr{t}}} \de t}^2}} + \eta \prn{ \cstlbd + \pRl(T)} \cdot \sqrt{\Ep \brk{\prn{\int_0^T \normtwo{\vt_\eta^{\flr{t}}} \de t}^2}} \nonumber \\
& \leq  \prn{\cstlbd + \cstloss + T \pRl(T)} \sqrt{T \cdot \Ep \brk{\int_0^T \normtwo{\vt_\eta^t - \vt_\eta^{\flr{t}}}^2 \de t}} + \eta \prn{ \cstlbd + \pRl(T)} \cdot \sqrt{T \cdot \Ep \brk{\int_0^T \normtwo{\vt_\eta^{\flr{t}}}^2 \de t}} \nonumber \, ,
\end{align}
where we invoke Cauchy-Schwarz inequality in the last line. Substituting in Eq.~\eqref{eq:mid-transformA-eta-1} and $\Ep \brk{\normtwo{\vt_\eta^{\flr{t}}}^2} \leq k \pCt(T)$, we get
\begin{align}
e^{-\lambda t} \sqrt{\Ep \brk{\normtwo{\wb{\vt}^t_\eta - \vt^t_\eta}^2}} & \leq T \prn{\cstlbd + \cstloss + T \pRl(T)} \wb{h}_1(\eta) + \eta T \prn{ \cstlbd + \pRl(T)} \sqrt{k \pCt(T)} \, .
\end{align}
Following the same coupling argument in Appendix~\ref{proof:diff-Cl-global}, we obtain
\begin{align}
&\lbddst{\bCt^\eta}{\Ct^\eta} \nonumber \\
& \leq \sup_{t \in [0, T]} e^{-\lambda t} \sqrt{\Ep \brk{\normtwo{\wb{\vt}^t_\eta - \vt^\flr{t}_\eta}^2}} \nonumber \\
& \leq \sup_{t \in [0, T]} e^{-\lambda t} \sqrt{\Ep \brk{\normtwo{\wb{\vt}^t_\eta - \vt^t_\eta}^2}} + \sup_{t \in [0, T]} e^{-\lambda t} \sqrt{\Ep \brk{\normtwo{\vt^t_\eta - \vt^{\flr{t}}_\eta}^2}} \nonumber \\
& \leq T \prn{\cstlbd + \cstloss + T \pRl(T)} \wb{h}_1(\eta) + \eta T \prn{ \cstlbd + \pRl(T)} \sqrt{k \pCt(T)} + \wb{h}_1(\eta) =: \wb{h}_2(\eta)\, ,
\end{align}
where $\wb{h}_2(\eta) \to 0$ as $\eta \to 0$.
\paragraph{Controlling the distance between \texorpdfstring{$\bRt^\eta$}{TEXT} and \texorpdfstring{$\brk{\Rt^\eta}$}{TEXT}.} Since $\partial \wb{\theta}^t_\eta / \partial \wb{u}^s_\eta$  and $\partial \theta^t_\eta / \partial u^s_\eta$ are not random, we can write
\begin{align}
\frac{\de}{\de t} \bRt^\eta(t,s) & = -(\Lambda^t + \Gamma^t_\eta) \bRt^\eta(t,s)  - \int_s^t R_{\ell}^\eta(t,s') \bRt^\eta(s',s)  \de s'\, , & & 0 \leq s \leq t \leq T \, , \\
\frac{\de}{\de t} \Rt^\eta(t,s) & = -(\Lambda^{\flr{t}} + \Gamma^{\flr{t}}_\eta) \Rt^\eta(\flr{t},s) - \int_s^{\flr{t}} R_{\ell}^\eta(t,s') \Rt^\eta(\flr{s'},s) \de s'\, , & & 0 \leq s \leq t \leq T \, .
\end{align}
with the same boundary conditions $\bRt^\eta(s,s) = \Rt^\eta(s,s) = I$ and the convention that $\Rt^\eta(t,s) = 0$ when $t < s$. 

First we try to control the error $\norm{\Rt^\eta(t,s) - \brk{\Rt^\eta}(t,s)}$. By definition, we have $\brk{\Rt^\eta}(t,s) = I$ when $\cil{s} >\flr{t}$. In this case, we have $\flr{t} \leq s$ and therefore $\norm{\ddt \Rt^\eta(t,s)} \leq \prn{\cstlbd + \cstloss} \norm{\Rt^\eta(\flr{t}, s)} \leq (\cstlbd + \cstloss)$. We can then control
\begin{align}
\norm{\Rt^\eta(t,s) - \brk{\Rt^\eta}(t,s)} & = \norm{\Rt^\eta(t,s) - \Rt^\eta(s,s)} \leq \eta \prn{\cstlbd + \cstloss}\, . \label{eq:mid-transformA-eta-2}
\end{align}
We can then assume $ \cil{s} \leq \flr{t}$. One then can derive
\begin{align}
\norm{\Rt^\eta(t,s) - \Rt^\eta(\flr{t}, s)} &\leq \eta \sup_{\flr{t} \leq s' \leq t} \norm{\frac{\de }{\de s'} \Rt^\eta(s', s)} \nonumber \\
& \leq \eta \cdot \norm{ -(\Lambda^{\flr{t}} + \Gamma^{\flr{t}}_\eta) \Rt^\eta(\flr{t},s) - \int_s^{\flr{t}} R_{\ell}^\eta(t,s') \Rt^\eta(\flr{s'},s) \de s'}  \nonumber \\
& \leq \eta \cdot \brc{\prn{\cstlbd + \cstloss} \pRt(T) + T \pRl(T) \pRt(T)} =: \wb{h}_3(\eta) \, . \label{eq:mid-transformA-eta-2.5}
\end{align}
In particular, take $t = \cil{s} - \epsilon$ and let $\epsilon \to 0$, it follows then
\begin{align}
\norm{\Rt^\eta(\cil{s}, s) - \Rt^\eta(s,s)} = \norm{\Rt^\eta(\cil{s}, s) - \Rt^\eta(\cil{s},\cil{s})} \leq \wb{h}_3(\eta) \, . \label{eq:mid-transformA-eta-3}
\end{align}
Note that for all $t \geq \cil{s}$,
\begin{align}
& \ddt \norm{\Rt^\eta(t, s) - \Rt^\eta(t, \cil{s})} \nonumber \\
& \leq \norm{\ddt \Rt^\eta(t, s) - \ddt \Rt^\eta(t, \cil{s})} \nonumber \\
& = \bigg\| -(\Lambda^{\flr{t}} + \Gamma^{\flr{t}}_\eta) \prn{\Rt^\eta(\flr{t},s) - \Rt^\eta(\flr{t}, \cil{s})} - \int_{\cil{s}}^{\flr{t}} R_{\ell}^\eta(t,s') \prn{\Rt^\eta(\flr{s'},s) -   \Rt^\eta(\flr{s'},\cil{s})}\de s' \nonumber \\
& \qquad - \int_s^{\cil{s}}  R_{\ell}^\eta(t,s') \Rt^\eta(\flr{s'},s) \de s' \bigg\| \nonumber \\
& \leq (\cstlbd + \cstloss)\norm{\Rt^\eta(\flr{t},s) - \Rt^\eta(\flr{t}, \cil{s})} + \int_{\cil{s}}^{\flr{t}} \pRl(t-s') \norm{\Rt^\eta(\flr{s'},s) - \Rt^\eta(\flr{s'}, \cil{s})} \de s' \nonumber \\
& \qquad + \eta \pRl(T) \pRt(T) \, .
\end{align}
Similar to what we did in Eq.~\eqref{eq:argument-lambda-bar}, taking $\wb{\lambda}$ such that $\int_0^\infty e^{-\wb{\lambda}t}\pRl(t) \de \leq \cstlbd + \cstloss$ gives us
\begin{align}
& e^{-\wb{\lambda}t}\ddt \norm{\Rt^\eta(t, s) - \Rt^\eta(t, \cil{s})} \nonumber \\
& \leq (\cstlbd + \cstloss) e^{-\wb{\lambda}t} \norm{\Rt^\eta(\flr{t},s) - \Rt^\eta(\flr{t}, \cil{s})} + \int_{\cil{s}}^{\flr{t}} e^{-\wb{\lambda}(t-s')} \pRl(t-s') \cdot e^{-\wb{\lambda}s'} \norm{\Rt^\eta(\flr{s'},s) - \Rt^\eta(\flr{s'}, \cil{s})} \de s' \nonumber \\
& \qquad + e^{-\wb{\lambda}t} \eta \pRl(T) \pRt(T) \nonumber \\
& \leq (\cstlbd + \cstloss) e^{-\wb{\lambda}\flr{t}} \norm{\Rt^\eta(\flr{t},s) - \Rt^\eta(\flr{t}, \cil{s})} + \int_{\cil{s}}^{\flr{t}} e^{-\wb{\lambda}(t-s')} \pRl(t-s') \cdot e^{-\wb{\lambda}\flr{s'}} \norm{\Rt^\eta(\flr{s'},s) - \Rt^\eta(\flr{s'}, \cil{s})} \de s' \nonumber \\
& \qquad + e^{-\wb{\lambda}t} \eta \pRl(T) \pRt(T) \nonumber \\
& \leq 2(\cstlbd + \cstloss) \cdot \sup_{\cil{s} \leq s' \leq t} e^{-\wb{\lambda}s'} \norm{\Rt^\eta(s',s) - \Rt^\eta(s', \cil{s})} + e^{-\wb{\lambda}t} \eta \pRl(T) \pRt(T) \, .
\end{align}
Further, this allows us to derive by taking in Eq.~\eqref{eq:mid-transformA-eta-3}
\begin{align}
& e^{-2(\cstlbd + \cstloss) t - \wb{\lambda}t} \norm{\Rt^\eta(t, s) - \Rt^\eta(t, \cil{s})} \nonumber \\
&\leq \norm{\Rt^\eta(\cil{s}, s) - \Rt^\eta(\cil{s}, \cil{s})} + \int_0^t e^{- 2\prn{\cstlbd + \cstloss}s -\wb{\lambda} s} \eta \pRl(T) \pRt(T) \de s  \nonumber \\
& \leq \wb{h}_3(\eta) + \eta T \pRl(T) \pRt(T) \, .
\end{align}
This implies for any $\lambda > 2(\cstlbd + \cstloss) + \wb{\lambda}$,
\begin{align}
e^{-\lambda t} \norm{\Rt^\eta(t, s) - \Rt^\eta(t, \cil{s})} \leq \wb{h}_3(\eta) + \eta T \pRl(T) \pRt(T) \, . \label{eq:mid-transformA-eta-4}
\end{align}
Combining Eqs.~\eqref{eq:mid-transformA-eta-2}, \eqref{eq:mid-transformA-eta-2.5} and \eqref{eq:mid-transformA-eta-4}, we obtain for any $0 \leq s \leq t \leq T$,
\begin{align}
e^{-\lambda t}\norm{\Rt^\eta(t,s) - \brk{\Rt^\eta}(t,s)} \leq \max \brc{\eta \prn{\cstlbd + \cstloss},  2\wb{h}_3(\eta) + \eta T \pRl(T) \pRt(T)} =: \wb{h}_4(\eta) \, . \label{eq:mid-transformA-eta-5}
\end{align}
Next we control the term $\norm{\bRt^\eta(t,s) - \Rt^\eta(t,s)}$, by definition and the fact that $\Gamma_\eta^{\flr{t}} = \Gamma_\eta^t$ we have
\begin{align}
& \ddt \norm{\bRt^\eta(t, s) - \Rt^\eta(t, s)} \nonumber \\
& \leq \norm{\ddt \bRt^\eta(t, s) - \ddt \Rt^\eta(t, s)} \nonumber \\
& = \norm{-(\Lambda^t + \Gamma^t_\eta) \bRt^\eta(t,s)  - \int_s^t R_{\ell}^\eta(t,s') \bRt^\eta(s',s)  \de s' + (\Lambda^{\flr{t}} + \Gamma^{t}_\eta) \Rt^\eta(\flr{t},s) + \int_s^{\flr{t}} R_{\ell}^\eta(t,s') \Rt^\eta(\flr{s'},s) \de s' } \nonumber \\
& \leq \norm{-(\Lambda^t + \Gamma^t_\eta) \prn{\bRt^\eta(t,s) - \Rt^\eta(t,s)} -(\Lambda^t + \Gamma^t_\eta) \prn{\Rt^\eta(t,s) - \Rt^\eta(\flr{t},s)} - \prn{\Lambda^t - \Lambda^{\flr{t}}} \Rt^\eta(\flr{t},s)} \nonumber \\
& \qquad + \bigg\|-\int_s^{\flr{t}} \Rl^\eta(t,s') \prn{\bRt^\eta(s',s) - \Rt^\eta(s',s)} \de s'  -\int_s^{\flr{t}} \Rl^\eta(t,s') \prn{\Rt^\eta(s',s) - \Rt^\eta(\flr{s'},s)} \de s' \nonumber \\
& \qquad - \int_{\flr{t}}^t \Rl^\eta(t,s') \bRt^\eta(s',s) \de s' \bigg\| \nonumber \\
& \leq (\cstlbd + \cstloss) \norm{\bRt^\eta(t,s) - \Rt^\eta(t,s)} +  (\cstlbd + \cstloss) \norm{\Rt^\eta(t,s) - \Rt^\eta(\flr{t},s)} + \eta \cstlbd \pRt(T) \nonumber \\
& \qquad + \int_s^{\flr{t}} \pRl(t-s') \norm{\bRt^\eta(s',s) - \Rt^\eta(s',s)} \de s'  + T \pRl(T) \sup_{s \leq s' \leq t} \norm{\Rt^\eta(s',s) - \Rt^\eta(\flr{s'},s)} \nonumber \\
& \qquad + \eta \pRl(T) \pRt(T) \, .
\end{align}
Further by Eq.~\eqref{eq:mid-transformA-eta-2.5}, we can obtain
\begin{align}
& \ddt \norm{\bRt^\eta(t, s) - \Rt^\eta(t, s)} \nonumber \\
& \leq  (\cstlbd + \cstloss) \norm{\bRt^\eta(t,s) - \Rt^\eta(t,s)} +  \int_s^{\flr{t}} \pRl(t-s') \norm{\bRt^\eta(s',s) - \Rt^\eta(s',s)} \de s' \nonumber \\
& \qquad + \eta \prn{\cstlbd \pRt(T) + \pRl(T) \pRt(T)} + \prn{\cstlbd + \cstloss + T \pRl(T)} \wb{h}_3(\eta) \, .
\end{align}
We get the exact same type of inequality as in Eq.~\eqref{eq:mid-transformA-eta-1.5} and we can repeat the same argument and get for all $\lambda > 2(\cstlbd + \cstloss) + \wb{\lambda}$
\begin{align}
e^{-\lambda t} \norm{\bRt^\eta(t, s) - \Rt^\eta(t, s)} \leq \eta T\prn{\cstlbd \pRt(T) + \pRl(T) \pRt(T)} + T\prn{\cstlbd + \cstloss + T \pRl(T)} \wb{h}_3(\eta) =: \wb{h}_5(\eta) \label{eq:mid-transformA-eta-6} \, .
\end{align}
Putting together Eqs.~\eqref{eq:mid-transformA-eta-5} and \eqref{eq:mid-transformA-eta-6} yields
\begin{align}
\lbddst{\bRt^\eta}{\brk{\Rt^\eta}} & \leq \sup_{0 \leq s \leq t \leq T} e^{-\lambda t} \norm{\bRt^\eta(t, s) - \brk{\Rt^\eta}(t, s)} \leq \wb{h}_4(\eta) + \wb{h}_5(\eta) =: \wb{h}_6(\eta)\,.
\end{align}
Clearly $\eta \to 0$ we have $\wb{h}_6(\eta) \to 0$. The proof is completed by taking $\wb{h}(\eta) = \max \brc{\wb{h}_2(\eta), \wb{h}_6(\eta)}$.
%
%
\subsection{Proof of Lemma \ref{lemma:Tight}}
\label{app:Tightness}

The claim of this lemma follows by establishing separately the following
two statements (possibly after adjusting the constants $M(\eps)$):
\begin{align}
&\prob\Big(\est{\mu}^{(n)}\big(\|\theta^0\|>M(\eps)\big)\ge \eps\mbox{ for infinitely many }n\Big) = 0\, ,\label{eq:Tightness1}\\
&\prob\Big(\est{\mu}^{(n)}\big( \|\rev{\theta^{[0,T]}}\|_{C^{0,\alpha}}>M(\eps) \big)\ge \eps
\mbox{ for infinitely many }n\Big) = 0\, .\label{eq:Tightness2}
\end{align}

We begin by Eq.~\eqref{eq:Tightness1}:
\begin{align*}
\est{\mu}^{(n)}\big(\|\theta^0\|>M(\eps)\big) 
= \frac{1}{d}\sum_{i=1}^d \bfone_{\{\|\theta_i^0\|>M \}}
\le \frac{1}{dM^2}\sum_{i=1}^d \|\theta_i^0\|^2 =  \frac{1}{dM^2}\|\btheta^0\|_F^2\, .
\end{align*}
Since by assumption $\Ep_{\widehat{\mu}_{\theta^0}} \brk{\norm{\theta^0}^2}
\to \Ep_{\mu_{\theta^0}} \brk{\norm{\theta^0}^2} < \infty$,
there exists a constant $C$ such that 
$\|\btheta^0\|_F^2/d\le C$ for all $n$ large enough. Therefore 
\begin{align*}
\est{\mu}^{(n)}\big(\|\theta^0\|>M(\eps)\big) \le \frac{C}{M^2}
\end{align*}
for all but finitely many values of $n$, which yields the claim \eqref{eq:Tightness1}.

Next, to prove Eq.~\eqref{eq:Tightness2}, we begin by noting that,
for any differentiable function $f:[0,T]\to \reals^k$, ans any $0\le s\le t\le T$, we have
\begin{align*}
\big\|f(t)-f(s)\big\| &= \Big\|\int_0^Tf'(u)\bfone_{[t,s]}(u)\, \de u\Big\|\\
& \le (t-s)^{1/2}\Big(\int_0^T\|f'(u)\|^2\de u\Big)^{1/2}\, ,
\end{align*}
which implies $\|f\|_{C^{0,1/2}}\le \|f'\|_{L^2}$.
Therefore
\begin{align*}
\est{\mu}^{(n)}\big( \|\rev{\theta^{[0,T]}}\|_{C^{0,1/2}}>M \big) & \le 
\est{\mu}^{(n)}\big( \|\rev{\dot\theta^{[0,T]}}\|_{L^2}>M \big)
= \frac{1}{d}\sum_{i=1}^d \bfone_{\{\|\rev{\dot\theta_i^{[0,T]}}\|_{L^2}>M \}}\\
& \le \frac{1}{M^2 d}\sum_{i=1}^d\|\rev{\dot\theta_i^{[0,T]}}\|^2_{L^2}
= \frac{1}{M^2 d}\sum_{i=1}^d \int_0^T\|\dot\theta_i^t\|^2\de t\\
& =  \frac{1}{M^2 d} \int_0^T\Big\|\btheta^t\Lambda^t+ \frac{1}{\delta}\bX\bell_t(\bX\btheta^t;\bz)
\Big\|_F^2\de t\, ,
\end{align*}
where in the last step we used the definition of the flow, per Eq.~\eqref{eq:GeneralFlow}.

By the Bai-Yin law, there exists $C=C(\delta)$ such that almost surely
$\|\bX\|\le  C(\delta)$ for all but finitely man values of $n$. Using the 
conditions on $\Lambda^t$, $\bell^t$ in Assumption \ref{ass:Normal}, we deduce
that, for all but finitely many values of $n$,
\begin{align}
\est{\mu}^{(n)}\big( \|\rev{\theta^{[0,T]}}\|_{C^{0,1/2}}>M \big) & \le 
\frac{C}{M^2 d} \int_0^T\big(\|\btheta^t\|_F^2+ \|\bell_t(\bfzero;\bz)\|_F^2\big)\, \de t\, .
\label{eq:TightnessAlmostDone}
\end{align}
It is therefore sufficient to bound $\|\btheta^t\|_F^2$. 
This follows from the Lipschitz property of $\ell$ and the fact that $\|\bX\|_{\op}$
is bounded, with high probability, implying that, for all but finitely many values of 
$n$, and for all $t$,
\begin{align*}
\|\btheta^t\|\le Ce^{Ct}\big(\|\btheta^0\|_F+\|\bell_t(\bfzero;\bz)\|_F\big)\, .
\end{align*}
By the assumptions on $\ell_t$ and $\bz$, we have 
$\|\bell_t(\b0;\bz)\|^2\le C(\|\bell_t(\bfzero;\bfzero)\|^2+\|\bz\|^2)\le C'd$.
Substituting these bounds in Eq.~\eqref{eq:TightnessAlmostDone}, we obtain
\begin{align}
\est{\mu}^{(n)}\big( \|\rev{\theta^{[0,T]}}\|_{C^{0,1/2}}>M \big) & \le  
\frac{Ce^{CT}}{M^2 d}\big(\|\btheta^0\|_F^2+ \|\bell_t(\bfzero;\bz)\|_F^2\big)\, \de t
\le \frac{C'}{M^2}\, ,
\end{align}
where the last step follows for all $n$ large enough from the assumptions
$\Ep_{\widehat{\mu}_{\theta^0}} \brk{\norm{\theta^0}^2} \to \Ep_{\mu_{\theta^0}} \brk{\norm{\theta^0}^2} < \infty$ and 
$\Ep_{\widehat{\mu}_{z}} \brk{\norm{z}^2} \to \Ep_{\mu_{z}} \brk{\norm{z}^2} < \infty$.
This concludes the proof of Eq.~\eqref{eq:Tightness2}.

\section{Proofs for fixed-point equations}
\label{app:fixed-pt}

\subsection{Proof of Corollary \ref{cor:planted-se}}

We write the flow Eq.~\eqref{eq:full-flow} as
\begin{equation}
\frac{\de \bar \btheta^t}{\de t} 
=
- \bar \btheta^t \bar \Lambda^{t, \sT} - \frac1\delta \bX^\top \bar \bell_t(\bX \bar \btheta^t;\bz),
\end{equation}
where
\begin{equation}
\label{eq:planted-Lam-ell}
\bar \Lambda^t
= 
\diag(\bar \Lambda^t_{11},0),
\qquad 
\bar \ell_t(\bX \bar \btheta^t;\bz)
=
\begin{pmatrix}
	\bar \ell_t(\bX\bar \btheta^t;\bz)_1 \\ 0   
\end{pmatrix},
\end{equation}
initialized at $\bar \btheta^0 = (\btheta^0,\btheta^*)$.
Here, we have identified $\Lambda^t = \bar \Lambda^t_{11}$ 
and $\bell_t(\bX \btheta^t,\bX \btheta^*;\bz) = \bar \bell_t(\bX\btheta^t,\bX \btheta^*;\bz)_1$.
Corollary \ref{cor:planted-se} will follow from applying Theorem \ref{thm:StateEvolution} to the special case that $\bar \Lambda^t$ and $\bar \ell_t$ take the special form given above, namely, that they contain zeros in certain coordinates.
We will show then that in this case, the unique solution to the integro-differential equations \eqref{eq:int-diff-1} and \eqref{eq:int-diff-2} are of the form
\begin{equation}\label{eq:planted-soln-form}
\begin{gathered}
	\bar \theta^t
	=
	\begin{pmatrix} 
		\bar \theta^t_1 \\ \bar \theta^0_2
	\end{pmatrix},
	\qquad 
	\bar r^t
	=
	\begin{pmatrix} 
		\bar r_1^t \\ \bar w^0_2
	\end{pmatrix},
	\qquad 
	\bar u^t
	=
	\begin{pmatrix} 
		\bar u_1^t \\ 0
	\end{pmatrix},
	\qquad 
	\bar w^t
	=
	\begin{pmatrix} 
		\bar w_1^t \\ \bar w_2^0
	\end{pmatrix},
	\\
	\bar R_\theta(t,s)
	=
	\begin{pmatrix} 
		\bar R_\theta(t,s)_{11} & \bar R_\theta(t,s)_{12} \\ 0 & I_k 
	\end{pmatrix},
	\qquad 
	\bar R_\ell(t,s)
	=
	\begin{pmatrix} 
		\bar R_\ell(t,s)_{11} & \bar R_\ell(t,s)_{12} \\ 0 & 0 
	\end{pmatrix},
	\qquad
	\bar \Gamma^t
	=
	\begin{pmatrix} 
		\bar \Gamma^t_{11} & \bar \Gamma^t_{12} \\ 0 & 0 
	\end{pmatrix},
	\\
	\frac{\partial \bar \theta^t}{\partial \bar u^s}
	= 
	\begin{pmatrix} 
		\Big(\frac{\partial \bar \theta^t}{\partial \bar u^s}\Big)_{11} & \Big(\frac{\partial \bar \theta^t}{\partial \bar u^s}\Big)_{12} \\ 
		0 & I_k 
	\end{pmatrix},
	\qquad 
	\frac{\partial \bar \ell_t(\bar r^t;z)}{\partial \bar w^s}
	=
	\begin{pmatrix} 
		\Big(\frac{\partial \bar \ell_t(\bar r^t;z)}{\partial \bar  w^s}\Big)_{11} & \Big(\frac{\partial \bar  \ell_t(\bar r^t;z)}{\partial \bar  w^s}\Big)_{12} \\ 0 & 0 
	\end{pmatrix},
	\\
	\nabla_r \bar \ell_t(\bar r^t;z)
	=
	\begin{pmatrix} 
		\nabla_r \bar \ell_t(\bar r^t;z)_{11} & \nabla_r \bar \ell_t(\bar r^t;z)_{12} \\ 0 & 0 
	\end{pmatrix},
	\\
	\bar C_\theta(t,s)
	=
	\begin{pmatrix}
		\bar C_\theta(t,s)_{11} & \bar C_\theta(t,0)_{12} \\ 
		\bar C_\theta(0,t)_{21} & \bar C_\theta(0,0)_{22}
	\end{pmatrix},
	\qquad 
	\bar C_\ell(t,s)
	= 
	\begin{pmatrix}
		\bar C_\ell(t,s)_{11} & 0 \\ 
		0 & 0
	\end{pmatrix}.
\end{gathered}
\end{equation}
Indeed, it is immediate that $\nabla_r \bar \ell_t(\bar r^t;z)$ is of the claimed form.
Then, by Eqs.~\eqref{eq:def-Gamma}, \eqref{eq:def-C-l}, and \eqref{eq:def-derivative-l},
we must have that $\frac{\partial \bar \ell_t(\bar r^t;z)}{\partial \bar w^s}$, $\bar \Gamma^t$, $\bar C_\ell$, and $\bar R_\ell$ are of the claimed form (i.e., they have zeros in the locations specified by the preceding display).
Thus, $\bar u^t_2 = 0$ for all $t$.
Moreover, by Eq.~\eqref{eq:def-derivative-t},
we must have that the final $k$-rows of $\frac{\de}{\de t} \frac{\partial \bar \theta^t}{\partial \bar u^s}$ are 0 for all $t$, whence by the \rev{initial} condition $\frac{\partial \bar \theta^t}{\partial \bar u^s} = I_{2k}$ we have that $\frac{\partial \bar \theta^t}{\partial \bar u^s}$ and thus also $\bar R_\theta(t,s)$ are of the claimed form.
Then, by Eq.~\eqref{eq:def-theta},
$\Big(\frac{\de}{\de t} \bar \theta^t\Big)_2$ is equal to zero for all $t$, whence $\bar \theta^t_2 = \bar \theta^0_2$, so that $\bar \theta^t$ is of the claimed form. 
Then, by Eq.~\eqref{eq:def-C-t}, $\bar C_\theta(t,s)$ is of the claimed form (because $\bar \theta_2^t = \bar \theta_2^0$).
Because $\bar w^t$ has covariance kernel $\bar C_\theta$,
we have that $\bar w^t_2 = \bar w^0_2$ for all $t$.
Then, by Eq.~\eqref{eq:def-r},
we have that $\bar r^t_2 = \bar w^t_2 = \bar w^0_2$, so that $\bar w^t$ is of the claimed form.
We thus conclude that the unique solution to Eqs.~\eqref{eq:int-diff-1} and \eqref{eq:int-diff-2} is of the form given in the preceding display.

To complete the proof of Corollary \ref{cor:planted-se},
we must show that $\bar \theta^t,\bar r^t,\bar u^t,\bar w^t,\bar R_\theta,\bar R_\ell,\bar \Gamma^t,\bar C_\theta,\bar C_\ell$ of the form \eqref{eq:planted-soln-form} solves Eqs.~\eqref{eq:int-diff-1} and \eqref{eq:int-diff-2} if and only if it solves Eqs.~\eqref{eq:planted-se} and \eqref{eq:planted-derivs}.
Indeed, plugging \eqref{eq:planted-soln-form} into Eqs.~\eqref{eq:int-diff-1} and \eqref{eq:int-diff-2} and simplifying where possible gives
\begin{equation}\label{eq:full-se-eqns}
\begin{aligned}
	\frac{\de}{\de t} \bar \theta_1^t 
	&= - (\bar \Lambda^t_{11} + \bar \Gamma^t_{11})\bar \theta_1^t 
	- \int_0^t \bar R_\ell(t,s)_{11} \bar \theta^s_1 \de s
	- \Big(\bar \Gamma^t_{12} + \int_0^t \bar R_\ell(t,s)_{12} \de s\Big) \bar \theta^s_2
	+ \bar u_1^t,
	\\
	\frac{\de}{\de t} \bar \theta_2^0 
	&= 0,
	\\
	\bar r^t_1
	&=
	-\frac1\delta \int_0^t \bar R_\theta(t,s)_{11} \bar \ell_s(\bar r^s_1,\bar w^0_2;z)_1 \de s + \bar w^t_1,
	\\
	\bar r^t_2 
	&= \bar w^t_2 = \bar w^0_2, 
	\\ 
	\bar R_\theta(t,s)_{11} 
	&= \E\Big[
	\Big(\frac{\partial \bar \theta^t}{\partial \bar u^s}\Big)_{11}
	\Big],
	\\
	\bar R_\theta(t,s)_{12} 
	&= \E\Big[
	\Big(\frac{\partial \bar \theta^t}{\partial \bar u^s}\Big)_{12}
	\Big],
	\\ 
	\bar R_\ell(t,s)_{11}
	&=
	\E\Big[
	\Big(\frac{\partial \bar \ell_t(\bar r^t;z)}{\partial \bar w^s}\Big)_{11}
	\Big],
	\\
	\bar R_\ell(t,s)_{12}
	&=
	\E\Big[
	\Big(\frac{\partial \bar \ell_t(\bar r^t;z)}{\partial \bar w^s}\Big)_{12}
	\Big],
	\\
	\bar \Gamma^t_{11}
	&=
	\E\Big[
	\nabla_{r_1} \bar \ell_t(\bar r^t_1,\bar w^0_2;z)_1
	\Big],
	\\
	\bar \Gamma^t_{12}
	&=
	\E\Big[
	\nabla_{w^0_2} \bar \ell_t(\bar r^t_1,\bar w^0_2;z)_1
	\Big],
	\\ 
	\bar C_\theta(t,s)_{11}
	&= \E[\bar \theta_1^t(\bar \theta_1^t)^\top],
	\qquad 
	\bar C_\theta(t,0)_{12}
	= \E[\bar \theta_1^t(\bar \theta_2^0)^\top],
	\qquad 
	\bar C_\theta(0,0)_{22}
	= \E[\bar \theta_2^0(\bar \theta_2^0)^\top],
	\\ 
	\bar C_\ell(t,s)_{11} &= \E[\bar \ell_t(\bar r^t_1,\bar w_2^0;z)\bar \ell_t(\bar r^t_1,\bar w_2^0;z)^\top]
	\\
	\frac{\de}{\de t} \Big(\frac{\partial \bar \theta^t}{\partial \bar u^s}\Big)_{11}
	&= 
	-(\bar \Gamma^t_{11} + \bar \Lambda^t_{11}) \Big(\frac{\partial \bar \theta^t}{\partial \bar u^s}\Big)_{11}
	- \int_s^t \bar R_\ell(t,s')_{11} \Big(\frac{\partial \bar \theta^{s'}}{\partial \bar u^s}\Big)_{11} \de s',
	\\ 
	\frac{\de}{\de t} \Big(\frac{\partial \bar \theta^t}{\partial \bar u^s}\Big)_{12}
	&= 
	-(\bar \Gamma^t_{11} + \bar \Lambda^t_{11}) \Big(\frac{\partial \bar \theta^t}{\partial \bar u^s}\Big)_{12}
	- \int_s^t \bar R_\ell(t,s')_{11} \Big(\frac{\partial \bar \theta^{s'}}{\partial \bar u^s}\Big)_{12} \de s'
	- \int_s^t \bar R_\ell(t,s')_{12} \de s',
	\\ 
	\Big(
	\frac{\partial \bar \ell_t(\bar r^t;z)}{\partial \bar w^s}
	\Big)_{11}
	&=
	\nabla_{r_1} \bar \ell_t(\bar r^t_1,\bar w^0_2;z)_1
	\cdot 
	\left(
	- \frac1\delta \int_s^t \bar R_\theta(t,s')_{11} \Big(\frac{\partial \bar \ell_{s'}(\bar r^{s'};z)}{\partial \bar w^s}\Big)_{11} \de s' 
	-
	\frac1\delta \bar R_\theta(t,s)_{11} \nabla_{r_1}\bar \ell_s(\bar r^s_1,\bar w^0_2;z)_{11}
	\right),
	\\ 
	\Big(
	\frac{\partial \bar \ell_t(\bar r^t;z)}{\partial \bar w^s}
	\Big)_{12}
	&=
	\nabla_{r_1} \bar \ell_t(\bar r^t_1,\bar w^0_2;z)_1
	\cdot 
	\left(
	- \frac1\delta \int_s^t \bar R_\theta(t,s')_{11} \Big(\frac{\partial \bar \ell_{s'}(\bar r^{s'};z)}{\partial \bar w^s}\Big)_{12} \de s' 
	-
	\frac1\delta \bar R_\theta(t,s)_{11} \nabla_{w_2^0} \bar \ell_s(\bar r^s_1,\bar w^0_2;z)_1
	\right).
\end{aligned}
\end{equation}
By Theorem \ref{thm:UniqueExist},
there exists a unique solution to the equations in the previous display.

We can simplify the above equations. 
In particular, integrating the last line and adding $\nabla_{w_2^0} \bar \ell_t(\bar r_1^t,\bar w_2^0;z)_1$ to both sides gives
\begin{equation}\label{eq:Rell-simplification}
\begin{aligned}
	&\nabla_{w_2^0}\bar \ell_t(\bar r^t_1, \bar w^0_2;z)_1
	+ 
	\int_0^t \Big(\frac{\partial \bar \ell_t(\bar r^t;z)}{\partial \bar w^s}\Big)_{12} \de s
	\\
	&=\nabla_{r_1} \bar \ell_t(\bar r^t_1,\bar w^0_2;z)_1
	+ 
	\int_0^t 
	\nabla_{r_1} \bar \ell_t(\bar r^t_1,\bar w^0_2;z)_1
	\cdot 
	\left(
	- \frac1\delta \int_s^t \bar R_\theta(t,s')_{11} \Big(\frac{\partial \bar \ell_{s'}(\bar r^{s'};z)}{\partial \bar  w^s}\Big)_{12} \de s' 
	-
	\frac1\delta \bar R_\theta(t,s)_{11} \nabla_{w_2^0}\bar \ell_s(\bar r^s_1,\bar w^0_2;z)_1
	\right) \de s
	\\
	&=\nabla_{w_2^0} \bar \ell_t(\bar r^t_1,\bar w^0_2;z)_1
	- \frac1\delta \nabla_{r_1} \bar \ell_t(\bar r^t_1,\bar w^0_2;z)_1 \cdot 
	\int_0^t 
	\left(
	\bar R_\theta(t,s)_{11}
	\left(
	\nabla_{w_2^0} \bar \ell_s(\bar r^s_1,\bar w^0_2;z)_1
	+
	\int_0^{s} \Big(\frac{\partial \bar \ell_{s}(\bar r^{s};z)}{\partial \bar w^{s'}}\Big)_{12} \de s' 
	\right)
	\right) \de {s}.
\end{aligned}
\end{equation}
Define 
\begin{equation}
\frac{\partial \bar \ell(\bar r^t_1,\bar w^*;z)}{\partial \bar w^*}
:=
\nabla_{w_2^0}\bar \ell_t(\bar r^t_1, \bar w^0_2;z)_1
+ 
\int_0^t \Big(\frac{\partial \bar \ell_t(\bar r^t;z)}{\partial \bar w^s}\Big)_{12} \de s.  
\end{equation}
Then we get
\begin{equation}
\frac{\partial \bar \ell(\bar r^t_1,\bar w^*;z)}{\partial \bar w^*}
=
- \frac1\delta \nabla_{r_1} \bar \ell_t(\bar r^t_1,\bar w^0_2;z)_1 \cdot 
\int_0^t 
\bar R_\theta(t,s)_{11}
\frac{\partial \bar \ell(\bar r^s_1,\bar w^*;z)}{\partial \bar w^*}
\de {s}
+
\nabla_{w_2^0} \bar \ell_t(\bar r^t_1,\bar w^0_2;z)_1.
\end{equation}
Taking expectations, we get
\begin{equation}
\bar \Gamma_{12}^t + \int_0^t \bar R_\ell(t,s)_{12} \de s
= 
\E\Big[\frac{\partial \bar \ell(\bar r^t_1,\bar w^*;z)}{\partial \bar w^*}\Big].
\end{equation}
We thus see that the state evolution equations \eqref{eq:int-diff-1} and \eqref{eq:int-diff-2} applied to $\bar \Lambda^t$ and $\bar \ell_t$ as in Eq.~\eqref{eq:planted-Lam-ell} gives the planted state evolution Eqs.~\eqref{eq:planted-se} and \eqref{eq:planted-derivs} 
under the change of variables (with the notation appearing in Eq.~\eqref{eq:planted-se} and \eqref{eq:planted-derivs} on the right)
\begin{equation}
\begin{gathered}
	\theta^t = \bar \theta_1^t,
	\qquad 
	\Lambda^t = \bar \Lambda_{11}^t,
	\qquad 
	\Gamma^t = \bar \Gamma_{11}^t,
	\qquad 
	R_\ell(t,s) = \bar R_\ell(t,s)_{11} 
	\\
	R_\ell(t,*) = \bar \Gamma_{12}^t + \int_0^t \bar R_\ell(t,s)_{12} \de s,
	\qquad 
	u^t = \bar u_1^t,
	\qquad 
	r^t = \bar r_1^t,
	\qquad 
	R_\theta(t,s) = \bar R_\theta(t,s)_{11},
	\\
	w^* = w^0_2,
	\qquad 
	w^t = w^t_1,
	\\
	\frac{\partial \theta^t}{\partial u^s} = \Big(\frac{\partial \bar \theta^t}{\partial \bar u^s}\Big)_{11},
	\qquad 
	\ell_s(r^s,w^*;z) 
	= 
	\bar \ell_s(r^s,w^*;z)_1,
	\qquad 
	R_\ell(t,s) 
	=
	\bar R_\ell(t,s)_{11},
	\\
	\frac{\partial \ell_t(r^t,w^*;z)}{\partial w^s}
	=
	\Big(\frac{\partial \bar \ell_t(\bar r^t;z)}{\partial \bar w^s}\Big)_{11},
	\qquad 
	C_\theta(t,s) = \bar C_\theta(t,s)_{11}, \quad 0 \leq s \leq t,
	\\
	C_\theta(t,*) = \bar C_\theta(t,0)_{12}, 
	\qquad 
	C_\theta(*,*) = \bar C_\theta(0,0)_{22}.
\end{gathered}
\end{equation}
Note that $\bar R_\theta(t,s)_{12}$, though defined by the state evolution equations \eqref{eq:int-diff-1} and \eqref{eq:int-diff-2}, plays no role in the dynamics of Eqs.~\eqref{eq:planted-se} and \eqref{eq:planted-derivs}, so is omitted.

In summary, we have shown that the unique solution to Eqs.~\eqref{eq:int-diff-1} and \eqref{eq:int-diff-2}with inputs \eqref{eq:planted-Lam-ell} gives a solution to Eqs.~\eqref{eq:planted-se} and \eqref{eq:planted-derivs}.
Thus, we have shown existence of a solution to these equations. 
Uniqueness requires a few more steps of argumentation. 
We have already shown that \eqref{eq:full-se-eqns} have a unique solution.
Note that any solution to Eqs.~\eqref{eq:planted-se} and \eqref{eq:planted-derivs} generates a solution to \eqref{eq:full-se-eqns} using the change of variables in the previous display, as well as setting 
\begin{equation}
\begin{gathered}
	\bar \Gamma^t_{12}
	=
	\E\Big[
	\nabla_{w^*} \ell_t(r^t, w^*;z)
	\Big],
	\qquad 
	\bar R_\ell(t,s)_{12}
	= 
	\E\Big[
	\Big(\frac{\partial \bar \ell_t(\bar r^t;z)}{\partial \bar w^s}\Big)_{12}
	\Big],
	\\
	\Big(
	\frac{\partial \bar \ell_t(\bar r^t;z)}{\partial \bar w^s}
	\Big)_{12}
	=
	\nabla_{r_1} \bar \ell_t(\bar r^t_1,\bar w^0_2;z)_1
	\cdot 
	\left(
	- \frac1\delta \int_s^t \bar R_\theta(t,s')_{11} \Big(\frac{\partial \bar \ell_{s'}(\bar r^{s'};z)}{\partial \bar w^s}\Big)_{12} \de s' 
	-
	\frac1\delta \bar R_\theta(t,s)_{11} \nabla_{w_2^0} \bar \ell_s(\bar r^s_1,\bar w^0_2;z)_1
	\right).
\end{gathered}
\end{equation}
By \eqref{eq:Rell-simplification},
we have that Eq.~\eqref{eq:def-derivative-dw} is satisfied with $\nabla_{w^*} \ell_t(r^s, w^*;z) + \int_0^s \Big(
\frac{\partial \bar \ell_t(\bar r^{s'};z)}{\partial \bar w^{s'}}
\Big)_{12} \de s'$ in place of $\frac{\partial \ell(r^t,w^*;z)}{\partial w^*}$.
This implies that 
\begin{equation}
\frac{\partial \ell(r^s,w^*;z)}{\partial w^*}
= 
\nabla_{w^*} \ell_t(r^s, w^*;z) + \int_0^s \Big(
\frac{\partial \bar \ell_t(\bar r^{s'};z)}{\partial \bar w^{s'}}
\Big)_{12} \de s',
\end{equation}
and that $R_\ell(t,*) = \bar \Gamma_{12}^t + \int_0^t \bar R_\ell(t,s)_{12} \de s$.
We have thus generated from Eqs.~\eqref{eq:planted-se} and \eqref{eq:planted-derivs} a solution to Eqs.~\eqref{eq:full-se-eqns}.
Because distinct solutions to Eqs.~\eqref{eq:planted-se} and \eqref{eq:planted-derivs} will generate distinct solutions to Eqs.~\eqref{eq:full-se-eqns},
and the solution to Eqs.~\eqref{eq:full-se-eqns} is unique,
we conclude the solution to Eqs.~\eqref{eq:planted-se} and \eqref{eq:planted-derivs} is unique.

\subsection{Proof of Theorem \ref{thm:fixed-pt}: convergence to fixed points}

\begin{proof}[Proof of Theorem \ref{thm:fixed-pt}]
Throughout the proof, we will repeatedly use that for $\mathsf{x} \in \{\theta,\ell\}$,
\begin{equation}
	\lim_{t\rightarrow \infty} \int_0^t \| R_\mathsf{x}(t,t-s) - R_\mathsf{x}(s) \|\de s = 0.
\end{equation}
Indeed, 
for any $t \geq \Delta \geq 0$, we have $\int_0^t \| R_\mathsf{x}(t,t-s) - R_\mathsf{x}(s) \| \de s
\leq \Delta \| R_\mathsf{x}(t,t-\cdot) - R_\mathsf{x}(\cdot) \|_\infty + 2Ce^{-c\Delta}$.
The previous display follows by taking $t \rightarrow \infty $ followed by $\Delta \rightarrow \infty$. 

Theorem \ref{thm:fixed-pt} will hold for 
\begin{equation}
	\label{eq:Rinf-def}
	R_\ell^\infty
	= 
	\Gamma
	+
	\int_0^\infty
	R_\ell(s) \de s,
	\qquad 
	R_\theta^\infty
	=
	\int_0^\infty R_\theta(s) \de s.
\end{equation}
We begin by establishing Eq.~\eqref{eq:fixed-pt-1}.
Note that as $t \rightarrow \infty$,
\begin{equation}
	\begin{aligned}
		\| (\Lambda + \Gamma^t) \theta^t - (\Lambda + \Gamma^\infty) \theta^\infty \|_{L^2}
		&\leq \| \Lambda + \Gamma^\infty \| \| \theta^t - \theta^\infty \|_{L^2}
		+ \|\Gamma - \Gamma^t\| \| \theta^t \|_{L^2} \rightarrow 0,
	\end{aligned}
\end{equation}
and
\begin{equation}
	\begin{aligned}
		&\Big\|
		\int_0^t R_\ell(t,t-s) \theta^{t-s} \de s
		- 
		\int_0^t R_\ell(s) \theta^\infty \de s
		\Big\|_{L^2}
		\\
		&\qquad
		\leq
		\int_0^t 
		\| R_\ell(t,t-s) \| \| \theta^{t-s} - \theta^\infty \|_{L^2}
		\de s
		+
		\int_0^t
		\| R_\ell(t,t-s) - R_\ell(s)\| \de s \, \| \theta^\infty \|_{L^2}
		\\
		&\qquad\leq 
		Ce^{-cs}Ce^{-c(t-s)} + \int_0^t
		\| R_\ell(t,t-s) - R_\ell(s)\| \de s \, \| \theta^\infty \|_{L^2} \rightarrow 0,
	\end{aligned}
\end{equation}
and
\begin{equation}
	\| R_\ell(t,*)\theta^* - R_\ell^* \theta^* \|_{L^2} \rightarrow 0,
	\qquad 
	\| u^t - u^\infty \|_{L^2} \rightarrow 0.
\end{equation}
Combining these bounds,
we conclude that 
\begin{equation}
	\begin{aligned}
		&\Big\|
		\frac{\de}{\de t}\theta^t
		-
		\Big(
		-(\Lambda + \Gamma^\infty)\theta^\infty
		-
		\int_0^t R_\ell(s) \theta^\infty \de s 
		-
		R_\ell^* \theta^*
		+
		u^\infty
		\Big)
		\Big\|_{L^2} \rightarrow 0.
	\end{aligned}
\end{equation}
Because $\Gamma^\infty + \int_0^t R_\ell(s) \theta^\infty \de s \stackrel{L^2}\rightarrow R_\ell^\infty \theta^\infty$,
we conclude that 
\begin{equation}
	\frac{\de}{\de t} \theta^t \stackrel{\mathrm{L^2}}\rightarrow 
	-(\Lambda+R_\ell^\infty)\theta^\infty
	-
	R_\ell^* \theta^*
	+
	u^\infty.
\end{equation}
Because $\theta^t$ stays bounded in $L^2$ as $t \rightarrow \infty$,
we must have that 
\begin{equation}
	0 = -(\Lambda+R_\ell^\infty)\theta^\infty
	-
	R_\ell^* \theta^*
	+
	u^\infty.
\end{equation}
Similarly, as $t \rightarrow \infty$
\begin{equation}
	\begin{aligned}
		&\Big\|
		\int_0^t R_\theta(t,t-s) \ell(r^{t-s},w^*;z) \de s
		-
		\int_0^t R_\theta(s) \ell(r^\infty,w^*;z) \de s
		\Big\|_{L^2}
		\\
		&\qquad 
		\leq 
		\int_0^t \| R_\theta(t,t-s) \| \| \ell(r^{t-s},w^*;z) - \ell(r^\infty,w^*;z) \|_{L^2} \de s
		+
		\int_0^t \| R_\theta(t,t-s) - R_\theta(s) \| \de s\, \| \ell(r^\infty,w^*;z) \|_{L^2}
		\\ 
		&\qquad 
		\leq 
		CLe^{-cs}Ce^{-c(t-s)}
		+
		\int_0^t \| R_\theta(t,t-s) - R_\theta(s) \| \de s\, \| \ell(r^\infty,w^*;z) \|_{L^2}
		\rightarrow 0.
	\end{aligned}
\end{equation}
Because $\int_0^t R_\theta(t,t-s) \de s \rightarrow R_\theta(s) \de s$,
we conclude that 
\begin{equation}
	r^\infty = - \frac1\delta R_\theta^\infty r^\infty + w^\infty.
\end{equation}
Because $u^t \stackrel{L^2}\rightarrow u^\infty$ and $r^t \stackrel{L^2}\rightarrow r^\infty$,
we have $u^\infty \sim \normal(0,C_\ell^\infty/\delta)$,
where 
\begin{equation}
	\begin{aligned}
		C_\ell^\infty = \lim_{t \rightarrow \infty} C_\ell(t,t) = \lim_{t \rightarrow \infty} \E[\ell(r^t,w^*;z)\ell(r^t,w^*;z)^\top] = \E[\ell(r^\infty,w^*;z)\ell(r^\infty,w^*;z)^\top].
	\end{aligned}
\end{equation}
Likewise, 
because $(w^t,w^*) \stackrel{L^2}\rightarrow (w^\infty,w^*)$ and $(\theta^t,\theta^*) \stackrel{L^2}\rightarrow (\theta^\infty,\theta^*)$,
we have $(w^\infty,w^*) \sim \normal(0,C_\theta^\infty)$, 
where 
\begin{equation}
	C_\theta^\infty = \lim_{t\rightarrow \infty} C_\theta(\{t,*\},\{t,*\}) = \lim_{t \rightarrow \infty} \E[(\theta^{t\top},\theta^{*\top})^\top(\theta^{t\top},\theta^{*\top})] = \E[(\theta^{\infty\top},\theta^{*\top})^\top(\theta^{\infty\top},\theta^{*\top})].
\end{equation}
We have finished the proof of Eq.~\eqref{eq:fixed-pt-1} and that $u^\infty \sim \normal(0, \Cl / \delta)$ and $(w^\infty,w^*) \sim \normal(0, \Ct)$.

We now show that 
$R_\ell^\infty,R_\theta^\infty,R_\ell^*$ satisfy Eq.~\eqref{eq:fixed-pt-2}.
By Eq.~\eqref{eq:def-derivative-t},
$\frac{\partial \theta^t}{\partial u^s}$ is deterministic, so $R_\theta(t,s) = \frac{\partial \theta^t}{\partial u^s}$.
First compute
\begin{equation}
	\frac{\de}{\de s}\int_0^s R_\theta(t+s,t+s')\de s'
	=
	I_k 
	+ \int_0^s 
	\Big(
	-(\Lambda + \Gamma^t) R_\theta(t+s,t+s')
	-
	\int_{t+s'}^{t+s}
	R_\ell(t+s,s'')
	R_\theta(s'',t+s')
	\de s''
	\Big)
	\de s',
\end{equation}
where we have used Eq.~\eqref{eq:def-derivative-t}, and that we may exchanged differentiation and integration by Definition \ref{def:exp-conv} (using the boundedness of the derivative).
Taking $s \rightarrow \infty$,
the right-hand side converges to
\begin{equation}
	I_k
	-
	(\Lambda + \Gamma^\infty)
	\int_0^\infty
	R_\theta(s')
	\de s'
	-
	\int_0^\infty R_\ell(s) \de s \int_0^\infty R_\theta(s) \de s,
\end{equation}
where to get the second term, we have used that 
\begin{equation}
	\int_0^s
	\int_{t+s'}^{t+s}
	R_\ell(t+s,s'')
	R_\theta(s'',t+s')
	\de s''
	\de s'
	= 
	\int_0^s
	\int_0^{s-s'}
	R_\ell(t+s,t+s-s')
	R_\theta(t+s-s',t+s-s'-s'')
	\de s''
	\de s',
\end{equation}
and taken $t \rightarrow \infty$ followed by $s \rightarrow \infty$, and used the convergence and decay conditions of $R_\ell,R_\theta$.
Because $R_\theta(t+s,t+s') \leq Ce^{-cs'}$, 
we must have that $\int_0^s R_\theta(t+s,t+s')\de s'$ converges as $s \rightarrow \infty$,
which implies that 
\begin{equation}
	0 = 
	I_k
	-
	(\Lambda + \Gamma^\infty)
	\int_0^\infty
	R_\theta(s')
	\de s'
	-
	\int_0^\infty R_\ell(s) \de s \int_0^\infty R_\theta(s) \de s
	=
	I_k 
	- \Lambda R_\theta^\infty - R_\ell^\infty R_\theta^\infty.
\end{equation}
(because otherwise, we would have that $\int_0^s R_\theta(t+s,t+s')\de s'$ diverges).
This gives us the second equation in Eq.~\eqref{eq:fixed-pt-2}.

Now define
\begin{equation}
	\label{eq:def-Rhat-ell-1}
	\widehat R_\ell^{(1)}(s)
	= 
	\nabla_r \ell(r^\infty,w^*;z) 
	\cdot 
	\prn{- \frac{1}{\delta} \int_0^s R_{\theta}(s-s') \widehat R_\ell(s')  \de s' - \frac 1 \delta R_\theta(s) \nabla_r \ell(r^\infty,w^*;z) }.
\end{equation}
We bound
\begin{equation}
	\begin{aligned}
		&\norm{\int_t^{t+s} R_\theta(t+s,s') \frac{\partial \ell(r^{s'},w^*;z)}{\partial w^s} \de s' -  \int_0^s R_{\theta}(s-s') \widehat R_\ell(s')  \de s'}_{\rev{L^2}}
		\\
		&\qquad \leq 
		\int_t^{t+s} 
		\| R_\theta(t+s,s') \| 
		\norm{
			\frac{\partial \ell(r^{s'},w^*;z)}{\partial w^s}
			-
			\widehat R_\ell(s')
		}_{\rev{L^2}}
		\de s'
		+ 
		\int_t^{t+s}
		\| \widehat R_\ell(s') \|_{\rev{L^2}}
		\| R_\theta(t+s,s') - R_\theta(s-s') \| \de s'
		\\
		&\qquad 
		\leq 
		Cs \int_t^{t+s} 
		\norm{
			\frac{\partial \ell(r^{s'},w^*;z)}{\partial w^s}
			-
			\widehat R_\ell(s')
		}_{\rev{L^2}} \de s'
		+ 
		C \| R_\theta(t+s,t+s-\cdot) - R_\theta(\cdot) \|_\infty
		\rightarrow 0.
	\end{aligned}
\end{equation}
where the limit is for $s$ fixed and $t \rightarrow \infty$.
One can likewise show that as $t \rightarrow \infty$, $R_\theta(t+s,t)\nabla_r\ell(r^t,w^*;z) \stackrel{\rev{L^2}}\rightarrow R_\theta(s) \nabla_r\ell(r^\infty,w^*;z)$ and $\nabla_r \ell(r^{t+s},w^*;z) \stackrel{\rev{L^2}}\rightarrow \nabla_r \ell(r^\infty,w^*;z)$.
Because each of these terms is also bounded,
using Eq.~\eqref{eq:def-Rhat-ell-1} we conclude that $\frac{\partial \ell(r^{t+s},w^*;z)}{\partial w^t}
\stackrel{\rev{L^2}}\rightarrow 
\widehat R_\ell^{(1)}(s)$.
Then,
we must have that $\widehat R_\ell^{(1)}(s) = \widehat R_\ell(s)$.
In particular, $\widehat R_\ell(s)$ satisfies the equation
\begin{equation}
	\label{eq:hatRell-integral}
	\widehat R_\ell(s) 
	=
	\nabla_r \ell(r^\infty,w^*;z) 
	\cdot 
	\prn{- \frac{1}{\delta} \int_0^s R_{\theta}(s-s') \widehat R_\ell(s')  \de s' - \frac 1 \delta R_\theta(s) \nabla_r \ell(r^\infty,w^*;z) },
\end{equation}
and moreover, 
\begin{equation}
	\E[\widehat R_\ell(s)]
	=
	\lim_{t \rightarrow \infty}
	\E\left[
	\frac{\partial \ell(r^{t+s},w^*;z)}{\partial w^t}
	\right]
	= 
	\lim_{t \rightarrow \infty} R_\ell(t+s,s) = R_\ell(s).
\end{equation}
Because $R_\theta(s),\widehat R_\ell(s) \leq Ce^{-cs}$, 
we may integrate Eq.~\eqref{eq:hatRell-integral} and apply Fubini's theorem to get
\begin{equation}
	\int_0^\infty \widehat R_\ell(s) \de s 
	= 
	\nabla_r \ell(r^\infty,w^*;z) 
	\cdot 
	\prn{- \frac{1}{\delta} \int_0^\infty R_{\theta}(s) \de s \int_0^\infty \widehat R_\ell(s)  \de s - \frac 1 \delta \int_0^\infty R_\theta(s) \de s \nabla_r \ell(r^\infty,w^*;z) }.
\end{equation}
Recalling the definition of $R_\theta$ (Eq.~\eqref{eq:Rinf-def}),
this can be rearranged to
\begin{equation}
	\begin{aligned}
		\nabla_r \ell(r^\infty,w^*;z) 
		+ 
		\int_0^\infty \widehat R_\ell(t) \de t
		&= 
		\Big(I_k + \frac1\delta \nabla_r\ell(r^\infty,w^*;z) R_\theta^\infty\Big)^{-1} \nabla_r \ell(r^\infty,w^*;z)
		\\
		&= 
		\delta\Big(I_k - \Big(I_k + \frac1\delta\nabla_r\ell(r^\infty,w^*;z)R_\theta^\infty\Big)^{-1} \Big)(R_\theta^\infty)^{-1}.
	\end{aligned}
\end{equation}
Taking expectations 
and using Eq.~\eqref{eq:Rinf-def} gives the first equation in \eqref{eq:fixed-pt-2}.

Now consider Eq.~\eqref{eq:def-derivative-dw}.
Recall
\begin{equation}
	\frac{\partial \ell(r^t,w^*;z)}{\partial w^*} 
	=  
	-\frac{1}{\delta} \nabla_r \ell(r^t,w^*;z) \int_0^t R_{\theta}(t,t-s) \frac{\partial \ell(r^{t-s},w^*;z)}{\partial w^*}  \de s + \nabla_{w^*} \ell(r^t,w^*;z).
\end{equation}
Because $\nabla_r\ell(r^t,w^*;z) \stackrel{\rev{L^2}}\rightarrow \nabla_r\ell(r^\infty,w^*;z)$,
$\|R_\theta(t,t-\cdot) - R_\theta(\cdot)\|_\infty \rightarrow 0$, 
$\frac{\partial \ell(r^{t-s},w^*;z)}{\partial w^*} \stackrel{\rev{L^2}}\rightarrow \widehat R_\ell^*$, 
$\nabla_{w^*}\ell(r^t,w^*;z) \stackrel{\rev{L^2}}\rightarrow \nabla_{w^*}\ell(r^\infty,w^*;z) $,
and we have that $\|\nabla_r\ell(r^t,w^*;z)\|,\|\nabla_r\ell(r^\infty,w^*;z)\|,\|\nabla_{w^*}\ell(r^t,w^*;z)\|,\|\nabla_{w^*}\ell(r^\infty,w^*;z)\| \leq M_\ell$, and $\|R_\theta(t,t-s)\|,\|R_\theta(s)\| \leq Ce^{-ts}$,
we can take the limit of the previous display as $t \rightarrow \infty$ to get
\begin{equation}
	\widehat R_\ell^*
	=
	- \frac1\delta \nabla_r \ell(r^\infty,w^*;z)
	\int_0^\infty R_\theta(s) \de s \widehat R_\ell^* 
	+
	\nabla_{w^*} \ell(r^\infty,w^*;z).
\end{equation}
This can be rearranged to
\begin{equation}
	\widehat R_\ell^*
	=
	\Big(
	I_k
	+
	\frac1\delta \nabla_r \ell(r^\infty,w^*;z) R_\theta^\infty
	\Big)^{-1}
	\nabla_{w^*}\ell(r^\infty,w^*;z).
\end{equation}
Taking expectations gives the third equation in Eq.~\eqref{eq:fixed-pt-2}.
This completes the proof.
\end{proof}

\subsection{Proof of Theorem \ref{thm:Variational-General}}

We will use throughout the mapping given in Eq.~\eqref{eq:FixedPointMatch}. 
We copy here for the reader's convenience the fixed point characterization of 
Theorem \ref{thm:fixed-pt}, for the case $\ell(r,w;z) =\nabla \Ls(r,w;z)$
(it is understood that gradients are taken with respect to the first argument):
\begin{align}
r^\infty &= - \frac1\delta R_\theta^\infty \nabla\Ls(r^\infty,w^*;z) + w^\infty,
\tag{FP1}\label{eq:FP1}\\
0 &= -(\Lambda + R_\ell^\infty) \theta^\infty - R_\ell^* \theta^* + u^\infty,
\tag{FP2}\label{eq:FP2}\\
C_\theta & = \Ep [ (\theta^{\infty\sT},\theta^{*\sT})^\sT (\theta^{\infty\sT},\theta^{*\sT})]\, ,
\tag{FP3}\label{eq:FP3} \\
C_\ell & =   \Ep[\nabla\Ls(r^\infty,w^*;z) \nabla\Ls(r^\infty,w^*;z)^\sT] \, ,
\tag{FP4}\label{eq:FP4}\\
(R_\theta^\infty)^{-1} &= \Lambda + R_\ell^\infty,\tag{FP1}\label{eq:FP5}\\
R_\ell^\infty&=\E\Big[ \Big(I_k + \frac1\delta\nabla^2\Ls(r^\infty,w^*;z) R_\theta^\infty\Big)^{-1}
\nabla^2\Ls(r^\infty,w^*;z) \Big],
\tag{FP5}\label{eq:FP6}\\
R_\ell^*&= \E\Big[\Big( I_k + \frac1\delta \nabla^2 \Ls(r^\infty,w^*;z) R_\theta^\infty\Big)^{-1} \nabla_{w^*} 
\Ls(r^\infty,w^*;z)\Big], 
\tag{FP6}\label{eq:FP7}
\end{align}
where $u^\infty \sim \normal(0, \Cl / \delta)$ and $(w^\infty,w^*) \sim \normal(0, \Ct)$.
By rotation invariance we can and will assume $\theta^* \sim \normal(0, Q_{00})$.

\vspace{0.2cm}
\noindent{\bf Equation \eqref{eq:FP1}}
can be rewritten as
\begin{align}
\delta (R_{\theta}^{\infty})^{-1}(r^{\infty}-w^{\infty}) +\nabla \Ls(r^{\infty},w^{\infty};z)=0\, .
\label{eq:ProxR}
\end{align}
Using the identification  $R_{\theta}^{\infty} = \delta S$ given in Eq.~\eqref{eq:FixedPointMatch}
and calculus, we see that the above is equivalent to
\begin{align}
r^{\infty} = \Prox_{ \Ls(\,\cdot\, ,w^*;z)}(w^{\infty};S)\, .
\end{align}

\vspace{0.2cm}

\noindent{\bf Equation \eqref{eq:FP2}.}
Using $R_{\ell}^*= -\frac{1}{\delta} S^{-1} Q_{10}Q_{00}^{-1}$
and
$R_{\ell}^{\infty} = (\delta \, S)^{-1}-\lambda I_k$, again as prescribed in 
Eq.~\eqref{eq:FixedPointMatch}, we can rewrite 
Eq.~\eqref{eq:FP2} as
\begin{align}
\theta^{\infty} = Q_{10}Q_{00}^{-1}\theta^* + \xi \, ,\;\;\; \xi\sim \normal(0,Q\setminus Q_{00})\, ,
\end{align}
which is equivalent to $(\theta^*,\theta^{\infty})\sim \normal(0,Q)$.
In particular $(\theta^*,\theta^{\infty})\ed (w^*,w^{\infty})$.

\vspace{0.2cm}

\noindent{\bf Equation \eqref{eq:FP3}.} Using the mapping \eqref{eq:FixedPointMatch},
this equation amounts to $Q=\E[(\theta^*,\theta^{\infty})(\theta^*,\theta^{\infty})^{\sT}]$
which is proved above.    

\vspace{0.2cm}

\noindent{\bf Equation \eqref{eq:FP4}} is equivalent to Eq.~\eqref{eq:QS1}, again using \eqref{eq:FixedPointMatch}.

\vspace{0.2cm}

\noindent{\bf Equation \eqref{eq:FP5}} 
holds because 
$R_{\theta}^{\infty} = \delta S$, $R_{\ell}^{\infty} = (\delta \, S)^{-1}-\lambda I_k$.
by Eq.~\eqref{eq:FixedPointMatch}.

\vspace{0.2cm}

We finally claim that Eq.~\eqref{eq:QS2} is equivalent to
\eqref{eq:FP6}, \eqref{eq:FP7}. The first block reads
\begin{align*}
0&=  \E\big[r_{\infty}\nabla \Ls(r_{\infty},w^*;z)^{\sT}\big] +\lambda Q_{11}\\
&\stackrel{(a)}{=} \E\big[w_{\infty}\nabla \Ls(r_{\infty},w^*;z)^{\sT}\big] - S \E[\nabla \Ls(r_{\infty},w^*;z)^{\sT}
\nabla \Ls(r_{\infty},w^*;z)^{\sT}]+\lambda Q_{11}\\
& =  \E\big[w_{\infty}\nabla \Ls(r_{\infty},w^*;z)^{\sT}\big] -\frac{1}{\delta} (Q\setminus Q_{00}) S^{-1}
+\lambda Q_{11}\, ,
\end{align*}
where in $(a)$ we used Eq.~\eqref{eq:ProxR}. Using Stein's lemma in the firs term,
and the fact that the Jacobian is
\begin{align}
D_{w^{\infty}}\Prox_{ \Ls(\,\cdot\, ,w^*;z)}(w^{\infty};S) = 
\big(I_k+S\nabla^2 \Ls(r_{\infty},w^*;z)\big)^{-1}\, ,
\end{align}
we obtain 
\begin{align}
0=  &Q_{11}\E\big[ \nabla^2 \Ls(r_{\infty},w^*;z)
\big(I_k+S\nabla^2 \Ls(r_{\infty},w^*;z)\big)^{-1}\big]\nonumber\\
&+
Q_{10}\E\big[ \nabla^2_{w^*,r} \Ls(r_{\infty},w^*;z)
\big(I_k+S\nabla^2 \Ls(r_{\infty},w^*;z)\big)^{-1}\big]\label{eq:LongEquiv}\\
&
-\frac{1}{\delta} (Q\setminus Q_{00}) S^{-1}
+\lambda Q_{11}\, .\nonumber
\end{align}
Defining 
\begin{align}
A &:= \E\big[ \nabla^2 \Ls(r_{\infty},w^*;z)
\big(I_k+S\nabla^2 \Ls(r_{\infty},w^*;z)\big)^{-1}\big]\, ,\\
B & :=  \E\big[ \nabla^2_{w_*,r} \Ls(r_{\infty},w^*;z)
\big(I_k+S\nabla^2 \Ls(r_{\infty},w^*;z)\big)^{-1}\big]\, ,
\end{align}
we can rewrite Eq.~\eqref{eq:LongEquiv} as
\begin{align}
\label{eq:ABeq1}
Q_{11}A+Q_{10}B = \frac{1}{\delta} (Q\setminus Q_{00})S^{-1} -\lambda Q_{11}\, ,
\end{align}
Proceeding in the same way for the second block in Eq.~\eqref{eq:QS2},
we obtain (note that $Q_{01}$)
\begin{align}
\label{eq:ABeq2}
Q_{01}A+Q_{00}B = -\lambda Q_{01}\, ,
\end{align}
It is easy to check that Eq.~\eqref{eq:ABeq1}, \eqref{eq:ABeq2} are solved by
\begin{align}
A = -\lambda I_k+ \frac{1}{\delta} S^{-1}\, ,\;\;\;\;
B = -\frac{1}{\delta}Q_{00}^{-1}Q_{01}S^{-1}\, .\label{eq:AB-solution}
\end{align}
These are easily seen to coincide with the  \eqref{eq:FP6}, \eqref{eq:FP7}.

Finally, recall that we assumed $Q_{00}=\E[\theta^*\theta^{*,\sT}]$
and $\Cov_z(\nabla \Ls(r,w;z))$ to be strictly positive for any $r,w$. This implies
that $Q_{00}$ and $(Q\setminus Q_{00})$ are strictly positive, and hence so is $Q$.
Therefore Eqs.~\eqref{eq:ABeq1}, \eqref{eq:ABeq2} have a unique solution that is given by 
Eq.~\eqref{eq:AB-solution}.

\rev{\subsection{Proof of Proposition \ref{prop:convergence-DMFT}} \label{proof:convergence-DMFT}
By the assumption of strong convexity of the regularized risk, there exists a unique global minimizer $\hbtheta$ to the modified cost function \eqref{eq:GeneralCost}
\begin{align*}
	\cL_n(\btheta)
	= 
	\frac{1}{\delta} \sum_{i=1}^n \Ls(\btheta^{\sT} \bx_i,(\btheta^*)^{\sT} \bx_i;z_i)
	+ 
	\frac{1}{2} \|\btheta \Lambda^{\frac{1}{2}}\|_F^2 \, ,
\end{align*}
such that $\nabla \cL_n(\hbtheta) = 0$. And for some $c_0 > 0$ independent of $n,d$, it holds that
\begin{align*}
	\langle \nabla \cL_n(\btheta^t), \btheta^t - \hbtheta \rangle = \langle \nabla \cL_n(\btheta^t) - \nabla \cL_n(\hbtheta), \btheta^t - \hbtheta \rangle \geq c_0 \| \btheta^t -  \hbtheta\|_2^2.
\end{align*}
For the Lyapunov function $V(t) = \| \btheta^t -  \hbtheta\|_2^2$, it satisfies 
\begin{align*}
	\ddt V(t) = - 2\langle \nabla \cL_n(\btheta^t), \btheta^t - \hbtheta \rangle \leq - 2c_0 V(t),
\end{align*}
and by Gronwall, $V(t) \leq e^{-2c_0t} V(0)$. This implies for some universal $C > 0$ and any $0 < t < t' < \infty$, $\| \btheta^t - \btheta^{t'}\|_2 /\sqrt{d} \leq C e^{-c_0 t}$, corresponding to in the DMFT limit that $\| \theta^t - \theta^{t'} \|_{L^2} \leq C e^{-c_0 t}$ invoking Theorem~\ref{thm:StateEvolution}. By completeness of the $L^2$ space and the Cauchy subsequence argument, we show there exists a limiting random variable $\theta^\infty$ in $\reals^k$ with bounded variance and $\| \theta^t - \theta^\infty \|_{L^2} \leq C e^{-c_0 t}$. The exponential convergence for $r^t$ follows similarly, and as well as the DMFT system parameters $R_\ell, R_\theta$.}
\end{document}